\newenvironment{Proof of}[1]{\textbf{Proof #1.}}{$\qquad \blacksquare$\par}
\DeclareMathOperator{\spane}{span}
\DeclareMathOperator{\clsp}{\overline{span}}
\DeclareMathOperator{\supp}{supp}
\DeclareMathOperator{\alg}{alg}
\DeclareMathOperator{\Aut}{Aut}
\DeclareMathOperator{\id}{id}
\DeclareMathOperator{\reg}{reg}
\DeclareMathOperator{\pos}{pos}
\DeclareMathOperator{\Prim}{Prim}
\newcommand{\B}{\mathcal B}
\newcommand{\M}{\mathcal M}
\newcommand{\NN}{\mathcal N}
\newcommand{\QQ}{\mathcal Q}
\newcommand{\OO}{\mathcal{O}}
\newcommand{\al}{\alpha}
\newcommand{\p}{\varphi}
\newcommand{\A}{\mathcal A}
\renewcommand{\AA}{\mathbb A}
\newcommand{\D}{\mathcal D}
\newcommand{\G}{\mathcal G}
\newcommand{\C}{\mathbb C}
\newcommand{\R}{\mathbb R}
\newcommand{\Z}{\mathbb Z}
\newcommand{\N}{\mathbb N}
\newtheorem{thm}{Theorem}[section]
\newtheorem{lem}[thm]{Lemma}
\newtheorem{prop}[thm]{Proposition}
\newtheorem{cor}[thm]{Corollary}
\newtheorem{thmx}{Theorem}
\theoremstyle{definition}
\newtheorem{defn}[thm]{Definition}
\newtheorem{ex}[thm]{Example}
\newtheorem{rem}[thm]{Remark}
\begin{document}
 \thispagestyle{empty}
   \title[$C^*$-algebras associated with transfer operators]{$C^*$-algebras associated to  transfer operators for countable-to-one maps}
  \author[Bardadyn]{Krzysztof Bardadyn}

\author[Kwa\'sniewski]{Bartosz K. Kwa\'sniewski}
	
	\author[Lebedev]{Andrei V. Lebedev}
   \subjclass[2000]{  47L30, 54H20; Secondary 37E99}
   \thanks{This work was supported by the National Science Centre,
Poland, grant number 2019/35/B/ST1/02684}
    \begin{abstract} Our initial data is a  transfer operator $L$ for a continuous, countable-to-one  map $\varphi:\Delta \to X$
	defined on an open subset of a locally compact Hausdorff space $X$. Then $L$ may be identified with a `potential', i.e. a map $\varrho:\Delta\to X$ that need not be continuous unless $\varphi$ is a local homeomorphism. 
	We define the crossed product  $C_0(X)\rtimes L$ as a universal $C^*$-algebra with explicit generators and relations, and give an explicit faithful representation
	of $C_0(X)\rtimes L$  under which it is generated by weighted composition operators.
We explain its relationship with Exel-Royer's crossed products, quiver $C^*$-algebras of Muhly and Tomforde, $C^*$-algebras associated to complex or self-similar dynamics
 by Kajiwara and Watatani, and groupoid $C^*$-algebras associated to Deaconu-Renault groupoids.

We describe spectra of core subalgebras of  $C_0(X)\rtimes L$, prove  uniqueness theorems for $C_0(X)\rtimes L$ and characterise simplicity of $C_0(X)\rtimes L$. We give efficient criteria for  $C_0(X)\rtimes L$ to be purely infinite simple and in particular a Kirchberg algebra.
		\end{abstract}
		
		\maketitle


\section*{Introduction.}

Since 1970's transfer operators are indispensable tools in thermodynamical formalism  and ergodic theory \cite{Bowen}, and even earlier such operators, named averaging operators, played an important role in the study of
Banach spaces $C(X)$ of continuous functions on a compact space $X$, see \cite{Pelczynski}.
They are also crucial in the study of spectrum of weighted composition operators, see \cite{t-entropy}, \cite{bar_kwa}.
 Transfer operators as a tool to construct $C^*$-algebras,  were explicitly used for the first time by Exel in \cite{exel3} to present Cuntz-Krieger algebras as crossed products associated to topological Markov chains.
 Since then a number of  generalisations and modifications of such crossed products were introduced, see for instance \cite{er}, \cite{exel_renault}, \cite{larsen}, \cite{BroRaeVit}, \cite{Brownlowe}. Their general structure  as Cuntz-Pimsner algebras  is now quite well-understood, see  \cite{br}, \cite{kwa_Exel}. However, the detailed analysis of the associated $C^*$-algebras is usually limited to the case where the underlying mapping is a local homeomorphism on a compact Hausdorff space,
see \cite{exel_vershik}, \cite{er}, \cite{CS}, \cite{BroRaeVit}, \cite{Brownlowe}. The exceptions  are $C^*$-algebras associated to rational maps \cite{Kajiwara_Watatani0} or maps whose inverse branches form a self-similar systems \cite{Kajiwara_Watatani}, \cite{Kajiwara_Watatani16}.
All these $C^*$-algebras can  be viewed as crossed products by transfer operators for \emph{finite-to-one} maps admitting at most finite number irregular points.
However, in many problems there is a natural need to study tranfer operators for partial continuous maps that are countable-to-one. This concerns
in particular infinite graph $C^*$-algebras \cite{Raeburn}, \cite{BroRaeVit}, \cite{Brownlowe}, \cite{kwa_Exel} or thermodynamic formalism for countable Markov shifts, interest in which has been growing in recent years, see \cite{Sarig}, \cite{exel_laca}, \cite{Bissacot-Exel-Frausino-Raszeja}, \cite{Bissacot-Exel-Frausino-Raszeja2}.  
In the present paper we give a general, comprehensive  account of the main structural results for crossed products by transfer operators for arbitrary partial continuous maps that are countable-to-one.

More specifically we consider a continuous map $\varphi:\Delta \to X$ defined on an open subset $\Delta$ of a  locally compact Hausdorff space $X$. We assume that   $\varphi^{-1}(y)$ is countable for all $y\in \Delta$. Then every bounded transfer operator for $\varphi$ is a map $L:C_0(\Delta)\to C_{0}(X)$ given by the formula
$$
L(a)(y)=\sum_{x\in\varphi^{-1}(y)}\varrho(x)a(x)
$$
where $\varrho: \Delta\to [0,\infty)$ is a map that we call a  \emph{potential}. 
A potential $\varrho$ is in general only upper semi-continuous and
the main role in our analysis is played by the following two sets:
$$
\Delta_{\pos}:=\{x\in \Delta: \varrho(x) >0\},\qquad \Delta_{\reg}:=\{x\in \Delta_{\pos}:  \varrho \text{ is continuous at }x\}.
$$
So  $\Delta_{\reg}\subseteq \Delta_{\pos}\subseteq \Delta\subseteq X$. As we show $\Delta_{\reg}$ is an open subset of $X$ and the restricted map $\varphi:\Delta_{\reg}\to X$ is a local homeomorphism.
We define the \emph{crossed product} $C_0(X)\rtimes L$ as a universal $C^*$-algebra generated
by the $C^*$-algebra $C_0(X)$ and weighted operators $at$,  for $a\in C_0(\Delta)$, subject to relations
$$
L(a)=tat^*, \,\, a\in C_0(\Delta), \qquad a\sum_{i=1}^n u_i^{K} tt^* u_i^{K}  =   a, \,\, a\in C_c(\Delta_{\reg})
$$
where $u_i^K$'s is a suitably normalized partition of unity on $K:=\supp(a)$, see \eqref{eq:quasi-basis} below.
Apart from the case of covering maps on compact spaces treated in  \cite{exel_vershik} this is the first general description of
the crossed product  $C_0(X)\rtimes L$ in terms of explicit relations coming from $L$. In other works the corresponding crossed product is
usually defined and analyzed as the  Cuntz-Pimsner algebra $\OO_{M_L}$ associated to a $C^*$-correspondence $M_L$. We prove that $C_0(X)\rtimes L$ is isomorphic to $\OO_{M_L}$ (Theorem \ref{thm:crossed_product_Cuntz-Pimsner}). We do not know whether in general $C_0(X)\rtimes L$ can be  naturally modelled by a topological quiver of Muhly and Tomforde \cite{mt}. 
One of our main structural result is the following the following version of (Cuntz-Krieger) uniqueness theorem (see Theorems \ref{thm:isomorphism}, \ref{thm:commutant_topological_freeness})
that generalizes  the corresponding results from \cite{exel_vershik}, \cite{er}, \cite{CS}, \cite{BroRaeVit}.
\begin{thmx}\label{Theorem_A} The following conditions are equivalent:
\begin{enumerate}
\item\label{enu:isomorphismA1}  Every representation of $C_0(X)\rtimes L$  is faithful  provided it is faithful on $C_0(X)$.

\item\label{enu:isomorphismA2} The orbit representation of $C_0(X)\rtimes L$ on $\ell^2(X)$ is faithful; this representation  sends function in $C_0(X)$ to operators of multiplication and the generator $t$ to the weighted composition operator
$Th:= \sqrt{\varrho} h\circ \varphi$,

\item\label{enu:isomorphismA3}  The map $\varphi: \Delta_{\reg}\to X$ is topologically free, that is the set of periodic points whose orbits are contained in  $\Delta_{\reg}$ has empty interior.

\end{enumerate}
If in addition $\Delta_{\pos}=\Delta_{\reg}$ the above conditions are further equivalent to
\begin{enumerate}\setcounter{enumi}{3}
\item\label{enu:isomorphismA4}  $C_0(X)$ is a maximal abelian $C^*$-subalgebra of $C_0(X)\rtimes L$.
\end{enumerate}

\end{thmx}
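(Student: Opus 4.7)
The plan is to prove the cycle (iii)$\Rightarrow$(i)$\Rightarrow$(ii)$\Rightarrow$(iii), and then address (iv) under the extra hypothesis $\Delta_{\pos}=\Delta_{\reg}$. Before entering the cycle one has to verify that the orbit representation in (ii) really is a representation of $C_0(X)\rtimes L$: boundedness of $T$ on $\ell^2(X)$ follows from $\|\varrho\|_\infty<\infty$ (a consequence of boundedness of $L$), and both defining relations can be checked pointwise on the basis vectors $\delta_x$, exploiting that $\varphi|_{\Delta_{\reg}}$ is a local homeomorphism so the sums built from the quasi-basis $\{u_i^K\}$ are finite on any given preimage set. Since multiplication of $C_0(X)$ on $\ell^2(X)$ is a faithful representation, the implication (i)$\Rightarrow$(ii) is then immediate.

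For (iii)$\Rightarrow$(i) I would follow the classical gauge-action strategy. The gauge $\mathbb{T}$-action $\gamma_z(t)=zt$ on $C_0(X)\rtimes L$ exists by universality and yields a faithful conditional expectation $E:C_0(X)\rtimes L\to F$ onto the core subalgebra. By the description of $F$ and its spectrum developed earlier in the paper, $F$ is an inductive limit whose primitive ideal space is governed by the local homeomorphism $\varphi|_{\Delta_{\reg}}$ on the relevant stratified phase space. Topological freeness of $\varphi|_{\Delta_{\reg}}$ then yields, via an Archbold--Spielberg/Kishimoto-type approximation, that $C_0(X)$ detects ideals of $F$. Combined with the faithfulness of $E$ this promotes faithfulness on $C_0(X)$ to faithfulness on all of $C_0(X)\rtimes L$: given $b\neq 0$ in the kernel of a representation $\pi$ faithful on $C_0(X)$, the element $E(b^*b)\in F$ is non-zero, and by the approximation it can be compressed into a non-zero element of $C_0(X)\cap\ker\pi$, a contradiction.

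For (ii)$\Rightarrow$(iii) I would argue by contraposition. If $\varphi|_{\Delta_{\reg}}$ fails to be topologically free, a Baire-category argument furnishes $n\geq 1$ and a nonempty open $U\subseteq \Delta_{\reg}$ whose points are fixed by $\varphi^n$ and have full forward orbits in $\Delta_{\reg}$. Pick $0\neq a\in C_c(U)$ and consider the $n$-th spectral element $at^n\in C_0(X)\rtimes L$. Using that $\varphi^n|_U=\mathrm{id}$ together with the local-homeomorphism structure, one subtracts from $at^n$ a suitable element of the $n$-th spectral subspace of the core to obtain $b\neq 0$ (as witnessed by $E(b^*b)\in C_0(X)$ being non-zero) which nevertheless annihilates every $\delta_x\in\ell^2(X)$ because of the exact cancellation forced by the periodic structure on $U$; this contradicts faithfulness of the orbit representation.

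Finally, for (iv) under $\Delta_{\pos}=\Delta_{\reg}$: the direction (iv)$\Rightarrow$(iii) is the easier one, since a nonempty open set of periodic points in $\Delta_{\reg}$ produces, by cut-downs of suitable powers of $t$, a nontrivial element of the relative commutant of $C_0(X)$ lying outside $C_0(X)$. The converse uses $\Delta_{\pos}=\Delta_{\reg}$ decisively: with the irregular stratum removed, $C_0(X)\rtimes L$ identifies with the $C^*$-algebra of a Deaconu--Renault groupoid $\mathcal G$ for $\varphi|_{\Delta_{\reg}}$ in which $C_0(X)=C_0(\mathcal G^{(0)})$, and topological freeness of the dynamics is exactly the effectiveness of $\mathcal G$, which ensures $C_0(\mathcal G^{(0)})$ is a Cartan, and in particular maximal abelian, subalgebra. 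The main obstacle I anticipate lies in the description of the core $F$ and its ideal structure when $\Delta_{\pos}\setminus\Delta_{\reg}$ is non-empty: verifying that the local-homeomorphism dynamics on $\Delta_{\reg}$ alone suffice to detect all ideals of $F$ is precisely the technical content that distinguishes this result from its predecessors in \cite{exel_vershik,er,CS,BroRaeVit}.
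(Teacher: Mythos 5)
The central implication (iii)$\Rightarrow$(i) is where your proposal has a genuine gap. You invoke an ``Archbold--Spielberg/Kishimoto-type approximation'' to conclude that ``$C_0(X)$ detects ideals of $F$'' and then argue that for $b\in\ker\pi$ the element $E(b^*b)$ ``can be compressed into a non-zero element of $C_0(X)\cap\ker\pi$.'' As stated this does not work: $\ker\pi$ is not gauge-invariant, so $E$ does not map it into itself, and compressions of $E(b^*b)$ need not lie in $\ker\pi$ at all. The correct shape of the argument is to compress $b^*b$ itself and show that topological freeness forces the compressions of the \emph{non-zero Fourier modes} to be small (faithfulness of $\pi$ on the core is automatic for covariant representations, Theorem~\ref{thm:faithfulness_on_the_core}, and needs no freeness; detection of ideals of the core by $C_0(X)$ is neither what is needed nor established). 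Moreover the approximation you appeal to is precisely the technical content that is missing: because $\varrho$ may be discontinuous, the core is not a groupoid algebra of a nice \'etale relation, and its building blocks $A_N$ have spectra whose topology is strictly finer than the naive pushout topology (Example~\ref{ex:tent_homeomorphism_fails}). The paper's proof of Theorem~\ref{thm:isomorphism} supplies exactly this: it uses the stratified description of $\widehat{A}_N$ (Theorem~\ref{thm:spectra_of_A_n}), the shift Lemma~\ref{lem:shift_map_induced_by_representations}, and a two-case compression argument (irreducible representations attached to irregular strata $\varphi^k(\Delta_{\pos,k})\setminus\Delta_{\reg}$ versus points of $\varphi^N(\Delta_{\pos,N})$ chosen aperiodic by topological freeness) to verify $\|b_0\|\le\|\sum_i b_{-i}T^{*i}a_{-i}+b_0+\sum_i a_iT^ib_i\|$, which combined with Corollary~\ref{cor:characterisation_faithfulness_crossed_product} gives faithfulness. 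You yourself flag this as ``the main obstacle,'' but flagging it is not proving it, and it is the heart of the theorem.

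The remaining pieces are essentially the paper's arguments and are fine in outline, with small inaccuracies. For (ii)$\Rightarrow$(iii) the element to use is $at^n-a\sqrt{\varrho_n}$ with the correction term lying in $C_0(X)$ (the zero spectral subspace), not in an ``$n$-th spectral subspace of the core,'' and continuity of $\sqrt{\varrho_n}$ on $U$ is exactly what the hypothesis $\varphi^k(U)\subseteq\Delta_{\reg}$ buys; your cancellation claim in the orbit representation is then correct. Boundedness of $T_o$ does not follow from $\|\varrho\|_\infty<\infty$ but from $\sup_y\sum_{x\in\varphi^{-1}(y)}\varrho(x)\le\|L\|$, which matters since fibres may be infinite. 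The treatment of (iv) under $\Delta_{\pos}=\Delta_{\reg}$ (identification with the Renault--Deaconu groupoid of $\varphi|_{\Delta_{\reg}}$, effectiveness, Renault's Cartan theory, and the direct construction of a commutant element from a periodic open set for the converse) matches Theorem~\ref{thm:commutant_topological_freeness} and is acceptable.
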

In general we characterise faithful representations of $C_0(X)\rtimes L$ in terms of a canonical  \emph{generalized expectation} $G$ for the inclusion $C_0(X)\subseteq  C_0(X)\rtimes L$ (Theorem \ref{cor:Expectation_Invariance}).
We construct $G$ using a \emph{regular representation}  of $C_0(X)\rtimes L$ on $\ell^2(X\rtimes\Z)$.   If $\Delta_{\pos}=\Delta_{\reg}$, then $G$  is a genuine conditional expectation and $C_0(X)\rtimes L$ is naturally isomorphic to the $C^*$-algebra of the Renault-Deaconu groupoid  for the partial local homeomorphism $\varphi:\Delta_{\reg}\to X$
(see Theorem \ref{thm:local_homeo_crossed_groupoid}). Then  \ref{enu:isomorphismA4} in Theorem \ref{Theorem_A} says that $C_0(X)$
 is a Cartan subalgebra of $C_0(X)\rtimes L$ in the sense of Renault \cite{Re}. We show by example that if $\Delta_{\pos}\neq\Delta_{\reg}$,  then topological freeness of  $\varphi: \Delta_{\reg}\to X$ is not sufficient for maximal abeliannes of $C_0(X)$  in $C_0(X)\rtimes L$.

We say that $L$ is \emph{minimal} if there are no non-trivial open subsets $U\subseteq X$ such that $\varphi(U\cap \Delta_{\pos})\subseteq U$ and $\varphi^{-1}(U)\cap \Delta_{\reg}\subseteq U$.
As a corollary to Theorem \ref{Theorem_A} we get the following characterisation of simplicity (see Theorem \ref{thm:test_of_simplicity}):

\begin{thmx}\label{Theorem_B}
If $\Delta_{\reg}$ is infinite, then $C_0(X)\rtimes L$ is simple if and only if $L$ is minimal.
\end{thmx}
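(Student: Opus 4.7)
The plan is to combine Theorem~\ref{Theorem_A} with an ideal-to-invariant-set correspondence; I handle the two directions separately.

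\emph{Necessity} ($\Rightarrow$): I argue by contrapositive. If $L$ is not minimal, there exists a proper non-empty open $U\subsetneq X$ with $\varphi(U\cap\Delta_{\pos})\subseteq U$ and $\varphi^{-1}(U)\cap\Delta_{\reg}\subseteq U$. I claim the closed two-sided ideal $J$ of $C_0(X)\rtimes L$ generated by $C_0(U)$ is proper, by establishing $J\cap C_0(X)=C_0(U)$. The forward invariance yields $L(C_0(U\cap\Delta))\subseteq C_0(U)$, because any $x$ with $f(x)\neq 0$ for $f\in C_0(U\cap\Delta)$ satisfies $x\in U\cap\Delta_{\pos}$ and hence $\varphi(x)\in U$, so $L(f)(y)=\sum_{x\in\varphi^{-1}(y)}\varrho(x)f(x)$ vanishes for $y\notin U$; the backward invariance gives the symmetric stability for the $t^*(\cdot)t$-action inserted via the second defining relation. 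Iterating along monomials in $C_0(X)$ and $t$ shows that the diagonal part of $J$ is precisely $C_0(U)\subsetneq C_0(X)$, so $J$ is a proper non-zero ideal.

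\emph{Sufficiency} ($\Leftarrow$): Assume $L$ is minimal and $\Delta_{\reg}$ is infinite. I first deduce topological freeness of $\varphi:\Delta_{\reg}\to X$. If it failed, a non-empty open $V\subseteq\Delta_{\reg}$ of periodic points with orbits in $\Delta_{\reg}$ would exist; a Baire-category argument together with the local-homeomorphism property of $\varphi|_{\Delta_{\reg}}$ allow shrinking $V$ so that $\varphi^n|_V=\id_V$ for some fixed $n\geq 1$ and the sheets $V,\varphi(V),\ldots,\varphi^{n-1}(V)$ are pairwise disjoint open subsets of $\Delta_{\reg}$. Set $O:=\bigcup_{k=0}^{n-1}\varphi^k(V)$ and pass to its backward saturation
\[
O'':=\{x\in\Delta_{\reg}:\varphi^k(x)\in O\text{ for some }k\geq 0\text{ with }\varphi^j(x)\in\Delta_{\reg}\text{ for }0\leq j\leq k\}.
\]
A direct check shows $O''$ is open and invariant in the minimality sense, so $O''=X$; hence $\Delta_{\reg}=X$ and every $\varphi$-orbit enters $O$ in finite time. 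A quotient argument against the induced $\mathbb{Z}/n\mathbb{Z}$-action on $O$ then forces $X$ to be a single finite orbit of length $\leq n$, contradicting the infiniteness of $\Delta_{\reg}$.

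With topological freeness in hand, Theorem~\ref{Theorem_A} gives that any non-zero closed two-sided ideal $J$ of $C_0(X)\rtimes L$ satisfies $J\cap C_0(X)\neq 0$. Write $J\cap C_0(X)=C_0(U)$ for a non-empty open $U\subseteq X$. Forward invariance of $U$: for $x\in U\cap\Delta_{\pos}$, pick $0\leq f\in C_0(U\cap\Delta)$ with $f(x)>0$; then $L(f)=tft^*\in J\cap C_0(X)=C_0(U)$ and $L(f)(\varphi(x))\geq\varrho(x)f(x)>0$, so $\varphi(x)\in U$. Backward invariance: for $y\in\varphi^{-1}(U)\cap\Delta_{\reg}$, choose $b\in C_c(\Delta_{\reg})$ with $b(y)>0$ supported in a neighborhood on which $\varphi$ is a homeomorphism, and $g\in C_0(U)$ with $g(\varphi(y))>0$; the second defining relation allows one to interpret $b^*\,t^*g\,t\,b$ as an element of $C_0(X)$, necessarily in $C_0(U)$, whose value at $y$ is positive, so $y\in U$. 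Minimality then gives $U=X$, so $C_0(X)\subseteq J$, and a standard approximate-unit argument yields $J=C_0(X)\rtimes L$.

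The principal obstacle is the reduction ``minimality plus infinite $\Delta_{\reg}$ implies topological freeness'': the non-injectivity of $\varphi$ precludes a naive orbit-closure argument, and the backward-saturation construction of $O''$, combined with the quotient-group argument, is essential. A secondary technical difficulty is the backward-invariance check for $U$, which requires careful use of the partition-of-unity relation on $C_c(\Delta_{\reg})$ to realise $b^*\,t^*g\,t\,b$ as a scalar witness in $C_0(X)$.
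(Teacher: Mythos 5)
The fatal problem is in your sufficiency direction, at the claimed reduction ``minimality plus infinite $\Delta_{\reg}$ implies topological freeness of $\varphi:\Delta_{\reg}\to X$'', and specifically at the last step, where you assert that $O''=X$ together with a $\Z/n\Z$-quotient argument forces $X$ to be a single finite orbit. That assertion is false. Take $X=\N$ discrete, $\varphi(0)=0$, $\varphi(n)=n-1$ for $n\geq 1$, and $\varrho\equiv 1/2$; this is an infinite instance of Example \ref{ex:one_circuit_graph}. Here $\Delta_{\pos}=\Delta_{\reg}=X$ is infinite and $L$ is minimal (a non-empty open set that is positively invariant must contain $0$, and negative invariance then forces it to be all of $\N$), yet $\{0\}$ is a non-empty open set of periodic points with orbit in $\Delta_{\reg}$, so $\varphi:\Delta_{\reg}\to X$ is not topologically free and $A\rtimes L$ is not simple by Theorem \ref{thm:isomorphism}. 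In your notation $V=O=\{0\}$, $n=1$, and the backward saturation $O''$ is indeed all of $X$; but that only says every point eventually enters the cycle, and it puts no bound whatsoever on how many points feed into it, so no quotient argument can force $X$ to be a finite orbit. This is precisely where the paper's own proof (Theorem \ref{thm:test_of_simplicity}) takes a different turn: from minimality plus failure of topological freeness it deduces only that the open set of periodic points is a single isolated point whose backward saturation is $X$, i.e.\ that $\varphi:\Delta_{\reg}\to X$ is a countable, possibly infinite, directed graph with one circuit; it does not, and cannot, conclude finiteness. In particular the hypothesis ``$\Delta_{\reg}$ is infinite'' does not by itself exclude the one-circuit alternative, so the implication you need cannot be salvaged along these (or any) lines without strengthening the hypothesis to something like Theorem \ref{thm:test_of_simplicity}\ref{thm:test_of_simplicity3} (``$\varphi:\Delta_{\reg}\to X$ is not a directed graph with one circuit''), or to the absence of isolated periodic points.

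The remainder of your argument is sound in outline and close to the paper's: under topological freeness, Theorem \ref{thm:isomorphism} gives $J\cap C_0(X)=C_0(U)\neq 0$ for every non-zero ideal, and your forward/backward invariance checks for $U$ are essentially the computations the paper performs with the partition-of-unity elements $u_i^K$ (cf.\ Lemma \ref{lem:restricted_ideals} and the proof of Theorem \ref{thm:test_of_simplicity}), after which minimality gives $U=X$ and $J=A\rtimes L$. In the necessity direction, however, the key claim that the ideal generated by $C_0(U)$ meets $C_0(X)$ exactly in $C_0(U)$ is only asserted (``iterating along monomials''); it needs an actual argument, for instance pushing spanning elements $a t^k t^{*l} b$ through the generalized expectation $G$ of Proposition \ref{prop:generalised_expectation} and using both invariance conditions, or constructing a covariant representation of $L$ on $\ell^2(X\setminus U)$ annihilating $C_0(U)$ (negative invariance of $U$ is exactly what makes that representation covariant). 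So the necessity half is only missing bookkeeping, while the sufficiency half fails at the concrete step above.
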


Inspired by notions of locally contractive groupoids \cite{Anantharaman-Delaroche} and contractive topological graphs \cite{ka1} we define \emph{contractive transfer operators}, see Definition \ref{defn:contractive}. For such operators we get (see Theorem \ref{thm:purely_infinite} and Corollary \ref{cor:Kirchberg}):

\begin{thmx}\label{thmx:purely_infinite}
If $L$ is minimal and contractive, then  $C_0(X)\rtimes L$ is  purely infinite and simple.
If in addition $X$ is second countable, then $C_0(X)\rtimes L$ is a UCT-Kirchberg algebra (and so it is classifiable by its $K$-theory).
 \end{thmx}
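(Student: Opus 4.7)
The plan is threefold: first deduce simplicity from Theorem \ref{Theorem_B}; second establish pure infiniteness by producing infinite projections via contractivity; and third, under the second countability hypothesis, upgrade to the Kirchberg conclusion by verifying separability, nuclearity, and the UCT.

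For simplicity I intend to invoke Theorem \ref{Theorem_B}, which requires that $\Delta_{\reg}$ be infinite. If $\Delta_{\reg}$ were finite, the contractive condition, which demands strict shrinking of open subsets of $\Delta_{\reg}$ under some iterate of $\varphi$, cannot be satisfied on nontrivial open sets, and combined with minimality this forces triviality of the system. Hence $\Delta_{\reg}$ is infinite and Theorem \ref{Theorem_B} yields simplicity of $C_0(X)\rtimes L$.

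For pure infiniteness I would follow the template established for locally contractive groupoids (Anantharaman--Delaroche) and contractive topological graphs (Katsura). Fix a nonzero $a\in (C_0(X)\rtimes L)_+$. Using the faithful generalized expectation $G$ from the introduction together with a Cuntz subequivalence argument, the task reduces to finding an infinite projection Cuntz-dominated by a nonzero positive element of the core $C^*$-subalgebra, whose spectrum is described earlier in the paper. On this spectrum I choose an open set $V$ on which the element is bounded below. The contractive hypothesis produces an iterate of $\varphi$ together with an open subset of $V$ whose image is strictly contained in $V$; translated into $C_0(X)\rtimes L$, this gives an isometry $s$ with $s^*s$ a projection supported in $V$ and $ss^*$ strictly dominated by $s^*s$. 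Therefore $s^*s$ is an infinite projection Cuntz-dominated by $a$. Combined with simplicity this is precisely pure infiniteness.

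For the Kirchberg conclusion assume $X$ is second countable. Then $C_0(X)$ is separable and, since $C_0(X)\rtimes L$ is generated by $C_0(X)$ and the single element $t$, the crossed product is separable. By the identification of $C_0(X)\rtimes L$ with the Exel--Royer crossed product $\OO_{M_L}$ recalled in the introduction, nuclearity and membership in the UCT bootstrap class follow from general results on Cuntz--Pimsner algebras over separable nuclear coefficient $C^*$-algebras in the UCT class; both properties hold trivially for $C_0(X)$ since it is commutative. Hence $C_0(X)\rtimes L$ is a separable, simple, nuclear, purely infinite $C^*$-algebra satisfying the UCT, i.e.\ a UCT-Kirchberg algebra.

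The main obstacle I anticipate is the concrete construction of the isometry $s$ with $ss^* \lneq s^*s$ inside the hereditary subalgebra corresponding to $V$: this step is where the geometric content of contractivity is converted into operator-algebraic infiniteness, and it must be compatible with the spectral description of the core. A second subtlety is the reduction via $G$, which in general takes values outside the core; this forces the use of Cuntz comparison rather than direct domination.
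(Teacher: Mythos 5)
Your simplicity step and your Kirchberg step are essentially sound and agree with the paper: the observation that a contracting open set forces $\Delta_{\reg}$ to be infinite is correct (a counting argument: if $\Delta_{\reg}$ were finite, a contracting set $V$ would satisfy $\overline{V}\subseteq\bigcup_k\varphi^{n_k}(U_k)$ with the $U_k$ finite, pairwise disjoint subsets of $V$ not covering $V$, which is impossible), so Theorem \ref{Theorem_B} applies; and separability, nuclearity and the UCT come from Theorem \ref{thm:crossed_product_Cuntz-Pimsner} exactly as you say. The genuine gap is in the pure infiniteness argument, and it sits precisely where you yourself flag the "main obstacle". The pivotal claim -- that contractivity yields a (partial) isometry $s$ with $s^*s$ a projection supported in $V$ and $ss^*$ strictly below it -- cannot be taken as a step: in the intended examples (Julia sets, self-similar sets, expanding maps on connected spaces) $X$ is connected, so $C_0(V)$ contains no nonzero projections at all, and Definition \ref{defn:contractive} hands you no projection; note also that the contracting condition goes the other way than you describe (the forward images $\varphi^{n_k}(U_k)$ of small pieces of $V$ cover $\overline{V}$; it is not asserted that some open subset of $V$ is mapped strictly inside $V$). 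The paper's replacement for this step is Lemma \ref{lem:scaling_elements}, which uses the partition-of-unity/inverse-branch data of a precompact contracting set to build a \emph{scaling element} $b=\sum_k\sqrt{a_k}\,t^{n_k}$ (so $b^*bb=b$, $b^*b\neq bb^*$), and then Katsura's theorem that a simple $C^*$-algebra with a scaling element contains an infinite projection \cite[Proposition 4.2]{katsura0}, as in Proposition \ref{prop:infinite_projection}. Without this (or an equivalent device) your outline produces no projection at all.

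The reduction step is also not supplied. The generalised expectation $G$ takes values in bounded Borel functions, not in $C_0(X)\rtimes L$, and "a Cuntz subequivalence argument" based on $G$ alone does not localise an arbitrary nonzero positive element; moreover, contracting sets are only guaranteed inside neighbourhoods of the distinguished point $x_0$, not inside an arbitrary open set where your element is bounded below. What the paper actually uses here is topological freeness (available because simplicity forces it, Theorem \ref{thm:test_of_simplicity}) fed into the topological-quiver condition (L) machinery of Muhly--Tomforde \cite[Proposition 6.14]{mt}, yielding Lemma \ref{lem:cutting_elements}: every nonzero positive $b_0$ admits $d$ and $a\in C_0(X)$ with $a\equiv 1$ near $x_0$ and $\|d^*b_0d-a\|<1/2$, and it is the density of the backward regular orbit of $x_0$ (part of the definition of contracting $L$) that makes this possible; simplicity is then used once more, via \cite[Lemma 4.1]{katsura0}, to conjugate the infinite projection into the hereditary subalgebra $\overline{b_0(A\rtimes L)b_0}$. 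Neither topological freeness, nor condition (L), nor the dense backward orbit enters your argument, so the passage from "some infinite projection exists" to "every nonzero hereditary subalgebra contains one" is missing. In short: the skeleton (simplicity, then infinite projections in every hereditary subalgebra, then the Kirchberg upgrade) matches the paper, but the two central constructions carrying the pure infiniteness are absent.
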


We illustrate the power of Theorem \ref{thmx:purely_infinite} by showing that it covers and unifies all purely infinite results in  \cite{Kajiwara_Watatani0}, \cite{Kajiwara_Watatani}, \cite[Section 4]{Anantharaman-Delaroche}, \cite{Exel_Huef_Raeburn}  (Examples \ref{ex:pure_infinite1}, \ref{ex:pure_infinite2}, \ref{ex:pure_infinite3}).

Another fundamental  $C^*$-algebra associated to $L$ is the fixed point algebra of the canonical circle gauge action on $C_0(X)\rtimes L$. It is a direct limit $A_{\infty}=\overline{\bigcup_{n=0}^\infty A_n}$ of $C^*$-algebras
$$
A_n=\clsp\{ at^k t^{*k} b: a,b\in C_0(\Delta_k), k=0,...,n\}
$$
where $\Delta_k=\varphi^{-k}(\Delta)$ is the natural domain for $\varphi^k$. The algebras $A_n$ are interesting in their own right, see \cite{kumjian0}, and the $C^*$-algebra $A_\infty$  has important dynamical interpretations. For special self-similar maps $A_{\infty}$ was studied in  \cite{Kajiwara_Watatani16}.
When $\Delta_{\reg}=\Delta$, so that $\varphi$ is a local homeomorphism, then $A_{\infty}$ is a groupoid $C^*$-algebra of  a generalized approximately proper equivalence relation  on $X$.
This is a crucial tool in  the study of Gibbs states via the Radon-Nikodym problem  \cite{Renault05}, \cite{Bissacot-Exel-Frausino-Raszeja2}.
Also stable $C^*$-algebras for irreducible Smale spaces are naturally Morita equivalent to algebras of the form $A_{\infty}$ (see Remark \ref{rem:Wieler_solenoids} below). 
Putting $
\Delta_{\pos,n}:=\{x\in \Delta_n: \prod_{i=0}^{n-1}\varrho(\varphi^i(x)) \neq 0\}
$ we describe the spectra of $A_n$, $n\in \N$, and $A_{\infty}$, as follows (see Proposition \ref{prop:spectrum_of_K_n} and Theorems  \ref{thm:spectra_of_A_n}, \ref{thm:primitive_groupoid}):

\begin{thmx}\label{thmx:core_subalgebras}
 For each $n\in \N$ the algebra $A_n$ is postliminary (Type I) and up to unitary equivalence all its irreducible representations are subrepresentations of the orbit representation on $\ell^2(X)$.
Namely, we have a bijection
\begin{equation}\label{eq:spectra_of_A_n}
\widehat{A}_n  \cong \left(\bigsqcup_{k=0}^{n-1}  \varphi^{k}(\Delta_{\pos,k})\setminus\Delta_{\reg} \right)\sqcup  \varphi^{n}(\Delta_{\pos,n}),
\end{equation}
where a representation corresponding to $y\in \varphi^{k}(\Delta_{\pos,k})$ is the restriction of the orbit representation to the subspace $\ell^2(\varphi^{-k}(x))\subseteq \ell^2(X)$.
The Jacobson topology on $\widehat{A}_n$ in general is finer than the pushout topology on the right-hand side of \eqref{eq:spectra_of_A_n}. But
the two topologies coincide for instance when the potential $\varrho$ is continuous,
and if in addition $X$ is second countable, then the primitive ideal space of $A_{\infty}$ is homeomorphic to the quasi-orbit space:
$$
\Prim (A_\infty) \cong X/\sim
$$
 where  $x\sim y$ iff $\overline{\OO(x)}=\overline{\OO(y)}$ and  the orbit of $x\in X$ is $\OO(x):=\bigcup_{k=0, x\in \Delta_{\pos, k}}^\infty \varphi^{-k}(\varphi^{k}(x))$.

 \end{thmx}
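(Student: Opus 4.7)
The plan is to analyze $A_n$ via a natural filtration by depth, combined with induction on $n$. For $0\le k\le n$ set $B_k := \clsp\{aT^kT^{*k}b : a,b\in C_0(\Delta_k)\}$ and $A_n^{(k)} := B_0 + B_1 + \ldots + B_k$. First I would verify, using the iterated transfer operator relation $T^{*k}C_0(\Delta_k)T^k \subseteq C_0(X)$, that each $A_n^{(k)}$ is a $C^*$-subalgebra of $A_n$ and that the subspaces satisfy $B_j\cdot B_k \subseteq B_{\max(j,k)}$, so that the chain $A_n^{(0)}\subseteq A_n^{(1)}\subseteq\cdots\subseteq A_n^{(n)} = A_n$ produces a chain of ideals which I will use as a composition series.

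Next I would identify each subquotient via the orbit representation on $\ell^2(X)$, in which a generator $aT^kT^{*k}b$ preserves every fiber $\ell^2(\varphi^{-k}(y))$ and acts there as the rank-one operator $\xi \mapsto \langle \xi, \overline{b}\sqrt{\varrho_k}\rangle_y \cdot a\sqrt{\varrho_k}$. For $y \in \varphi^k(\Delta_{\pos,k})$ this yields a nonzero irreducible representation on $\ell^2(\varphi^{-k}(y)\cap \Delta_{\pos,k})$. The crucial structural observation is the defining relation $a\sum_i u_i^K TT^*u_i^K = a$ for $a \in C_c(\Delta_{\reg})$: iterated, it shows that at any point $x \in \Delta_{\reg}$ the projection $T^kT^{*k}$ localized at $x$ is already absorbed by $B_{k+1}$. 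Consequently the new irreducible representations appearing at depth $k < n$ are parameterized exactly by $\varphi^k(\Delta_{\pos,k})\setminus \Delta_{\reg}$, while at the terminal depth $k = n$ every point of $\varphi^n(\Delta_{\pos,n})$ contributes. Since each subquotient is a $C_0$-algebra of compact-operator-valued sections over one of these parameter spaces, $A_n$ is postliminal, its irreducible representations are (up to unitary equivalence) sub-representations of the orbit representation on the fibers, and the set bijection \eqref{eq:spectra_of_A_n} follows. Postliminality of $A_\infty$ is then automatic as a direct limit.

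For the topology comparison, the rank-one formula above shows that for a net $y_\alpha \to y$ in the pushout topology at the same depth $k$, the norms $\|\pi_{y_\alpha}(aT^kT^{*k}b)\|$ depend continuously on $\varrho_k(y_\alpha)$, which may fail to be continuous when $\varrho$ is merely upper semi-continuous; thus Jacobson convergence is a priori finer. When $\varrho$ is continuous, all iterated cocycles $\varrho_k$ are continuous, each subquotient is a genuine continuous-trace algebra, and the two topologies coincide — I would confirm this by constructing, for each spectrum point, an approximate unit in the appropriate $B_k$ that detects precisely the Jacobson-open neighbourhoods. For the primitive ideal space of $A_\infty$ under the assumptions that $X$ is second countable and $\varrho$ continuous, postliminality gives $\Prim(A_\infty) \cong \widehat{A_\infty}$ as sets, and the groupoid model furnished by Proposition \ref{prop:local_homeo_core_groupoid} (or an extension of it covering the continuous-$\varrho$ case) realizes $A_\infty$ as the $C^*$-algebra of an approximately proper equivalence relation $R$ on $X$ whose orbits are exactly the sets $\OO(x)$; the standard identification of the primitive ideal space of a second countable postliminal groupoid $C^*$-algebra with the quasi-orbit space then finishes the argument. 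The main obstacle I anticipate is the topology comparison itself: showing that upper semi-continuity of $\varrho$ can produce a strictly finer Jacobson topology, and conversely that continuity of $\varrho$ is sufficient to collapse the difference, will require careful manipulation of approximate units at the boundary between $\Delta_{\reg}$ and its complement inside $\Delta_{\pos}$.
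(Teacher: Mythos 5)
Your overall architecture is the same as the paper's (filter $A_n$ by the subspaces $B_k=K_k$, identify the fibre representations inside the orbit representation, use the covariance relation to decide which points get absorbed into the next level, and pass to a groupoid model when $\varrho$ is continuous), but as written there are three concrete gaps. First, your ``chain of ideals'' goes the wrong way: since $B_jB_k\subseteq B_{\max(j,k)}$, the partial sums $A_n^{(k)}=B_0+\cdots+B_k$ are $C^*$-subalgebras of $A_n$ but not ideals (already $C_0(X)=A_n^{(0)}$ is not an ideal in $A_1$); the ideals are the sums from the top, $K_k+\cdots+K_n$, and the induction has to be run as $A_{n+1}=A_n+K_{n+1}$ with $K_{n+1}$ the ideal. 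This is fixable, but the composition series you propose does not exist. Second, and more seriously, the covariance relation only yields one half of the ``exact'' parameterization: it shows that elements of $K_k$ whose middle symbol lies in $C_0(\Delta_{\reg})$ are absorbed into $K_{k+1}$, i.e.\ that points of $\varphi^k(\Delta_{\pos,k})$ lying in $\Delta_{\reg}$ produce nothing new. For the bijection \eqref{eq:spectra_of_A_n} you also need the converse --- that nothing over an irregular point is absorbed --- which amounts to $A\cap\overline{ITT^*I}\subseteq C_0(\Delta_{\reg})$ and to the intersection computations $K_k\cap K_{k+1}=\overline{E_kC_0(\Delta_{\reg})E_k^*}$ and $A_k\cap K_{k+1}=K_k\cap K_{k+1}$ (Proposition \ref{prop:katsuras_ideal} and Lemmas \ref{lem:K_n_intersection}, \ref{lem:intersection_of_A_n} in the paper). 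This is a genuine analytic step exploiting upper semicontinuity of $\varrho$ at irregular points; without it the right-hand side of \eqref{eq:spectra_of_A_n} could overcount, and your proposal never addresses it.

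Third, ``postliminality of $A_\infty$ is then automatic as a direct limit'' is false --- direct limits of postliminal algebras can be antiliminal (Glimm algebras), and the paper explicitly notes that $A_\infty$ is typically not postliminal --- so you cannot deduce $\Prim(A_\infty)\cong\widehat{A}_\infty$ this way; fortunately the quasi-orbit description does not need it: when $\varrho$ is continuous one may replace $\Delta$ by $\Delta_{\pos}=\Delta_{\reg}$, realize $A_\infty\cong C^*(R)$ for the principal amenable \'etale relation $R$ (Proposition \ref{prop:local_homeo_core_groupoid}), and use that ideals of $C^*(R)$ correspond to open invariant sets, which is exactly how Theorem \ref{thm:primitive_groupoid} proceeds. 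Relatedly, your plan for the topology comparison via ``continuous-trace subquotients plus approximate units'' is too weak in principle: the Jacobson topology on an extension is not determined by the subquotients alone (see Example \ref{ex:tent_homeomorphism_fails}, where $A_1=C[0,\tfrac12]\oplus C[\tfrac12,1]$ has an ideal invisible to the pushout data), so in the continuous case you must argue through the ideal lattice --- e.g.\ via the groupoid picture --- rather than through the structure of the individual subquotients.
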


\smallskip

The paper is organized as follows. In Section \ref{sec:transfer_operators} we discuss transfer operators for partial maps and
the properties of the associated potential $\varrho$. Covariant representations  for transfer operators are introduced in Section \ref{sec:covariant_representations}.
The crossed product $C_0(X)\rtimes L$ and its relationship with  previous constructions modeled by Cuntz-Pimsner algebras are discussed in Section \ref{sec:crossed_products}.

In Section \ref{sec:invariance_uniqueness_theorems}, based on the well known gauge-invariance uniqueness for Cuntz-Pimsner algebras  we prove faithulness of 
the regular representation of  $C_0(X)\rtimes L$, which in turn leads us to a generalized expectation-invariance uniqueness theorem. We use the latter in Section \ref{sec:groupoid_picture}
to prove that Reanult-Deaconu groupoid $C^*$-algebras assocaited to a a local homeomorphism $\varphi$ is  naturally isomorphic to $C_0(X)\rtimes L$.
Section \ref{sec:Spectra of the core subalgebras} is devoted to description of the spectrum of algebras $A_n$ and $A_\infty$ and it contains the proof of Theorem \ref{thmx:core_subalgebras}.
Section \ref{Sec:Topological_freeness} introduces topological freeness for transfer operators and  contains  proof of  Theorems \ref{Theorem_A} and \ref{Theorem_B}.
Finally in Section \ref{Sec:simplicity_pure_infiniteness} we  give criteria for pure infiniteness for $C_0(X)\rtimes L$ (we prove Theorem \ref{thmx:purely_infinite}).

\section{Transfer operators for partial maps and potentials}\label{sec:transfer_operators}
Throughout this paper  $\p:\Delta \to X$ is a continuous map defined on an open subset $\Delta$ of a locally compact space $X$.
We refer to $(X,\p)$ as to a \emph{partial dynamical system}. In addition we will fix a bounded transfer operator for $(X,\p)$,
which we will interpret  as a  \emph{potential} for the system $(X,\p)$.
Namely, let us denote by $C_0(X)$ the $C^*$-algebra of continuous functions on $X$ that vanish at infinity. We treat $C_0(\Delta)$  as an ideal in $C_0(X)$.
By a \emph{transfer operator} for $(X,\p)$ we mean a positive linear map $L:C_0(\Delta)\to C_{0}(X)$ satisfying
\begin{equation}\label{eq:transfer_identity}
L((a\circ \p)b)=aL(b), \qquad a\in C_{0}(X), b\in C_0(\Delta).
\end{equation}
\begin{rem}\label{rem:definition_of_transfer}
We could allow the transfer operator  $L$ to attain values in the bounded continuous functions $C_{b}(X)$,
but then \eqref{eq:transfer_identity} forces $L$ to take values in $C_{0}(X)$ anyway. Indeed, if $b\in C_c(\Delta)$ is compactly supported with the support $K$ then taking $a\in C_c(X)$ such that $a|_{\varphi(K)}\equiv 1$ we get
$
L(b)=L((a\circ \p)b)=aL(b)\in C_c(X).
$
Thus  transfer operators map compactly supported functions to compactly supported ones.

\end{rem}
Transfer operator could be defined in  purely $C^*$-algebraic terms as follows. Let $I$ be an ideal in a $C^*$-algebra  $A$ (by which we always mean a closed two-sided ideal).
Let $\alpha:A\to M(I)$ be a non-degenerate $*$-homomorphism  from $A$ to the multiplier $C^*$-algebra  $M(I)$ of $I$.
 Such maps are called \emph{partial endomorphisms} of $A$ in \cite[Definition 1.1]{er}, \cite[Definition 3.12]{katsura1}. A (bounded) \emph{transfer operator for $\alpha$} is a positive linear
map $L:I\to A$ satisfying
\begin{equation}\label{e-1}
 L(\alpha(a)b)=aL(b), \qquad a\in A, b\in I.
\end{equation}
Positivity implies that $L$ is bounded and $*$-preserving. 
In addition the transfer equality \eqref{e-1} implies that $L(I)$ is an ideal.
Transfer operators introduced in \cite[Definition 1.2]{er} are defined on a not necessarily closed ideal in $I$, and thus
in general they are unbounded.

Having the triple $(A,\alpha, L)$ as above and assuming that $A=C_0(X)$, we necessarily have  $I=C_0(\Delta)$, for an open set $\Delta\subseteq X$,  and
 $$
\al(a)= a(\p(x)), \qquad  x\in \Delta, \qquad a\in A,
$$
for a continuous map $\p:\Delta\to X$. Accordingly, $M(I)=C_b(\Delta)$ consists of continuous bounded functions and $\alpha:C_0(X)\to C_b(\Delta)$.
In particular, $\alpha:C_0(X)\to C_0(\Delta)\subseteq C_0(X)$ is an endomorphism of $C_0(X)$ 
if and only if the map $\p:\Delta\to X$ is \emph{proper}, i.e.  the preimage of every compact set in $X$ is compact in $\Delta$.
Furthermore, denoting by $\M(X)$  the space of finite regular borel measures on $X$ equipped with the weak$^*$ topology, a  transfer operator $L:C_0(\Delta)\to C_{0}(X)$ for $\alpha$ is of the form
\begin{equation}\label{equ:transfer_operator_form}
L(a)(y)=\int_{\varphi^{-1}(y)} a(x) d\mu_y(x), \qquad a\in C_0(\Delta), \, y\in X,
\end{equation}
where
 $X\ni y \longmapsto \mu_y \in \M(X)$ is a continuous map such that $\supp\mu_y\subseteq \varphi^{-1}(y)$ for every $y\in X$
 and $\sup_{y\in X}\mu_{y}(X)=\|L\|<\infty$, cf., for instance, \cite{t-entropy},
\cite{kwa-trans}, \cite{kwa_Exel}. If the preimages of $\varphi$ are countable, then
this measure valued function can be replaced by a number valued function. We assume this throughout the paper.

\textbf{Standing assumption:}
\begin{equation}\label{eq:countable_to_one}
 |\varphi^{-1}(y)|\leq \aleph_0 \qquad \text{ for all } y\in X.
\end{equation}
Under this assumption,  the measures $\{\mu_y\}_{y\in X}\subseteq \M(X)$ appearing
 in \eqref{equ:transfer_operator_form} are discrete and putting
$
\varrho(x):=\mu_{\p(x)}(\{x\})$, $x\in \Delta,
$
we get that the corresponding transfer operator  is given by
\begin{equation}\label{eq:transfer operator}
L(a)(y)=\sum_{x\in\varphi^{-1}(y)}\varrho(x)a(x).
\end{equation}
We refer to the map $\varrho: X\to [0,\infty)$ as to the \emph{potential associated to $L$},  and we put
$$
 \Delta_{\pos}:=\Delta\setminus  \varrho^{-1}(0)=\{x\in \Delta: \varrho(x)>0\}.
$$
Obviously, every map admits a zero transfer operator (so that $\Delta_{\pos}=\emptyset$), but there is a large and important class of maps that admit a transfer operator
with $\Delta_{\pos}=\Delta$. This concerns essentially all local homeomorphisms, see Theorem  \ref{thm:local_homeo_crossed_groupoid} below,
and all open finite-to-one maps on compact spaces. This last claim follows from   
 a result of Pavlov and Troitsky \cite[Theorem 1.1]{Pavlov_Troitsky}
-- we thank Magnus Goffeng for pointing this to us:

\begin{thm}[Pavilov, Troisky, \cite{Pavlov_Troitsky}]
Let $\varphi:\Delta\to X$ be a continuous surjection where $\Delta$ is a compact open subset of $X$. There exists a transfer operator
$L:C(\Delta)\to C(X)$ with a strictly positive potential  $\varrho:\Delta\to (0,+\infty)$
if and only if $\varphi$ is an open map with $\sup_{x\in X} |\varphi^{-1}(x)|<\infty$.
\end{thm}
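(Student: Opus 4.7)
My plan is to split the equivalence into the easier direction ($\Rightarrow$) and the harder converse ($\Leftarrow$), which is the substance of Pavlov and Troitsky's theorem.

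\emph{Direct implication.} Assuming a transfer operator $L$ with strictly positive continuous potential $\varrho$ exists, I would first deduce the uniform fiber bound by exploiting compactness of $\Delta$: the continuous function $\varrho$ attains a positive minimum $m:=\min_{x\in\Delta}\varrho(x)>0$, and applying $L$ to $1\in C(\Delta)$ gives $L(1)(y)=\sum_{x\in\varphi^{-1}(y)}\varrho(x)\geq m|\varphi^{-1}(y)|$. Since $L(1)\in C(X)$ is bounded on the compact space $X$, this forces $\sup_y|\varphi^{-1}(y)|\leq\|L(1)\|_\infty/m<\infty$. For openness, given open $U\subseteq\Delta$ and $x_0\in U$, I would invoke Urysohn to pick $a\in C(\Delta)$ with $\supp(a)\subseteq U$ and $a(x_0)>0$; then $L(a)(\varphi(x_0))\geq\varrho(x_0)a(x_0)>0$ and continuity of $L(a)$ yields an open neighborhood $V\ni\varphi(x_0)$ on which $L(a)>0$. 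For every $y\in V$ there must exist $x\in\varphi^{-1}(y)$ with $\varrho(x)a(x)>0$, so $a(x)>0$, hence $x\in U$ and $y\in\varphi(U)$. Thus $\varphi(U)$ is a neighborhood of $\varphi(x_0)$, and since $\varphi(x_0)$ was arbitrary, $\varphi(U)$ is open.

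\emph{Converse implication.} Given openness and a uniform bound $n:=\sup_y|\varphi^{-1}(y)|<\infty$, I would proceed in three stages. First, observe that the set-valued map $F:X\to 2^\Delta$, $F(y)=\varphi^{-1}(y)$, is continuous in the Vietoris topology, with upper semicontinuity coming from compactness of $\Delta$ together with continuity of $\varphi$, and lower semicontinuity coming precisely from openness of $\varphi$. Second, use the uniform bound $n$ and a compactness-based covering argument to produce finitely many open sets $W_1,\dots,W_N$ covering $\Delta$ together with locally continuous partial selections of $\varphi$, from which one assembles a partition of unity $\{u_i\}$ on $\Delta$ whose fiber sums $y\mapsto\sum_{x\in\varphi^{-1}(y)}u_i(x)$ are continuous on $X$. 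Third, combine these weights into a strictly positive continuous $\varrho:\Delta\to(0,\infty)$ and verify directly that $L(a)(y):=\sum_{x\in\varphi^{-1}(y)}\varrho(x)a(x)$ defines a bounded positive linear map $C(\Delta)\to C(X)$ satisfying the transfer identity.

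The main obstacle is the second stage. Vietoris continuity of $F$ gives continuity of the fiber as an abstract finite subset of $\Delta$, but it does not automatically supply a coherent continuous labelling of sheets, because preimages can merge and split as $y$ varies. The technical heart of Pavlov and Troitsky's argument is to use the uniform bound $n$ and an inductive decomposition of the fibers to produce weights that remain continuous across such sheet collisions; this is the step I would expect to carry the bulk of the work, while stages one and three are bookkeeping around it.
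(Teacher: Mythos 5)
Your route is genuinely different from the paper's: the paper does not reprove this result at all, but observes that $\alpha\colon C(X)\to C(\Delta)$, $\alpha(a)=a\circ\varphi$, is a unital inclusion, that conditional expectations for this inclusion correspond to transfer operators via $E=\alpha\circ L$, and then quotes \cite[Theorem 1.1]{Pavlov_Troitsky} (and the proof of their Theorem 4.3 for the ``if'' direction). Attempting a direct proof is legitimate, but as written both halves of your argument have gaps.

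In the forward direction, the finiteness bound is not justified: you take $m=\min_{x\in\Delta}\varrho(x)>0$, which presupposes that $\varrho$ is continuous. The hypothesis only provides a strictly positive potential, and by Proposition \ref{prop:properties_or_rho} a potential is merely upper semicontinuous; it is continuous at a point of $\Delta_{\pos}$ exactly when $\varphi$ is locally injective there, so strictly positive potentials of branched maps are genuinely discontinuous (e.g.\ the tent map with $\varrho=\tfrac12$ off $\{\tfrac12\}$ and $\varrho(\tfrac12)=1$). A strictly positive upper semicontinuous function on a compact set need not be bounded away from zero, so the inequality $|\varphi^{-1}(y)|\le \|L(1)\|_\infty/m$ is not available; all that comes for free is the bound on the weighted sums $\sum_{x\in\varphi^{-1}(y)}\varrho(x)\le\|L\|$, and passing from that to a uniform bound on the cardinality of the fibers is precisely the nontrivial content of Pavlov--Troitsky's theorem, not a two-line compactness remark. (Your openness argument is fine; it is essentially Lemma \ref{lem:local_open_map}.) In the converse direction, your ``second stage'' is the whole theorem: Vietoris continuity of $y\mapsto\varphi^{-1}(y)$ does not by itself yield a partition of unity on $\Delta$ whose fiber sums are continuous on $X$ -- producing such weights across points where sheets merge and split is equivalent to producing the potential itself, and you explicitly flag this sheet-collision problem without resolving it. This is exactly the construction carried out in the proof of \cite[Theorem 4.3]{Pavlov_Troitsky}, which the paper invokes; as it stands your proposal defers the key step rather than supplying it.
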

\begin{proof}
Under our assumptions the endomorphism $\alpha:C(X)\to C(\Delta)$, given by  composition with $\varphi$, is a unital monomorphism -- an inclusion.
Conditional expectations $E$ for the inclusion $\alpha$ are in bijective correspondence with transfer operators
$L$ for $\varphi$, given by  $E=\alpha \circ L$. Thus the assertion follows from \cite[Theorem 1.1]{Pavlov_Troitsky}
(in fact the `if part' follows from the proof of  \cite[Theorem 4.3]{Pavlov_Troitsky}).
\end{proof}
We fix a transfer operator $L$ of the form \eqref{eq:transfer operator}. In general,  $\varrho$  has the following  properties.
\begin{prop}\label{prop:properties_or_rho}
The potential $\varrho$ is upper semi-continuous, and so $\varrho$ is continuous at every point in $\varrho^{-1}(0)$.
If $x_0\in \Delta_{\pos}=\Delta\setminus  \varrho^{-1}(0)$, then the following are equivalent:

\begin{enumerate}
\item\label{it:properties_or_rho1} $\varrho$ is continuous at $x_0$,

\item \label{it:properties_or_rho2}$\p$ is locally injective at $x_0$, i.e. there is open $U\subseteq \Delta$ with $x_0\in U$ such that $\p|_{U}:U\to X$ injective,
\item \label{it:properties_or_rho3}  $x_0$ is a local homeomorphism point for $\p$, i.e.  there is open $U$ with  $x_0\in U$ such that $\p:U\to \p(U)$ is a homeomorphism
 and $\p(U)$ is open in $X$.

\end{enumerate}
Moreover, $\varphi$ restricted to $\Delta_{\pos}$ is an open map.
 \end{prop}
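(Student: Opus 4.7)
My plan rests on the identity $L(a)(y)=\sum_{x\in\varphi^{-1}(y)}\varrho(x)a(x)$ together with the standing countable-to-one assumption, which (via \eqref{equ:transfer_operator_form}) makes each fibre measure $\mu_{\varphi(x_0)}=\sum_{x\in\varphi^{-1}(\varphi(x_0))}\varrho(x)\delta_x$ finite and purely atomic. The workhorse, which I would set up once and reuse, is a \emph{separating bump}: given $x_0\in\Delta$ and $\varepsilon>0$, pick a finite $F\subseteq\varphi^{-1}(\varphi(x_0))\setminus\{x_0\}$ absorbing all but $\varepsilon$ of the mass of $\varphi^{-1}(\varphi(x_0))\setminus\{x_0\}$, and then $a\in C_c(\Delta)$ with $0\leq a\leq 1$, $a\equiv 1$ on an open neighborhood $V$ of $x_0$, and $\supp a$ lying in a neighborhood of $x_0$ disjoint from $F$. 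For this $a$ one has $\varrho(x_0)\leq L(a)(\varphi(x_0))\leq\varrho(x_0)+\varepsilon$. Separating $x_0$ from the rest of its fibre in this way is the delicate point of the whole proof, and it is exactly where the countable-to-one hypothesis bites.

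Upper semi-continuity then falls out quickly: for $x\in V$ we have $\varrho(x)\leq L(a)(\varphi(x))$, so continuity of $L(a)\circ\varphi$ gives $\limsup_{x\to x_0}\varrho(x)\leq L(a)(\varphi(x_0))\leq\varrho(x_0)+\varepsilon$, and letting $\varepsilon\to 0$ yields $\varrho$ USC. At $x_0\in\varrho^{-1}(0)$, USC combined with $\varrho\geq 0$ forces continuity.

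For the equivalence at $x_0\in\Delta_{\pos}$, the implication (iii)$\Rightarrow$(ii) is trivial. For (ii)$\Rightarrow$(i) I would pick $a\in C_c(\Delta)$ with $a(x_0)=1$ and $\supp a\subseteq U$ where $\varphi|_U$ is injective; then for each $x\in U$ the defining sum of $L(a)(\varphi(x))$ collapses to the single term $\varrho(x)a(x)$, so $\varrho=(L(a)\circ\varphi)/a$ is continuous where $a>0$, in particular at $x_0$. For the harder direction (i)$\Rightarrow$(ii) I would argue by contradiction: if sequences $y_n\neq z_n$ with $\varphi(y_n)=\varphi(z_n)$ both converge to $x_0$, continuity of $\varrho$ at $x_0$ gives $\varrho(y_n)+\varrho(z_n)\to 2\varrho(x_0)$. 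Applying the separating bump with $\varepsilon<\varrho(x_0)$, for large $n$ we have $y_n,z_n\in V$, so $L(a)(\varphi(y_n))\geq\varrho(y_n)+\varrho(z_n)$, while at the same time $L(a)(\varphi(y_n))\to L(a)(\varphi(x_0))\leq\varrho(x_0)+\varepsilon$; passing to the limit forces $2\varrho(x_0)\leq\varrho(x_0)+\varepsilon<2\varrho(x_0)$, a contradiction.

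Finally, the implication (ii)$\Rightarrow$(iii) and the open map property of $\varphi|_{\Delta_{\pos}}$ I would derive from a single observation: given any open $W\subseteq X$ and any $x_0'\in W\cap\Delta_{\pos}$, choose $a\in C_c(\Delta)$ with $a(x_0')=1$ and $\supp a\subseteq W$. Then $L(a)(\varphi(x_0'))\geq\varrho(x_0')>0$, so $L(a)>0$ on a neighborhood $W'$ of $\varphi(x_0')$, and positivity of $\sum\varrho(x)a(x)$ at $y\in W'$ forces some $x\in\varphi^{-1}(y)$ with $a(x)>0$ and $\varrho(x)>0$, hence $x\in W\cap\Delta_{\pos}$ and $y\in\varphi(W\cap\Delta_{\pos})$. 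Thus $W'\subseteq\varphi(W\cap\Delta_{\pos})$, proving that $\varphi\colon\Delta_{\pos}\to X$ is an open map. Specializing to $W=U$ from (ii), shrunk so that $\varrho>0$ on $U$ (which is allowed by the already-established (i)), makes $\varphi|_U$ a continuous open injection and so a homeomorphism onto the open set $\varphi(U)$, which is (iii).
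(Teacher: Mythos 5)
Your proof is correct, and while it uses the same basic toolkit as the paper (bump functions fed into $L$, continuity of $L(a)\circ\varphi$, and the openness argument for $\varphi|_{\Delta_{\pos}}$, which is verbatim the paper's Lemma \ref{lem:local_open_map}), the route through the equivalences is genuinely different. The paper's technical engine is Lemma \ref{continuity of measures lemma}: a two-sided estimate $|\mu_y(U)-\varrho(x_0)|<\varepsilon$ uniformly for $y$ in a neighbourhood of $\varphi(x_0)$, obtained from outer regularity of the fibre measure; it then proves \ref{it:properties_or_rho1}$\Rightarrow$\ref{it:properties_or_rho2}, \ref{it:properties_or_rho2}$\Rightarrow$\ref{it:properties_or_rho3} (openness), and \ref{it:properties_or_rho3}$\Rightarrow$\ref{it:properties_or_rho1} by approximate estimates. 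You instead build a ``separating bump'' from pure atomicity and finiteness of $\mu_{\varphi(x_0)}$ (choosing a finite exceptional set $F$ by mass rather than by location, which correctly handles fibres accumulating at $x_0$) and let continuity of $L(a)$ transfer the one-point bound to nearby fibres; this suffices for everything in this proposition, though the paper's stronger lemma is reused later (via Corollary \ref{corollary to construct approximations of rho}) in the proof of Proposition \ref{prop:katsuras_ideal}. Your closing of the cycle is also different and arguably cleaner: you prove \ref{it:properties_or_rho2}$\Rightarrow$\ref{it:properties_or_rho1} via the exact identity $\varrho=(L(a)\circ\varphi)/a$ on the open set $\{a>0\}\subseteq U$ (no approximation needed), and then get \ref{it:properties_or_rho2}$\Rightarrow$\ref{it:properties_or_rho3} by using the already-proved \ref{it:properties_or_rho1} to shrink $U$ inside $\Delta_{\pos}$, which makes the openness step immediate; the paper's one-line reduction of \ref{it:properties_or_rho2}$\Rightarrow$\ref{it:properties_or_rho3} to Lemma \ref{lem:local_open_map} leaves a small argument implicit. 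One quibble: in \ref{it:properties_or_rho1}$\Rightarrow$\ref{it:properties_or_rho2} you argue with sequences $y_n,z_n\to x_0$, but $X$ is only locally compact Hausdorff, not assumed first countable, so you should either use nets or phrase the contradiction locally (fix a neighbourhood on which $\varrho>\varrho(x_0)-\varepsilon$ and $L(a)\circ\varphi<\varrho(x_0)+2\varepsilon$, as the paper does); the fix is immediate and does not affect the structure of your argument.
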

Before we get into the proof of Proposition \ref{prop:properties_or_rho}, we  first
prove a couple of lemmas.

 \begin{lem}\label{lem:local_open_map}
Restriction of $\varphi$ to $\Delta_{\pos}$ is an open map.
 \end{lem}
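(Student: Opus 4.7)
The plan is to use a test function supported near a chosen point and apply the transfer operator to it, exploiting positivity of $\varrho$ to detect the image. The key observation is that $L(a)\in C_0(X)$ is continuous by assumption, so its positivity set is automatically open.

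First I would fix an open $V \subseteq \Delta_{\pos}$ (in the subspace topology) and a point $x_0 \in V$, and aim to produce an open neighborhood of $\varphi(x_0)$ in $X$ contained in $\varphi(V)$. Since $V$ is relatively open, we may write $V = W \cap \Delta_{\pos}$ for some open $W \subseteq \Delta$. Local compactness of $X$ (hence of $\Delta$) lets me choose, by Urysohn's lemma, a function $a \in C_c(\Delta)$ with $0 \le a \le 1$, $\supp(a) \subseteq W$, and $a(x_0) = 1$.

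Next I would feed $a$ into the transfer operator. From the explicit formula
\[
L(a)(y) = \sum_{x \in \varphi^{-1}(y)} \varrho(x)\, a(x),
\]
applied at $y = \varphi(x_0)$, the term $x = x_0$ contributes $\varrho(x_0) \cdot 1 > 0$, and all other terms are nonnegative, so $L(a)(\varphi(x_0)) > 0$. Because $L(a) \in C_0(X)$ is continuous, the set
\[
U := \{y \in X : L(a)(y) > 0\}
\]
is open in $X$ and contains $\varphi(x_0)$.

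Finally I would check the inclusion $U \subseteq \varphi(V)$. If $y \in U$, then $L(a)(y) > 0$ forces the existence of some $x \in \varphi^{-1}(y)$ with $\varrho(x) a(x) > 0$; hence $\varrho(x) > 0$ (so $x \in \Delta_{\pos}$) and $a(x) > 0$ (so $x \in \supp(a) \subseteq W$). Therefore $x \in W \cap \Delta_{\pos} = V$ and $y = \varphi(x) \in \varphi(V)$. Since every point of $\varphi(V)$ has such an open neighborhood in $\varphi(V)$, the set $\varphi(V)$ is open in $X$, proving that $\varphi|_{\Delta_{\pos}}$ is an open map. There is no real obstacle here beyond correctly packaging the interplay between the subspace topology on $\Delta_{\pos}$ and the openness of $W$ in $\Delta$; the transfer-operator test function does all the work.
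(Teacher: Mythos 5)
Your proof is correct and follows essentially the same route as the paper: test function $a$ with $a(x_0)=1$ supported in the open set, then use continuity of $L(a)$ (equivalently of $y\mapsto\mu_y(a)$) to get an open neighbourhood $\{L(a)>0\}$ of $\varphi(x_0)$, and positivity of the summands to pull any point of that set back into $W\cap\Delta_{\pos}$. If anything, your final inclusion step is spelled out slightly more carefully than in the paper, but the idea is identical.
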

  \begin{proof}
	Every open set in  $\Delta_{\pos}$ is of the form $U\cap \Delta_{\pos}$ where $U\subseteq \Delta$ is open in $\Delta$.
	Let $y_0\in \varphi(U\cap \Delta_{\pos})$ so that $y_0=\varphi(x_0)$ for some $x_0\in U\cap \Delta_{\pos}$.
	 Take any continuous function $0 \leq a \leq 1 $  supported on  $U$  and such that $a(x_0)=1$. Then
  $
  \mu_{\p(x_0)}(a)\geq \varrho(x_0)>0$ and $\mu_y(a)=0$  for every $y \notin \p(U).
 $
  Since the map $X\ni y\to \mu_y(a)$ is  continuous  the set
	$V:=\{y\in X: \mu_y(a)>0\}
	$ is open in $X$. Clearly, $y_0=\p(x_0)\in V \subseteq \p(U\cap \Delta_{\pos})$.
 \end{proof}
\begin{lem}\label{continuity of measures lemma}
For any $x_0\in \Delta$ and $\varepsilon>0$ there is a neighbourhood $U_0$ of $x_0$ such that for any open $U\subseteq U_0$, with $x_0\in U$, there is a neighbourhood $V$  of $\varphi(x_0)$ such that
\begin{equation}\label{eq:continuity of measures lemma}
 \left|\sum_{x\in U\cap \varphi^{-1}(y)}{\varrho(x)} - \varrho(x_0)\right|<\varepsilon \qquad\text{for all } y\in V.
\end{equation}
\begin{proof}
Fix $\varepsilon >0$.
Since the measure $\mu_{\p(x_0)}$ is regular there is  a
neighbourhood  $U_1$ of $x_0$ such that $\mu_{\p(x_0)} (U_1)<\mu_{\p(x_0)} (\{x_0\})+\varepsilon$, which translates to
$$
\sum_{x\in U_1\cap \p^{-1}(\p(x_0))}{\varrho(x)} < \varrho(x_0) +\varepsilon.
$$
Let $U_0$ be any neighbourhood of $x_0$ such that $\overline{U_0}\subseteq U_1$. Now for any neighbourhood $U\subseteq U_0$ of $x_0$ take two continuous functions such that
$$
0\leq f_{1}, f_{2} \leq 1,\qquad
f_{1}(x)=
\begin{cases}
1,& x \in U\\
0, & x\notin U_0
\end{cases} ,\qquad
f_{2}(x)=
\begin{cases}
1,& x =x_0\\
0, & x\notin U
\end{cases}.
$$
Set $V=\{y:\mu_y(f_1)< \varrho(x_0)+\varepsilon \text{ and } \mu_y(f_2)> \varrho(x_0)-\varepsilon\}$.
Clearly, $\p(x_0)\in V$ and  for any $
y\in V$ we have 
$
 \varrho(x_0)-\varepsilon< \mu_y(f_2) \leq \sum_{x\in U\cap \p^{-1}(y)}{\varrho(x)}= \mu_y(U) \leq  \mu_x(f_1)< \varrho(x_0)+\varepsilon.
$
\end{proof}
\end{lem}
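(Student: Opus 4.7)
The plan is to exploit the weak-$*$ continuity of the measure-valued map $y\mapsto \mu_y$ from \eqref{equ:transfer_operator_form}, together with the key identity
$$
\sum_{x\in U\cap \varphi^{-1}(y)}\varrho(x)=\mu_y(U),
$$
which holds for every open $U\subseteq \Delta$ because $\mu_y$ is a discrete measure supported on $\varphi^{-1}(y)$ with $\mu_y(\{x\})=\varrho(x)$. So the statement to prove is really a quantitative continuity claim for $y\mapsto \mu_y(U)$ near $(x_0,\varphi(x_0))$.

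First I would invoke regularity of the finite measure $\mu_{\varphi(x_0)}$ to choose an open neighbourhood $U_1$ of $x_0$ so small that $\mu_{\varphi(x_0)}(U_1)<\varrho(x_0)+\varepsilon$. Since $X$ is locally compact Hausdorff, I can pick an open $U_0\ni x_0$ with $\overline{U_0}\subseteq U_1$; this $U_0$ will be the neighbourhood of the statement. Given any open $U\subseteq U_0$ containing $x_0$, Urysohn's lemma produces continuous functions $f_1,f_2\colon X\to[0,1]$ with $f_1\equiv 1$ on $U$, $\supp f_1\subseteq U_0$, $f_2(x_0)=1$ and $\supp f_2\subseteq U$. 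Together these sandwich the indicator of $U$: $f_2\leq \mathbf{1}_U\leq f_1$.

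Next, weak-$*$ continuity of $y\mapsto\mu_y$ implies that
$$
V:=\{y\in X:\mu_y(f_1)<\varrho(x_0)+\varepsilon\text{ and }\mu_y(f_2)>\varrho(x_0)-\varepsilon\}
$$
is open in $X$, and $\varphi(x_0)\in V$ because $\mu_{\varphi(x_0)}(f_1)\leq \mu_{\varphi(x_0)}(U_1)<\varrho(x_0)+\varepsilon$ while $\mu_{\varphi(x_0)}(f_2)\geq \mu_{\varphi(x_0)}(\{x_0\})=\varrho(x_0)>\varrho(x_0)-\varepsilon$. For $y\in V$ monotonicity of $\mu_y$ yields $\varrho(x_0)-\varepsilon<\mu_y(f_2)\leq\mu_y(U)\leq\mu_y(f_1)<\varrho(x_0)+\varepsilon$, which rewritten via the identity above gives exactly \eqref{eq:continuity of measures lemma}.

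The only delicate point is bridging the gap between the weak-$*$ continuity, which is phrased in terms of continuous test functions, and the a priori only semi-continuous quantity $\mu_y(U)$. The Urysohn sandwich $f_2\leq \mathbf{1}_U\leq f_1$, combined with the choice of $U_0$ compactly contained in $U_1$ so that the upper test function $f_1$ has small enough mass under $\mu_{\varphi(x_0)}$, is precisely what resolves this obstacle.
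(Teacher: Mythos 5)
Your proposal is correct and follows essentially the same route as the paper: regularity of $\mu_{\varphi(x_0)}$ to get $U_1$, an intermediate $U_0$ with $\overline{U_0}\subseteq U_1$, an Urysohn sandwich $f_2\leq \mathds{1}_U\leq f_1$, and the open set $V$ defined by the two test-function inequalities, using weak-$*$ continuity of $y\mapsto\mu_y$. The only cosmetic difference is that you spell out explicitly why $\varphi(x_0)\in V$ and why $\sum_{x\in U\cap\varphi^{-1}(y)}\varrho(x)=\mu_y(U)$, which the paper leaves implicit.
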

\begin{cor}\label{corollary to construct approximations of rho}
For any neighbourhood  $U$ of $x_0\in \Delta$ and any   $\varepsilon>0$ there exists a continuous function $0\leq h\leq1$ supported on $U$  such that $h(x)\equiv 1$ on an  neighbourhood of $x_0$ and
$\varrho(x_0)\leq  \max_{y\in X} \sum_{x\in\varphi^{-1}(y)}  \varrho(x)h(x) < \varrho(x_0) +\varepsilon.
$
\end{cor}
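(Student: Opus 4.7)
The plan is to produce a continuous cutoff $h$ whose support lies in a suitably small open set $U_1 \subseteq U$ around $x_0$ chosen so that two things happen simultaneously: on a neighbourhood $V$ of $\varphi(x_0)$ the total fibre mass $\sum_{x\in U_1\cap\varphi^{-1}(y)}\varrho(x)$ is close to $\varrho(x_0)$, while \emph{outside} $V$ the fibres $U_1\cap \varphi^{-1}(y)$ are empty, so $L(h)(y)=0$ there. This second feature is what lets me upgrade the \emph{local} control supplied by Lemma \ref{continuity of measures lemma} into a bound on the \emph{global} maximum of $L(h)$.

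Concretely, first I apply Lemma \ref{continuity of measures lemma} to $x_0$ and $\varepsilon$ and, after intersecting with the given $U$, obtain an open neighbourhood $U_0\subseteq U$ of $x_0$ with the property stated there. Applying the conclusion of that lemma to the admissible choice $U_0$ itself yields an open neighbourhood $V$ of $\varphi(x_0)$ with
$$
\sum_{x\in U_0\cap \varphi^{-1}(y)} \varrho(x) \;<\; \varrho(x_0)+\varepsilon \qquad \text{for all }y\in V.
$$
I then set $U_1:=U_0\cap \varphi^{-1}(V)$, which is open in $\Delta$, contains $x_0$ (because $\varphi(x_0)\in V$), and satisfies $\varphi(U_1)\subseteq V$ by construction. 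Local compactness of $X$ together with Urysohn's lemma provides a continuous $0\leq h\leq 1$ supported in $U_1\subseteq U$ with $h\equiv 1$ on some open neighbourhood $W$ of $x_0$ satisfying $\overline{W}\subseteq U_1$.

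It remains to check the two inequalities. For $y\in V$, since $h\leq \mathbf{1}_{U_1}\leq \mathbf{1}_{U_0}$,
$$
L(h)(y) \;=\; \sum_{x\in \varphi^{-1}(y)}\varrho(x)h(x) \;\leq\; \sum_{x\in U_0\cap \varphi^{-1}(y)}\varrho(x) \;<\; \varrho(x_0)+\varepsilon,
$$
while for $y\notin V$ the set $U_1\cap \varphi^{-1}(y)$ is empty and so $L(h)(y)=0$. Because $L(h)\in C_0(X)$ attains its supremum on $X$, the strict pointwise bound forces $\max_y L(h)(y)<\varrho(x_0)+\varepsilon$. The lower bound is immediate from $h(x_0)=1$: evaluating at $y=\varphi(x_0)$ gives $L(h)(\varphi(x_0))\geq \varrho(x_0)h(x_0)=\varrho(x_0)$.

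The only real subtlety is the shrinking step $U_0\rightsquigarrow U_1=U_0\cap \varphi^{-1}(V)$; without it, points $y$ far from $\varphi(x_0)$ but still in $\varphi(U_0)$ could carry a large fibre sum, and the lemma would give no bound there. Once that trick is in place, everything else is a straightforward combination of continuity of $\varphi$, Urysohn's lemma, and the fact that functions in $C_0(X)$ attain their maximum.
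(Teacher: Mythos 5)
Your argument is correct and follows essentially the same route as the paper: shrink the given neighbourhood inside the $U_0$ of Lemma \ref{continuity of measures lemma}, obtain the corresponding $V$, support $h$ in $U_0\cap\varphi^{-1}(V)$ via Urysohn, and then the fibre bound on $V$ together with $L(h)\equiv 0$ off $V$ gives the upper estimate, while $h(x_0)=1$ gives the lower one. Your explicit remark that the strict bound survives taking the maximum because $L(h)\in C_0(X)$ attains it is a point the paper leaves implicit, but the proofs coincide in substance.
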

\begin{proof}
We may assume that $U\subseteq U_0$ where  $U_0$  is as in Lemma \ref{continuity of measures lemma} and then find  $V$ corresponding to $U$ in this lemma. Take any continuous function $0\leq h\leq 1$  supported on an open set contained with a boundary in $U \cap \p^{-1}(V)$ and such that $h(x)=1$ on an open neighbourhood of $x_0$. Then
 $$
 \varrho(x_0)\leq \sum_{x\in\p^{-1}(\p(x_0))} \varrho(x)h(x)\leq \max_{y\in X} \sum_{x\in \p^{-1}(y)}\varrho(x)h(x)=\|L(h)\|
 $$
 and
 $
\|L(h)\|=  \max_{y\in V} \sum_{x\in\p^{-1}(y)}  \varrho(x)h(x) \leq \max_{y\in V} \sum_{x\in U}\varrho(x)< \varrho(x_0) +\varepsilon
 $.
 \end{proof}

  \begin{proof}[Proof of Proposition \ref{prop:properties_or_rho}]
	Let $x_0\in X$ and $\varepsilon > 0$. Let $ U$ and $V$ be open sets as in Lemma \ref{continuity of measures lemma}. Then $U \cap \p^{-1}(V)$
	is an open   neighbourhood of $x_0$, and for any $x\in U \cap \p^{-1}(V)$  we have
$
\varrho(x)\leq \sum_{y\in U\cap\varphi^{-1}(\varphi(x))}{\varrho(y)}<\varrho(x_0)+\varepsilon.
$
Hence $\varrho$ is upper continuous at $x_0$.

Now let $x_0\in \Delta\setminus  \varrho^{-1}(0)$.

\ref{it:properties_or_rho1}$\Rightarrow$\ref{it:properties_or_rho2}.
 Suppose that $\varrho$ is lower continuous at $x_0$.
 Then for any  $\varepsilon < \varrho(x_0)/3$  there is  a neighbourhood $U$  of $x_0$ such that
\begin{equation}\label{from below boundary}
\varrho(x)> \varrho(x_0)- \varepsilon >0 \qquad \textrm{ for all } x\in U.
\end{equation}
By Lemma \ref{continuity of measures lemma} we may assume that there is an open neighbourhood $V$ of $\varphi(x_0)$ such that \eqref{eq:continuity of measures lemma}
holds. Then for $W:=U\cap\p^{-1}(V)$ we get
$$
\varrho(x_0)-\varepsilon < \mu_{\p(x)}(W)<\varrho(x_0)+\varepsilon\quad \text{ for all }x\in W.
$$
We claim, that $\p$ is injective on $W$. Indeed,  assume on the contrary that $W$ contains two distinct points  $x_1$, $x_2$ such that $\p(x_1)=\p(x_2)$. Then by \eqref{from below boundary} we get
$$
\varrho(x_0)+\varepsilon > \mu_{\p(x_1)}(W) \geq \varrho(x_1)+\varrho(x_2)> 2 (\varrho(x_0)- \varepsilon).
$$
which contradicts $\varepsilon < \varrho(x_0)/3$.

\ref{it:properties_or_rho2}$\Rightarrow$\ref{it:properties_or_rho3}.
This follows from Lemma \ref{lem:local_open_map}.

\ref{it:properties_or_rho3}$\Rightarrow$\ref{it:properties_or_rho1}. 	
	Suppose that $x_0$ is a local homeomorphism point, and let $U$ be a neighbourhood  of $x_0$ such that $\p:U\to \p(U)$ is a homeomorphism.
	Let $\varepsilon >0$. 	By Lemma \ref{continuity of measures lemma} we may assume that there is a neighbourhood $V$ of $\p(x_0)$ such that
	\eqref{eq:continuity of measures lemma} holds. But for any $x$ in $U\cap \p^{-1}(V)$  we have  have $\p^{-1}(\p(x))\cap U=\{x\}$ and thus
$$
\varrho(x_0)-\varepsilon < \varrho(x)=\mu_{\p(x)}(\{x\})=\mu_{\p(x)}(U)<\varrho(x_0)+\varepsilon.
$$
 Hence $\varrho$ is continuous at $x_0$.
   \end{proof}

	\section{Covariant representations and regular points}\label{sec:covariant_representations}

Throughout the paper,	we fix a  transfer operator $L:C_0(\Delta)\to C_0(X)$ of the form \eqref{eq:transfer operator} where $\p:\Delta\to X$ is a partial map and
	 $\varrho:\Delta\to [0,\infty)$ is the associated potential. We write $A:=C_0(X)$ and $I:=C_0(\Delta)$, and let $\alpha:C_0(X)\to C_b(\Delta)$ be  given by $\al(a)= a \circ \p$. 
\begin{defn}\label{defn:representations}
A \emph{representation of the transfer operator} $L$ is a pair $(\pi, T)$ where $\pi:A\to B(H)$ is a non-degenerate representation
and $T\in B(H)$ satisfies
\begin{equation}\label{equ:main_relation}
\pi(L(a))=T^* \pi(a) T,\qquad a\in I=C_0(\Delta).
\end{equation}
We say that  $(\pi, T)$ is \emph{faithful} if $\pi$ is faithful.  We denote by
$$C^*(\pi,T):=C^*(\pi(A)\cup \pi(I)T )
$$
 the $C^*$-algebra generated by $\pi(A)\cup \pi(I)T$.
\end{defn}
\begin{rem}\label{rem:representation_of_L}
Without loss of generality, we could additionally assume in  Definition \ref{defn:representations} that $TH \subseteq \overline{\pi(I)H}$  (as composing $T$ with the projection onto $\overline{\pi(I)H}$ does not affect \eqref{equ:main_relation} and the $C^*$-algebra $C^*(\pi,T)$).
Assuming this we have $\|T\|\leq  \|L\|^{\frac{1}{2}}$, with the equality when $\pi$ is faithful. Indeed,
since $L:I\to A$ is positive, we have $\|L\|=\lim_{\lambda}\|L(\mu_\lambda)\|$ for an approximate unit $\{\mu_\lambda\}$ in $I$, see for instance,
\cite[Lemma 2.1]{kwa_Exel}.
Hence
$$
\|T\|^2=\|T^*T\|=\lim_{\lambda}\|T^*\pi(\mu_{\lambda})T\|=\lim_{\lambda}\|\pi(L(\mu_\lambda))\|\leq \lim_{\lambda}\|L(\mu_\lambda)\|=\|L\|
$$
and the inequality is equality when $\pi$ is injective. However, in what follows, we will not assume that $TH\subseteq \overline{\pi(I)H}$, as we will be mainly concerned with operators of the form $\pi(a)T$, for $a\in I$, and then we always have $\|\pi(a)T\|\leq \|a\| \|L\|^{\frac{1}{2}}$.
\end{rem}
\begin{rem}
The $C^*$-algebra $C^*(\pi,T)$ is not affected if we replace $\Delta$ by any open  set $U$ such that $\Delta_{\pos}\subseteq U\subseteq \Delta$,
as then  $\pi(C_0(\Delta))T=\pi(C_0(U))T$. Indeed,
$
\|\pi(a)T\|^2=\|\pi(L(a^*a))\|$ and $\|L(a^*a)\|=\sup_{y\in X}\sum_{x\in \varphi^{-1}(y)\cap \Delta_{\pos}} |a|^2(x)\varrho(x)$,
so the norm of $\pi(a)T$ depends only on values of $a$ on $\Delta_{\pos}$. In particular, we may always assume that
$
\Delta=\varphi^{-1}(\varphi(\Delta_{\pos})),
$ 
as the set $\varphi^{-1}(\varphi(\Delta_{\pos}))$ is open because the map  $\varphi:\Delta_{\pos}\to X$ is open.
\end{rem}

\begin{lem}\label{lem:automatic_commutation_relation}
Let $(\pi, T)$ be a representation of $L$. We  have the following commutation relations
$$
\pi(b)T\pi(a)=\pi(b\alpha(a)) T, \qquad a\in A, b\in I.
$$
 If in addition $TH\subseteq \overline{\pi(I)H}$ and $\varphi$ is proper, then  $T\pi(a)=\pi(\alpha(a)) T$, $a\in A$.
\end{lem}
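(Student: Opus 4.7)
For the first identity, the natural strategy is to show that the difference
\[
X:=\pi(b)T\pi(a)-\pi(b\alpha(a))T
\]
vanishes by proving $X^*X=0$. This is purely formal once one uses (i) the covariance relation $T^*\pi(c)T=\pi(L(c))$ for $c\in I$, (ii) the transfer identity $L(\alpha(a)c)=aL(c)$, (iii) commutativity of $A=C_0(X)$ (so in particular $b^*b\,\alpha(a)=\alpha(a)b^*b$ inside the multiplier action of $M(I)$ on $I$), and (iv) the fact that $\alpha$ is a $*$-homomorphism, so $\alpha(a^*)\alpha(a)=\alpha(a^*a)$.

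Expanding $X^*X$ gives four terms. Each one, after applying the covariance relation to turn $T^*\pi(\cdot)T$ into $\pi(L(\cdot))$ and then applying the transfer identity and commutativity in $A$, reduces to $\pi(a^*a\,L(b^*b))$: the first term is $\pi(a^*L(b^*b)a)=\pi(a^*aL(b^*b))$; the second and third each contribute a minus sign and the same value, using $L(\alpha(a)b^*b)=aL(b^*b)$ and $L(\alpha(a^*)b^*b)=a^*L(b^*b)$ respectively; the fourth is $\pi(L(\alpha(a^*a)b^*b))=\pi(a^*aL(b^*b))$. The four terms cancel, so $X^*X=0$ and hence $X=0$.

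For the second assertion, properness of $\varphi$ means exactly that $\alpha(A)\subseteq I$, so $\alpha(a)\in I$ for every $a\in A$. Let $\{e_\lambda\}\subset I$ be an approximate unit. By the first part, $\pi(e_\lambda)T\pi(a)=\pi(e_\lambda\alpha(a))T=\pi(e_\lambda)\pi(\alpha(a))T$ for every $a\in A$. Given any $\xi\in H$, the assumption $TH\subseteq\overline{\pi(I)H}$ forces $\pi(e_\lambda)T\pi(a)\xi\to T\pi(a)\xi$, while on the other side $\pi(e_\lambda)\pi(\alpha(a))\to\pi(\alpha(a))$ in norm because $\alpha(a)\in I$, so $\pi(e_\lambda)\pi(\alpha(a))T\xi\to\pi(\alpha(a))T\xi$. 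Passing to the limit yields $T\pi(a)=\pi(\alpha(a))T$. There is no real obstacle in this proof; the only subtlety is tracking that each multiplication one wants to move through $L$ is legitimate, which is ensured by $I$ being an ideal and by commutativity.
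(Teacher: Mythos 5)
Your proof is correct. For the first identity you argue exactly as the paper does: set the difference $X=\pi(b)T\pi(a)-\pi(b\alpha(a))T$, expand $X^*X$ into four terms, and use $T^*\pi(\cdot)T=\pi(L(\cdot))$, the transfer identity, commutativity of $C_0(X)$ and $C_b(\Delta)$, and multiplicativity of $\alpha$ to see that all four terms equal $\pi(a^*a\,L(b^*b))$, so they cancel; the paper phrases this as $\|c-d\|^2=\|(c-d)^*(c-d)\|=0$ with each of $c^*c,c^*d,d^*c,d^*d$ equal to $\pi\bigl(L(\alpha(a^*)b^*b\alpha(a))\bigr)$, which is the same computation. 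For the second assertion you take a mildly different route: the paper repeats the four-term $C^*$-trick with $c=T\pi(a)$, $d=\pi(\alpha(a))T$, inserting an approximate unit $\{\mu_\lambda\}$ of $I$ and passing to strong limits to identify all four products; you instead apply the already-proved identity with $b=e_\lambda$ an approximate unit, use $TH\subseteq\overline{\pi(I)H}$ to get $\pi(e_\lambda)T\eta\to T\eta$ for every $\eta$, and use $\alpha(a)\in I$ (properness) to get $\pi(e_\lambda\alpha(a))\to\pi(\alpha(a))$ in norm, then pass to the limit. Your version is a bit more economical since it reuses the first identity directly rather than redoing the norm computation, and it makes transparent exactly where the two extra hypotheses enter; the paper's version keeps the two halves of the lemma structurally parallel. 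Both arguments are complete and correct.
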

\begin{proof} 
Putting $c:=\pi(b)T\pi(a)$ and $d:=\pi(b\alpha(a)) T$ one sees, that   each of the  expressions
$
c^*d$, $d^*d$, $c^*c$, $d^*c$ is equal to $\pi\big(L(\alpha(a^*)b^*b\alpha(a))\big)$.
Thus using the $C^*$-equality we get
$
\|c -d\|^2=\|\big(c^* -d^*)\big(c -d)\|=\|c^*d+d^*d+c^*c-d^*c\|= 0.
$

If $\varphi$ is proper, then $\alpha$ takes values in $I=C_0(\Delta)$ (rather than in $M(I)=C_b(\Delta)$) and hence we  may put $c:=T\pi(a)$ and $d:=\pi(\alpha(a)) T$ in the calculations  above. Then all the terms $
c^*d, d^*d, c^*c, d^*c
$  are equal to $\pi(a^*)TT^*\pi(a)$. For instance, if $\{\mu_\lambda\}$ is  an approximate unit  in $I$, then
\begin{align*}
c^*d&=\pi(a^*)T^*\pi(\alpha(a))T=s\text{-}\lim_{\lambda}\pi(a^*)T^*\pi(\mu_\lambda)\pi(\alpha(a))T
\\
&=s\text{-}\lim_{\lambda}\pi(a^*)\pi\big(L(\mu_\lambda)\alpha(a)\big)=s\text{-}\lim_{\lambda}\pi(a^*)\pi(L(\mu_\lambda))\pi(a)=\pi(a^*)TT^*\pi(a).
\end{align*}
Here $s\text{-}\lim$ stands for a limit in strong operator topology.
\end{proof}
	\begin{cor}\label{cor:algebra_and_ideal}
	If $(\pi, T)$ is a representation of $L$, then
	$$
	\overline{\pi(I)TT^*\pi(I)}=\clsp\{\pi(a)TT^*\pi(b): a,b \in I\}
	$$ is a $C^*$-algebra, and so
	$\pi(A)\cap \overline{\pi(I)TT^*\pi(I)}$ is an ideal in $\pi(A)$.
	\end{cor}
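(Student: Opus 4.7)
The plan is to first show that $\pi(I)TT^*\pi(I)$ is already closed under both multiplication and involution, so its closed linear span is a $C^*$-subalgebra; then the ideal claim will follow from a short $\pi(A)$-bimodule argument. Involution is immediate, since $(\pi(a)TT^*\pi(b))^*=\pi(b^*)TT^*\pi(a^*)$ is again of the same form.

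The main step is multiplicative closure. For $a_1,b_1,a_2,b_2\in I$, I compute
\begin{equation*}
\pi(a_1)TT^*\pi(b_1)\pi(a_2)TT^*\pi(b_2)=\pi(a_1)T\bigl[T^*\pi(b_1a_2)T\bigr]T^*\pi(b_2).
\end{equation*}
Since $b_1a_2\in I$, the defining relation \eqref{equ:main_relation} collapses the bracket to $\pi(L(b_1a_2))$, an element of $\pi(A)$ but not \emph{a priori} of $\pi(I)$. To reabsorb it I invoke Lemma~\ref{lem:automatic_commutation_relation}, which gives $\pi(a_1)T\pi(L(b_1a_2))=\pi\bigl(a_1\alpha(L(b_1a_2))\bigr)T$. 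Since $\alpha$ takes values in $M(I)=C_b(\Delta)$ and $I$ is an ideal in $M(I)$, the function $c:=a_1\alpha(L(b_1a_2))$ lies in $I$, and the whole product equals $\pi(c)TT^*\pi(b_2)\in\pi(I)TT^*\pi(I)$. This shows that the product of two elements of $\pi(I)TT^*\pi(I)$ is itself a single element of $\pi(I)TT^*\pi(I)$ (not just a finite sum), so the linear span is multiplicatively closed, and taking the closure yields the desired $C^*$-algebra. The asserted equality with $\clsp\{\pi(a)TT^*\pi(b):a,b\in I\}$ is then just a matter of notation.

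For the ideal assertion, I observe that for any $a\in A$ and $a_0\in I$ one has $\pi(a)\pi(a_0)=\pi(aa_0)\in\pi(I)$ since $I$ is an ideal of $A$; likewise on the right. Hence the linear span of $\pi(I)TT^*\pi(I)$ is a two-sided $\pi(A)$-module, and so is its closure $B:=\overline{\pi(I)TT^*\pi(I)}$. Because $\pi(A)$ is commutative, $J:=\pi(A)\cap B$ is automatically an ideal of $\pi(A)$.

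The only subtle step is the multiplicative closure: one has to juggle two different relations—$T^*\pi(c)T=\pi(L(c))$ which requires $c\in I$, and the commutation $\pi(a)T\pi(d)=\pi(a\alpha(d))T$ which requires $a\in I$ and $d\in A$—and the argument only closes because $I$ is an ideal in $M(I)$, so that the $C_b(\Delta)$-valued factor $\alpha(L(b_1a_2))$ can be absorbed back into $I$ against $a_1$. Once that calculation is in hand, the rest is formal.
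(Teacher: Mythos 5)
Your proof is correct and follows essentially the same route as the paper: both collapse $T^*\pi(b_1a_2)T$ to $\pi(L(b_1a_2))$ and then use Lemma~\ref{lem:automatic_commutation_relation} to absorb the multiplier $\alpha(L(b_1a_2))$ back into $I$, showing a product of two generators is again a single generator (the paper merely absorbs it into the right-hand factor $d$ rather than the left-hand $a_1$, via the adjoint of the same lemma). Your explicit bimodule argument for the ideal claim is the same formal step the paper leaves implicit.
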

	\begin{proof}
	By Lemma \ref{lem:automatic_commutation_relation},
	$	\pi(a)TT^*\pi(b) \cdot  \pi(c)TT^*\pi(d)=\pi(a)TT^*\pi(\alpha(L(bc))d)$  for $a,b,c,d\in I$.
	Thus $\text{span}\{\pi(a)TT^*\pi(b): a,b \in I\}$ is a $*$-algebra.
	\end{proof}
	\begin{rem}
	In view of  Lemma \ref{lem:automatic_commutation_relation}, we have $\overline{\pi(I)TT^*\pi(I)}=\overline{\pi(I)T\pi(A)T^*\pi(I)}$,
	and if $TH\subseteq \pi(I)H$ and 	$\varphi$ is proper, then $\overline{\pi(I)TT^*\pi(I)}=\overline{\pi(A)TT^*\pi(A)}$.
	\end{rem}
	The spectrum of the ideal in Corollary \ref{cor:algebra_and_ideal} is  related to the set of regular points that we define as follows.
	\begin{defn}\label{defn:regular_points}
	The set of \emph{regular points} for $\varrho$ is
	$$
	\Delta_{\reg}:=\{x\in \Delta: \varrho(x)>0 \text{ and } \varrho \text{ is continuous at }x\}.
	$$
	Clearly,  $\Delta_{\reg}$ is an open set, and by Proposition \ref{prop:properties_or_rho}, a point  $x\in \Delta$  is regular if and only if $\varrho(x)>0$ and $x$ is  a local homeomorphism point
for $\varphi$.
	\end{defn}
\begin{rem}
We have a hierarchy of sets $\Delta_{\reg}\subseteq \Delta_{\pos} \subseteq \Delta $ where $\Delta_{\pos}$  need not be open nor closed in $X$.
The map $\varphi$ is open on $\Delta_{\pos}$ and in addition locally injective on $\Delta_{\reg}$.
\end{rem}

	\begin{prop}\label{prop:katsuras_ideal}
Let	$(\pi, T)$ be a  faithful representation of $L$. Then
$$
\pi(A)\cap \overline{\pi(I)TT^*\pi(I)}\subseteq \pi(C_0(\Delta_{\reg})).
$$
\end{prop}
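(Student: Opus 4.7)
The plan is to show $g(x_0) = 0$ for every $g \in A$ with $\pi(g) \in \overline{\pi(I)TT^*\pi(I)}$ and every $x_0 \in X \setminus \Delta_{\reg}$, splitting into three cases via $X\setminus\Delta_{\reg} = (X\setminus\Delta)\sqcup(\Delta\setminus\Delta_{\pos})\sqcup(\Delta_{\pos}\setminus\Delta_{\reg})$.

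If $x_0 \notin \Delta$, choose $h \in C_0(X)$ with $h(x_0)=1$ supported disjoint from $\Delta$; since $h\cdot a = 0$ for every $a \in I$, one has $\pi(h)\pi(a)TT^*\pi(b) = \pi(ha)TT^*\pi(b) = 0$, so by closure $\pi(hg) = 0$, and faithfulness of $\pi$ gives $g(x_0)=0$. If $x_0 \in \Delta$ with $\varrho(x_0) = 0$, Lemma \ref{continuity of measures lemma} furnishes $h_n \in C_c(\Delta)$ with $h_n(x_0)=1$, $0 \leq h_n \leq 1$, and $\|L(|h_n|^2)\|_\infty \to 0$ (shrink supports into $U\cap\varphi^{-1}(V)$ as in the lemma). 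Then $\|\pi(h_n a) T\|^2 = \|L(|h_n a|^2)\| \leq \|a\|_\infty^2 \|L(|h_n|^2)\| \to 0$ implies $\|\pi(h_n)\cdot \pi(a)TT^*\pi(b)\|\to 0$; combined with an $\epsilon$-approximation of $\pi(g)$ by a finite sum, this yields $\|h_n g\|_\infty = \|\pi(h_ng)\|\to 0$, and $\|h_n g\|_\infty \geq |g(x_0)|$ forces $g(x_0) = 0$.

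The substantial case is $x_0 \in \Delta_{\pos}\setminus\Delta_{\reg}$. By Proposition \ref{prop:properties_or_rho}, $\varphi$ is not locally injective at $x_0$, so I pick distinct $x_n, x_n' \to x_0$ with $\varphi(x_n) = \varphi(x_n') =: y_n$. If $\varrho(x_n) = 0$ for infinitely many $n$, Case 2 gives $g(x_n)=0$ along that subsequence and continuity of $g$ forces $g(x_0)=0$; similarly for $x_n'$. So I may assume $\varrho(x_n), \varrho(x_n') > 0$, and after passing to a subsequence (using upper semi-continuity of $\varrho$) $\varrho(x_n) \to r > 0$, $\varrho(x_n') \to r' > 0$. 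The key identity, obtained by iterating $T^*\pi(c)T = \pi(L(c))$ together with Lemma \ref{lem:automatic_commutation_relation}, is that for $\eta_i := \pi(c_i) T \xi_i$ with $c_i \in I, \xi_i \in H$,
\[
\langle \pi(g)\eta_1,\eta_2\rangle = \langle \pi(L(\overline{c_2}\,g\,c_1))\xi_1,\xi_2\rangle, \quad \langle \pi(a)TT^*\pi(b)\eta_1,\eta_2\rangle = \langle \pi(L(\overline{c_2}\,a)\cdot L(b\,c_1))\xi_1,\xi_2\rangle.
\]
Combined with $\|\eta_i\|^2 \leq \|L(|c_i|^2)\|\,\|\xi_i\|^2$ and faithfulness of $\pi$, an $\epsilon$-approximation $\pi(g)\approx \sum_i \pi(a_i)TT^*\pi(b_i)$ converts into the representation-free scalar inequality
\[
\Big\|L(\overline{c_2}\,g\,c_1) - \sum_i L(\overline{c_2}\,a_i)\cdot L(b_i\,c_1)\Big\|_\infty \leq \epsilon\,\|L(|c_1|^2)\|^{1/2}\|L(|c_2|^2)\|^{1/2}
\]
valid for all $c_1, c_2 \in I$.

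I would then test this inequality at $y = y_n$ with two choices of cutoffs. Because $\varphi^{-1}(y_n)$ is closed and discrete, any pre-compact neighbourhood meets it in a finite set, so Urysohn's lemma produces $c_1^{(n)}, c_2^{(n)} \in C_c(\Delta)$ with small disjoint supports around $x_n, x_n'$, each equal to $1$ at its centre and $0$ at the finitely many other pre-images of $y_n$ in its support. The off-diagonal choice makes $L(\overline{c_2^{(n)}}\,g\,c_1^{(n)})\equiv 0$ by disjointness; using Lemma \ref{continuity of measures lemma} to bound $\|L(|c_i^{(n)}|^2)\|_\infty \leq \varrho(x_n^{(i)})(1+o(1))$, the inequality evaluated at $y_n$ collapses in the limit to $\sqrt{rr'}\,|\alpha| \leq \epsilon$, where $\alpha := \sum_i a_i(x_0)b_i(x_0)$. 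The diagonal choice $c_1^{(n)} = c_2^{(n)}$ concentrated at $x_n$, after dividing by $\varrho(x_n) > 0$, yields $|g(x_0) - r\alpha| \leq \epsilon$, and symmetrically $|g(x_0) - r'\alpha| \leq \epsilon$. Combining gives $|g(x_0)| \leq \epsilon(1 + \sqrt{\min(r,r')/\max(r,r')}) \leq 2\epsilon$, and arbitrariness of $\epsilon$ forces $g(x_0) = 0$. The main obstacle is Case 3, specifically the careful construction of cutoffs ensuring that the $L$-integrals at $y_n$ reduce cleanly to single-point contributions uniformly as $n\to\infty$.
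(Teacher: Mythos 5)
Your overall scheme (reduce to showing $g(x_0)=0$ for $x_0\in X\setminus\Delta_{\reg}$, scalarize the $\epsilon$-approximation by testing against vectors $\pi(c_i)T\xi_i$, then play diagonal cutoffs against off-diagonal ones at a colliding pair) is viable and, in its main case, genuinely different from the paper's: the paper splits instead according to whether points of arbitrarily small $\varrho$ accumulate at $x_0$, and when they do not it tests on one side only with the signed function $h_1-\frac{\varrho(x_1)}{\varrho(x_2)}h_2$, so that $\|L(hb)\|$ is small by cancellation while $\|hT\|^2\geq\delta$. However, two of your steps fail as written. First, in Case 1 the function $h$ with $h(x_0)=1$ and support disjoint from $\Delta$ does not exist when $x_0\in\overline{\Delta}\setminus\Delta$: $\Delta$ is open, so every neighbourhood of a boundary point meets it. Such points do lie in $X\setminus\Delta_{\reg}$, and $\varrho$ may stay bounded below on $\Delta$ near them (e.g. $X=[0,1]$, $\Delta=(0,1]$, $\varphi=\mathrm{id}$, $\varrho\equiv 1$), so your Case 2 route is unavailable there as well. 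The repair is to use that $a_i,b_i\in C_0(\Delta)$ vanish at $x_0$: then $\|\pi(ha_i)T\|^2=\|L(h^2|a_i|^2)\|\leq\|L\|\,\sup_{\supp h}|a_i|^2\to 0$ as $\supp h$ shrinks to $x_0$, and the sandwich of Case 2 goes through.

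Second, and more seriously, in Case 3 the claim that after passing to a subsequence $\varrho(x_n)\to r>0$, $\varrho(x_n')\to r'>0$ ``using upper semi-continuity'' is backwards: upper semi-continuity only gives $\limsup_n\varrho(x_n)\leq\varrho(x_0)$, i.e. an upper bound, and nothing prevents $\varrho(x_n)\to 0$ with every $\varrho(x_n)>0$ -- a sub-case covered neither by your exact-zero reduction nor by your endgame, which divides by $\varrho(x_n)$ and by $\max(r,r')$. It is fixable with tools you already invoke: either sharpen the cutoff bound to the multiplicative form $\|L(|c_1^{(n)}|^2)\|\leq\varrho(x_n)(1+\tfrac1n)$ (Lemma \ref{continuity of measures lemma} allows this since $\varrho(x_n)>0$), after which the diagonal inequality alone yields $|g(x_0)-r\alpha|\leq\epsilon$ even when $r=0$, hence $|g(x_0)|\leq\epsilon$; or treat points of small positive $\varrho$ by the same estimate as in your Case 2 -- this is exactly how the paper's dichotomy routes that sub-case into its easy estimate \eqref{*condition2}. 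Two further slips are harmless but should be corrected: countable fibres need not be discrete, so ``$\varphi^{-1}(y_n)$ meets a pre-compact neighbourhood in a finite set'' is false; you do not need it, since Lemma \ref{continuity of measures lemma} already gives $\sum_{x\in U\cap\varphi^{-1}(y_n)}\varrho(x)<\varrho(x_n)+\delta$, so the other preimages in the support contribute only $O(\delta)$ regardless of their number. And since $X$ is not assumed first countable, your sequences $x_n,x_n'\to x_0$ should be nets (the argument survives this change).
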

\begin{proof}
To lighten the notation we will suppress $\pi$ and we will write $A\subseteq B(H)$.
Let us fix $a\in A$ such that $a\not\in C_0(\Delta_{\reg})$. That is, there is $x_0 \in X\setminus \Delta_{\reg}\neq 0$
with $a(x_0)\neq 0$. We need to show that $a\not\in \overline{ITT^*I}$ and to this end it suffices to show that  for any $a_i,b_i\in I$, $i=1,...,N\in\N$, we have
\begin{equation}\label{*condition}
\|a- \sum_{i=1}^{N}a_iTT^* b_i\|\geq|a(x_0)|.
\end{equation}
We first show a weaker inequality, which holds for an arbitrary $y\in \Delta$ though,
\begin{equation}\label{*condition2}
\|a- \sum_{i=1}^{N}a_iTT^* b_i\| \geq |a(y)| - \sqrt{\varrho(y)}\sum_{i=1}^{N}\|a_i\| \|b_i\|.
\end{equation}
Let $\varepsilon>0$ and put $U:=\{x\in \Delta:|a(x)-a(y)|<\varepsilon \}$.
By Corollary \ref{corollary to construct approximations of rho}  there is a continuous function $0\leq h\leq 1$
 supported on $U$  such that $h(x)=1$ on an open neighbourhood of $y$  and
$$
\varrho(y)\leq  \|L(h^2)\| < \varrho(y) +\varepsilon .
$$
Thus for any $b\in C_0(X)$ one has
$$
\|hb^*T\|^2=\|T^*bh\|^2=\|L(|b|^2h^2)\|\leq \|b\|^2 \|L(h^2)\| \leq  \|b\|^2 (\varrho(y) +\varepsilon).
$$
Using this we get
\begin{align*}
\|a- \sum_{i=1}^{N}a_iTT^* b_i\|& \geq  \|h \big(a- \sum_{i=1}^{N}a_iTT^* b_i\big)h\|= \| a h^2 - \sum_{i=1}^{N}ha_iT  T^*b_i h\|
\\
&\geq \| a h^2\|- \sum_{i=1}^{N}\|ha_iT\| \|T^*b_i h\| \\
& \geq |a(y)|  -  \sqrt{(\varrho(y) +\varepsilon)} \sum_{i=1}^{N}\|a_i\| \|b_i\|.
\end{align*}
Passing with $\varepsilon$ to zero, we get \eqref{*condition2}.
Now we consider  two cases.

I). Suppose first that for each $\delta>0$ every neighbourhood of $x_0$ contains a point $x$ with $\varrho(x)<\delta$.
Equivalently, there is a net $\{x_n\}\subseteq \Delta$ such that  $x_n\to  x_0$ and  $\varrho(x_n)\to 0$. Applying  \eqref{*condition2}
to  $y=x_n$ we have
$$
\|a- \sum_{i=1}^{N}a_iTT^* b_i\| \geq |a(x_n)| -  \sqrt{\varrho(x_n)} \sum_{i=1}^{N}\|a_i\| \|b_i\|,
$$
which by passing to the limit,  gives  \eqref{*condition}.

II). Finally, suppose that
  there is $\delta>0$ and an open neighbourhood $U$ of $x_0$
 such that
$$
\inf_{x\in U}\varrho(x)\geq \delta >0.
$$
Clearly it is enough to consider the case when $a(x_0)\neq 0$. Let  $\varepsilon >0$. We may assume that $U\subseteq \{x\in \Delta:|a(x)-a(x_0)|<\varepsilon \}$.
Also, since $x_0\not\in \Delta_{\reg}$,  $\varrho$ is not continuous at $x_0$. Therefore, by Proposition \ref{prop:properties_or_rho}, there exist two distinct points $x_1$, $x_2$ in $U$ such that $\p(x_1)=\p(x_2)$. Let $U_1$, $U_2\subseteq U$ be two open disjoint sets with $x_1\in U_1$ and $x_2\in U_2$.
By Corollary \ref{corollary to construct approximations of rho}, for each $i=1,2$, there are  continuous functions $0\leq h_i\leq 1$
supported on $U_i$  such that $h_i(x_i)=1$  and
$$
\varrho(x_i)\leq  \|L(h_i)\| < \varrho(x_i) +\varepsilon, \qquad {\rm\text{and}} \qquad
\varrho(x_i)\leq  \|L(h_i^2)\| < \varrho(x_i) +\varepsilon .
$$
Put
$
h:=h_1 -  \frac{\varrho(x_1)}{\varrho(x_2)} h_2.
$
Using that $h$ is supported on $U$ we get
$$
\|ahT\|\geq \|hT\|(|a(x_0)| -\varepsilon).
$$
%
On the other hand,
$
\|hT\|^2=\|L(h^2)\|=\|L(h_1^2) +\left(\frac{\varrho(x_1)}{\varrho(x_2)}\right)^2 L(h_2^2)\|\geq \|L(h_1^2)\|\geq \varrho(x_1)\geq \delta.
$
Moreover, for any $b\in C_0(X)$  we have
\begin{align*}
\|T^*b hT\|&=\|L(hb)\|\leq  \|L(h)\|\cdot \|b\| =  \left\|L(h_1) -   \frac{\varrho(x_1)}{\varrho(x_2)} L(h_2)\right \|\cdot \|b\|
\\
&\leq \left((\varrho(x_1) +\varepsilon)
- \frac{\varrho(x_1)}{\varrho(x_2)} \varrho(x_2)\right) \cdot \|b\| = \varepsilon \cdot \|b\|.
\end{align*}
Using all this, we get
\begin{align*}
\|a- \sum_{i=1}^{N}a_iTT^* b_i\| & \geq \frac{ \| \Big(a- \sum_{i=1}^{N}a_iTT^* b_i\Big) (hT)\|}{\|hT\|}
\\
&\geq \frac{\| a h T\|}{\|hT\|}- \sum_{i=1}^{N}\frac{ \|a_iT\|\cdot \| T^*b_i  hT\|}{\|hT\|}
\\
& \geq (|a(x_0)| -\varepsilon)- \varepsilon \sum_{i=1}^{N}\frac{ \|a_iT\|\cdot \| b_i\|}{ \sqrt{\delta}}.
\end{align*}
Passing with $\varepsilon$ to zero, we get \eqref{*condition}.
\end{proof}


\begin{defn}\label{defn:representations_covariant}
We say that a representation $(\pi, T)$ of $L$ is \emph{covariant} if
$$
\pi(C_0(\Delta_{\reg}))\subseteq  \overline{\pi(I)TT^*\pi(I)}.
$$
Thus a faithful  representation $(\pi, T)$ is covariant iff  $\pi(C_0(\Delta_{\reg}))=\overline{\pi(I)TT^*\pi(I)}$.
\end{defn}
\begin{rem}\label{rem:Toeplitz representation}
Every transfer operator admits a faithful covariant representation (see  Example~\ref{ex:orbit_representation}).
If  $\Delta_{\reg}$ is non-empty, then there are faithful representations that are not covariant. Indeed, if $(\pi,T)$ is any representation
of $L$ on a Hilbert space $H$, then putting $\widetilde{H}:=H\otimes \ell^2(\N)$, $\widetilde{\pi}:=\pi\otimes \id$ and $\widetilde{T}:=T\otimes U$
where $U$ is the unilateral shift on  $\ell^2(\N)$, we get a representation $(\widetilde{\pi}, \widetilde{T})$ of $L$ with
$
\widetilde{\pi}(A)\cap\overline{\widetilde{\pi}(I)\widetilde{T}\widetilde{T}^*\widetilde{\pi}(I)}= 0,
$
because $UU^*$ is a non-trivial projection.
\end{rem}
\subsection{Characterisations of covariant representations}
Let $K$  be a compact subset of $\Delta_{\reg}$. Then we may find a finite cover $\{U_i\}_{i=1}^n$ of $K$ such that
 $\bigcup_{i=1}^n U_i$ is contained in a  compact subset of  $\Delta_{\reg}$ and $\varphi|_{U_i}$ is injective for every $i=1,...,n$. Take a partition of unity $\{v_i\}_{i=1}^n\subseteq C_0(\Delta_{\reg})$ on $K$
 subordinated to $\{U_i\}_{i=1}^n$. Then
\begin{equation}\label{eq:quasi-basis}
u_i^K:=\sqrt{\frac{v_i}{\varrho}},\qquad  i=1,...,n,
\end{equation}
 are well defined functions in $C_c(\Delta_{\reg})$ because $\varrho$ is bounded away from zero on $\bigcup_{i=1}^n U_i$. We will use these functions
to characterise covariant representations.
\begin{prop}\label{prop:characterization_of_covariance} Let  $(\pi,T)$ be a
representation   of $L$. The following are equivalent:
\begin{enumerate}
\item\label{it:characterization_of_covariance1} $(\pi,T)$  is covariant.

\item\label{it:characterization_of_covariance2.5} for every $a\in C_c(\Delta_{\reg})$ supported on a set where $\varphi$ is injective we have
$
\pi(a)TT^*\pi(u)=\pi(a)
$ for some $u\in C_0(X)$.
\item\label{it:characterization_of_covariance3} for every $a\in C_c(\Delta_{\reg})$ supported on a set where $\varphi$ is injective we have
$
\pi(a)TH=\pi(a)H.
$
\item\label{it:characterization_of_covariance2}  For every  $a\in C_c(\Delta_{\reg})$ supported on $K$ we have
we have
\begin{equation}\label{eq:Cuntz_relation_K}
\pi(a) \sum_{i=1}^n \pi(u_i^{K}) TT^* \pi(u_i^{K})  =   \pi(a).
\end{equation}

\item\label{it:characterization_of_covariance4} for every $x_0\in \Delta_{\reg}$ and $\varepsilon >0$ there is a neighbourhood $U$ of $x_0$ such that for every
$a,b\in C_0(U)$ with $\|a\|, \|b\|\leq 1$ we have
$$
\|\pi(a)TT^*\pi(b)-\varrho(x_0)\pi(ab)\|< \varepsilon.
$$
\end{enumerate}
The above conditions hold whenever $\overline{\pi(I)TH}=H$ (which is equivalent to $\overline{\pi(A)TH}=H$ when $\varphi$ is proper).
\end{prop}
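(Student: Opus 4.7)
The plan is to establish all five equivalences via an exact local identity
\begin{equation*}
\pi(a)TT^*\pi(b) = \pi(\varrho a b), \qquad a, b \in C_c(V),
\end{equation*}
valid on every open $V\subseteq \Delta_{\reg}$ where $\varphi|_V$ is injective (the function $\varrho ab\in C_0(X)$ extends by zero because $\varrho$ is continuous on $\Delta_{\reg}$). Two algebraic facts drive the analysis, both following from the commutation relation of Lemma \ref{lem:automatic_commutation_relation} together with injectivity of $\varphi|_V$: first, $\alpha(L(f))|_V = \varrho f|_V$ for $f\in C_c(V)$, since $\varphi^{-1}(\varphi(x))\cap V = \{x\}$ on $V$; second, $\pi(a)TT^*\pi(b) = 0$ whenever $a, b\in C_c(V)$ have disjoint supports, via
\begin{equation*}
(\pi(a)TT^*\pi(b))^*(\pi(a)TT^*\pi(b)) = \pi(b^*\alpha(L(|a|^2)))TT^*\pi(b) = \pi(b^*\varrho|a|^2)TT^*\pi(b) = 0.
\end{equation*}

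The local identity follows from (iv) by choosing $V_0$ with $\supp a\cup\supp b\subseteq V_0\subseteq \overline{V_0}\subseteq V$ compact, and applying (iv) to $K = \overline{V_0}$ with $n=1$, a single open $U_1\supseteq K$ contained in $V$, and $v_1\equiv 1$ on $K$, so $u_1^K = 1/\sqrt{\varrho}$ on $K$. Then (iv) reads $\pi(f) = \pi(f/\sqrt{\varrho})TT^*\pi(1/\sqrt{\varrho})$ for $f\in C_c(V_0)$; right-multiplying by $\pi(g)$ with $g\in C_c(V_0)$ and substituting $f\mapsto f\sqrt{\varrho}$, $g\mapsto g\sqrt{\varrho}$ produces the identity. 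From it, (iv) $\Rightarrow$ (i) is immediate by density of $C_c(\Delta_{\reg})$; (iv) $\Rightarrow$ (ii) via $u\in C_c(V)$ with $u = 1/\varrho$ on $\supp a$, giving $\pi(a)TT^*\pi(u)=\pi(\varrho au) = \pi(a)$; (iv) $\Rightarrow$ (v) from $\|\pi(a)TT^*\pi(b)-\varrho(x_0)\pi(ab)\|\leq \sup_U|\varrho-\varrho(x_0)|\cdot\|a\|\|b\|$, which falls below $\varepsilon$ on a small enough $U$; and (ii) $\Rightarrow$ (iii) is the observation $\pi(a)\xi = \pi(a)T(T^*\pi(u)\xi)\in\pi(a)TH$. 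The reverse (ii) $\Rightarrow$ (iv) applies the local identity in each summand, $\pi(au_i^K)TT^*\pi(u_i^K) = \pi(\varrho a(u_i^K)^2) = \pi(av_i)$, summed using $\sum_i v_i\equiv 1$ on $K$.

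For (iii) $\Rightarrow$ (ii) I fix $u\in C_c(V)$ with $u=1/\varrho$ on $\supp a$, set $X = \pi(a)TT^*\pi(u) - \pi(a)$, and obtain $X^*X = \pi(a^*a) - \pi(u)TT^*\pi(a^*a)$ using the commutation and $u\alpha(L(a^*a))|_V = u\varrho a^*a = a^*a$. Then (iii) applied to $a^*a$ (same support as $a$) yields $\pi(a^*a)\xi = \pi(a^*a)T\eta$ for each $\xi\in H$, whence $\pi(u)TT^*\pi(a^*a)\xi = \pi(u\alpha(L(a^*a)))T\eta = \pi(a^*a)\xi$, forcing $X = 0$. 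To close the circle via (v) $\Rightarrow$ (ii), I establish $\pi(u)TT^*\pi(a^*a) = \pi(a^*a)$ exactly. Refining a (v)-cover $\{U_{x_i}\}$ of $\supp u\cup\supp a^*a$ to a partition of unity $\{w_k\}$ whose supports are so small that any overlapping pair $W_k, W_l$ lies in a common $U_{x_i}$, the decomposition $\pi(u)TT^*\pi(a^*a) = \sum_{k,l}\pi(w_ku)TT^*\pi(w_la^*a)$ splits into vanishing disjoint-support terms (by the second algebraic fact) and $(v)$-controlled overlapping ones summing (up to an error driven to zero by refinement) to $\pi(\varrho u a^*a) = \pi(a^*a)$. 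An analogous approximation handles (i) $\Rightarrow$ (v) by covering $\supp f$ with (v)-neighborhoods and approximating each local piece by an element of $\pi(I)TT^*\pi(I)$.

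The final assertion is immediate from the same core computation applied to $\xi' = \pi(b)T\eta\in\pi(I)TH$: $\pi(u)TT^*\pi(a^*a)\xi' = \pi(u\alpha(L(a^*ab)))T\eta = \pi(a^*ab)T\eta = \pi(a^*a)\xi'$ holds unconditionally, so if $\overline{\pi(I)TH} = H$ then the operator identity $\pi(u)TT^*\pi(a^*a) = \pi(a^*a)$ extends to all of $H$, giving (ii). The main obstacle will be the double sum in (v) $\Rightarrow$ (ii): a naive triangle bound scales with the number of partition pieces, so obtaining a partition-independent error of order $\varepsilon$ will require a Cauchy--Schwarz-type estimate that leverages the disjoint-support vanishing to prune the vast majority of cross-terms.
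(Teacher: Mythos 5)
Your core computations are sound and in places cleaner than the paper's: the disjoint-support vanishing via the $C^*$-identity, the derivation of the exact local identity $\pi(a)TT^*\pi(b)=\pi(\varrho ab)$ on injective sets from \ref{it:characterization_of_covariance2}, the implications \ref{it:characterization_of_covariance2}$\Rightarrow$\ref{it:characterization_of_covariance1}, \ref{it:characterization_of_covariance2}$\Rightarrow$\ref{it:characterization_of_covariance2.5}$\Rightarrow$\ref{it:characterization_of_covariance3}, your $X^*X$ argument for \ref{it:characterization_of_covariance3}$\Rightarrow$\ref{it:characterization_of_covariance2.5}, and the final assertion all check out (modulo the cosmetic point that $1/\sqrt{\varrho}\notin C_0(X)$, repaired by your substitution, and a small circularity: you only derive the local identity from \ref{it:characterization_of_covariance2}, yet invoke it under hypothesis \ref{it:characterization_of_covariance2.5} to prove \ref{it:characterization_of_covariance2.5}$\Rightarrow$\ref{it:characterization_of_covariance2}; this is fixable, since \ref{it:characterization_of_covariance3} yields the local identity by the same trick of writing $\pi(b)\xi=\pi(b)T\eta$ and commuting, but it must be said). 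The genuine gaps are the two remaining links. First, you never give a usable implication out of \ref{it:characterization_of_covariance1}: your only exit is ``\ref{it:characterization_of_covariance1}$\Rightarrow$\ref{it:characterization_of_covariance4} by covering with (v)-neighbourhoods'', which is not an argument (those neighbourhoods are what (v) asserts), and it feeds into your other incomplete step. The paper closes this by observing that covariance places $\pi(a)$ in $\overline{\pi(I)TT^*\pi(I)}$, so both $\pi(a)$ and $\pi(a)TT^*\pi(u)$ vanish on $(\overline{\pi(I)TH})^{\perp}$ and agree on $\pi(I)TH$ by the algebraic identity you already have; that mechanism is absent from your proposal, and without it condition \ref{it:characterization_of_covariance1} is only a sink in your implication graph.

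Second, \ref{it:characterization_of_covariance4}$\Rightarrow$\ref{it:characterization_of_covariance2.5} is left open exactly where it is hard, as you acknowledge: the sum of (v)-errors over overlapping pairs scales with the number of partition pieces, and there is no orthogonality between the summands to invoke Cauchy--Schwarz directly. A colouring/bounded-multiplicity repair would need covers of bounded order, i.e.\ a finite-dimensionality hypothesis not available for a general locally compact Hausdorff $X$, so this is not a routine detail but a missing idea. The paper avoids refinement and summation altogether: after reducing (via a partition of unity, using that $a$ satisfies \ref{it:characterization_of_covariance3} iff each $u_i a$ does) to $a\in C_0(U)$ with a single tolerance $\varrho(x_0)/2$, it takes the strong limit $P$ of an approximate unit of $C_0(U)$, gets $\|PTT^*P-\varrho(x_0)P\|\le \varrho(x_0)/2$, concludes that $\varrho(x_0)^{-1}PTT^*P$ is invertible on $PH$, hence $PTH=PH$ and so $\pi(a)TH=\pi(a)H$; that is, (v) is used to reach \ref{it:characterization_of_covariance3} by a Neumann-series argument with one fixed error, not to manufacture an exact identity as a limit. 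Replacing your (v)-step by this argument (and adding the $\overline{\pi(I)TT^*\pi(I)}$ observation for \ref{it:characterization_of_covariance1}) would complete your proof.
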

\begin{proof}
Clearly, \ref{it:characterization_of_covariance2.5} implies  \ref{it:characterization_of_covariance3}.
Since $C_c(\Delta_{\reg})$ is dense in $C_0(\Delta_{reg})$ and every element in $C_c(\Delta_{\reg})$ is
 a finite sum of functions supported on sets where $\varphi$ is injective,  we see that \ref{it:characterization_of_covariance2.5} also implies  \ref{it:characterization_of_covariance1}.
For converse implications, let $a\in C_c(\Delta_{\reg})$ have support  $K$ such that $\varphi|_K$ is injective and let $u\in C_c(\Delta_{\reg})$
 be such that $u|_K=(\varrho|_K)^{-1}$.
 For every $b\in C_0(\Delta)$ and $x\in \Delta_{\reg}$ we have $a(x)\alpha(L(ub))(x)=a(x)\sum_{t\in\varphi^{-1}(\varphi(x))}\varrho(t)u(t)b(t)=a(x)b(x)$.
Hence
$$
\Big(\pi(a)TT^*\pi(u)\Big)\pi(b)T=\pi(a\alpha(L(ub)))T=\pi(a)\pi(b)T.
$$
Thus $\pi(a)TT^*\pi(u)=\pi(a)$  whenever  $\pi(a)$ is determined by its action on $\pi(I)TH$.
 Both \ref{it:characterization_of_covariance1} and \ref{it:characterization_of_covariance3} imply this.
Indeed, if \ref{it:characterization_of_covariance1} holds then $\pi(a)\in \overline{\pi(I)TT^*\pi(I)}$,
and if we assume  \ref{it:characterization_of_covariance3} we get
$$
\pi(a)H=\pi(a)TH=\pi(a)T\pi(A)H=\pi(a)\pi(\alpha(A))TH=\pi(a)\pi(I)TH.
$$
Hence \ref{it:characterization_of_covariance1}, \ref{it:characterization_of_covariance2.5}, \ref{it:characterization_of_covariance3} are equivalent and they follow from
the condition $\pi(I)TH=H$.
If $\varphi$ is proper, then $\pi(I)TH=\pi(A\alpha(A))TH=\pi(A)T\pi(A)H=\pi(A)TH$.

Since $C_c(\Delta_{\reg})$ is dense in $C_0(\Delta_{\reg})$,  \ref{it:characterization_of_covariance2} readily implies \ref{it:characterization_of_covariance1}.
Conversely, if we assume
\ref{it:characterization_of_covariance1}, then for every $a\in C_c(\Delta_{\reg})$ the operator  $\pi(a)\in \overline{\pi(I)TT^*\pi(I)}$  is determined by its action on $\pi(I)TH$. Moreover,
 for every $b\in I$ we have  $a \sum_{i=1}^n u_i^K \alpha(L(  u_i^K b))=a b$. Thus
\begin{align*}
\left(\pi(a) \sum_{i=1}^n \pi(u_i^K) TT^* \pi(u_i^K)\right) \pi(b) T&= \pi\left(a \sum_{i=1}^n u_i^K \alpha(L(  u_i^K b))\right) T.
\\
&=\pi(a) \pi(b) T.
\end{align*}
This implies
\eqref{eq:Cuntz_relation_K}. Hence \ref{it:characterization_of_covariance1}$\Rightarrow$\ref{it:characterization_of_covariance2}.

\ref{it:characterization_of_covariance3} $\Rightarrow$
\ref{it:characterization_of_covariance4}.
Let $U$ be a neighbourhood of $x_0\in \Delta_{\reg}$
such that $\varphi|_U$ is injective and $U\subseteq \{x\in \Delta_{\reg} : |\varrho(x)-\varrho(x_0)|<\varepsilon\}$.
Take any $a,b\in C_0(U)$ with $\|a\|, \|b\|\leq 1$.
Note that $\varrho a\in C_0(U)$ and  $\| \varrho ab  - \varrho(x_0) ab\|<\varepsilon$. The argument in the proof that \ref{it:characterization_of_covariance3}
implies \ref{it:characterization_of_covariance1}, shows that $\pi(a)TT^*\pi(b)= \pi(\varrho ab)$. Hence \ref{it:characterization_of_covariance4} holds.

%
%
%
%
%

\ref{it:characterization_of_covariance4} $\Rightarrow$ \ref{it:characterization_of_covariance3}.
Let   $a\in C_c(\Delta_{\reg})$  have support  $K$ such that $\varphi|_K$ is injective. Without loss of generality
we may assume that $\|a\|\leq 1$.
By  \ref{it:characterization_of_covariance4} and compactness
of $K$ there is a partition of unity $\{u_i\}_{i=1}^n$ on $K$
subordinate to an open cover $\{U_{i}\}_{i=1}^n$ of $K$
such that for every $i=1,...,n$ there is a point $x_i\in U_i$
such that for every $b\in C_0(U_i)$, $\|b\|\leq 1$, we have
$\|\pi(u_ia)T^*T\pi(b)- \varrho(x_i)\pi(ab)\|< \varrho(x_i)/2$.
Clearly, $a$ satisfies \ref{it:characterization_of_covariance3}
iff each $u_i a$  satisfies \ref{it:characterization_of_covariance3}.
Hence we may assume that $a\in C_0(U)$ where $\varphi|_U$ is injective and
there is $x_0\in U$ such that
$$
\|\pi(a)T^*T\pi(b)- \varrho(x_0)\pi(ab)\|< \varrho(x_0)/2,
$$
for any $b\in C_0(U)$, $\|b\|\leq 1$.
Now let $\{\mu_\lambda\}$ be an approximate  unit in $C_0(U)$ and let  $P:=\text{s-}\lim \pi(\mu_\lambda)$
be the projection given by the strong limit. Then we have
$$
\| PT^*TP- \varrho(x_0)P\|\leq \varrho(x_0)/2.
$$
Thus $\|1/\varrho(x_0)PT^*TP- P\|\leq 1/2<1$ and therefore the operator
$1/\varrho(x_0)PT^*TP:PH\to PH$ is invertible.
In particular, $P_U TH = P_U H$ and this gives
$\pi(a) TH =\pi(a)P_U TH = \pi(a)P_U H =\pi(a)H$.
\end{proof}
\begin{rem}
Assume  $X=\Delta_{\reg}$. Equivalently, $\p:X\to X$ is a local homeomorphism and $\varrho>0$ is strictly positive. Then condition
\ref{it:characterization_of_covariance2} in Proposition \ref{prop:characterization_of_covariance} reduces to
$$
\sum_{i=1}^n \pi(u_i^X) TT^* \pi(u_i^X)  =1,
$$ which is the condition  identified by Exel and Vershik in \cite{exel_vershik}. Also
conditions in Proposition \ref{prop:characterization_of_covariance} are equivalent to the condition $\overline{\pi(A)TH}=H$,
which is called axiom (A3) in \cite{bar_kwa}.
\end{rem}
\begin{ex}[Orbit representation] \label{ex:orbit_representation}
There is a natural faithful covariant representation $(\pi_o, T_o)$ of $L$ on the Hilbert space $\ell^2(X)$. We will call it the \emph{orbit representation}.  Namely we define a faithful representation
$\pi_o:C_0(X) \to B(\ell^2(X))$  by
$$
(\pi_o(a) h)(x):= a(x) h(x),  \qquad a\in C_0(X), h\in \ell^2(X).
$$
Let $\{\mathds{1}_{x}\}_{x\in X}$ be  the standard orthonormal basis of $\ell^2(X)$.
Since   $\sum_{x\in \varrho^{-1}(y)}\varrho(x)\leq \|L\|$, $y\in X$,   there is $T_o\in B(\ell^2(X))$
such that $T_o\mathds{1}_y:=\sum_{x\in\varphi^{-1}(y)}\sqrt{\varrho(x)}\mathds{1}_x$, $y\in X$.
Its  adjoint is given by
$T_o^*\mathds{1}_x=\sqrt{\varrho(x)}\mathds{1}_{\varphi(x)}$, for $x\in \Delta$, and $T_o^*\mathds{1}_x=0$ for $x\not\in \Delta$. Equivalently,
$$
(T_oh)(x)=\begin{cases}\sqrt{\varrho(x)}h(\varphi(x)), & x\in \Delta
\\
0 , & x\not\in \Delta,
\end{cases}
\qquad  (T_o^*h)(y)=\sum_{x\in\varphi^{-1}(y)} \sqrt{\varrho(x)}h(x),
$$
for  $h\in \ell^2(X)$.
Clearly $(\pi_o, T_o)$  is a faithul representation of $L$. To see that it is covariant we show condition \ref{it:characterization_of_covariance2.5} in Proposition \ref{prop:characterization_of_covariance}.
Let  $a\in C_c(\Delta_{\reg})$ be supported on a set $K$ such that $\varphi|_K$ is injective and let $u\in C_c(\Delta_{\reg})$ be such that $u|_K=(\varrho|_K)^{-1}$. Then
$
\left( \pi_o(a) T_oT_o^* \pi_o(u)h\right)(x)
=a(x)\sqrt{\varrho(x)}\left(\sum_{t\in\varphi^{-1}(\varphi(x))}\sqrt{\varrho(t)}u(t)h(t)\right)=a(x)h(x)
=\left(\pi_o(a) h\right)(x).
$
Hence $\pi_o(a)=\pi_o(a) T_oT_o^* \pi_o(u)$.

\end{ex}

\section{The crossed product}\label{sec:crossed_products}


Recall that $L:C_0(\Delta)\to C_0(X)$ is  a transfer operator for a partial map $\varphi:\Delta\to X$,  $A=C_0(X)$, $I=C_0(\Delta)$, and $\alpha:A\to M(I)$ is given by composition with $\varphi$.
Let us consider  a universal $*$-algebra $\A(L)$ generated by $C_c(X)$ (viewed as a $*$-algebra) and an element $t$ subject to relations
\begin{equation}\label{eq:standard relations}
L(a)=tat^*, \qquad atb=a\alpha(b)t, \qquad \text{for all } a\in C_c(\Delta), b\in C_c(X),
\end{equation}
and for every compact $K\subseteq \Delta_{\reg}$ and  $a\in C_c(\Delta_{\reg})$ supported on $K$
\begin{equation}\label{eq:Cuntz_relation_K2}
a \sum_{i=1}^n u_i^K tt^* u_i^K  =   a
\end{equation}
where  $u_i^K$'s are given by \eqref{eq:quasi-basis}. Note that these relations are satisfied by operators coming from
covariant representations of $L$, see Lemma \ref{lem:automatic_commutation_relation} and Proposition \ref{prop:characterization_of_covariance}.
\begin{defn}
The \emph{algebraic crossed-product} $C_c(X)\rtimes_{\alg} L$ is the $*$-subalgebra of $\A(L)$ generated by $C_c(X)$ and $C_c(\Delta)t$.
\end{defn}
To describe the structure of $C_c(X)\rtimes_{\alg} L$ we need to iterate partial transfer operators. Let
$$
\Delta_n:=\varphi^{-n}(X), \qquad I_n:=C_0(\Delta_n), \qquad n\in \N.
$$
We put $\Delta_0:=X$ and $I_0:=A=C_0(X)$. So $\Delta_n$ is a natural domain for the partial map $\varphi^n$; the composition $\underbrace{\varphi\circ \dots \circ \varphi}_{\text{$n$ times}}$ makes sense on $\Delta_n$. Define $\alpha^n:A\to M(I_n)$ to be the partial endomorphism of $A$ given by composition with $\varphi^n:\Delta_n\to X$. Having the map $\varrho:\Delta\to [0,\infty)$,
 that defines $L$ via \eqref{eq:transfer operator}, for each $n\in\N$ we define $\varrho_n:\Delta_n\to [0,\infty)$ by
$$
\varrho_n(x):=\prod_{i=0}^{n-1}\varrho(\varphi^i(x)), \qquad x\in \Delta_n.
$$
We also put $\varrho_0\equiv 1$. 
Then the formula
$$
L^n(a)(y):=\sum_{x\in \varphi^{-n}(y)}\varrho_n(x) a(x),\qquad  a\in C_0(\Delta_n), y\in X,
$$
defines a transfer operator $L^n:I_n\to A$ for $\alpha^n:A\to M(I_n)$.
To describe the maps $L^n$ and $\alpha^n$ more algebraically, note that
\begin{equation}\label{eq:I_n_spanning_set}
I_n^0:=\spane\{a_1\alpha(a_2\alpha(...a_n)...):a_1,...,a_n\in C_c(\Delta)\}
\end{equation}
is a dense $*$-subalgebra of $I_n$ (we put $I_0^0:=C_c(X)$).
Thus $L^n$ and $\alpha^n$ are determined by the following formulas, for $a_1,...,a_n\in C_c(\Delta)$ and $a\in C_c(X)$:
\begin{align}
L^n\Big(a_1\alpha(a_2\alpha(...a_n)...)\Big)&=L(L(... L(L(a_1)a_2)a_3...)a_n)\label{eq:transfer_powers}
\\
a_1\alpha(a_2\alpha(...a_n)...)\cdot \alpha^n(a) &=a_1\alpha(a_2\alpha(...a_n\alpha (a)...).\label{eq:endomorphism_powers}
\end{align}
\begin{lem}\label{algebraic_structure}
The algebraic crossed-product is the following linear span
\begin{equation}\label{eq:spanning set}
C_c(X)\rtimes_{\alg} L=\spane\{at^n t^{*m}b: a\in I_n^0, b\in I_m^0, n,m\in \N_0\}.
\end{equation}
Moreover,
 for all $n,m,k,l\in \N_0$ and $a\in I_n^0,b\in I_m^0,c\in I_k^0,d\in I_l^0$, we have
\begin{equation}\label{eq:commutation relations}
(at^n t^{*m}b)\cdot  (ct^k t^{*l}d)
=\begin{cases}
at^n t^{*m-k+l}\alpha^l(L^k(bc)) d & m\geq k,
\\
a\alpha^n(L^m(bc))t^{k-m+n} t^{*l} d & m< k.
\end{cases}
\end{equation}
\end{lem}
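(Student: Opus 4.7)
My plan is to prove the commutation formula \eqref{eq:commutation relations} first; the spanning description \eqref{eq:spanning set} then follows as a direct corollary. Indeed, the set
$$
\mathcal{S}=\{at^nt^{*m}b:\,a\in I_n^0,\ b\in I_m^0,\ n,m\in\N_0\}
$$
contains the generators $C_c(X)=I_0^0$ and $C_c(\Delta)t\subseteq I_1^0\cdot t\cdot I_0^0$ of $C_c(X)\rtimes_{\alg}L$, its linear span is closed under products by \eqref{eq:commutation relations}, and it is closed under the involution since $(at^nt^{*m}b)^*=b^*t^mt^{*n}a^*$ is of the same form; thus $\spane\mathcal S$ is the smallest $*$-subalgebra containing the generators and hence equals $C_c(X)\rtimes_{\alg}L$.

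The main ingredients for \eqref{eq:commutation relations} are two iterated identities, proved by induction on $n$:
\begin{align*}
(P_n)&:\quad at^nb=a\alpha^n(b)t^n,\qquad a\in I_n^0,\ b\in A,\\
(Q_n)&:\quad t^nat^{*n}=L^n(a),\qquad a\in I_n^0.
\end{align*}
For $(P_n)$, the base case $n=1$ is the commutation relation $atb=a\alpha(b)t$ from \eqref{eq:standard relations}; the inductive step writes a pure tensor in $I_{n+1}^0$ as $a_1\cdot\alpha(a')$ with $a_1\in C_c(\Delta)$, $a'\in I_n^0$, uses the $n=1$ case to rewrite $a_1\alpha(a')t=a_1ta'$, and then invokes the inductive hypothesis on $a't^nb$. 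For $(Q_n)$, one writes $t^{n+1}at^{*(n+1)}=t^n(tat^*)t^{*n}=t^nL(a)t^{*n}$ and uses the transfer identity $L(a_1\alpha(a'))=a'L(a_1)\in I_n^0$ together with the inductive hypothesis and \eqref{eq:transfer_powers}. Taking adjoints of $(P_n)$ gives the mirror identity
$$
(P_n^*):\quad b\,t^{*n}\,a=t^{*n}\alpha^n(b)\,a,\qquad a\in I_n^0,\ b\in A,
$$
which will be used to combine trailing $A$-valued factors with $t^{*}$-powers on the outside.

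For the product formula itself I would split
$$
(at^nt^{*m}b)(ct^kt^{*l}d)=at^n\bigl(t^{*m}(bc)\,t^k\bigr)t^{*l}d
$$
and simplify the middle factor $t^{*m}(bc)t^k$. Using commutativity of $A$ together with \eqref{eq:I_n_spanning_set}, a pure-tensor product $bc\in I_m^0\cdot I_k^0$ decomposes in the case $m\geq k$ as $bc=e\cdot\alpha^k(f)$ with $e\in I_k^0$ and $f\in I^0_{m-k}\subseteq A$; in the case $m<k$ the decomposition is symmetric. Applying $(P_k)$ in reverse absorbs $\alpha^k(f)$ into the $t^k$, giving $(bc)t^k=e\,\alpha^k(f)\,t^k=e\,t^k\,f$, whence
$$
t^{*m}(bc)\,t^k=t^{*(m-k)}\,\bigl(t^{*k}\,e\,t^k\bigr)\,f.
$$
The inner bracket is collapsed to $L^k(bc)\in A$ by telescoping the defining relations of \eqref{eq:standard relations} piece-by-piece along the tensor presentation $e=\prod_{i=1}^k\alpha^{i-1}(e_i)$, peeling off each innermost $t^*(\cdot)t$ pair via the base case $tet^*=L(e)$ combined with the commutation relation $atb=a\alpha(b)t$. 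Finally $(P_l^*)$ transports the resulting $L^k(bc)\in A$ past $t^{*l}$ to give $t^{*(m-k+l)}\alpha^l(L^k(bc))d$, while the outer $at^n$ is left untouched; the formula in the $m\geq k$ case follows. The case $m<k$ is entirely symmetric: the middle factor reduces to $L^m(bc)t^{k-m}$, and $(P_n)$ transports $L^m(bc)$ across $t^n$ to produce $a\alpha^n(L^m(bc))t^{k-m+n}$.

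The principal obstacle is this inner telescoping: $(Q_n)$ only provides the \emph{forward} form $t^n(\cdot)t^{*n}=L^n(\cdot)$, whereas the middle simplification requires the \emph{backward} form $t^{*k}(\cdot)t^k=L^k(\cdot)$. This must be extracted by an inductive peeling-off argument along the pure-tensor presentation of $e$, interleaving the defining relations \eqref{eq:standard relations} with the transfer identity, and carefully tracking how the auxiliary $f$-factor passes through. Once this bookkeeping is in place, the rest of the calculation is routine linear algebra, and the decomposition of $bc$ for general (non-pure-tensor) $b,c$ is handled by linearity.
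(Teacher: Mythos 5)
Your overall architecture matches the paper's: prove iterated forms of the defining relations by induction, deduce the multiplication formula \eqref{eq:commutation relations}, and then obtain \eqref{eq:spanning set} because the span is a $*$-subalgebra containing the generators. Your identity $(P_n)$, its adjoint, and your splitting of the middle factor $t^{*m}(bc)t^{k}$ are exactly what is needed. The gap is in the other iterated identity. What the computation requires is $t^{*k}et^{k}=L^{k}(e)$ for $e\in I_k^0$; this is the first relation in the paper's \eqref{eq:standard_relations_iterated}, and it is the orientation consistent with $\pi(L(a))=T^*\pi(a)T$ in \eqref{equ:main_relation} and with $L(a)=t^*at$ in Proposition \ref{prop:universal_crossed_product} (the display \eqref{eq:standard relations} contains a typo in this respect, which has evidently misled you). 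You instead prove $(Q_n)$ in the opposite orientation, $t^{n}et^{*n}=L^{n}(e)$, you correctly observe that this is not what the middle simplification needs, and you then assert that the needed ``backward'' form can be extracted by peeling off innermost $t^{*}(\cdot)t$ pairs ``via the base case $tet^{*}=L(e)$''. That step fails: the base case has the $t$'s in the opposite order, so it cannot be applied to a pair of the form $t^{*}(\cdot)t$, and in the $*$-algebra generated by $C_c(X)$ and $t$ subject only to $tat^{*}=L(a)$ and $atb=a\alpha(b)t$ nothing allows one to rewrite $t^{*}et$ as an element of $A$ (the adjoint of the commutation relation only moves elements of $A$ across $t^{*}$). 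So the central step of your argument — the collapse $t^{*k}(bc)t^{k}=L^{k}(bc)$ — is not established, and the sketch given for it would not go through.

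The remedy is not a cleverer telescoping but the correct orientation of the transfer relation: take $L(a)=t^{*}at$, as the rest of the paper does. Then $t^{*k}et^{k}=L^{k}(e)$ follows by the same easy induction you used for $(P_n)$: writing $e=e_1\alpha(e')$ with $e_1\in C_c(\Delta)$ and $e'\in I^0_{k-1}$, one has $t^{*}e_1\alpha(e')t=t^{*}e_1te'=L(e_1)e'$, and induction together with \eqref{eq:transfer_powers} gives $L^{k}(e)$; this is precisely the paper's derivation of \eqref{eq:standard_relations_iterated}. With that identity in hand, your computation of the product — including the transport of $L^{k}(bc)\in A$ across $t^{*l}$ via the adjoint of $(P_l)$ — goes through verbatim and reproduces \eqref{eq:commutation relations}, and your closing argument for \eqref{eq:spanning set} is fine. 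Your $(Q_n)$ is then neither needed nor, in the intended universal algebra, available.
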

\begin{proof} Using  \eqref{eq:transfer_powers} and \eqref{eq:endomorphism_powers} we get that \eqref{eq:standard relations} generalizes to
\begin{equation}\label{eq:standard_relations_iterated}
L^n(a)=t^{*n}at^{n}, \qquad at^nb=a\alpha^n(b)t^n, \qquad \text{for all } a\in I_n^0, b\in A, n\in \N.
\end{equation}
For instance, for $n=2$, and $a_1,a_2\in I$, we have
$$
L^2\Big(a_1\alpha(a_2)\Big)\stackrel{\eqref{eq:transfer_powers}}{=}L(L(a_1) a_2)\stackrel{\eqref{eq:standard relations}}{=}t^* (t^* a_1 ta_2)t\stackrel{\eqref{eq:standard relations}}{=}t^{*2}a_1\alpha(a_2) t^{2},
$$
$$
\Big(a_1\alpha(a_2)\Big)t^2b\stackrel{\eqref{eq:standard relations}}{=}a_1 t a_2tb\stackrel{\eqref{eq:standard relations}}{=} a_1t a_2\alpha(b)t=a_1 \alpha(a_2\alpha(b))t^2\stackrel{\eqref{eq:endomorphism_powers}}{=}  \Big(a_1\alpha(a_2)\Big)\cdot \alpha^2(b)t^2 .
$$
Using \eqref{eq:standard_relations_iterated} one readily gets \eqref{eq:commutation relations}. In turn
\eqref{eq:commutation relations} implies that the self-adjoint linear space $\spane\{at^n t^{*m}b: a\in I_n^0, b\in I_m^0, n,m\in \N_0\}$
is closed under multiplication. Hence it is a $*$-algebra, and clearly it is generated by $I_0^0\cup I_1^0$. This proves \eqref{eq:spanning set}.
\end{proof}
By universality every covariant representation $(\pi,T)$ of $L$ induces (uniquely)
a representation $\pi\rtimes T$ of the $*$-algebra $C_c(X)\rtimes_{\alg} L$ where $\pi\rtimes T(a)=\pi(a)$, $a\in C_c(X)$, and
$\pi\rtimes T(at)=aT$ for $a\in C_c(\Delta)$. Namely, $\pi\rtimes T (\sum_{i=1}^n a_it^{n_i} t^{*m_i}b_i)=\sum_{i=1}^n a_i T^{n_i} T^{*m_i}b_i$
 for $a_i\in I_{n_i}^0$, $b_i\in I_{m_i}^0$, $i=1,...,n$.
We put
$$
\|x\|_{\max}:=\sup\{ \|\pi\rtimes T(x)\|: (\pi,T)\text{ is a covariant representation for }L\}.
$$
It is easily verified that $\|\cdot \|_{\max}$ is a $C^*$-seminorm (a submultiplicative seminorm satisfying the $C^*$-equality). It is finite because
$
\|\sum_{i=1}^n a_it^{n_i} t^{*m_i}b_i \|_{\max}\leq  \sum_{i=1}^n \|a_i\|  \|b_i\|  (\|L^{n_i}\| \|L^{m_i}\|)^{\frac{1}{2}},
$
cf. Remark \ref{rem:representation_of_L}. Restriction $\|\cdot \|_{\max}$  to $C_c(X)$
coincides with the unique  $C^*$-norm on $A$, because there exists  a faithful covariant representation, see Example \ref{ex:orbit_representation}.
In other words, the (self-adjoint and two-sided) ideal
$$
\NN:=\{x\in C_c(X)\rtimes_{\alg} L: \|x\|_{\max}=0\}
$$
intersects $C_c(X)$ trivially.
\begin{defn}\label{def:crossed_product}
The \emph{crossed product} of $A$ by the transfer operator $L$ is the $C^*$-algebra $A\rtimes L$ obtained by the Hausdorff completion
of $C_c(X)\rtimes_{\alg} L$ in $\|\cdot \|_{\max}$:
$$
A\rtimes L=\overline{C_c(X)\rtimes_{\alg} L/\NN}^{\|\cdot \|_{\max}}.
$$
\end{defn}
\begin{rem}\label{rem:crossed_product_span}
Since   $C_c(X)\cap \NN=\{0\}$, we may and we will treat $C_c(X)$ as a $*$-subalgebra of $A\rtimes L$. The closure of $C_c(X)$ in  $A\rtimes L$ will be identified with $A$. We will also abuse the notation and write
$at^n$, $a\in I_n^0$,    for  their images in $A\rtimes L$.
In fact we extend this notation to any $a\in I_n=C_0(\Delta_n)$ by writing $at^n$ for the
limit in   $A\rtimes L$ of a sequence $a_nt^n$ where $\{a_n\}_{n=1}^\infty \subseteq I_n^0$ converges uniformly to $a$.
So by Lemma \ref{algebraic_structure} we have
$$
A\rtimes L=\clsp\{at^n t^{*m}b: a\in I_n,b\in I_m, n,m\in \N_0\}.
$$
\end{rem}
\begin{prop}\label{prop:universal_crossed_product}
Assume that $A\rtimes L\subseteq B(H)$ is represented in a faithful and non-degenerate way on a Hilbert space $H$. The crossed product $A\rtimes L$ is the universal $C^*$-algebra for covariant representations of $L$
:
\begin{enumerate}
\item \label{prop:universal_crossed_product1} $A\rtimes L$ contains $A$ as a $C^*$-subalgebra, and is generated by $A$ and  $It$ for $t\in B(H)$ such that
$L(a)=t^*a t$, $a\in I$ and  $C_0(\Delta_{\reg})\subseteq \overline{Itt^*I}$.
\item\label{prop:universal_crossed_product2} Every covariant representation $(\pi,T)$ of $L$  induces
a representation $\pi\rtimes T$ of $ A\rtimes L$ where $\pi\rtimes T(a)=\pi(a)$, $a\in A$, and
$\pi\rtimes T(at)=\pi(a)T$, $a\in I$.
\end{enumerate}
 Every $C^*$-algebra possessing properties \ref{prop:universal_crossed_product1}, \ref{prop:universal_crossed_product2} is  isomorphic to $A\rtimes L$ by an isomorphism which is identity on $A$.
\end{prop}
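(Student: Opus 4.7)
Part (ii) is essentially built into the construction: a covariant representation $(\pi,T)$ induces a $*$-representation of the algebraic crossed product $C_c(X)\rtimes_{\alg}L$ that is contractive in $\|\cdot\|_{\max}$ by the very definition of that seminorm, hence descends to $C_c(X)\rtimes_{\alg}L/\NN$ and extends by continuity to $A\rtimes L$; the explicit formulas on generators follow. For (i), the main task is to construct $t\in B(H)$ from the given faithful nondegenerate embedding $\rho\colon A\rtimes L\hookrightarrow B(H)$. Fix an approximate unit $\{\mu_\lambda\}\subseteq I$; by Remark \ref{rem:crossed_product_span} each $\mu_\lambda t$ lies in $A\rtimes L$. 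For $\lambda\geq\nu$ the elementary bound $0\leq(\mu_\lambda-\mu_\nu)^2\leq\mu_\lambda-\mu_\nu$ inside $A$ gives
$$
\|\rho((\mu_\lambda-\mu_\nu)t)h\|^2=\langle h,\rho(L((\mu_\lambda-\mu_\nu)^2))h\rangle\leq\langle h,\rho(L(\mu_\lambda)-L(\mu_\nu))h\rangle.
$$
Since $\{\rho(L(\mu_\lambda))\}$ is a positive, monotone, norm-bounded net in $B(H)$, it converges in the strong operator topology and its matrix coefficients form Cauchy numerical nets. Thus $T:=s\text{-}\lim_\lambda\rho(\mu_\lambda t)\in B(H)$ exists.

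The key consequence is that $\rho(a)T=s\text{-}\lim\rho(a\mu_\lambda t)=\rho(at)$ for every $a\in I$, because $a\mu_\lambda\to a$ in norm; in particular $IT\subseteq A\rtimes L$. A matrix-coefficient computation now yields
$$
\langle h,T^{*}\rho(a)Th'\rangle=\lim_{\lambda,\nu}\langle h,\rho(L(\mu_\lambda a\mu_\nu))h'\rangle=\langle h,\rho(L(a))h'\rangle,\qquad a\in I,
$$
because $\mu_\lambda a\mu_\nu\to a$ and hence $L(\mu_\lambda a\mu_\nu)\to L(a)$ in norm. Thus $T^{*}\rho(a)T=\rho(L(a))$ and $(\id_A,T)$ is a representation of $L$. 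The identity $\rho(a)TT^{*}\rho(b)=\rho(att^{*}b)$ for $a,b\in I$, combined with the defining relation \eqref{eq:Cuntz_relation_K2} inside $A\rtimes L$, delivers $C_0(\Delta_{\reg})\subseteq\overline{ITT^{*}I}$, so the representation is covariant. Finally, by Lemma \ref{algebraic_structure} and Remark \ref{rem:crossed_product_span} the image under $\rho$ of $C_c(X)\cup C_c(\Delta)t$ lies in $A\cup IT$ and generates a dense $*$-subalgebra; hence $C^{*}(A\cup IT)=A\rtimes L$.

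For the uniqueness clause, suppose $B$ is a $C^{*}$-algebra with properties (i) and (ii). Property (i) for $B$ furnishes a faithful nondegenerate representation $B\subseteq B(H_B)$ and an operator $t_B\in B(H_B)$ such that $(\id_A,t_B)$ is a covariant representation of $L$ with $B=C^{*}(A\cup It_B)$; applying (ii) for $A\rtimes L$ yields a $*$-homomorphism $\psi\colon A\rtimes L\to B$ which is the identity on $A$ and sends $at\mapsto at_B$. Swapping the roles of $A\rtimes L$ and $B$ produces $\phi\colon B\to A\rtimes L$ with $\phi|_A=\id$ and $\phi(at_B)=at$. Each composition is the identity on a generating set, hence on the whole algebra, so $\phi$ and $\psi$ are mutually inverse $*$-isomorphisms fixing $A$. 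The main obstacle throughout is the construction of $T\in B(H)$: the bare element $t$ does not generally belong to $A\rtimes L$ nor to its multiplier algebra, and its reconstruction rests on the monotonicity estimate above, which controls the tail of $\{\rho(\mu_\lambda t)\}$ by the increasing bounded net $\{\rho(L(\mu_\lambda))\}$.
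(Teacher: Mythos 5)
Your argument is correct and follows essentially the same route as the paper: parts \ref{prop:universal_crossed_product1} and \ref{prop:universal_crossed_product2} are read off from the construction of $A\rtimes L$, and uniqueness follows by playing the two universal properties against each other to obtain mutually inverse $*$-epimorphisms fixing $A$. The only difference is that you spell out the ``by construction'' step for \ref{prop:universal_crossed_product1}, realizing $t$ as the strong limit of $\rho(\mu_\lambda t)$ over an approximate unit of $I$ --- precisely the strong-limit technique the paper itself employs elsewhere (e.g.\ in the proofs of Lemma \ref{lem:representations_correspondence} and Theorem \ref{thm:local_homeo_crossed_groupoid}).
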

\begin{proof}
\ref{prop:universal_crossed_product1} and \ref{prop:universal_crossed_product2} follow by construction. To see the  last part, assume that  $C=C^*(A\cup Is)\subseteq B(K)$
is a $C^*$-algebra, represented on a Hilbert space $K$, that satisfy analogues of  \ref{prop:universal_crossed_product1}, \ref{prop:universal_crossed_product2}.
Then \ref{prop:universal_crossed_product2} for $A\rtimes L$ and $C$   give    $*$-epimorphisms $\Psi:A\rtimes L\to C$ and $\Phi:C\to A\rtimes L$ which clearly are inverse to each other.
\end{proof}
\begin{rem}
Proposition \ref{prop:universal_crossed_product} shows that $A\rtimes L$ depends only on  $L:I\to A$, or equivalently on $\varrho:\Delta\to X$
(it depends only on $\varrho$ up to continuous factors, see Corollary \ref{cor:independence} below).
\end{rem}

\subsection{Cuntz-Pimsner picture and other constructions}
Let $L:I\to A$ be a transfer operator for the partial endomorphism $\alpha:A\to M(I)$.
The $C^*$-correspondence $M_L$ associated to $L$, cf.  \cite{exel3, er},    is a Hausdorff completion of the $A$-bimodule $I$ where
$a\cdot  \xi \cdot b= ax\alpha(b)$, for $\xi\in I$, $a,b\in A$, in the $A$-valued pre-inner product given by
$
\langle \xi, \eta\rangle_A:=L(\xi^*\eta)
$, $\xi,\eta\in I$. A representation of $M_L$ is a pair $(\pi,\psi)$ where $\pi:A\to B(H)$ is a non-degenerate representation
 and $\psi:M_L\to B(H)$ is a (necessarily linear) map such that $\pi(a)\psi(\xi)\pi(b)=\psi(a\xi b)$ and $\psi(\xi)^*\psi(\eta)=\pi( \langle \xi, \eta\rangle_A)$
for $a,b \in A$, $\xi,\eta\in M_L$.
\begin{lem}\label{lem:representations_correspondence}
Every representation $(\pi, \psi)$ of $M_L$ comes from a representation $(\pi, T)$ of $L$ in the sense that
$
\psi(q(\xi))=\pi(\xi)T$, $\xi\in I$,
where $q:I\to M_L$ is the canonical quotient map. This gives a bijective correspondence between  representations
$(\pi, \psi)$ of $M_L$  and representations $(\pi, T)$ of $L$ satisfying $TH \subseteq \overline{\pi(I)H}$.
\end{lem}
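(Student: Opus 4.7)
The plan is to construct two assignments and verify they are mutually inverse. For the forward direction, given a representation $(\pi,T)$ of $L$ with $TH\subseteq \overline{\pi(I)H}$, I would set $\psi(q(\xi)):=\pi(\xi)T$ for $\xi\in I$. To see this is well-defined on the quotient and extends boundedly to the completion $M_L$, I would use the isometric estimate
$$
\|\pi(\xi)T\|^2=\|T^*\pi(\xi^*\xi)T\|=\|\pi(L(\xi^*\xi))\|\leq \|L(\xi^*\xi)\|=\|q(\xi)\|_{M_L}^2.
$$
The bimodule identity $\psi(a\cdot q(\xi)\cdot b)=\pi(a)\psi(q(\xi))\pi(b)$ then reduces to the commutation $\pi(\xi)T\pi(b)=\pi(\xi\alpha(b))T$ supplied by Lemma~\ref{lem:automatic_commutation_relation}, while $\psi(q(\xi))^*\psi(q(\eta))=\pi(\langle q(\xi),q(\eta)\rangle_A)$ is immediate from \eqref{equ:main_relation}.

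For the reverse direction, given $(\pi,\psi)$, I would fix an approximate unit $\{\mu_\lambda\}$ in $I$ and define $Th:=\lim_\lambda \psi(q(\mu_\lambda))h$. The main obstacle is existence of this limit. From
$$
\|\psi(q(\mu_\lambda-\mu_\nu))h\|^2=\langle h,\pi(L((\mu_\lambda-\mu_\nu)^2))h\rangle
$$
and commutativity of $I$, the problem reduces to showing that
$$
\omega_h\bigl(L(\mu_\lambda^2)\bigr)+\omega_h\bigl(L(\mu_\nu^2)\bigr)-2\omega_h\bigl(L(\mu_\lambda\mu_\nu)\bigr)\longrightarrow 0,
$$
where $\omega_h(a):=\langle \pi(a)h,h\rangle$ is represented by a finite Borel measure $\mu_h$ on $X$ via Riesz representation. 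For every $y\in X$, formula \eqref{eq:transfer operator} gives that $L(\mu_\lambda^2)(y)$ and $L(\mu_\lambda\mu_\nu)(y)$ both increase to the pointwise supremum $\sum_{x\in \varphi^{-1}(y)}\varrho(x)$ (bounded by $\|L\|$), so dominated convergence against $\mu_h$ yields the required decay.

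Once $T$ is constructed, the identity $\pi(a)T=\psi(q(a))$ on $I$ follows from $\pi(a)\psi(q(\mu_\lambda))=\psi(q(a\mu_\lambda))\to \psi(q(a))$ (using $\|q(a-a\mu_\lambda)\|_{M_L}^2=\|L((a-a\mu_\lambda)^*(a-a\mu_\lambda))\|\to 0$). This immediately gives $TH\subseteq \overline{\pi(I)H}$ since $\psi(q(\mu_\lambda))h=\pi(\mu_\lambda)Th\in \pi(I)H$, and the transfer relation follows from
$$
T^*\pi(a)T=T^*\psi(q(a))=\lim_\lambda \psi(q(\mu_\lambda))^*\psi(q(a))=\lim_\lambda \pi(L(\mu_\lambda a))=\pi(L(a)).
$$
Bijectivity is then bookkeeping: starting from $(\pi,T)$ with $TH\subseteq\overline{\pi(I)H}$, the reconstructed operator sends $h\mapsto \lim_\lambda\pi(\mu_\lambda)Th=Th$, and starting from $(\pi,\psi)$, the reconstructed $\psi'$ satisfies $\psi'(q(a))=\pi(a)T=\psi(q(a))$ by construction, so agrees with $\psi$ on the dense subspace $q(I)\subseteq M_L$.
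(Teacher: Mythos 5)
Your overall route is the same as the paper's: the forward assignment $\psi(q(\xi))=\pi(\xi)T$ with the contractivity estimate and Lemma \ref{lem:automatic_commutation_relation}, the reverse construction of $T$ as the strong limit of $\psi(q(\mu_\lambda))$ along an approximate unit, and the mutual-inverse bookkeeping all match. The one point where you diverge is the justification that the net $\psi(q(\mu_\lambda))h$ converges, and there your argument has a gap: dominated convergence is a theorem about sequences, while $\{\mu_\lambda\}$ is in general only a net ($X$ is not assumed second countable, so $C_0(\Delta)$ need not be $\sigma$-unital). Pointwise convergence of a uniformly bounded net does not imply convergence of integrals: the indicators of finite subsets of $[0,1]$, directed by inclusion, increase pointwise to $1$ while all their Lebesgue integrals are $0$. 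So the passage from the fiberwise limits $L(\mu_\lambda^2)(y),\,L(\mu_\lambda\mu_\nu)(y)\nearrow\sum_{x\in\varphi^{-1}(y)}\varrho(x)$ to the decay of $\omega_h(L(\mu_\lambda^2))+\omega_h(L(\mu_\nu^2))-2\omega_h(L(\mu_\lambda\mu_\nu))$ is not licensed by the theorem you invoke (it would be fine if one had a sequential approximate unit, or if one used the Radon-measure form of monotone convergence for increasing nets of continuous functions).

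The step is easily repaired, and in fact you need neither $\mu_h$ nor the fiberwise suprema. Take the approximate unit increasing with $0\le\mu_\lambda\le 1$ (as you implicitly do). Then $I(\lambda):=\omega_h(L(\mu_\lambda^2))$ is a bounded increasing net of reals, hence converges to its supremum $S$; and for $\lambda,\nu\ge\lambda_0$ one has $\mu_\lambda\mu_\nu\ge\mu_{\lambda_0}^2$, so positivity of $L$ and of $\omega_h$ gives $\omega_h(L(\mu_\lambda\mu_\nu))\ge I(\lambda_0)$. Hence $I(\lambda)+I(\nu)-2\omega_h(L(\mu_\lambda\mu_\nu))\le 2S-2I(\lambda_0)$, which is small for suitable $\lambda_0$, proving the net is strongly Cauchy. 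The paper's own argument is a still shorter version of this: since $0\le\mu_{\lambda'}-\mu_\lambda\le 1$, one has $L((\mu_{\lambda'}-\mu_\lambda)^2)\le L(\mu_{\lambda'}-\mu_\lambda)$, so $\lVert(\psi(q(\mu_\lambda))-\psi(q(\mu_{\lambda'})))h\rVert^2\le\langle h,\pi(L(\mu_{\lambda'})-L(\mu_\lambda))h\rangle$, and the bounded increasing net $\pi(L(\mu_\lambda))$ converges strongly. With that one step replaced, the rest of your write-up (covariance relation, $TH\subseteq\overline{\pi(I)H}$, and bijectivity) is correct and coincides with the paper.
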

\begin{proof} If  $(\pi,T)$ is a representation of $L$, then $
\psi(q(\xi)):=\pi(\xi)T$, $\xi\in I$, is well defined because $\|\pi(\xi)T\|^2=\|\pi(L(\xi^*\xi))\|\leq \|L(\xi^*\xi)\|=\|q(\xi)\|$, and clearly, $(\pi,\psi)$ is a representation of $M_L$.
Let $(\pi, \psi)$ be a representation of $M_L$ and let $\{\mu_\lambda\}$ be an approximate unit in $I=C_0(\Delta)$.
We  claim that the net of operators $T_\lambda:=\psi(q(\mu_{\lambda}))$ is strongly Cauchy. Indeed, let $h\in H$ and  $\lambda\leq \lambda'$, in the directed set $\Lambda$.  Then
 \begin{align*}
\|(T_\lambda-T_{\lambda'})h\|^2
&=\langle h,  L\left(\mu_\lambda-\mu_{\lambda'}\right)^2)h\rangle
\leq \langle h, L(\mu_\lambda-\mu_{\lambda'})  h \rangle.
\end{align*}
Since the net $\{L(\mu_\lambda)\}_{\lambda\in \Lambda}$ is strongly convergent  the last expression tends to zero.
Hence $T:=\textrm{s-}\lim_{\lambda \in \Lambda} T_\lambda$ defines a bounded operator. For every $a\in C_0(\Delta)$
we have
  $$T^* a T=\textrm{s-}\lim_{\lambda \in \Lambda} T_\lambda^* a T_\lambda=\lim_{\lambda \in \Lambda}L(\mu_{\lambda} a\mu_\lambda)=L(a).
	$$
	Thus $(\pi,T)$ is a representation of $L$ satisfying $TH \subseteq \overline{\pi(I)H}$.
\end{proof}
\begin{thm}\label{thm:crossed_product_Cuntz-Pimsner}
The crossed product $A\rtimes L$ is naturally isomorphic with Katsura's Cuntz-Pimsner algebra $\OO_{M_L}$. In particular, $A\rtimes L$ is always nuclear, and satisfies  the Universal Coefficient Theorem (UCT) if $A$ is separable (equivalently $X$ is second countable).
\end{thm}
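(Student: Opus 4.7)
The plan is to check that our covariance relations for representations of $L$ match Katsura's Cuntz--Pimsner covariance relations for the correspondence $M_L$; the isomorphism $A\rtimes L\cong\OO_{M_L}$ and the structural consequences then follow from Katsura's general theory.

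First, by Lemma \ref{lem:representations_correspondence}, non-degenerate representations of $M_L$ correspond bijectively to representations $(\pi,T)$ of $L$ satisfying $TH\subseteq\overline{\pi(I)H}$, via $\psi(q(\xi))=\pi(\xi)T$ (the auxiliary assumption is harmless by Remark \ref{rem:representation_of_L} and does not alter $C^*(\pi,T)$). Under this correspondence the canonical lift $\psi^{(1)}\colon\KK(M_L)\to B(H)$ sends a rank-one operator $\theta_{q(\xi),q(\eta)}$ to $\pi(\xi)TT^*\pi(\eta^*)$, so that $\psi^{(1)}(\KK(M_L))=\overline{\pi(I)TT^*\pi(I)}$. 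Recall that Katsura's ideal is $J_{M_L}:=\varphi_L^{-1}(\KK(M_L))\cap(\ker\varphi_L)^\perp$, where $\varphi_L\colon A\to\LL(M_L)$ is the left action $\varphi_L(a)q(\xi)=q(a\xi)$, and Katsura's covariance requirement reads $\pi(a)=\psi^{(1)}(\varphi_L(a))$ for $a\in J_{M_L}$.

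The heart of the argument is the identification $J_{M_L}=C_0(\Delta_{\reg})$. A direct computation with the inner product $\langle\xi,\eta\rangle_A=L(\xi^*\eta)$ gives $\ker\varphi_L=\{a\in A:a|_{\overline{\Delta_{\pos}}}=0\}$, whence $(\ker\varphi_L)^\perp$ contains $C_0(\Delta_{\reg})$ because $\Delta_{\reg}\subseteq\Delta_{\pos}$. The inclusion $\varphi_L^{-1}(\KK(M_L))\subseteq C_0(\Delta_{\reg})$ is obtained by applying Proposition \ref{prop:katsuras_ideal} to the orbit representation $(\pi_o,T_o)$ from Proposition \ref{prop:orbit_representation}: faithfulness of $\pi_o$ turns $\pi_o(A)\cap\overline{\pi_o(I)T_oT_o^*\pi_o(I)}\subseteq\pi_o(C_0(\Delta_{\reg}))$ into the required set-theoretic inclusion. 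For the opposite inclusion $C_0(\Delta_{\reg})\subseteq\varphi_L^{-1}(\KK(M_L))$, one shows that each $a\in C_c(\Delta_{\reg})$ acts as a compact operator on $M_L$: taking a compact $K\supseteq\supp a$ in $\Delta_{\reg}$ with associated quasi-basis $\{u_i^K\}_{i=1}^n$ as in \eqref{eq:quasi-basis}, local injectivity of $\varphi$ on each $U_i$ together with $\sum_i v_i\equiv 1$ on $K$ yields the pointwise identity $a\sum_{i=1}^n u_i^K\,\alpha(L(u_i^K\xi))=a\xi$ in $I$ for every $\xi\in I$, which rewrites as the finite-rank decomposition $\varphi_L(a)=\sum_{i=1}^n\theta_{q(au_i^K),\,q(u_i^K)}\in\KK(M_L)$; density of $C_c(\Delta_{\reg})$ in $C_0(\Delta_{\reg})$ closes the step.

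With $J_{M_L}=C_0(\Delta_{\reg})$ established, Katsura's covariance relation $\pi(a)=\psi^{(1)}(\varphi_L(a))$ for $a\in J_{M_L}$ translates under the bijection of Lemma \ref{lem:representations_correspondence} into $\pi(C_0(\Delta_{\reg}))\subseteq\overline{\pi(I)TT^*\pi(I)}$, i.e.\ exactly the covariance of Definition \ref{defn:representations_covariant}. Consequently $\OO_{M_L}$ satisfies both universal properties of Proposition \ref{prop:universal_crossed_product}, and the resulting pair of universal $*$-homomorphisms are mutually inverse, giving the natural isomorphism $A\rtimes L\cong\OO_{M_L}$ fixed on $A$. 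Nuclearity of $A\rtimes L$ now follows from nuclearity of the commutative $A=C_0(X)$ together with Katsura's general result that $\OO_M$ is nuclear whenever the coefficient algebra is; similarly, when $X$ is second countable (equivalently $A$ is separable), the UCT for $A\rtimes L$ is a consequence of the UCT for $A$ and Katsura's stability of the UCT under $M\mapsto\OO_M$. The principal obstacle is the identification $J_{M_L}=C_0(\Delta_{\reg})$: Proposition \ref{prop:katsuras_ideal} rules out the irregular points, while the quasi-basis concretely realises the left action by $C_0(\Delta_{\reg})$ as a finite sum of rank-one operators, using essentially both the strict positivity of $\varrho$ and the local injectivity of $\varphi$ on $\Delta_{\reg}$.
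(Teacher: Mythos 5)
Your overall route is essentially the paper's (identify Katsura's ideal $J_{M_L}$ with $C_0(\Delta_{\reg})$, match the two covariance notions, then invoke universality and Katsura's nuclearity/UCT results), and your lower bound is fine: the computation $\varphi_L(a)=\sum_{i=1}^n\theta_{q(au_i^K),q(u_i^K)}$ for $a\in C_c(\Delta_{\reg})$ correctly gives $C_0(\Delta_{\reg})\subseteq \varphi_L^{-1}(\KK(M_L))\cap(\ker\varphi_L)^\perp$. The gap is in your upper bound. First, the inclusion $\varphi_L^{-1}(\KK(M_L))\subseteq C_0(\Delta_{\reg})$ as you state it is false whenever $\ker\varphi_L\neq 0$, i.e. whenever $\overline{\Delta_{\pos}}\neq X$: any nonzero $a$ vanishing on $\overline{\Delta_{\pos}}$ satisfies $\varphi_L(a)=0\in\KK(M_L)$ but is not in $C_0(\Delta_{\reg})$; what you need is the inclusion for the intersection $\varphi_L^{-1}(\KK(M_L))\cap(\ker\varphi_L)^\perp$. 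Second, and more seriously, even that corrected inclusion does not follow from Proposition \ref{prop:katsuras_ideal} applied to the orbit representation. That proposition bounds the set $\{a\in A:\pi_o(a)\in\overline{\pi_o(I)T_oT_o^*\pi_o(I)}\}$, and to place an element with $\varphi_L(a)\in\KK(M_L)$ into this set you would need $\pi_o(a)=\psi_o^{(1)}(\varphi_L(a))$, i.e. Katsura covariance of the orbit representation at $a$. What one actually knows is only that $\pi_o(a)$ and $\psi_o^{(1)}(\varphi_L(a))$ agree on $\overline{\pi_o(I)T_oH}=\ell^2(\Delta_{\pos})$ (the module norm only sees this subspace), while $\psi_o^{(1)}(\varphi_L(a))$ vanishes on $\ell^2(X\setminus\Delta_{\pos})$ and $\pi_o(a)$ acts there by multiplication by $a$; the condition $a\in(\ker\varphi_L)^\perp$ only forces $a$ to vanish outside the interior of $\overline{\Delta_{\pos}}$, so it does not exclude nonvanishing at points of $\overline{\Delta_{\pos}}\setminus\Delta_{\pos}$, nor at irregular points of $\Delta_{\pos}$ --- and excluding exactly these points is the hard content of the upper bound. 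Deducing Katsura covariance of $(\pi_o,T_o)$ from $J_{M_L}=C_0(\Delta_{\reg})$ would be circular, since that identification is what you are proving.

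To close the gap you must either argue as the paper does --- quote Katsura's Propositions 3.3 and 4.9, by which $J_{M_L}$ contains $\{a:\pi(a)\in\overline{\psi(M_L)\psi(M_L)^*}\}$ for every faithful representation and is attained by a faithful (Katsura-covariant) one; then Proposition \ref{prop:katsuras_ideal}, via Lemma \ref{lem:representations_correspondence} applied to that attaining representation, yields $J_{M_L}\subseteq C_0(\Delta_{\reg})$, while the orbit representation is used only for the reverse inclusion $C_0(\Delta_{\reg})\subseteq J_{M_L}$ --- or else rework the estimates in the proof of Proposition \ref{prop:katsuras_ideal} at the level of the operator norm of $\LL(M_L)$ (equivalently, compressed to $\ell^2(\Delta_{\pos})$) to show directly that $\varphi_L(a)\in\KK(M_L)$ together with $a\perp\ker\varphi_L$ forces $a$ to vanish outside $\Delta_{\reg}$. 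As written, the step $J_{M_L}\subseteq C_0(\Delta_{\reg})$ is not established, and everything after it depends on it.
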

\begin{proof}
By \cite[Propositions 3.3 and 4.9]{katsura} there is the largest ideal $J_{M_L}$ in $A$ such that for every faithful representation $(\pi,\psi)$ of $M_L$ we have
$$
\{a\in A: \pi(a)\in \overline{\psi(M_L)\psi(M_L)^*}\}\subseteq J_{M_L}.
$$
The faithful representation $(\pi,\psi)$ of $M_L$ is called \emph{covariant} if the above inclusion is an equality.
Hence by Lemma \ref{lem:representations_correspondence} and Propositions \ref{prop:katsuras_ideal} (and Example \ref{ex:orbit_representation}) we have $J_{M_L}=C_0(\Delta_{\reg})$
and we have a bijective correspondence between covariant representations
$(\pi, \psi)$ of $M_L$  and covariant representations $(\pi, T)$ of $L$ satisfying $TH \subseteq \overline{\pi(I)H}$.
By definition $\OO_{M_L}$ is generated by the range of a universal covariant representation of $M_L$. By Proposition \ref{prop:universal_crossed_product} and Remark \ref{rem:representation_of_L}, $A\rtimes L$ is
generated by a universal  covariant representations $(\pi, T)$ of $L$ satisfying $TH \subseteq \overline{\pi(I)H}$. This gives a natural isomorphism  $A\rtimes L\cong\OO_{M_L}$,
cf. the last part of Proposition \ref{prop:universal_crossed_product}.

Since $A$ is commutative (and hence nuclear),  $A\rtimes L\cong\OO_{M_L}$ is nuclear by \cite[Corollary 7.4]{katsura}. If $A$ is separable,
then  satisfies the UCT by \cite[Proposition 8.8]{katsura}.
  \end{proof}
	\begin{rem}\label{rem:correspondence_description}
	We have seen in the above proof that Katsura's ideal $J_{M_L}$ for $M_L$ is $C_0(\Delta_{\reg})$. It also follows form the proof of Lemma \ref{lem:representations_correspondence}
	that the $C^*$-correspondence $M_L$ is naturally isomorphic to $\overline{It}$ with operations coming from the $C^*$-algebra  $A\rtimes L$.
	
	\end{rem}
\begin{cor}
The crossed product $A\rtimes L$ is naturally isomorphic to the crossed product $\OO(A,\alpha, L)$ by the partial endomorphism $\alpha$ defined
in \cite{er}.
\end{cor}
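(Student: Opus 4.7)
The plan is to identify Exel--Royer's crossed product $\OO(A,\alpha,L)$ with the Katsura--Cuntz--Pimsner algebra $\OO_{M_L}$ and then invoke Theorem \ref{thm:crossed_product_Cuntz-Pimsner}. First I would recall the construction in \cite{er}: to the partial endomorphism $\alpha:A\to M(I)$ together with the transfer operator $L:I\to A$ one associates a $C^*$-correspondence $M$ over $A$ defined as the Hausdorff completion of $I$ with respect to the $A$-valued semi-inner product $\langle \xi,\eta\rangle_A := L(\xi^*\eta)$ and bimodule structure $a\cdot \xi \cdot b := a\,\xi\,\alpha(b)$. This is literally our correspondence $M_L$ introduced just before Lemma \ref{lem:representations_correspondence}, so in particular $\OO(A,\alpha,L)$ and $\OO_{M_L}$ are built from the same Hilbert module data.

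Next I would check that the covariance condition used by Exel and Royer in the definition of $\OO(A,\alpha,L)$ agrees with Katsura's covariance for $M_L$. Since our $L$ is bounded (so $\mathrm{dom}(L)=I$ is closed), the Exel--Royer construction reduces, after the standard identification of redundancies with covariance, to the universal $C^*$-algebra generated by representations $(\pi,\psi)$ of $M_L$ which are covariant on the largest ideal $J$ of $A$ on which the natural map $A\to \mathcal{L}(M_L)$ lands in $\mathcal{K}(M_L)$ injectively. This is exactly Katsura's ideal $J_{M_L}$, which by the proof of Theorem \ref{thm:crossed_product_Cuntz-Pimsner} is equal to $C_0(\Delta_{\reg})$. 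Therefore $\OO(A,\alpha,L)$ is generated by a universal covariant (in Katsura's sense) representation of $M_L$, i.e.\ $\OO(A,\alpha,L)\cong \OO_{M_L}$ by a canonical isomorphism that is the identity on $A$.

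Combining this with Theorem \ref{thm:crossed_product_Cuntz-Pimsner}, which gives $A\rtimes L\cong \OO_{M_L}$, yields the desired natural isomorphism $A\rtimes L\cong \OO(A,\alpha,L)$. I expect the main (and essentially the only) point requiring care to be the translation between Exel--Royer's redundancy formulation and Katsura's ideal-based formulation of covariance, because \cite{er} permits unbounded transfer operators and states the axioms with a more general flavour; once one specialises to the bounded case and uses the description of $J_{M_L}$ provided by Proposition \ref{prop:katsuras_ideal} together with the faithful orbit representation of Proposition \ref{prop:orbit_representation}, the two conditions match verbatim and the corollary follows.
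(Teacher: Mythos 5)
Your proposal is correct and follows essentially the same route as the paper: identify $\OO(A,\alpha,L)$ with the Cuntz--Pimsner algebra $\OO_{M_L}$ and then apply Theorem \ref{thm:crossed_product_Cuntz-Pimsner}. The paper's proof is even shorter, observing that in \cite{er} the crossed product $\OO(A,\alpha,L)$ is \emph{defined} to be $\OO_{M_L}$, so the careful translation between redundancies and Katsura's covariance that you anticipate is not actually needed here.
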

\begin{proof}
The crossed product $\OO(A,\alpha, L)$ in \cite{er} is defined to be  $\OO_{M_L}$.
\end{proof}
\begin{cor} If $\Delta=X$ and $\varrho>0$ on a dense subset of $X$, then
$A\rtimes L$ is naturally isomorphic to the Exel's crossed product $A\rtimes_{\alpha,L} \N$ \cite{exel3}, generalised to the non-unital case in
\cite{BroRaeVit}.
\end{cor}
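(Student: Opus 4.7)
The plan is to combine the previous theorem (Theorem~\ref{thm:crossed_product_Cuntz-Pimsner}), which identifies $A\rtimes L$ with Katsura's Cuntz-Pimsner algebra $\OO_{M_L}$, with the corresponding identification available for Exel's crossed product in the non-unital setting of \cite{BroRaeVit}. By transitivity of these two natural isomorphisms, the corollary will follow; the remaining work is just to verify that the hypotheses $\Delta=X$ and $\{\varrho>0\}$ dense guarantee (a)~that the system $(A,\alpha,L)$ fits the framework of \cite{BroRaeVit}, and (b)~that the ideal used there to define $\OO_{M_L}$ coincides with Katsura's ideal $J_{M_L}=C_0(\Delta_{\reg})$ identified in the proof of Theorem~\ref{thm:crossed_product_Cuntz-Pimsner}.

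First, I would check that $(A,\alpha,L)$ is an Exel system in the sense of \cite{BroRaeVit}. Since $\Delta=X$, the ideal $I$ equals $A$, so $\alpha:A\to M(A)=C_b(X)$ is a genuine (not merely partial) non-degenerate endomorphism implemented by composition with the continuous total map $\varphi:X\to X$, and $L:A\to A$ is a bona fide transfer operator for $\alpha$. This is the setting in which \cite{BroRaeVit} defines $A\rtimes_{\alpha,L}\N$.

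Next, I would invoke the main identification of \cite{BroRaeVit}, which realises $A\rtimes_{\alpha,L}\N$ as the Cuntz-Pimsner algebra of the correspondence $M_L$. The relative Cuntz-Pimsner condition imposed by Exel's redundancy relations, as reformulated in \cite{BroRaeVit}, is to ensure that the canonical inclusion of $A$ is injective, which in turn forces the Cuntz-Pimsner relations to be imposed exactly on Katsura's ideal $J_{M_L}$ provided that the left action of $A$ on $M_L$ is faithful. In our setting the left action of $a\in A$ on $M_L$ is multiplication by $\alpha(a)=a\circ\varphi$ on $I=A$, and this action is faithful iff $\varphi(X)$ is dense in $X$; since $\varphi$ is open on $\Delta_{\pos}$ by Proposition~\ref{prop:properties_or_rho}, and $\Delta_{\pos}$ is dense by hypothesis, the set $\varphi(\Delta_{\pos})$ is open and equals $\{y:\|L(\cdot)(y)\|\neq 0\}$, which is dense because $L$ has dense image. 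Hence the left action on $M_L$ is faithful, and the construction of \cite{BroRaeVit} yields $A\rtimes_{\alpha,L}\N\cong \OO_{M_L}$.

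Combining this with Theorem~\ref{thm:crossed_product_Cuntz-Pimsner} gives a natural $*$-isomorphism $A\rtimes L\cong \OO_{M_L}\cong A\rtimes_{\alpha,L}\N$ that is the identity on $A$ and sends our generator $t$ to Exel's isometric generator. The subtle step — the main potential obstacle — is the careful bookkeeping to ensure that the two Cuntz-Pimsner descriptions use \emph{the same} ideal; the density of $\Delta_{\pos}$ is what forces the ideal used in \cite{BroRaeVit} to coincide with $J_{M_L}=C_0(\Delta_{\reg})$, so that no spurious relations are imposed on either side. Once this is verified, the universal property from Proposition~\ref{prop:universal_crossed_product} makes the isomorphism canonical.
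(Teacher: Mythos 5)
Your overall route is the intended one: the paper's own proof is a one-line reduction, observing that $\Delta=X$ and density of $\{\varrho>0\}$ say precisely that $\alpha$ is non-degenerate and $L$ is faithful, and then citing \cite[Proposition 4.9]{kwa_Exel}, which is exactly the comparison of Exel's crossed product with Katsura's $\OO_{M_L}$ that you are trying to carry out by hand. So the strategy (chain $A\rtimes L\cong \OO_{M_L}\cong A\rtimes_{\alpha,L}\N$ and match the ideals) is fine; the problem is that your verification of the central step is wrong in two places. First, you misidentify the module structure: in $M_L$ the bimodule actions are $a\cdot\xi\cdot b=a\xi\alpha(b)$, so the \emph{left} action of $A$ is plain multiplication by $a$, not multiplication by $\alpha(a)=a\circ\varphi$. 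Consequently the relevant injectivity statement is $\ker\phi=C_0(X\setminus\overline{\Delta_{\pos}})=0$, which is exactly the density of $\Delta_{\pos}$ (equivalently, faithfulness of $L$). Your criterion "faithful iff $\varphi(X)$ is dense" concerns the wrong map, and the justification you give for it ("$\varphi(\Delta_{\pos})$ is dense because $L$ has dense image") does not follow from the hypotheses and is false in general: take $X$ a countable compact space, $\varphi$ the constant map onto one point and $\varrho>0$ summable; the hypotheses of the corollary hold, yet $\varphi(X)$ is a single point.

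Second, the ideal bookkeeping is not as you state it. The ideal appearing in the Brownlowe--Raeburn/\cite{BroRaeVit} description of Exel's crossed product is $K_\alpha=\overline{A\alpha(A)A}$, which under non-degeneracy ($\Delta=X$) is all of $A$, and this does \emph{not} coincide with $J_{M_L}=C_0(\Delta_{\reg})$ in general: for the tent map with $\varrho=\frac12\mathds{1}_{X\setminus\{1/2\}}+\mathds{1}_{\{1/2\}}$ the hypotheses of the corollary hold while $J_{M_L}=C_0(X\setminus\{1/2\})\subsetneq A$. So "density of $\Delta_{\pos}$ forces the \cite{BroRaeVit} ideal to coincide with $J_{M_L}$" is not true, and this is precisely the gap. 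What actually makes the comparison work is that Exel's redundancy relations can only bite on elements acting by generalized compacts, i.e.\ on $K_\alpha\cap\phi^{-1}(\KK(M_L))$; faithfulness of $L$ gives $(\ker\phi)^{\perp}=A$, hence $\phi^{-1}(\KK(M_L))=J_{M_L}=C_0(\Delta_{\reg})$, and non-degeneracy gives $K_\alpha=A\supseteq J_{M_L}$, so the relations imposed are exactly Katsura's covariance on $J_{M_L}$. That argument (together with the care needed because $\Delta=X$ only gives $\alpha:A\to M(A)$, not $\alpha(A)\subseteq A$ unless $\varphi$ is proper) is the content of \cite[Proposition 4.9]{kwa_Exel}; without carrying it out or citing it, your proof does not close.
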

\begin{proof}
The assumptions mean that $\alpha:A\to A$ is non-degenerate and  $L$ is faithful. Thus the assertion follows from \cite[Proposition 4.9]{kwa_Exel}.
\end{proof}
We naturally  associate to $L$ a \emph{topological correspondence} in the sense of \cite[Definition 2.1]{BHM}, see also \cite[Subection 9.3]{CKO}.
The underlying topological directed graph $(E^0,E^1,s,r)$ is the graph of $\varphi$:
$$
E^0:=X, \quad E^{1}:=\Delta,\quad  r(x):=x,\quad s(x):=\varphi(x).
$$
It is equipped with the continuous family  of measures $\mu=\{\mu_{y}\}_{y\in X}$ along fibers  of $\varphi$ given by $\mu_y(a):=L(a)(y)$, $a \in  C_c(X)$. Note that we only have $\supp \mu_y\subseteq s^{-1}(y)$, $y\in X$. Thus the  topological correspondence $\QQ:=(X, \Delta, id, \varphi,\mu)$ is a \emph{topological quiver} in the sense of \cite{mt} iff $\supp \mu_y= s^{-1}(y)$, $y\in X$   iff $\Delta=\Delta_{\pos}$ (note that we use a convention where  $s$ and $r$ play the opposite role in \cite{mt}).
\begin{lem}\label{lem:correspondence_coincidence}
The $C^*$-correspondence $M_{\QQ}$ associated to the topological correspondence $\QQ=(X, \Delta, id, \varphi,\mu)$ in \cite[Definition 2.4]{BHM}, cf. \cite[3.1]{mt}, coincides
 $M_L$. 
\end{lem}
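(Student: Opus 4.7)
The plan is to show that $M_{\QQ}$ and $M_L$ arise as Hausdorff completions of the same pre-inner-product bimodule by unwinding both definitions on the dense $*$-subalgebra $C_c(\Delta)\subseteq I$ and matching all three structural ingredients: the left action of $A$, the right action of $A$, and the $A$-valued inner product.

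First I would recall that $M_{\QQ}$, built from the topological correspondence $\QQ=(X,\Delta,\id,\varphi,\mu)$, is (by \cite[Definition 2.4]{BHM}) the Hausdorff completion of $C_c(\Delta)$ equipped with the bimodule operations
\[
(a\cdot \xi)(x)=a(r(x))\xi(x)=a(x)\xi(x),\qquad (\xi\cdot b)(x)=\xi(x)b(s(x))=\xi(x)b(\varphi(x)),
\]
and $A$-valued pre-inner product
\[
\langle \xi,\eta\rangle(y)=\int_{s^{-1}(y)} \overline{\xi(x)}\eta(x)\,d\mu_y(x).
\]
Since $r=\id$, the left action coincides with pointwise multiplication in $I$, which is the left action used in $M_L$. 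Since $s=\varphi$, the right action is precisely $\xi\cdot b=\xi\,\alpha(b)$, matching the right action used in $M_L$. Finally, by the very definition of $\mu$, the inner product above equals
\[
\sum_{x\in\varphi^{-1}(y)}\varrho(x)\overline{\xi(x)}\eta(x)=L(\xi^{*}\eta)(y),
\]
so the pre-inner products on $C_c(\Delta)$ agree.

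The remaining step is to confirm that the two Hausdorff completion procedures yield canonically identified $C^{*}$-correspondences. Since $C_c(\Delta)$ is dense in $I=C_0(\Delta)$ in the uniform norm and the map $\xi\mapsto \|L(\xi^{*}\xi)\|^{1/2}$ is continuous with respect to the uniform norm (because $L$ is bounded), completing $C_c(\Delta)$ in the inner-product seminorm gives the same Banach space as first passing to $I$ and then completing, which is exactly $M_L$. The bimodule operations extend continuously and uniquely in both cases, so the resulting identification is an isomorphism of $A$-correspondences. I do not foresee a genuine obstacle here; the lemma is a routine matter of checking that the two constructions are literally the same once the data is unpacked, and the only care needed is with the passage from $C_c(\Delta)$ to $C_0(\Delta)$ in the Hausdorff completion, which is handled by density and boundedness of $L$.
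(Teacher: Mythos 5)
Your proof is correct and follows the same route the paper intends: the paper simply states that the lemma follows immediately from the constructions, and your argument is exactly that unpacking — matching left action ($r=\id$), right action ($s=\varphi$, i.e. $\xi\cdot b=\xi\alpha(b)$) and inner product ($\mu_y(\xi^*\eta)=L(\xi^*\eta)(y)$), plus the routine observation that completing $C_c(\Delta)$ or $C_0(\Delta)$ in the seminorm $\|L(\xi^*\xi)\|^{1/2}$ gives the same correspondence since this seminorm is dominated by $\|L\|^{1/2}\|\xi\|_\infty$.
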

\begin{proof} This follows immediately from the constructions (definitions).
\end{proof}
\begin{cor}\label{cor:topological_quivers}
If $\Delta=\Delta_{\pos}$, so that $\QQ=(X, \Delta, id, \varphi,\mu)$ is a topological quiver, then the crossed product $A\rtimes L$ is naturally isomorphic to
the quiver $C^*$-algebra associated to $\QQ$ by Muhly and Tomforde in \cite{mt}.
\end{cor}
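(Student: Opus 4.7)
\begin{Proof}[Plan]
The strategy is to identify both sides of the desired isomorphism with the Katsura Cuntz--Pimsner algebra $\OO_{M_L} = \OO_{M_{\QQ}}$, so that the corollary will follow immediately from Theorem \ref{thm:crossed_product_Cuntz-Pimsner} and Lemma \ref{lem:correspondence_coincidence}.

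First, I would invoke Lemma \ref{lem:correspondence_coincidence}: under the standing hypothesis $\Delta=\Delta_{\pos}$, the topological correspondence $\QQ=(X,\Delta,\id,\varphi,\mu)$ is a topological quiver in the sense of Muhly--Tomforde, and its associated $C^*$-correspondence $M_{\QQ}$ coincides with the correspondence $M_L$ constructed from the transfer operator. Hence the two Cuntz--Pimsner algebras $\OO_{M_L}$ and $\OO_{M_{\QQ}}$, as defined in Katsura's sense, coincide.

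Next, I would recall that by Theorem \ref{thm:crossed_product_Cuntz-Pimsner} we have a natural isomorphism $A\rtimes L\cong \OO_{M_L}$ (identity on $A$). On the other hand, the quiver $C^*$-algebra of Muhly and Tomforde associated to $\QQ$ is, by its very definition in \cite{mt}, generated by a universal representation of $M_{\QQ}$ satisfying the Cuntz--Krieger type relation at all vertices in the regular ideal $J_{M_\QQ}$, i.e.\ it is precisely Katsura's algebra $\OO_{M_{\QQ}}$. (For a cleanly packaged statement of this identification one can cite \cite{BHM}, and one verifies directly that under our $\QQ$ the Muhly--Tomforde set of regular vertices matches Katsura's ideal $J_{M_L}=C_0(\Delta_{\reg})$ determined in the proof of Theorem \ref{thm:crossed_product_Cuntz-Pimsner}.) Combining, we obtain a natural isomorphism $A\rtimes L\cong \OO_{M_L}=\OO_{M_{\QQ}}\cong C^*(\QQ)$ that restricts to the identity on $A=C_0(X)$.

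The only potential obstacle is the routine bookkeeping of reconciling Muhly--Tomforde's original formulation of the Cuntz--Krieger relations for topological quivers with Katsura's ideal formulation, in particular checking that the Muhly--Tomforde regular vertices are exactly the points admitting a neighbourhood over which $\varphi$ is a proper local homeomorphism with $\varrho>0$, which under $\Delta=\Delta_{\pos}$ is $\Delta_{\reg}$ by Proposition \ref{prop:properties_or_rho}. This is a straightforward unpacking of definitions rather than a substantive difficulty.
\end{Proof}
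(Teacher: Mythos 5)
Your proposal is correct and follows essentially the same route as the paper: both identify the quiver $C^*$-algebra with the Cuntz--Pimsner algebra $\OO_{M_\QQ}=\OO_{M_L}$ via Lemma \ref{lem:correspondence_coincidence} and then apply Theorem \ref{thm:crossed_product_Cuntz-Pimsner}. The extra remarks about matching Muhly--Tomforde's regular vertices with Katsura's ideal $C_0(\Delta_{\reg})$ are harmless but not needed, since the quiver $C^*$-algebra is by definition the Cuntz--Pimsner algebra of $M_\QQ$.
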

\begin{proof}
By definition the quiver $C^*$-algebra is the Cuntz-Pimsner algebra of $M_\QQ$, which by Lemma
\ref{lem:correspondence_coincidence} is equal to $M_L$. Hence the assertion follows from Theorem \ref{thm:crossed_product_Cuntz-Pimsner}.
\end{proof}
\begin{rem}\label{rem:topological_quiver_model}
Note that if $\Delta_{\pos}$ is locally compact, then $\QQ_{\pos}:=(X, \Delta_{\pos}, id, \varphi,\mu)$ is a topological quiver.
Moreover, if $\Delta_{\pos}\subseteq \Delta$ is  open, we may apply  Corollary \ref{cor:topological_quivers} to the restricted map $\varphi:\Delta_{\pos}\to X$, to conclude that  $A\rtimes L$ is the quiver algebra associated to  $\QQ_{\pos}$. If $\Delta_{\pos}$ is closed in $\Delta$ and $\Delta$ is normal,  one may show
that the $C^*$-correspondences $M_{L}$ and $M_{\QQ_{\pos}}$ are isomorphic and hence  $A\rtimes L$ is again the quiver algebra of $\QQ_{\pos}$.
We do not know whether  $A\rtimes L$ has a natural topological quiver model in general.
\end{rem}
\begin{ex}[Maps on Riemann surfaces]\label{ex:Riemann_maps}
Let $\varphi:\Delta\to X$ be a non-constant holomorphic  map defined on an open connected subset $\Delta$ of  a  Riemann surface $X$ (so that $\Delta$ is a Riemann surface as well). Let $x\in \Delta$. By branching lemma, $\varphi$  locally at $x$ looks like
$z\to z^d$, and then $m(x):=d \in \N$ is called the multiplicity of $\varphi$ at $x$. In particular, $\varphi^{-1}(y)$ is a discrete subset of $\Delta$, for every $y\in X$. Assume that $\varphi$ is proper.  Then it is surjective and the number  $d:= \sum_{x\in \varphi^{-1}(y)} m(x)$, called the \emph{degree} of $\varphi$, does not depend on $y\in X$ and is finite. In particular,
$$
L(a)(y):= \sum_{x\in \varphi^{-1}(y)} m(x)a(x), \qquad a\in C_0(\Delta)
$$
defines a transfer operator for $\varphi$, and $\|L\|=d$. If $\Delta=X=\widehat{\C}=\C\cup \{\infty\}$ is  the Riemann sphere, then $\varphi$ is a rational function $R:\widehat{\C}\to \widehat{\C}$ and the crossed product $C(\widehat{\C})\rtimes L$ is isomorphic to the $C^*$-algebra $\OO_R(\widehat{\C})$ associated to $R$ in \cite{Kajiwara_Watatani0} (which is  $\OO_{M_L}$ by definition).
If $R$ is of degree at least two and has an exceptional point, then $R$ is conjugated either to a polynomial or a map $z\to z^d$ for some $d\in \Z\setminus \{0\}$, \cite[Theorem 4.1.2]{Beardon}.
The rational map $R$ (and the transfer operator $L$) restricts to the\emph{ Julia set} $J_R$ and \emph{Fatou set} $F_{R}$ and the crossed products
$C(J_R)\rtimes L$ and $C_0(F_{R})\rtimes L$ to the $C^*$-algebras studied in \cite{Kajiwara_Watatani0}.
\end{ex}

\begin{ex}[Branched coverings with finite system of branches]\label{ex:branched_maps}
Let $\varphi:\Delta \to X$ be a continuous surjective partial map such that  $\varphi^{-1}$ has a \emph{finite system of branches}, i.e. there is a finite collection of partial maps  $\{\gamma_i\}_{i=1}^N$
such that each $\gamma_i:X\to \Delta$ is continuous injective and
$\varphi^{-1}(y)=\{\gamma_i(y):  i=1,...,N\}$. 
Then $\varrho(x):=|\{i: x \in \gamma_i(X)\}|$ defines a potential for $\varphi$ as clearly
$$
L(a)(y)= \sum_{i=1}^N a(\gamma_i(y))=\sum_{x\in \varphi^{-1}(y)}\varrho(x) a(x), \qquad
a\in C_0(\Delta),
$$
 defines a transfer operator $L :C_0(\Delta)\to C_0(X)$ for $\varphi$.
If $X=\Delta$ is compact and $\gamma_i$ is a proper contraction, for all $i$,  the crossed product $A\rtimes L$
is naturally isomorphic to the the $C^*$-algebra associated to the \emph{self-similar set }$X$ in \cite{Kajiwara_Watatani}
(it is defined there as $\OO_{M_L}$). The model example is the \emph{tent map}  $\p:[0,1]\to[0,1]$ where  $\p(x)=1-|1-2x|$ and $L(a)(y)=a(\frac{y}{2})+a(1-\frac{y}{2})$.
\end{ex}
If a map has infinitely many branches one may define a transfer operator by using a scaling function that will make the sums converge:
\begin{ex}
Let $X=[0,1]$, $\Delta=(0,1]$ and $\varphi(x)=\sin\frac{1}{x}$. On may
define a transfer operator for $\varphi$ by the formula
$
L(a)(y)=2^{[y=\pm 1]} \sum_{x\in \varphi^{-1}(y)} e^{-1/x}a(x) .
$
\end{ex}

\section{Invariance uniqueness theorems and the regular representation}\label{sec:invariance_uniqueness_theorems}

An important consequence of universality of $A\rtimes L$ is that it is equipped with a \emph{circle gauge action} $\gamma:\mathbb{T}\to \Aut(A\rtimes L)$.
Namely, for each $\lambda \in \mathbb{T}$ the pair $(\id_A, \lambda t)$ may be treated as covariant representation of $L$. Hence by Proposition \ref{prop:universal_crossed_product}\ref{prop:universal_crossed_product2}
there is a $*$-epimorphism $\gamma_\lambda: A\rtimes L\to A\rtimes L$ such that
$$
{\gamma_\lambda}|_A=\id_A,\qquad\text{and} \qquad \gamma_\lambda (at)=\lambda at, \,\, a\in I.
$$
Moreover, we clearly have $\gamma_1=\id_{A\rtimes L }$ and  $\gamma_{\lambda_1}\circ \gamma_{\lambda_2}=\gamma_{\lambda_1 \lambda_2}$ for $\lambda_1,\lambda_2\in \mathbb{T}$. Thus  $\gamma:\mathbb{T}\to \Aut(A\rtimes L)$ is a group homomorphism. Its fixed points form a $C^*$-algebra
$
A_{\infty}:=\{x\in A\rtimes L: \gamma_\lambda(x)=x\text{ for all }\lambda \in \mathbb{T}\}.
$
We  call  $A_{\infty}$ the \emph{core $C^*$-subalgebra} of $A\rtimes L$. It is well known, see, for instance \cite[Proposition 3.2]{Raeburn},
that the formula
$
E(x):=\int_{\mathbb{T}} \gamma_\lambda(x)\, d\lambda
$
defines a faithful conditional expectation onto $A_\infty$. That is, $E$ is norm one projection onto $A_\infty$, which is   necessarily a completely positive $A_\infty$-bimodule map, see \cite[III, Theorem 3.4, IV, Corollary 3.4]{Tak}. Faithfulness here means that $E(a^*a)=0$ implies $a=0$ for all $a\in A\rtimes L$.
\begin{prop}\label{prop:conditional_expectation_description}
We have 
$$
A_{\infty}=\clsp\{at^n t^{*n}b: a,b\in I_n, n\in \N_0\},
$$
and the conditional expectation $E:A\rtimes L\to A_{\infty}$ is the unique contractive projection onto $A_{\infty}$ such that
$E(at^nt^{*m}b)=0$ for $n\neq m$ ($a\in I_n$, $b\in I_m$).
\end{prop}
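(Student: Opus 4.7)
The plan is to prove both assertions by running the gauge action through the spanning description of $A\rtimes L$ given in Remark \ref{rem:crossed_product_span}.

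First I would settle the easy inclusion. Since $\gamma_\lambda(at) = \lambda\, at$ and $\gamma_\lambda(t^*b) = \bar\lambda\, t^*b$ on generators, for $a,b\in I_n$ one gets $\gamma_\lambda(at^n t^{*n}b) = \lambda^n\bar\lambda^n at^n t^{*n}b = at^n t^{*n}b$, so each such monomial lies in $A_\infty$, and by continuity $\clsp\{at^nt^{*n}b : a,b\in I_n, n\in \N_0\}\subseteq A_\infty$.

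For the reverse inclusion I would exploit that $E$ is the projection $x\mapsto \int_\T \gamma_\lambda(x)\,d\lambda$ with image $A_\infty$. On a general monomial $a t^n t^{*m} b$ with $a\in I_n$, $b\in I_m$, the same computation gives $\gamma_\lambda(at^n t^{*m} b) = \lambda^{n-m} at^n t^{*m} b$, hence
\[
E(at^n t^{*m} b) \;=\; \Big(\int_\T \lambda^{n-m}\,d\lambda\Big)\, at^n t^{*m} b \;=\; \delta_{n,m}\, at^n t^{*n} b.
\]
Remark \ref{rem:crossed_product_span} says that these monomials span a dense subspace of $A\rtimes L$, so by continuity of $E$ we obtain $A_\infty = E(A\rtimes L) \subseteq \clsp\{at^n t^{*n} b: a,b\in I_n,\, n\in\N_0\}$, which combined with the previous paragraph gives the claimed description of $A_\infty$. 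As a by-product the displayed formula records that $E(at^n t^{*m} b)=0$ whenever $n\neq m$, so $E$ itself is a contractive projection of the kind mentioned in the statement.

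For uniqueness, suppose $F:A\rtimes L\to A_\infty$ is any contractive (hence norm-continuous) projection with $F(at^n t^{*m}b) = 0$ for $n\neq m$ and $a\in I_n$, $b\in I_m$. On the dense spanning set of $A\rtimes L$ the values of $F$ are then forced: for $n\neq m$ we have $F(at^n t^{*m} b) = 0 = E(at^n t^{*m} b)$, while for $n=m$ the element $at^n t^{*n} b$ already lies in $A_\infty$ by the first step, so $F(at^n t^{*n} b) = at^n t^{*n} b = E(at^n t^{*n} b)$. Thus $F$ and $E$ agree on a dense subalgebra and hence coincide on all of $A\rtimes L$. No step here is genuinely difficult; the only subtlety to bear in mind is that the spanning description from Remark \ref{rem:crossed_product_span} must be invoked to make the continuity argument work, and uniqueness needs the density rather than any deeper expectation-theoretic input.
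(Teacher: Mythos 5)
Your proposal is correct and follows essentially the same route as the paper: compute $E(at^nt^{*m}b)=\bigl(\int_{\mathbb{T}}\lambda^{n-m}\,d\lambda\bigr)\,at^nt^{*m}b=\delta_{n,m}\,at^nt^{*n}b$ on the spanning monomials from Remark \ref{rem:crossed_product_span}, deduce the description of $A_{\infty}$ as the closed span of the $n=m$ monomials, and get uniqueness by density and contractivity. The paper's proof is just a terser version of the same argument, and the extra details you supply (the $n=m$ monomials being gauge-fixed, and any projection onto $A_\infty$ fixing them) are exactly the implicit steps there.
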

\begin{proof}
We  have $E(at^n t^{*m}b)=at^n t^{*m}b \int_{\mathbb{T}} \lambda^{n-m}\, d\lambda$  which is zero when $n\neq m$ and
$at^n t^{*n}b$ when $n=m$. This determines $E$ and implies that $
A_{\infty}=\clsp\{at^n t^{*n}b: a,b\in I_n, n\in \N_0\}.
$
\end{proof}

The gauge-invariance uniqueness for Cuntz-Pimsner algebras implies the following
\begin{thm}\label{thm:faithfulness_on_the_core}
Let $(\pi,T)$ be  faithful covariant representation  of $L$ and let $C^*(\pi,T)$ be the $C^*$-algebra generated by $\pi(A)\cup \pi(I)T$.
Then $\pi\rtimes T$ is faithful on
the core subalgebra $A_\infty$ of  $A\rtimes L$, and the following conditions are equivalent:

\begin{enumerate}
\item\label{it:gauge_uniqueness1} $\pi\rtimes T$ is an isomorphism, i.e. $A\rtimes L\cong C^*(\pi,T)$;
\item\label{it:gauge_uniqueness2} $C^*(\pi,T)$ is equipped with a circle gauge-action, i.e.  a group homomorphism $\gamma:\mathbb{T}\to \Aut (C^*(\pi,T))$ where $\gamma_z|_{\pi(A)}=\id_{\pi(A)}$ and  $\gamma_z(\pi(a)T)= z\pi(a)T$, for $z\in \mathbb{T}$, $a\in I$;
\item\label{it:gauge_uniqueness3} There is a conditional expectation from $C^*(\pi,T)$ onto $(\pi\rtimes T)(A_\infty)\subseteq C^*(\pi,T)$  that annihilates all the operators of the form $\pi(a)T^mT^{*n}\pi(b)$ with $n\neq m$, $a\in I_m$, $b\in I_n$.
\end{enumerate}
\end{thm}
\begin{proof} Faithfulness of $\pi\rtimes T$ on $A_\infty$ follows from Theorem \ref{thm:crossed_product_Cuntz-Pimsner} and \cite[Theorem 6.4]{katsura}.
Implications \ref{it:gauge_uniqueness1}$\Rightarrow$\ref{it:gauge_uniqueness2}$\Rightarrow$\ref{it:gauge_uniqueness3}
are obvious, cf. Proposition \ref{prop:conditional_expectation_description}. Assume \ref{it:gauge_uniqueness3} and denote by $E_\pi$ the conditional expectation from $C^*(\pi,T)$ onto $(\pi\rtimes T)(A_\infty)$. Then $E_\pi\circ \pi\rtimes T=\pi\rtimes T\circ E$, and this composite map is faithful because $E$ is faithful and $\pi\rtimes T$ is faithful on the range of $E$. This implies that $\pi\rtimes T$ is faithful on the whole of $A\rtimes L$.
\end{proof}
\begin{cor}
\label{cor:iteration} For each $n\in \N$, $A\rtimes L^n$ is naturally isomorphic to the $C^*$-subalgebra of $A\rtimes L$ generated by
$A\cup I_nt^n$.
\end{cor}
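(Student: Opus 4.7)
The plan is to use the universal property of $A\rtimes L^n$ to manufacture a $*$-homomorphism $\Phi:A\rtimes L^n\to A\rtimes L$ whose image is $B:=C^*(A\cup I_n t^n)$, and then to verify $\Phi$ is injective via the gauge uniqueness theorem. Writing $s$ for the generator of $A\rtimes L^n$, Proposition~\ref{prop:universal_crossed_product} delivers such a $\Phi$ (with $\Phi|_A=\id_A$ and $\Phi(as)=at^n$ for $a\in I_n$) provided that $(A,t^n)$ is a covariant representation of $L^n$ inside $A\rtimes L$. The transfer relation $t^{*n}at^n=L^n(a)$ for $a\in I_n$ and the commutation $at^n b=a\alpha^n(b)t^n$ for $a\in I_n$, $b\in A$ follow at once by iterating \eqref{eq:standard_relations_iterated}; the only nontrivial ingredient is the Cuntz-type covariance $C_0(\Delta_{n,\reg})\subseteq\overline{I_n t^n t^{*n} I_n}$ in $A\rtimes L$, which I expect to be the main obstacle.

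I would establish this covariance by induction on $n$ using criterion~\ref{it:characterization_of_covariance4} of Proposition~\ref{prop:characterization_of_covariance}. Fix $x_0\in\Delta_{n,\reg}$ and $\varepsilon>0$. Since $\Delta_{n,\reg}\subseteq\Delta_{\reg}$ and $\varphi(\Delta_{n,\reg})\subseteq\Delta_{n-1,\reg}$, covariance of $L$ supplies a neighborhood $V$ of $x_0$ in $\Delta_{\reg}$, while the inductive hypothesis at $\varphi(x_0)$ supplies a neighborhood $U_{n-1}$ of $\varphi(x_0)$ in $\Delta_{n-1,\reg}$ on which the approximate Cuntz relations
$\|ctt^*d-\varrho(x_0)cd\|<\varepsilon_2$ and $\|ct^{n-1}t^{*(n-1)}d-\varrho_{n-1}(\varphi(x_0))cd\|<\varepsilon_1$ hold for $c,d$ of norm $\le 1$ in $C_0(V)$ and $C_0(U_{n-1})$ respectively, with $\varepsilon_1,\varepsilon_2$ to be chosen later. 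Set $U:=V\cap\varphi^{-1}(U_{n-1})$ and pick $\psi\in C_0(U_{n-1})$ with $0\le\psi\le 1$ and $\psi\equiv 1$ on a neighborhood of $\varphi(U)$. For $a,b\in C_0(U)$ with $\|a\|,\|b\|\le 1$ one then has $a\alpha(\psi)=a$ and $\alpha(\psi)b=b$; the commutation $a\alpha(\psi)t=at\psi$ (Lemma~\ref{lem:automatic_commutation_relation}) and its adjoint give the identity
\begin{equation*}
at^n t^{*n} b \;=\; at\cdot\bigl(\psi\, t^{n-1} t^{*(n-1)}\,\psi\bigr)\cdot t^* b.
\end{equation*}
The inductive hypothesis (with $c=d=\psi$) gives $\psi t^{n-1}t^{*(n-1)}\psi\approx\varrho_{n-1}(\varphi(x_0))\psi^2$, and substituting together with the identity $at\psi^2=a\alpha(\psi^2)t=at$ reduces the expression to $\varrho_{n-1}(\varphi(x_0))\,att^*b$. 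A final application of the base case replaces $att^*b$ by $\varrho(x_0)ab$, and the cocycle identity $\varrho_n(x_0)=\varrho(x_0)\varrho_{n-1}(\varphi(x_0))$ yields \ref{it:characterization_of_covariance4} for $L^n$ after the bilinear $\varepsilon_1,\varepsilon_2$ are tuned small enough.

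With $\Phi$ in hand, its image is visibly $C^*(\Phi(A)\cup\Phi(I_n s))=C^*(A\cup I_n t^n)=B$, so it remains to show $\Phi$ is injective. I would invoke Corollary~\ref{cor:characterisation_faithfulness_crossed_product} applied to the faithful covariant representation $(\id_A,t^n)$ of $L^n$ inside any faithful embedding $A\rtimes L\hookrightarrow B(H)$. Indeed, the gauge action on $A\rtimes L$ satisfies
\begin{equation*}
\gamma_z\bigl(\Phi(as^p s^{*q} b)\bigr)=\gamma_z(at^{np}t^{*nq}b)=z^{n(p-q)}\,\Phi(as^p s^{*q} b),
\end{equation*}
so averaging over $\mathbb{T}$ yields $E\circ\Phi=\Phi\circ E_n$, where $E$ and $E_n$ are the canonical faithful expectations of $A\rtimes L$ and $A\rtimes L^n$. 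Consequently $E$ restricts to a conditional expectation $\Phi(A\rtimes L^n)\to\Phi((A\rtimes L^n)_\infty)$ that annihilates every $at^{np}t^{*nq}b$ with $p\neq q$; this is precisely condition~\ref{it:gauge_uniqueness3} of Corollary~\ref{cor:characterisation_faithfulness_crossed_product}, which then forces $\Phi$ to be an isomorphism onto $B$.
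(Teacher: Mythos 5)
Your proof is correct, and it follows the paper's overall skeleton — realize $(\id_A,t^n)$ as a faithful covariant representation of $L^n$ inside $A\rtimes L$, then upgrade to an isomorphism onto $C^*(A\cup I_nt^n)$ via Corollary~\ref{cor:characterisation_faithfulness_crossed_product} — but the two key verifications are done by genuinely different means. For covariance, the paper checks the exact condition \ref{it:characterization_of_covariance2.5} of Proposition~\ref{prop:characterization_of_covariance}: it factors $a=\prod_{k=0}^{n-1}\alpha^k(a_k)$ along the orbit of the support, produces $u_k$ with $a_k=a_ktt^*u_k$ at each level, and computes $at^nt^{*n}u=a$ with $u=\prod_k\alpha^k(u_k)$, so no estimates are needed; you instead run an induction on $n$ through the approximate condition \ref{it:characterization_of_covariance4}, telescoping $at^nt^{*n}b=at\,(\psi t^{n-1}t^{*(n-1)}\psi)\,t^*b$ and tracking $\varepsilon$'s via the cocycle identity $\varrho_n(x_0)=\varrho(x_0)\varrho_{n-1}(\varphi(x_0))$. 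Your route is valid (the error terms are controlled by $\|at\|,\|t^*b\|\le\|L\|^{1/2}$ and $\varrho_{n-1}\le\|L\|^{n-1}$), at the cost of the bookkeeping the paper avoids; one small point you should make explicit is that $U$ must be shrunk so that $\overline{\varphi(U)}$ is a compact subset of $U_{n-1}$, otherwise no $\psi\in C_0(U_{n-1})$ equal to $1$ on a neighbourhood of $\varphi(U)$ need exist — local compactness makes this harmless. Both you and the paper use, without proof, that every regular point of $\varrho_n$ has its whole partial orbit $x,\varphi(x),\dots,\varphi^{n-1}(x)$ in $\Delta_{\reg}$ (this follows from upper semicontinuity of each factor of $\varrho_n$), so you are on equal footing there. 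For injectivity, the paper constructs a circle gauge action on the subalgebra by choosing $n$-th roots (item \ref{it:gauge_uniqueness2} of Corollary~\ref{cor:characterisation_faithfulness_crossed_product}), whereas you verify item \ref{it:gauge_uniqueness3} by restricting the expectation $E$ and using $E\circ\Phi=\Phi\circ E_n$; these are interchangeable, and your version has the mild advantage of not having to check well-definedness of $\gamma_z$ for $z^n=\lambda$ on the subalgebra.
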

\begin{proof} By \eqref{eq:standard_relations_iterated} we see that $(\id,t^n)$ is a faithful representation of $L^n$ into $A\rtimes L$.
We use Proposition \ref{prop:characterization_of_covariance}\ref{it:characterization_of_covariance2.5} to show that the representation $(\id,t^n)$ is covariant.
To this end note that  the set of regular points for $\varrho_n$ is $\Delta_{\reg}^n=\{x\in \Delta_n: x,\varphi(x),...,\varphi^{n-1}(x)\in \Delta_{\reg}\}$.
Let  $a\in C_c(\Delta_{\reg}^n)$  with support $K$ contained in an open set $U\subseteq \Delta_{\reg}^n$ where $\varphi^n|_{U}$ is injective.
Put $a_0:=a$ and  for each $k=1,...,n-1$ let $a_k\in C_0(\varphi^k(U))$ be such that $a_k|_{\varphi^{k}(K)}\equiv 1$, so that then we have
$a=\prod_{k=0}^{n-1} \alpha^{k}(a_k)$. For each $k=0,...,n-1$, the map $\varphi|_{\varphi^k(U)}$ is injective. Hence by Proposition \ref{prop:characterization_of_covariance}\ref{it:characterization_of_covariance2.5}  there is $u_k\in C_0(X)$ such
that $a_k=a_ktt^*u_k$. In particular, since $a_0=a\in C_0(\Delta_n)$ we may assume that $u_0\in  C_0(\Delta_n)$. Then  $u:=\prod_{k=0}^{n-1} \alpha^k(u_k)$ is well defined and using \eqref{eq:standard_relations_iterated} we get
$$
a t^{n}t^{*n} u= a_0 (t a_1 t ... a_{n-1}tt^* u_{n-1} ... t^* u_1 t^*) u_0=\prod_{k=0}^{n-1} \alpha^{k}(a_k)=a.
$$
Hence $(\id,t^n)$ is a covariant representation of $L^n$. It is  equipped with a circle gauge-action. Indeed,  if $\gamma:\mathbb{T}\to \Aut (A\rtimes L )$
is the gauge-action $\gamma^n$ on $A\rtimes L$, then the desired gauge action on $\clsp\{at^{nk} t^{*nl}b: a\in I_{nk},b\in I_{nl}, n,m\in \N_0\}$ can be defined by the formula $\gamma^n_\lambda(b):=\gamma_z(b)$,  for any $\lambda\in \mathbb{T}$ and $z\in \mathbb{T}$ such that $z^n=\lambda$.
Hence we have the natural isomorphism $A\rtimes L^n\cong\clsp\{at^{nk} t^{*nl}b: a\in I_{nk},b\in I_{nl}, n,m\in \N_0\}$ by Theorem \ref{thm:faithfulness_on_the_core}.
\end{proof}

\subsection{Regular representation and  generalised expectations}\label{Sec:regular_representation}
The orbit representation $(\pi_o, T_o)$ defined in Example~\ref{ex:orbit_representation}
in general does give a faithful representation of $A\rtimes L$. 
 Tensoring it with the  regular representation $\lambda$ of $\Z$ guarantees that:
\begin{defn}
The \emph{regular representation of the transfer operator $L$} is the pair $(\tilde{\pi}, \tilde{T})$ where
$\tilde{\pi}:C_0(X)\to B(H)$ and $\tilde{T}\in B(H)$ act on   $H:=\ell^2(X)\otimes \ell^2(\Z)\cong \ell^2(X\times \Z)$
by $\tilde{\pi}=\pi_o \otimes id_{ \ell^2(\Z)}$ and $\tilde{T}=T_o\otimes \lambda$.
Thus using the standard orthonormal basis $\{\mathds{1}_{x,n}\}_{x\in X, n\in \Z}$ of $H$ we have
$$
\tilde{\pi}(a)\mathds{1}_{x,n}= a(x)\mathds{1}_{x,n}, \qquad \tilde{T} \mathds{1}_{y,n} =\sum_{x\in\varphi^{-1}(y)}\sqrt{\varrho(x)}\mathds{1}_{x,n+1}.
$$
\end{defn}

\begin{prop}\label{prop:regular_is_faithful}
The regular representation $(\tilde{\pi},\tilde{T})$  is a faithful covariant representation of $L$ that
extends to a faithful representation $\tilde{\pi}\rtimes\tilde{T}$ of $A\rtimes L$, so
$
A\rtimes L\cong C^*(\tilde{\pi},\tilde{T}).
$
\end{prop}
\begin{proof}
Using that $(\pi_o, T_o)$ is a faithful covariant representation of $L$,
one readily concludes that  $(\tilde{\pi},\tilde{T})$ is also a faithful covariant representation of $L$.
By Theorem \ref{thm:faithfulness_on_the_core}, to prove that $\tilde{\pi}\rtimes\tilde{T}$
 is faithful it suffices to show  that $C^*(\tilde{\pi},\tilde{T})$ has the appropriate gauge action.
To this end, for each $z\in \mathbb{T}$  we define a unitary operator $U_z\in B(\ell^2(X\times \Z))$ by the formula $U_z \mathds{1}_{x,n}:=z^n \mathds{1}_{x,n}$, $ x\in X$, $n\in \Z$.
Putting $\gamma_z(b):=U_z b U_z^*$, $b\in C^*(\tilde{\pi},\tilde{T})$, we get   $\gamma_z|_{\pi(A)}=\id_{\pi(A)}$ and  $\gamma_z(\pi(a)\tilde{T})= z\pi(a)\tilde{T}$ for $z\in \mathbb{T}$ and $a\in I$.
Hence $\gamma:\mathbb{T}\to \Aut (C^*(\tilde{\pi},\tilde{T}))$ is the desired homomorphism.
\end{proof}
\begin{cor}
\label{cor:independence}
Let $L$ and $L'$ be transfer operators for a fixed partial map $\varphi:\Delta\to X$ and let
$\varrho,\varrho':\Delta \to [0,+\infty)$ be the corresponding potentials.   Assume that
there is a continuous strictly positive map $\omega:\Delta:\to (0,\infty)$ such that $\varrho'=\varrho \omega$.
Then
$$
C^*(\tilde{\pi},\tilde{T})=C^*(\tilde{\pi},\tilde{T}'),
$$
where $(\tilde{\pi}, \tilde{T})$  and $(\tilde{\pi}, \tilde{T}')$ are regular representations of $L$ and $L'$ respectively.
Thus $C_0(X)\rtimes L$ and $C_0(X)\rtimes L'$ are naturally isomorphic.
\end{cor}
\begin{proof}
For any $a\in C_c(\Delta)$ we have  $a \omega^{\frac{1}{2}}, a \omega^{-\frac{1}{2}}\in C_c(\Delta)$,
$
\tilde{\pi}(a)\tilde{T}' =\tilde{\pi}(a \omega^{\frac{1}{2}})\tilde{T}$ and $\tilde{\pi}(a)\tilde{T} =\tilde{\pi}( a\omega^{-\frac{1}{2}})\tilde{T}'.
$
Hence  $\tilde{\pi}(C_c(\Delta))\tilde{T}'=\tilde{\pi}(C_c(\Delta))\tilde{T}$ which implies $\tilde{\pi}(C_0(\Delta))\tilde{T}'=\tilde{\pi}(C_0(\Delta))\tilde{T}$
and this gives the assertion.
\end{proof}

Using the regular representation we prove existence of a canonical faithful completely positive map  from $C_0(X)\rtimes L$
to  the $C^*$-algebra $\mathcal{B}(X)$  of all bounded Borel complex valued maps on $X$.  We denote by $\delta_{i,j}$ 
the Kronecker symbol.
\begin{lem}\label{lem:generalised_expectation}
There is a faithful completely positive map $G : C_0(X)\rtimes L\to  \mathcal{B}(X)$
such that
$$
G(at^kt^{*l}b)=\delta_{k,l}\cdot  ab\varrho_k
$$
for all $a\in I_k,b\in I_l$ and $k,l\in\N_0$. In particular, $G$ is a (genuine) conditional expectation from $C_0(X)\rtimes L$ onto $C_0(X)$ iff $\varrho:\Delta\to [0,+\infty)$ is continuous.
\end{lem}
\begin{proof} In view of Proposition \ref{prop:regular_is_faithful} we may identify $A\rtimes L$ with $C^*(\tilde{\pi}(A)\cup \tilde{\pi}(I)\tilde{T})$.
Let $P_{x,n}$ be the one-dimensional orthogonal projection onto the subspace spanned by
$\mathds{1}_{x,n}\in H:=\ell^2(X)\otimes\ell^2(\Z) $, for $(x,n)\in X\times \Z$. Since the projections $\{P_{x,n}\}_{(x,n)\in X\times \Z}$ are pairwise orthogonal and sum up, in the strong topology, to the identity operator, we get that the formula
$$
G(b):=\sum_{(x,n)\in X\times\Z}P_{x,n}bP_{x,n}, \qquad b\in B(H),
$$
defines a faithful, completely positive, contractive map (the series is strongly convergent).
For any $a\in I_k,b\in I_l$, $k,l\in\N_0$ $(x,n)\in X\times \Z$ we get
\begin{align*}
G(at^kt^{*l}b)\mathds{1}_{x,n}&=P_{x,n}at^kt^{*l}b\mathds{1}_{x,n}=P_{x,n}at^k b(x)\sqrt{\varrho_l(x)}\mathds{1}_{\varphi^l(x),n-l}
\\
&=P_{x,n}\sum_{t\in\varphi^{-k}(\varphi^l(x))}a(t)\sqrt{\varrho_k(t)} b(x)\sqrt{\varrho_l(x)} \mathds{1}_{t,n+k-l}\\
&=\delta_{k,l}\cdot a(x)b(x)\varrho_k(x) \mathds{1}_{x,n}
=\delta_{k,l}\cdot (ab\varrho_k) \mathds{1}_{x,n}.
\end{align*}
\end{proof}
\begin{rem} The above map $G$ is an identity on $A=C_0(X)\subseteq \B(X)$.
Therefore $G$ is a  \emph{generalised expectation} for the $C^*$-inclusion $A\subseteq A\rtimes L$
in the sense of  \cite[Definition 3.1]{Kwa-Meyer}. 
\end{rem}
\begin{thm}\label{cor:Expectation_Invariance} Let $(\pi,T)$ be a faithful covariant representation  of $L$.
Then $\pi\rtimes T:A\rtimes L\to C^*(\pi,T) $ is faithful if and only if  there is a bounded linear map $F:C^*(\pi,T)\to \B(X)$  such that
$
F(\pi(a)T^kT^{*l}\pi(b))=\delta_{k,l}\cdot ab\varrho_k$,   for all $a\in I_k,b\in I_l, k,l\in\N_0$.
\end{thm}
\begin{proof} If $\pi\rtimes T$ is faithful, then $F$ exists by Lemma \ref{lem:generalised_expectation}.
Conversely, if $F$ exists, then for any  $b\in A\rtimes L$ with $\pi\rtimes T(b)=0$,
we have $G(b^*b)=F(\pi\rtimes T(b^*b))=F(\pi\rtimes T(b)^*\pi\rtimes T(b))=F(0)=0$, which by faithfulness of $G$ implies that $b=0$. Hence $\pi\rtimes T$ is faithful.
\end{proof}

\section{Local homeomorphisms and the groupoid model}
\label{sec:groupoid_picture}
In this section we assume that $\varphi:\Delta\to X$ is a  \emph{local homeomorphism}. 
We show  that the groupoid $C^*$-algebra associated in \cite{Renault2000} to  $(X,\varphi)$  is naturally isomorphic to the crossed product of $C_0(X)$ by a transfer operator.
We first discuss existence of a transfer operator for $(X,\varphi)$.

\begin{lem}[cf. {\cite[Lemma 2.1]{er}}]
Let $\varphi:\Delta\to X$ be a local homeomorphism. For any continuous function $\varrho:\Delta\to [0,+\infty)$  with
$\sup_{y\in X} \sum_{x\in \varphi^{-1}(y)}\varrho(x)<\infty$,
the formula
$L(a)(y)=\sum_{x\in\varphi^{-1}(y)}\varrho(x)a(x)$ defines a  transfer operator $L:C_0(\Delta)\to C_0(X)$ for $\varphi$.
Moreover every transfer operator for $\varphi$ is of the above form (even if we drop our standing assumption \eqref{eq:countable_to_one}).
\end{lem}
\begin{proof}
For each $a\in C_0(\Delta)$ and  $y\in Y$ we have $
|L(a)(y)|\leq  \sum_{x\in\varphi^{-1}(y)}|\varrho(x)a(x)|\leq \| a\|\cdot M$ where $M:=\sup_{y\in X} \sum_{x\in \varphi^{-1}(y)}\varrho(x)
$.
Hence $L$ is a well defined bounded linear operator from $C_0(\Delta)$ to the space of bounded functions on $X$.
Clearly, $L$ is positive and satisfies the transfer identity \eqref{eq:transfer_identity}. Thus
 it suffices to show that
 $L(a)$ is continuous on $X$ for any $a\in C_c(\Delta)$, see Remark \ref{rem:definition_of_transfer}(1).
 Let  $K$ be the compact support of $a$ and take any $y\in X$. If $y\notin \varphi(K)$, then $L(a)(y)=0$ and as $X\setminus\varphi(K)$ is open, $L(a)$ is continuous at $y$.
 Assume then that $y\in \varphi(K)$.
 Since $\varphi$ is a local homeomorphism,  $\varphi^{-1}(y)\cap K$ is finite, and
 we may find  pairwise disjoint, non-empty open sets $\{U_i\}_{i=1}^n$ covering  $\varphi^{-1}(y)\cap K$ and  such that
$\varphi|_{U_i}$ is injective for any $i=1,...n$.
By \cite[Lemma 2.1 claim]{er} we may find open  $V\subseteq \bigcap_{i=1}^{n}\varphi(U_i)$ containing $y$
 and such that $\varphi^{-1}(V)\cap\big(K\setminus\bigcup_{i=1}^n U_i\big)=\emptyset$. So   $\varphi^{-1}(V)\cap K \subseteq \bigcup_{i=1}^n U_i$.
Using this we see that $L(a)|_V=\sum_{i=1}^n (\varrho\circ \varphi|_{U_i}^{-1})\cdot ( a\circ \varphi|_{U_i}^{-1})$.
Since the latter sum is finite and involves only continuous functions, $L(a)|_V$ is continuous.
This finishes the proof of the first part.

For the second part note that since $\varphi$ is a local homeomorphism, for every $y\in X$, $\varphi^{-1}(y)$ is discrete. Hence
the measures in \eqref{equ:transfer_operator_form} have to be discrete, here we do not need our standing assumption \eqref{eq:countable_to_one}.
Thus every transfer operator $L$ for $\varphi$ is of the form \eqref{eq:transfer operator} and the associated potential $\varrho$ is continuous by
Proposition \ref{prop:properties_or_rho}. In addition, $\sup_{y\in X} \sum_{x\in \varphi^{-1}(y)}\varrho(x)= \|L\|<\infty$.
\end{proof}

The important question is whether we can find a \emph{strictly positive} continuous  $\varrho:\Delta\to (0,+\infty)$  with
$\sup_{y\in X} \sum_{x\in \varphi^{-1}(y)}\varrho(x)<\infty$. Note that a necessary condition for this is our standing assumption   \eqref{eq:countable_to_one}.
We answer this question in the affirmative in two important cases.

\begin{ex}
If $\varphi:\Delta\to X$ is a proper local homeomorphism, then for each $y\in X$ the preimage $\varphi^{-1}(y)
$ is finite and in fact the map $X\ni y\mapsto |\varphi^{-1}(y)|\in \N_0$ is continuous (locally constant),  see \cite[Lemma 2.2]{BroRaeVit} where it is assumed that $\Delta=\varphi(\Delta)=X$ but the proof works in our  setting.  Thus putting $\varrho(x):=|\varphi^{-1}(\varphi(x))|^{-1}$, $x\in\Delta$,  we get a continuous strictly positive function $\varrho>0$ such that
$\sum_{x\in \varphi^{-1}(y)}\varrho(x)=1$ for every $y\in \varphi(\Delta)$. The corresponding transfer operator is
given by the formula
\begin{equation}\label{equ:classical_transfer}
L(a)(y)= \frac{1}{|\varphi^{-1}(y)|}\sum_{x\in \varphi^{-1}(y)} a(x), \qquad a\in C_0(\Delta).
\end{equation}
If  $\varphi$ is not proper,   \eqref{equ:classical_transfer} fails to
define a transfer operator  even if we assume $\sup_{y\in X}|\varphi^{-1}(y)|<\infty$ (the function $L(a)(y)$ may be discontinuous).
\end{ex}
\begin{ex}\label{ex:rho_countable} Let $\varphi:\Delta\to X$ be any  local homeomorphism, but assume
that there is a partition of unity $\{f_n\}_{n=1}^\infty$ subordinated to a countable cover $\{U_n\}_{n=1}^\infty$ of $\Delta$ such that
$\varphi|_{U_n}$ is injective. Such a partition exists if $\Delta$ is second countable or more generally if $\Delta$ is $\sigma$-compact.
Then
$$
\varrho(x)=\sum_{n=1}^\infty \frac{1}{2^n}f_n(x), \qquad x\in \Delta,
$$
defines a continuous strictly positive function $\varrho:\Delta \to (0,+\infty)$ such that for every $y\in X$ we have  $\sum_{x\in \varphi^{-1}(y)}\varrho(x) \leq 1$. Thus
$\varrho$ yields a transfer operator.
\end{ex}
For an introduction to the theory of  \'etale, locally compact, Hausdorff groupoids  we recommend \cite{Sims}.
For any such a groupoid  $\G$ the \emph{groupoid $C^*$-algebra} $C^*(\G)$ is the maximal $C^*$-completion of
 the $*$-algebra $C_c(\G)$ with
operations:
$$
(f *g)(\gamma)=\sum_{\gamma_1\gamma_2=\gamma} f(\gamma_1)g(\gamma_2)\qquad  \text{ and } \qquad  f^*(\gamma)=\overline{f(\gamma^{-1})},
$$ where $\gamma,\gamma_1, \gamma_2\in \G$,
 $f,g\in C_c(\G)$.
Then the embedding $C_c(\G)\subseteq  C_0(\G)$ extends to a contractive embedding $C^*(\G)\subseteq  C_0(\G)$, so that we may view elements of
$C^*(\G)$ as functions on $\G$ and the formulas for algebraic operations remain valid. Also,  identifying $X$ with $\G^0:=\{(x,0,x):x\in X\}$,   $C_0(X)\subseteq C^*(\G)$ is a non-degenerate $C^*$-subalgebra and there is a
conditional expectation $F$ from $C^*(\G)$ onto $C_0(X)$ given by restriction of functions. This conditional expectation $F$ is faithful if $\G$ is amenable.

The  \emph{transformation groupoid} or \emph{Renault-Deaconu groupoid} associated to $(X,\varphi)$ is an  \'etale, amenable, locally compact, Hausdorff groupoid, see \cite{Renault2000}, where
$$
\G := \{(x,n-m,y): n,m \in \N_0, x\in \Delta_n, y\in \Delta_m, \varphi^n(x)=\varphi^{m}(y)\},
$$
the groupoid structure is given by $
(x,n,y) (y,m,z):= (x,n+m,z)$,  $(x,n,y)^{-1}:=(y,-n,x)
$, and the topology is defined by the basic open
sets $ \{(x,n-m,y):  (x,y)\in U\times V, \varphi^n(x) = \varphi^m(y)\}$
where $U\subseteq \Delta_n$, $V\subseteq \Delta_m$ are  open sets such that $\varphi^m|_U$
and  $\varphi^n|_V$ are injective.
For full local homeomorphisms on compact spaces the isomorphism in the following theorem is well known, see \cite{exel_vershik}, \cite{exel_renault}, \cite{bar_kwa}.
\begin{thm}\label{thm:local_homeo_crossed_groupoid}
Assume  $\varphi:\Delta\to X$ is a local homeomorphism and  let $\varrho:\Delta\to (0,+\infty)$ be any strictly positive continuous map  with
$\sup_{y\in X} \sum_{x\in \varphi^{-1}(y)}\varrho(x)<\infty$ (such a map always exists when $\varphi$ is proper or $\Delta$ is $\sigma$-compact).
Then
$L(a)(y)=\sum_{x\in\varphi^{-1}(y)}\varrho(x)a(x)$ is a well defined transfer operator $L:C_0(\Delta)\to C_0(X)$ for $\varphi$, and
we have an isomorphism
$$
 C^*(\G)\cong C_0(X)\rtimes L
$$
where $\G$ is the Renault-Deaconu groupoid associated to $\varphi$. This isomorphism is determined by the formula
$$
\Phi(a_n\otimes b_m):= a_n {\varrho_n}^{-\frac{1}{2}} t^nt^{*m}  {\varrho_m}^{-\frac{1}{2}}  b_m , \qquad a_n\in C_c(\Delta_n), \, b\in C_c(\Delta_m), n,m\in \N_0,
$$
where $(a_n\otimes b_m)(x,k,y)=\delta_{k,n-m}\cdot a_n(x) b_m(y)$.
\end{thm}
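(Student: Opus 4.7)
The plan is to construct an isomorphism $\Psi:A\rtimes L\to C^*(\G)$ via the universal property of the crossed product (Proposition \ref{prop:universal_crossed_product}) and then identify the inverse $\Phi=\Psi^{-1}$ by a direct convolution computation on a spanning set. A useful preliminary observation is that since $\varphi$ is a local homeomorphism and $\varrho>0$ is continuous, Proposition \ref{prop:properties_or_rho} gives $\Delta=\Delta_{\pos}=\Delta_{\reg}$, so the covariance condition concerns all of $C_0(\Delta)$.

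First I would define $\iota:C_c(\Delta)\to C_c(\G)$ by setting $\iota(a)(x,1,\varphi(x)):=a(x)\sqrt{\varrho(x)}$ for $x\in\Delta$ and zero elsewhere, and write $aT:=\iota(a)$. The transfer identity $T^*aT=L(a)$ reduces to the one-line computation
\[
\iota(a)^*\iota(b)(y,0,y)=\sum_{x\in\varphi^{-1}(y)}\overline{a(x)}b(x)\varrho(x)=L(\bar a b)(y),
\]
which also gives $\|\iota(a)\|^2\leq\|L\|\,\|a\|^2$ so that $\iota$ extends continuously to $C_0(\Delta)$. The commutation relations follow directly from convolution. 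For covariance I would apply Proposition \ref{prop:characterization_of_covariance}\ref{it:characterization_of_covariance2.5}: given $a\in C_c(\Delta)$ supported on a compact $K\subseteq U$ with $\varphi|_U$ injective, choose $u\in C_c(U)$ with $u|_K=1/\varrho|_K$; then injectivity of $\varphi|_U$ kills all off-diagonal contributions so that $\iota(a)\iota(u)^*$ is supported on the diagonal with value $a(x)u(x)\varrho(x)=a(x)$, hence equals $a\in C_0(X)\subseteq C^*(\G)$. Proposition \ref{prop:universal_crossed_product} then yields the required $\Psi$.

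Next I would compute $\Psi$ on the spanning set of $A\rtimes L$. An inductive convolution calculation gives $\Psi(ct^n)=\iota_n(c)$, where $\iota_n(c)(x,n,\varphi^n(x)):=c(x)\sqrt{\varrho_n(x)}$, and using $at^nt^{*m}b=(at^n)(\bar b t^m)^*$ one obtains
\[
\Psi(at^nt^{*m}b)(x,n-m,y)=a(x)b(y)\sqrt{\varrho_n(x)\varrho_m(y)}
\]
for $(x,n-m,y)\in\G$ with $x\in\Delta_n$ and $y\in\Delta_m$. Substituting $a=a_n\varrho_n^{-1/2}$ and $b=\varrho_m^{-1/2}b_m$ (which lie in the respective $C_c$ spaces since $\varrho_n,\varrho_m>0$ are continuous) yields $\Psi(a_n\varrho_n^{-1/2}t^nt^{*m}\varrho_m^{-1/2}b_m)=a_n\otimes b_m$, identifying $\Phi$ as the inverse of $\Psi$ on this spanning set.

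Finally I would show $\Psi$ is bijective. Surjectivity follows from a partition-of-unity argument: every $f\in C_c(\G)$ decomposes as a finite sum of functions supported on open bisections of the form $\{(x,n-m,y):x\in U,\,y\in V,\,\varphi^n(x)=\varphi^m(y)\}$ with $\varphi^n|_U,\varphi^m|_V$ injective, and each such function lies in the range of $\Psi$ by the previous step. Injectivity follows from the gauge-invariance uniqueness theorem: the integer-valued cocycle $(x,k,y)\mapsto k$ on $\G$ induces a $\mathbb T$-gauge action $\gamma^{\G}_z(f)(x,k,y)=z^k f(x,k,y)$ on $C^*(\G)$, and $\Psi$ visibly intertwines this with the gauge action on $A\rtimes L$, so Corollary \ref{cor:characterisation_faithfulness_crossed_product} applies. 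The principal technical obstacle is the covariance verification: the key point is that injectivity of $\varphi|_U$ is exactly what eliminates off-diagonal contributions in $\iota(a)\iota(u)^*$, so the local homeomorphism hypothesis is indispensable for producing a basis of such $U$ covering $\Delta$.
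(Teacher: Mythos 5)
Your proposal is correct and follows essentially the same route as the paper: build a covariant representation of $L$ inside $C^*(\G)$ by sending $a\in C_c(\Delta)$ to the function supported on $\{(x,1,\varphi(x))\}$ with value $a(x)\sqrt{\varrho(x)}$, verify covariance via Proposition \ref{prop:characterization_of_covariance}\ref{it:characterization_of_covariance2.5} using injectivity of $\varphi$ on bisections, compute the homomorphism on the spanning elements $a_n\varrho_n^{-1/2}t^nt^{*m}\varrho_m^{-1/2}b_m\mapsto a_n\otimes b_m$ to get surjectivity, and finish with a uniqueness theorem.

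Two small points of divergence. First, writing ``$aT:=\iota(a)$'' glosses over the fact that Definition \ref{defn:representations} and Proposition \ref{prop:universal_crossed_product} require an actual operator $T$ on a Hilbert space, not just the family of elements $\iota(a)\in C^*(\G)$; the paper fills this in by representing $C^*(\G)$ faithfully and non-degenerately on $H$ and taking $T$ as the strong limit of $\iota(\mu_\lambda)$ for an approximate unit $\{\mu_\lambda\}$ of $C_0(\Delta)$ (alternatively, your relations say exactly that $(\mathrm{id},\iota)$ is a representation of the correspondence $M_L$, so Lemma \ref{lem:representations_correspondence} also produces $T$). Second, for injectivity you invoke the gauge-invariance uniqueness theorem (Corollary \ref{cor:characterisation_faithfulness_crossed_product}) via the canonical cocycle $(x,k,y)\mapsto k$, whereas the paper instead checks that the homomorphism intertwines the generalised expectation $G$ with the canonical (faithful, by amenability of $\G$) expectation $F:C^*(\G)\to C_0(X)$ and applies Corollary \ref{cor:Expectation_Invariance}; both closing arguments are legitimate and of comparable length.
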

\begin{proof}


Let us assume that $ C^*(\G)\subseteq B(H)$ is represented in a faithful and non-degenerate way on some Hilbert space $H$.
 Let $\{\mu_\lambda\}_{\lambda\in \Lambda}\subseteq C_c(\Delta)$ be an approximate unit
  in $I$ and consider the net of functions $\{T_\lambda\}_{\lambda\in \Lambda}\in C_c(\G)$ given by
$
T_\lambda(x,1,\varphi(x))=\mu_\lambda(x)\varrho(x)^{\frac{1}{2}}$   and  $T_\lambda(x,n,y)=0$ if $(n,y)\neq (1,\varphi(x))$.
We  claim that $\{T_\lambda\}_{\lambda\in \Lambda}$ is strongly Cauchy. Indeed, let $a\in A$, $h\in H$ and  $\lambda\leq \lambda'$, in the directed set $\Lambda$.
We have $T_\lambda^* a T_{\lambda'}=L(\mu_\lambda a \mu_{\lambda'})$ in the $*$-algebra $C_c(\G)$. Thus
 \begin{align*}
\|(T_\lambda-T_{\lambda'})ah\|^2
&=\langle h,  L\left(\alpha(a^*)  (\mu_\lambda-\mu_{\lambda'})^2\alpha( a)\right)h\rangle
\\
&\leq \langle h, a^*L(\mu_\lambda-\mu_{\lambda'}) a h \rangle
\\
&=\langle ah, L (\mu_\lambda-\mu_{\lambda'})   ah \rangle.
\end{align*}
Since the net $\{L(\mu_\lambda)\}_{\lambda\in \Lambda}$ is strongly convergent  the last expression tends to zero.
Hence $T:=\textrm{s-}\lim_{\lambda \in \Lambda} T_\lambda$ defines a bounded operator. For every $a\in C_0(\Delta)$
we have
  $$T^* a T=\textrm{s-}\lim_{\lambda \in \Lambda} T_\lambda^* a T_\lambda=\lim_{\lambda \in \Lambda}L(\mu_{\lambda} a\mu_\lambda)=L(a).
	$$
	If $a$ is supported on a set $K$ such that $\varphi|_K$ is injective, then taking $u\in C_c(\Delta_{\reg})$ such that $u|_K=(\varrho|_K)^{-1}$
	we get
	$aT T^*u=\textrm{s-}\lim_{\lambda \in \Lambda} a T_\lambda T_\lambda u =\lim_{\lambda \in \Lambda} \mu_{\lambda} a=a.
	$
	Hence $(\id, T)$ is a covariant representation of $L$ by Proposition \ref{prop:characterization_of_covariance}.
	Thus we have a $*$-homomorphism $\id\times T:C_0(X)\rtimes L \to B(H)$. It takes values in $C^*(\G)$ because if $a\in C_c(\Delta)$, then $aT\in C_c(\G)$
	where  $aT(x,k,y)=\delta_{(k,y), (1,\varphi(x))}\cdot a(x)\varrho(x)^{\frac{1}{2}}$.
	More generally, one readily checks that for $a_n\in C_c(\Delta_n)$, $b\in C_c(\Delta_m)$
	we have $a_n {\varrho_n}^{-\frac{1}{2}} T^n T^{*m}{\varrho_m}^{-\frac{1}{2}}  b_m =a_n\otimes b_m\in C_c(\G)$.
	Since functions $a_n\otimes b_m$ span $C_c(\G)$ we conclude that
	$\id\times T:C_0(X)\rtimes L \to C^*(\G)$ is a surjective $*$-homomorphism that  intertwines the conditional expectations
$G:C_0(X)\rtimes L \to C_0(X)$ and  $F:C^*(\G) \to C_0(X)$. Hence $\id\times T$  is an isomorphism by Corollary  \ref{cor:Expectation_Invariance}. Its inverse is as described in the assertion.
\end{proof}

\begin{ex}[Deaconu-Muhly $C^*$-algebras associated with branched coverings]
We consider a slightly more general situation than in \cite{Deaconu-Muhly} and by a \emph{branched self-covering} we mean a  continuous open and surjective map $\varphi:X\to X$ of a locally compact, $\sigma$-compact space, for which there is a closed set $S\subseteq X$ such that $\varphi|_{X\setminus S}$
is a local homeomorphism. The $C^*$-algebra $DM(X,\varphi)$ associated to $\sigma$ in \cite{Deaconu-Muhly} is by definition the $C^*$-algebra of the Renault-Deaconu groupoid
associated to the partial local homeomorphism $\varphi:X\setminus S\to X$.
Thus by Theorem \ref{thm:local_homeo_crossed_groupoid} we have
$$
DM(X,\varphi)\cong C_0(X)\rtimes L.
$$
where $L:C_0(X)\to  C_0(X)$ is any transfer operator for $\varphi:X \to X$ given by continuous $\varrho:X\to [0,+\infty)$   with  $S=\varrho^{-1}(0)$.
If in addition, $S$ has empty interior, then $C_0(X)\rtimes L$ is Exel's crossed product.
\end{ex}
\begin{ex}[Graph $C^*$-algebras]\label{ex:Graph_algebras}
Let $E = (E^0,E^1, r, s)$ be a countable directed graph  ($r,s:E^{1}\to E^0$ are range and source  maps).
 The \emph{boundary space} $\partial E= E^\infty\cup E^*_{s}\cup E^*_{inf} $ of $E$, cf. \cite{Webster}, \cite[Subsection 4.1]{Brownlowe} or \cite{kwa_Exel}, as a
set consist of all infinite paths  and of finite paths that start in sources  or in infinite emitters.
It is a locally compact Hausdorff space with topology generated by  cylinder sets  and their complements.
The one-sided \emph{topological Markov shift}  associated to $E$ is the map $\sigma:\partial E\setminus E^0 \to \partial E$  defined, for $\mu=\mu_1\mu_2...\in \partial E\setminus E^0$,  by the formulas
$$
\sigma(\mu):=\mu_2\mu_3...\,\, \textrm{ if }\,\,\mu \notin E^1, \quad \textrm{ and} \quad \sigma(\mu):=s(\mu_1)\,\, \textrm{ if }\,\,\mu=\mu_1 \in E^1.
$$
This is a countable-to-one local homeomorphism.
So we have a partial endomorphism $\alpha:C_0(\partial E) \to  M(C_0(\partial E\setminus E^0))$.
One may always find strictly positive numbers
 $\lambda=\{\lambda_e\}_{e\in E^1}$, such that the formula
$$ 
L(a)(\mu)=\sum\limits_{e \in E^1,\, e\mu \in \partial E} \lambda_{e}\, a(e\mu)
$$
defines a bounded map $ L:C_0(\partial E\setminus E^0)\to C_0(\partial E)$  (\cite[Proposition 5.4]{kwa_Exel}
 characterises when this happens), and then $L$ is a  transfer operator for $\sigma$. By \cite[Theorem 5.6]{kwa_Exel}  we then also have
$$
C^*(E)\cong C_0(\partial E)\rtimes L,
$$
where $C^*(E)$ is the graph $C^*$-algebra - the universal $C^*$-algebra generated by partial isometries $\{s_e: e\in E^1\}$ and mutually  orthogonal projections $\{p_v: v\in E^1\}$ such that  $s_e^*s_e=p_{s(e)}$, $s_e s_e^*\leq p_{r(e)}$   and $p_v=\sum_{r(e)=v} s_e s_e^*$ whenever the sum is finite. 
\end{ex}


\begin{ex}[Exel-Laca $C^*$-algebras]
Let $I$ be any set and let $\AA =
\{A(i,j)_{i,j \in I} \}$ be a $\{0,1\}$-matrix
over $I$ with no identically zero rows.  The Exel-Laca
algebra $\OO_\AA$ is the universal
$C^*$-algebra generated by partial isometries $\{ s_i : i \in
I \}$ with commuting initial projections and mutually
orthogonal range projections satisfying $s_i^* s_i s_j s_j^*
= A(i,j) s_js_j^*$ and
$$
\prod_{i \in E} s_i^*s_i \prod_{j \in F} (1-s_j^*s_j) =
\sum_{k \in I} \prod_{i \in E} A(i,k)
\prod_{j \in F} (1-A(j,k)) s_ks_k^*
$$
whenever $E, F\subseteq I$ are finite sets   such that  $\prod_{i \in E} A(i,k)
\prod_{j \in F} (1-A(j,k))$ is non zero only for finitely many $k\in I$.
For any word $\alpha=\alpha_1...\alpha_n$ in $I$  admissible by $\AA$ we put
$s_{\alpha}=s_{\alpha_1}...s_{\alpha_n}$. Then
$$
\D_\AA:=\clsp\{s_{\alpha} \big(\prod_{i \in E} s_x^*s_x\big)  s_{\alpha}^*: E\subseteq I \text{ is a finite set}, \alpha \text{ is a finite word} \}
$$
is a commutative $C^*$-subalgebra of $\OO_\AA$. The spectrum $X$ of this algebra is a second countable totally disconnected space described in \cite{exel_laca}, as a certain
subset of $\{0,1\}^\mathbb{F}$ where $\mathbb{F}$ is a free group generated by $I$. It is also
described in \cite{Renault2000} as a spectrum of a certain Boolean algebra that model a Markov shift.
In particular, there is a naturally associated  partial local homeomorphism  $\varphi:\Delta\to X$ defined on an  open dense subset $\Delta\subseteq X$.
The space of infinite admissible words
$$
X_\AA:=\{ \omega\in I^\N:   A(\omega_{n},\omega_{n+1})=1 \text{ for }n\in \N\}
$$
embeds naturally into $X$ (and is dense in $X$ when  $\AA$ is irreducible), in  a way that
$$
\varphi(\omega_1\omega_2...)=\omega_2\omega_3..., \qquad \text{ for } \omega \in X_\AA\cap \Delta.
$$
Moreover,  by  \cite[Proposition 4.8]{Renault2000}  $\OO_\AA$ is isomorphic to the    $C^*$-algebra $C^*(\G)$ of the Renault-Deaconu groupoid associated to $\varphi$.
Thus by Theorem \ref{thm:local_homeo_crossed_groupoid}, $\OO_\AA$ is isomorphic to  the crossed product $C_0(X)\rtimes L$ for a certain transfer operator $L$ for $\varphi$.
Such an isomorphism  is described in  \cite[Proposition 2.13]{er} for an unbounded transfer operator, and one can make the operator bounded by choosing appropriate potential $\varrho$.
For instance,  as in Example \ref{ex:Graph_algebras}, it suffices to choose positive numbers $\lambda=\{\lambda_i\}_{i\in I}$, such that the formula
$$
T_\lambda:=\sum_{i\in I} \sqrt{\lambda_i} s_i
$$
converges strictly in $\OO_\AA$, cf. \cite[Proposition 5.4]{kwa_Exel}. Then
$
L(a):=T_\lambda (a) T_\lambda^*
$
defines a bounded transfer operator for $\varphi$ and  $\OO_\AA\cong C_0(X)\rtimes L$.
\end{ex}

 \section{Spectra of the core subalgebras}\label{sec:Spectra of the core subalgebras}

 We now proceed to
the analysis of the internal structure the core subalgebra $A_{\infty}$ of $A\rtimes L$. 
The fundamental fact is that $A_{\infty}=\overline{\bigcup_{n\in \N_0}A_n}$ is a direct limit of algebras that can be further decomposed into `liminary pieces'.
Namely, for  each $n\in \N_0$ we put
$$
K_n:=\overline{I_nt^{n}t^{*n}I_n}, \quad  A_n:=K_0+K_1+\dots +K_n=\clsp\{at^k t^{*k}b: a,b\in I_k, k=0,...,n\}.
$$
%
\begin{prop}\label{prop:reduced_to_katsura}
 For each $n\in \N_0$, $A_n$ and $K_n$ are $C^*$-sualgebras of  $A_{\infty}$. 
Moreover, $K_{n}K_m=K_m$ for $n\leq m$ and $A_n\cap K_{n+1}=K_{n}\cap K_{n+1}=\overline{I_n t^n C_0(\Delta_{\reg}) t^{*n} I_n}$.
\end{prop}
\begin{proof} By Remark \ref{rem:correspondence_description}, $J_{M_L}=C_0(\Delta_{\reg})$ and  we have an isomorphism of $C^*$-correspondences $M_{L}\cong \overline{It}$. 
By  Corollary \ref{cor:iteration}, this implies that for any $n\in \N$ we have $M_{L^n}\cong \overline{I_nt^{n}}$, and so also $M_{L^n}\cong M_{L}^{\otimes n}$, see  \eqref{eq:I_n_spanning_set}, \eqref{eq:transfer_powers} and \eqref{eq:endomorphism_powers}. In particular, $K_n$ is naturally isomorphic with compact operators on  $M_{L^n}$, and the assertion follows from the corresponding facts for Cuntz-Pimsner algebras, see, for instance, \cite[Lemma 5.4, Propositions 5.9, 5.11]{katsura}.
\end{proof}
Recall that $I_n=C_0(\Delta_n)$, where $\Delta_n$ is the domain of $\varphi^n$, and $I_0=A=C_0(X)$.
 We put
$$
\Delta_{\pos,n}:=\Delta_{n}\setminus  \varrho_n^{-1}(0)=\{x\in \Delta_n: \varrho_n(x)>0\}, \qquad n\in \N_0,
$$
which is the natural domain for the $n$-th iterate of the partial map $\varphi|_{\Delta_{\pos}}$ where $\Delta_{\pos}:=\{x\in \Delta: \varrho(x)>0\}$.
Using the transfer identity, we see that the closure of $L^n(I_n)$ is an ideal  in $A$.
Its spectrum is
$$
\widehat{L^n(I_n)}=\{y\in X:\varphi^{-n}(y)\setminus \varrho_n^{-1}(0)\neq \emptyset \}=\varphi^n(\Delta_{\pos,n}),
$$
and in particular, this set is open in $X$.
For any positive function $\rho:\Omega \to (0,\infty)$ we denote by $\ell^2(\Omega, \rho)$ the weighted $\ell^2$-space consisting
 of those functions $f:\Omega\to \C$ for which $\|f\|_2:=\left(\sum_{x\in \Omega} |f(x)|^2\rho(x)\right)^{1/2} <\infty$. This is a Hilbert space unitarily isomorphic to
$\ell^2(\Omega)$ via the map $\ell^2(\Omega, \rho)\ni \mathds{1}_{x} \mapsto \sqrt{\rho(x)}\mathds{1}_{x}\in \ell^2(\Omega)$.
\begin{prop}\label{prop:spectrum_of_K_n}
 For each $n\in \N$ the algebra $K_n$ is liminary (in fact it has a continuous  trace) and up to unitary equivalence all its irreducible representations are subrepresentations of the orbit representation. Moreover, we have a homeomorphism
$$
\widehat{K}_n\cong \varphi^{n}(\Delta_{\pos,n})
$$
under which the representation
$\pi_y^n$ of $K_n$ corresponding to $y\in \varphi^{n}(\Delta_{\pos,n})$
acts on $H_{y}^n:=\ell^2(\varphi^{-n}(y)\setminus \varrho_n^{-1}(0), \varrho_n)$ and  is defined   by
$$
 \pi_y^n(at^{n}t^{*n}b)h=a \cdot \left(\sum_{x\in \varphi^{-n}(y)}\varrho_n(x)b(x)h(x)\right),
$$
for $a,b\in I_n$ and $h\in H_{y}^n$.
The map $U\mapsto \overline{I_nt^{n}C_0(U)t^{*n}I_n}$ is a bijection between open subsets of $\varphi^{n}(\Delta_{\pos,n})$ and ideals  in $K_n$.
\end{prop}
\begin{proof} 

As in the proof of Proposition \ref{prop:reduced_to_katsura}, we may identify $M_{L^n}$ with $\overline{I_nt^n}$ and then $K_n$ is identified with the algebra of compact operators on $M_{L^n}$.
Hence $\overline{I_nt^{n}}$ is a Morita-Rieffel equivalence bimodule between $K_n=\overline{I_nt^nt^{*n}I_n}$ and $\overline{t^{*n}I_nt^n}=\overline{L^n(I_n)}=C_0(\varphi^n(\Delta_{\pos,n}))$.
Such an equivalence preserves spectra and a number of other properties, see \cite{RaeWill}. In particular $K_n$ has a   continuous trace, because $C_0(\varphi^n(\Delta_{\pos,n}))$ has it.
The ideal in $K_n$ corresponding via the equivalence $M_{L^n}=\overline{I_nt^{n}}$ to an ideal  $C_0(U)$ in  $C_0(\varphi^n(\Delta_{\pos,n})$  is  $\overline{\langle M_{L^n}, C_0(U)M_{L^n}\rangle}=\overline{I_nt^{n}C_0(U)t^{*n}I_n}$.
This correspondence extends to a homeomorphism  $\widehat{K}_n\cong \varphi^n(\Delta_{\pos,n})$ where the representation $\pi_y$ of $K_n$ corresponding to  $y\in \varphi^n(\Delta_{\pos,n})$
acts on the Hilbert space $H_y:=M_{L^n}\otimes_{\text{ev}_{y}} \C$ which by construction is the Hausdorff completion of the algebraic tensor product $C_0(\Delta_n)\otimes \C$ in  seminorm
coming from the sesqui-linear form determined by   
$$
\langle c_1\otimes \lambda_1, c_2\otimes \lambda_2 \rangle= \overline{\lambda_1} L(c_1^* c_2)(y)\lambda_2=\sum_{x\in \varphi^{-n}(y)}\varrho_n(x) \overline{\lambda_1 c_1(x)} \lambda_2c_2(x).
$$
Then $\pi_y$ is determined by the formula  $\pi_y(at^nt^*b)[c\otimes \lambda]=[a L^n(bc) \otimes \lambda]$, for $a,b,c \in C_0(\Delta_n)$, $\lambda \in \C$.
Using this one readily sees that the map $[c\otimes 1]\mapsto c|_{\varphi^{-n}(y)\setminus \varrho_n^{-1}(0)}$   determines a unitary $H_{y}\cong H_{y}^n= \ell^2(\varphi^{-n}(y)\setminus \varrho_n^{-1}(0), \varrho_n)$ that intertwines $\pi_y$ and $\pi_y^n$. 
Moreover,  the subspace $G_{y}:=\ell^2\left(\varphi^{-n}(y)\setminus \varrho_n^{-1}(0)\right)$ of $\ell^2(X)$ is invariant
under the action of 
$\pi_o\rtimes T_o(K_n)$, because for $a\in A$ and $x\in \varphi^{-n}(y)\setminus \varrho_n^{-1}(0)$ we have
$$
\pi_o(a)\mathds{1}_x=a(x)\mathds{1}_x, \qquad  T_o^nT_o^{*n}\mathds{1}_x=\sum_{x'\in \varphi^{-n}(y)\setminus \varrho_n^{-1}(0)} \sqrt{\varrho_n(x')\varrho_n(x)} \mathds{1}_{x'}.
$$
Thus  we have  a subrepresentation $\sigma_y:K_n\to B(G_{y})$  of $\pi_o\rtimes T_o|_{K_n}$ where $\sigma_y(at^*tb)=\pi_o(a)T_oT_o^{*}\pi_o(b)|_{G_{y}}$. The canonical isomorphism $G_y\cong H_y^n$ is a unitary equivalence between $\sigma_y$ and $\pi_y^n$.
\end{proof}
\begin{rem}
If $\Delta=X$ is compact,  then $t\in A\rtimes L$ and the unique extension of $\pi_y^n$  to $A+K_n$ is defined by the formulas
$
\pi_y^n(a)h=a \cdot h$,  $\pi_y^n(t^{n}t^{*n})h=\left(\sum_{x\in \varphi^{-n}(y)}\varrho_n(x)h(x)\right) \cdot 1,
$
for $a\in A$ and $h\in H_{y}^n$. Thus $\pi_y^n(a)$ is a multiplication operator and $\pi_y^n(t^{n}t^{*n})$ is a rank one operator whose range consists of constant functions.
\end{rem}

Having a continuous map $f:U\to Y$ defined on an open subset $U$ of a  topological space $X$, we may  \emph{attach $X$ to $Y$ along $f$} to get the space
$
X\cup_f Y:=(X\sqcup Y)/(x\sim f(x) \text{ for all }x\in U)
$
equipped with the quotient topology. This is the \emph{pushout} of $f:U\to Y$ and the inclusion map $U\subseteq X$.
We may always identify $X\cup_f Y$ with the disjoint union
$
X\cup_f Y:=(X\setminus U)\sqcup Y
$
where the second summand $Y$ is open in  $X\cup_f Y$ and if the map $f$ is open, then
the open sets in $X\cup_f Y$ can be identified with pairs  of open sets $V\subseteq  X$, $W\subseteq Y$ satisfying $\varphi^{-1}(W)=V\cap U$  (then the corresponding open set in $X\cup_f Y$ is
 $V\setminus U \sqcup W$).
We use this construction to describe the spectrum of the  $C^*$-algebras $A_n$, as $A_{n+1}=A_{n} + K_n$ may be viewed as a pushout of $A_n$ and $K_{n+1}$ via
the $C^*$-algebra $A_{n}\cap K_{n+1}$.
\begin{lem}\label{lem:pushout_K_n}
For each $n\in \N$,  we have continuous bijection form $
\widehat{K_{n}+K_{n+1}}$ onto the pushout of  $ \varphi^{n}(\Delta_{\pos,n})$  and  $\varphi^{n+1}(\Delta_{\pos,n+1})$
along the partial homeomorphism  $\varphi: \varphi^{n}(\Delta_{\pos,n})\cap \Delta_{\reg} \to  \varphi^{n+1}(\Delta_{\pos,n+1}) $. We have a continuous  bijection
\begin{equation}\label{eq:pushout_bijection}
\widehat{K_{n}+K_{n+1}}\stackrel{\cong}{\longrightarrow}\varphi^{n}(\Delta_{\pos,n})\setminus \Delta_{\reg} \sqcup \varphi^{n+1}(\Delta_{\pos,n+1}),
\end{equation}
where the topology on the right hand side  consists of sets $U_{n}\setminus \Delta_{\reg}\sqcup U_{n+1}$ where $U_n\subseteq  \varphi^{n}(\Delta_{\pos,n})$, $U_{n+1}\subseteq \varphi^{n+1}(\Delta_{\pos,n+1}) $  are open and $\varphi^{-1}(U_{n+1})=U_n\cap \Delta_{\reg}$.

\end{lem}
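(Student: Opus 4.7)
The plan is to use the short exact sequence coming from the ideal structure of $B := K_n + K_{n+1}$, match the resulting decomposition of $\widehat{B}$ with the pushout via Proposition~\ref{prop:spectrum_of_K_n} and Lemma~\ref{lem:K_n_intersection}, and then establish continuity by exhibiting suitable ideals for each basic pushout-open set. By Lemma~\ref{lem:product_system_properties}\ref{it:product_system_properties4} we have $K_nK_{n+1}=K_{n+1}$, so $K_{n+1}$ is a closed two-sided ideal of $B$, yielding the short exact sequence $0 \to K_{n+1} \to B \to K_n/(K_n \cap K_{n+1}) \to 0$ and a decomposition of $\widehat{B}$ into the open subset $\widehat{K_{n+1}}$ and the closed subset $\widehat{K_n/(K_n \cap K_{n+1})}$.

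For the first piece, Proposition~\ref{prop:spectrum_of_K_n} gives $\widehat{K_{n+1}} \cong \varphi^{n+1}(\Delta_{\pos,n+1})$. For the second, Lemma~\ref{lem:K_n_intersection} yields $K_n \cap K_{n+1} = \overline{I_n t^n C_0(\Delta_{\reg}) t^{*n} I_n}$. I rewrite $at^n c t^{*n} b = a\alpha^n(c) t^n t^{*n} b$ via \eqref{eq:standard_relations_iterated}; since $\alpha^n(c)(x) = c(\varphi^n(x)) = c(y)$ for every $x \in \varphi^{-n}(y)$, the explicit action of $\pi_y^n$ from Proposition~\ref{prop:spectrum_of_K_n} shows $\pi_y^n(K_n \cap K_{n+1}) \neq 0$ if and only if $y \in U^* := \Delta_{\reg} \cap \varphi^n(\Delta_{\pos,n})$. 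Hence $K_n \cap K_{n+1}$ is the ideal of $K_n$ corresponding, via the lattice correspondence in Proposition~\ref{prop:spectrum_of_K_n}, to $U^*$, and consequently $\widehat{K_n/(K_n \cap K_{n+1})} \cong \varphi^n(\Delta_{\pos,n}) \setminus U^* = \varphi^n(\Delta_{\pos,n}) \setminus \Delta_{\reg}$. Combining the two identifications yields the set-theoretic bijection \eqref{eq:pushout_bijection}.

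For continuity, I note that $\varphi|_{U^*}$ is open, being a restriction of the local homeomorphism $\varphi: \Delta_{\reg} \to X$ from Proposition~\ref{prop:properties_or_rho}, so the pushout topology is as described. For a basic pushout-open set $U_n \setminus \Delta_{\reg} \sqcup U_{n+1}$ with $\varphi^{-1}(U_{n+1}) = U_n \cap \Delta_{\reg}$, I would build an ideal $\widetilde{J}$ of $B$ whose open set in $\widehat{B}$ matches under the bijection. A first step is to verify that $J_{n+1} := \overline{I_{n+1} t^{n+1} C_0(U_{n+1}) t^{*(n+1)} I_{n+1}}$ is actually an ideal of $B$, not just of $K_{n+1}$; this follows by computing, for $k = at^nt^{*n}b \in K_n$ and $j = a't^{n+1}ct^{*(n+1)}b' \in J_{n+1}$, that $kj = a\alpha^n(L^n(ba')) t^{n+1} c t^{*(n+1)} b' \in J_{n+1}$, using $L^n(I_{n+1}) \subseteq I$ and hence $\alpha^n(L^n(ba')) \in I_{n+1}$.

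The main obstacle is the matching of topologies: producing $\widetilde{J}$ with $\widetilde{J} \cap K_{n+1} = J_{n+1}$ and $\widetilde{J} + K_{n+1} = J_n + K_{n+1}$ (where $J_n := \overline{I_n t^n C_0(U_n) t^{*n} I_n}$). The compatibility condition $\varphi^{-1}(U_{n+1}) = U_n \cap \Delta_{\reg}$ is precisely what allows one to use covariance, by applying Proposition~\ref{prop:characterization_of_covariance}\ref{it:characterization_of_covariance2} in the form $c = c \sum_i u_i^K tt^* u_i^K$ to functions $c$ supported on $U_n \cap \Delta_{\reg} = \varphi^{-1}(U_{n+1})$, to show that the portion of $J_n$ that lands in $K_{n+1}$ lies inside $J_{n+1}$. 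Without this compatibility, the naive candidate ideal generated by $J_n \cup J_{n+1}$ would give an open set strictly larger than desired on the $\widehat{K_{n+1}}$-side, which is consistent with the remark in Theorem~D that the Jacobson topology is in general strictly finer than the pushout topology.
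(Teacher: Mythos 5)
Your treatment of the set-theoretic bijection is essentially the paper's own argument: you split $\widehat{K_n+K_{n+1}}$ into the open piece $\widehat{K_{n+1}}$ and the closed piece $\widehat{K_n/(K_n\cap K_{n+1})}$, identify the first via Proposition~\ref{prop:spectrum_of_K_n} and the second via Lemma~\ref{lem:K_n_intersection} together with the observation that $K_n\cap K_{n+1}$ is the ideal of $K_n$ sitting over $\Delta_{\reg}\cap\varphi^{n}(\Delta_{\pos,n})$. That half is complete and correct.

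The continuity half, however, is a plan rather than a proof, and it stops exactly where the work lies. You correctly reduce openness of the preimage of a basic pushout-open set to producing an ideal $\widetilde J$ of $B:=K_n+K_{n+1}$ with $\widetilde J\cap K_{n+1}=J_{n+1}$ and $\widetilde J+K_{n+1}=J_n+K_{n+1}$, and your computation $K_nJ_{n+1}\subseteq J_{n+1}$ is fine, but no $\widetilde J$ is ever exhibited, and the covariance argument you invoke settles only the wrong piece. Applying $c=c\sum_i u_i^{K}tt^*u_i^{K}$ to $c\in C_c(U_n\cap\Delta_{\reg})$ shows that $J_n\cap K_{n+1}\subseteq J_{n+1}$, i.e.\ it controls the elements of $J_n$ that already lie in $K_{n+1}$. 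The obstruction to taking for $\widetilde J$ the ideal of $B$ generated by $J_n\cup J_{n+1}$ lies instead in the products $K_{n+1}J_n$, $J_nK_{n+1}$, $K_{n+1}J_nK_{n+1}$: by \eqref{eq:commutation relations} these are spanned by terms such as $a\alpha^n\bigl(cL^n(ba')\bigr)t^{n+1}t^{*(n+1)}b'$ with $c\in C_0(U_n)$, and their behaviour in the representations $\pi_z^{n+1}$ is governed by $\varphi(U_n\cap\Delta_{\pos})$, not by $\varphi(U_n\cap\Delta_{\reg})\subseteq U_{n+1}$. If $U_n$ meets $\Delta_{\pos}\setminus\Delta_{\reg}$ at a point whose image leaves $U_{n+1}$, the generated ideal meets $K_{n+1}$ in something strictly larger than $J_{n+1}$, so your first identity fails for this candidate. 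This is already visible in Example~\ref{ex:tent_homeomorphism_fails}: for the basic pushout-open set $[\tfrac12,\tfrac12+\varepsilon')\sqcup(1-\varepsilon,1)$ one has $\tfrac12\in U_0\cap(\Delta_{\pos}\setminus\Delta_{\reg})$ with $\varphi(\tfrac12)=1\notin U_1$, and the ideal of $A_1$ generated by $J_0\cup J_1$ is not killed by $\pi_1^1$, hence cuts out $(1-\varepsilon,1]$ rather than $(1-\varepsilon,1)$ on the $\widehat{K_1}$ side (the preimage is still open there, but it is realized by a different ideal).

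So the missing step is genuinely the heart of the continuity claim: one must either identify, for each compatible pair $(U_n,U_{n+1})$, a concrete ideal of $K_n+K_{n+1}$ whose non-vanishing locus is exactly the preimage, or argue openness of the preimage directly. This is what the paper does, by computing the ideal of $K_n+K_{n+1}$ generated by an ideal $\overline{It^nC_0(U)t^{*n}I}$ of $K_n$ and combining it with the lattice description of ideals from Proposition~\ref{prop:spectrum_of_K_n}; that computation is precisely where the distinction between $\Delta_{\pos}$ and $\Delta_{\reg}$ has to be confronted, and your proposal, as written, leaves it unresolved.
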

\begin{proof}
Since $K_{n+1}$ is an ideal in $K_{n}+K_{n+1}$ we may identify $\widehat{K}_{n+1}\cong \varphi^{n+1}(\Delta_{\pos,n+1})$ with an open
subset of  $\widehat{K_{n}+K_{n+1}}$. Its complement is naturally identified with the spectrum of the
quotient  $K_n/(K_{n+1}\cap K_n)\cong (K_{n}+K_{n+1})/K_{n+1}$. By  Proposition \ref{prop:reduced_to_katsura},
$K_{n}\cap K_{n+1}=\overline{I_nt^n C_0(\Delta_{\reg}) t^{*n} I_n}$ is an ideal in $K_n$.
Hence using the homeomorphisms from  Proposition \ref{prop:spectrum_of_K_n} we may identify $
\widehat{K}_{n+1}$ with $\varphi^{n+1}(\Delta_{\pos,n+1}) $ and  $K_{n}\cap K_{n+1}$ with $\varphi^{n}(\Delta_{\pos,n})\cap \Delta_{\reg}$.
Accordingly, we get the  bijection \eqref{eq:pushout_bijection},
which restricts to homeomorphisms
$
\varphi^{n+1}(\Delta_{\pos,n+1})$ and $
\widehat{K_{n}+K_{n+1}}\setminus \widehat{K}_{n+1}\cong \varphi^{n}(\Delta_{\pos,n})\setminus \Delta_{\reg}
$.
Any representation $\pi$ that is in $\widehat{K_{n}+K_{n+1}}\setminus \widehat{K}_{n+1}$ is a representation of $K_{n}$ that vanishes on $K_{n+1}$.
Every ideal in $K_{n}$ is of the form $
\overline{It^{n}C_0(U)t^{*n}I}$, and the ideal in $K_n+K_{n+1}$ generated by the latter is
 $$
\overline{It^{n}C_0(U)t^{*n}I} + \overline{It^{n+1}C_0(\varphi(U\cap \Delta_{\reg})t^{*n+1}I}.
$$
Hence the bijection \eqref{eq:pushout_bijection} becomes continuous if $\widehat{L^{n}(I_{n})}\setminus \Delta_{\reg} \sqcup  \widehat{L^{n+1}(I_{n+1})}$
is equipped with  the pushout topology.
\end{proof}
The pushout topology on the right hand side of \eqref{eq:pushout_bijection}  is always $T_0$, and the continuous bijection \eqref{eq:pushout_bijection} might be a  homeomorphism even when this topology is non-Hausdorff, see \cite{Kajiwara_Watatani16} and Example \ref{ex:tent_homeomorphism_works}  below.
However, in general the pushout topology is weaker than the topology of the spectrum $\widehat{K_{n}+K_{n+1}}$, and   a general description of the topology  of the latter requires more than just the pushout data:
\begin{ex}\label{ex:tent_homeomorphism_fails}
Let us consider  $A_1=A+K_1=K_0+K_1$ associated to the  transfer operator $L(a)(y)=a(\frac{y}{2})$ for the tent map  $\p:[0,1]\to[0,1]$, $\p(x)=1-|1-2x|$.
 Then $\varrho=\mathds{1}_{[0,\frac{1}{2}]}$, $\Delta_{\pos}=[0,\frac{1}{2}]$ and $\Delta_{\reg}=[0,1/2)$.
So
as sets we have
$$
\widehat{A}_1\cong X\setminus \Delta_{\reg}\sqcup \varphi(\Delta_{\pos})= [1/2,1]\sqcup [0,1].
$$
The pushout topology on the right hand side is the usual one with the only exception that  neighbourhoods of $1/2$ in the first summand
contain sets of the form $[ 1/2, 1/2+\varepsilon)\sqcup (1 -\varepsilon, 1)$. So in particular the pushout topology is not Hausdorff in this case (it is $T_0$ though). The topology on $\widehat{A}_1$
is larger and in fact $\widehat{A}_1$ is homeomorphic to the direct union of two closed intervals. Indeed,  the operator $tt^{*}$ in the regular representation
 becomes the multiplication operator by the characteristic function $\mathds{1}_{[0,\frac{1}{2}]}$.
So  $A_1$ is generated by $A=C[0,1]$ and  $tt^*=\mathds{1}_{[0,\frac{1}{2}]}$ and $A_1=Att^*  \oplus A(1-tt^*) =C[0,1/2]\oplus C[1/2,1]$.
The extra open set in  $\widehat{A}_1$ (not seen by the pushout topology) comes from the ideal generated by
the element $1-tt^*$ which is neither in $K_0=A$ nor in $K_1$.
Thus the precise description of  $\widehat{A}_1$ seem to require some additional algebraic data that is difficult to pin down.
\end{ex}

\begin{thm}\label{thm:spectra_of_A_n}
Let $\varrho:\Delta\to [0,+\infty)$ be a potential associated to a transfer operator $L:C_0(\Delta)\to C_0(X)$ for $\varphi:\Delta\to X$.
For each $n\in \N$ the algebra $A_n$ is postliminary
and we have a natural bijection
\begin{equation}\label{eq:pushout_bijection2}
\widehat{A}_n  \stackrel{\cong}{\longrightarrow} \left(\bigsqcup_{k=0}^{n-1}  \varphi^{k}(\Delta_{\pos,k})\setminus\Delta_{\reg} \right)\sqcup  \varphi^{n}(\Delta_{\pos,n}).
\end{equation}
More specifically, for every irreducible representation $\pi$ of $A_n$ there is a maximal $k\leq n$ with $\pi(K_k)\neq 0$ and a unique $y\in  \varphi^{k}(\Delta_{\pos,k})$
  ($y\in  \varphi^{k}(\Delta_{\pos,k})\setminus\Delta_{\reg}$ if $k<n$) such that $\pi\cong \pi_y^k$ where $\pi_y^k$ is a representation of $A_n$ on $\ell^2(\varphi^{-k}(y)\setminus \varrho_k^{-1}(0), \varrho_k)$ determined by
$$
\pi_y^k(at^{i}t^{*i}b)h=a \cdot \left(\sum_{x\in \varphi^{-i}(y)}\varrho_i(x)b(x)h(x)\right),  \qquad a,b\in I_i, \, i=1,...,k,
$$
and $\pi_y^k(K_i)=0$ for all $k<i\leq n$. If we equip the right hand side of \eqref{eq:pushout_bijection2} with the
the topology  that consists
of sets  $\left(\bigsqcup_{k=0}^{n-1}  U_k\setminus\Delta_{\reg} \right)\sqcup  U_n$ where
$U_k$ is an open subset of $ \varphi^{k}(\Delta_{\pos,k})$, for  $k=0,...,n$,  and $U_k\cap \Delta_{\reg}=\varphi^{-1}(U_{k+1})$ for $k<n$,
then \eqref{eq:pushout_bijection2}
 is continuous and its inverse is continuous when restricted to each direct summand.
\end{thm}
\begin{proof} 
We prove this by induction. The assertion holds for $n=1$ by Lemma \ref{lem:pushout_K_n}.
Assume that for certain $n$  we have $\widehat{A}_n \cong \left(\bigsqcup_{k=0}^{n-1}   \varphi^{k}(\Delta_{\pos,k})\setminus\Delta_{\reg} \right)\sqcup   \varphi^{n}(\Delta_{\pos,n})$  as in the assertion.  Here $\bigsqcup_{k=0}^{n-1}  \varphi^{k}(\Delta_{\pos,k})\setminus\Delta_{\reg} $ corresponds to the closed set $ \widehat{A}_n\setminus \widehat{K}_{n}$ and
$\widehat{K}_{n}\cong \varphi^{n}(\Delta_{\pos,n})$ is the homeomorphism from Proposition \ref{prop:spectrum_of_K_n}.

By Proposition \ref{prop:reduced_to_katsura},  $K_{n}+K_{n+1}$ is an ideal in $A_{n+1}$.  The corresponding open subset of  $\widehat{A}_{n+1}$ is
$\widehat{K_{n}+K_{n+1}}\cong\widehat{L^{n}(I_{n})}\setminus \Delta_{\reg} \sqcup  \widehat{L^{n+1}(I_{n+1})}$ as described in  Lemma \ref{lem:pushout_K_n}.
Its complement    $\widehat{A}_{n+1}\setminus \widehat{K_{n}+K_{n+1}}\cong \widehat{A}_n\setminus \widehat{K}_{n}\cong \left(\bigsqcup_{k=0}^{n-1}  \widehat{L^k(I_k)}\setminus\Delta_{\reg} \right)$.
 Since  $A_{n}\cap K_{n+1}=K_{n}\cap K_{n+1}$, see Proposition  \ref{prop:reduced_to_katsura},  we conclude that  the topology on
$\widehat{A}_{n+1}\cong \left(\bigsqcup_{k=0}^{n}  \widehat{L^k(I_k)}\setminus\Delta_{\reg} \right)\sqcup  \widehat{L^{n+1}(I_{n+1})}$
is as described in the assertion. The ranges of all representations in $\widehat{A}_n$ contain compact operators. Hence $A_n$ is postliminary.
\end{proof}
\begin{rem} If $\pi$ is an irreducible representation  of $A_n$, there is a `dynamical procedure' of determining $y$ and $k$ for which $\pi\cong \pi_y^k$. Namely,
the set $Z:=\{x\in X: a(x)\neq 0 \text{ implies } \pi(a)\neq 0\text{ for all }a\in A\}$ is closed and there is $k\leq n$ such that $\varphi^{k}(Z)$ is a singleton. If there is a  minimal $k< n$ such that
$\varphi^{k}(Z)=\{y\}\notin \Delta_{\reg}$, then $\pi\cong \pi_y^k$. Otherwise $\pi\cong \pi_y^n$  where $\varphi^{n}(Z)=\{y\}$.
\end{rem}
Example \ref{ex:tent_homeomorphism_fails} shows that the continuous bijection \eqref{eq:pushout_bijection2} in general fails to be a homeomorphism. Obviously, it is a homeomorphism
when the pushout topology on the right hand side of \eqref{eq:pushout_bijection2} is Hausdorff, and less obviously, when $\varrho$ is continuous, see Theorem \ref{thm:primitive_groupoid} below.
This may also happen in a non-continuous and non-Hausdorff case:
\begin{ex}\label{ex:tent_homeomorphism_works}
Let us consider  the standard transfer operator $L(a)(y)=\frac{1}{2}[a(\frac{y}{2})+a(1-\frac{y}{2})]$ for the tent map  $\p:[0,1]\to[0,1]$, $\p(x)=1-|1-2x|$. Then $
\varrho=\frac{1}{2}\mathds{1}_{X\setminus \{\frac{1}{2}\}}+\mathds{1}_{\{\frac{1}{2}\}}
$, $\Delta_{\pos}=X=[0,1]$ and  $\Delta_{\reg}=X\setminus \{\frac{1}{2}\}=[0,\frac{1}{2})\cup(\frac{1}{2},1]$. Accordingly,
$$
\widehat{A}_n\cong \bigcup_{k=0}^{n-1}\{\pi_{1/2}^k\}\cup\{\pi_{x}^n:x\in [0,1]\}
$$
where the pushout topology on the right hand side can be described as follows:
$\{\pi_{x}^n:x\in [0,1]\}$ is an open set homeomorphic to $[0,1]$ and each $\pi_{1/2}^k$ has a basis of
neighbourhoods of the form $\{\pi_{1/2}^k\}\cup\{\pi_{x}^n:x\in (0,\varepsilon)\}$, if $k<n-1$, and
$\{\pi_{1/2}^{n-1}\}\cup\{\pi_{x}^n:x\in (1-\varepsilon,1)\}$, if $k=n-1$ (so  $\pi_{1/2}^k$, $k<n-1$, cannot be separated from
$\pi_{0}^n$ and  $\pi_{1/2}^{n-1}$ can not be separated from
$\pi_{1}^n$). This topology coincides  with the standard topology of $\widehat{A}_n$,
as  using the regular representation one can see that $A_{n}$
is naturally isomorphic  the $C^*$-subalgebra of $C([0,1],M_{2^{n}}(\C))$ consisting continuous matrix valued functions $a$ satisfying
$$
  a(1)\in M_{2^{n-1}}(\C)\oplus M_{2^{n-1}}(\C),\qquad
  a(0)\in M_{2^{n-1}+1}(\C)\oplus M_{2^{n-2}}(\C)  \oplus ...\oplus M_2(\C) \oplus \C.
$$
In particular, representations
$\pi_1^n$ and $\pi_{1/2}^{n-1}$ are of dimension $2^{n-1}=|\varphi^{-n}(1)|$, and $\pi_0^n$ is of dimension  $2^{n-1}+1=|\varphi^{-n}(0)|$.
This example is covered by the main result of \cite{Kajiwara_Watatani16}.
\end{ex}
The algebra $A_\infty$ as a rule is not  postliminary (the example of Glimm algebras shows that  $A_{\infty}$ will usually be antiliminary, cf. \cite[Theorem 6.5.7]{Pedersen}).
Accordingly, one can not hope to describe the spectrum $\widehat{A}_{\infty}$ completely in a reasonable way. However,
the inductive limit of spaces $\widehat{A}_{n}$ will give a dense subset of $\widehat{A}_{\infty}$, and  when the continuous maps \eqref{eq:pushout_bijection2}
are opens one can use them to describe the \emph{primitive ideal space}  $\Prim(A_{\infty})$   of $A_{\infty}$. 
We show how to do it in the case when $\varrho$ is continuous.


\subsection{The case of a continuous potential}
When $\varrho$ is continuous then $A_{\infty}$ has a natural \'etale groupoid model.
The groupoid in question is a motivating example in the theory of approximately proper equivalence relations
\cite{Renault05}, which recently has been generalized to cover partial local homeomorphisms \cite{Bissacot-Exel-Frausino-Raszeja2}.
Namely, assume that $\varphi:\Delta\to X$ is a local homeomorphism.
For each $n\in \N$  consider the equivalence relation
$$
R_n:=\{(x,y)\in \Delta_{n}\times \Delta_n: \varphi^{n}(x)=\varphi^{n}(y)\}
$$
as an \'etale groupoid with the product topology inherited from $\Delta_{n}\times \Delta_n$.
Then we get a  generalized approximately proper (in short \textsc{gap}) equivalence relation on $X\times X$
$$
R:=\bigcup_{n=0}^\infty R_n,
$$
equipped with the \emph{inductive limit topology}, i.e. $U\subseteq R$ is open iff $U\cap R_n$ is open for every $n\in \N$,
see \cite[Proposition 5.6]{Bissacot-Exel-Frausino-Raszeja2}. Similarly, each finite union $\bigcup_{k=0}^n R_k$ becomes an \'etale groupoid.
All these groupoids are amenable (see the proof of \cite[Proposition 2.4]{Renault2000}).
\begin{prop}\label{prop:local_homeo_core_groupoid}
 Assume $\varphi:\Delta\to X$ is a local homeomorphism and let $\varrho:\Delta\to (0,+\infty)$ be any strictly positive continuous map  with
$\sup_{y\in X} \sum_{x\in \varphi^{-1}(y)}\varrho(x)<\infty$.
Equivalently, fix a  transfer operator $L:C_0(\Delta)\to C_0(X)$ with continuous potential $\varrho >0$. We have a natural isomorphisms
$$
K_{n}\cong C^*(R_n),\quad   A_{n}\cong C^*(\bigcup_{k=0}^n R_k), \quad n\in \N, \qquad A_{\infty}\cong C^*(R),
$$
where $K_n:=\overline{I_nt^{n}t^{*n}I_n}$, $A_n:=K_0+...+K_n$,  $A_{\infty}=\overline{\bigcup_{n=0}^\infty A_n}$ are core subalgebras of $A\rtimes L$, and
$R$ is the \textsc{gap} relation associated to $\varphi$. 
\end{prop}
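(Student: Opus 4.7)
The plan is to work through the isomorphism $\Phi: C_0(X)\rtimes L \cong C^*(\G)$ provided by Theorem \ref{thm:local_homeo_crossed_groupoid} and to identify the core subalgebras $K_n$, $A_n$, $A_\infty$ with the $C^*$-algebras of the appropriate open subgroupoids of $\G$. First, I will observe that $\G$ carries the continuous $\Z$-valued cocycle $c(x,k,y):=k$, and that $c^{-1}(0)$ is exactly the \textsc{gap} equivalence relation $R$, while $R_n\subseteq \G$ (via $(x,y)\mapsto (x,0,y)$) is an open subgroupoid: at $(x_0,y_0)\in R_n$, pick open $U\ni x_0$, $V\ni y_0$ in $\Delta_n$ with $\varphi^n|_U$, $\varphi^n|_V$ injective and $\varphi^n(U)=\varphi^n(V)$; the basic bisection $Z(U,V)$ of $\G$ lies entirely in $R_n$. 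The same argument shows $R_k$ is open in $R_l$ for $k\le l$, so the inductive limit topology on $\bigcup_{k=0}^n R_k$ and on $R$ agrees with the subspace topology from $\G$. Since $\G$ is amenable and Hausdorff, all these (open) subgroupoids are amenable, and the extension-by-zero maps induce injective $*$-homomorphisms $C^*(R_n)\hookrightarrow C^*(\bigcup_{k=0}^n R_k)\hookrightarrow C^*(R)\hookrightarrow C^*(\G)$.

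Next I would identify $K_n$ with $\Phi^{-1}(C^*(R_n))$. Functions $f\in C_c(R_n)$ can be written as finite sums $f=\sum_i a_i\otimes b_i$ of elementary tensors supported on bisections, where $a_i,b_i\in C_c(\Delta_n)$ are supported on sets on which $\varphi^n$ is injective with equal image. By the explicit formula in Theorem \ref{thm:local_homeo_crossed_groupoid} applied to $n=m$,
\[
\Phi(a\otimes b)=a\,\varrho_n^{-1/2}\,t^n t^{*n}\,\varrho_n^{-1/2}b,
\]
which lies in $\overline{I_n t^n t^{*n}I_n}=K_n$. Since $\varrho_n>0$ is continuous, multiplication by $\varrho_n^{\pm 1/2}$ is a bijection of $C_c(\Delta_n)$, so these images span $I_n t^n t^{*n}I_n$ densely. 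Hence $\Phi(C^*(R_n))=K_n$ and the restriction of $\Phi$ is an isomorphism $C^*(R_n)\cong K_n$.

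For $A_n=K_0+\dots+K_n$, the key observation is that $K_k$ sits inside $C^*(\bigcup_{j=0}^n R_j)$ via the inclusion $C^*(R_k)\hookrightarrow C^*(\bigcup_{j=0}^n R_j)$ coming from $R_k$ being open in $\bigcup_{j=0}^n R_j$. Conversely, $C_c(\bigcup_{j=0}^n R_j)=\sum_{k=0}^n C_c(R_k)$ using a partition-of-unity argument subordinate to the open cover $\{R_k\}_{k=0}^n$. Therefore $\Phi\bigl(\sum_{k=0}^n K_k\bigr)$ is dense in $C^*(\bigcup_{k=0}^n R_k)$, yielding the isomorphism $A_n\cong C^*(\bigcup_{k=0}^n R_k)$. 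Passing to the inductive limit, $A_\infty=\overline{\bigcup_n A_n}\cong \overline{\bigcup_n C^*(\bigcup_{k=0}^n R_k)}\cong C^*(R)$. One can also verify the last isomorphism directly by noting that $\Phi$ intertwines the gauge action on $C_0(X)\rtimes L$ with the $\mathbb{T}$-action on $C^*(\G)$ dual to the cocycle $c$, so the fixed point algebras correspond, and the fixed point algebra of the latter is $C^*(c^{-1}(0))=C^*(R)$.

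The main obstacle I expect is the careful matching of spanning sets: on the crossed-product side, $K_n$ is generated by $a t^n t^{*n} b$ with arbitrary $a,b\in I_n$, whereas on the groupoid side one needs these elements to come from functions on bisections of $R_n$. The bridge is the formula for $\Phi$ combined with the fact that $\varrho_n$ is continuous and strictly positive, which lets one absorb the factors $\varrho_n^{\pm 1/2}$ into $a$ and $b$. A secondary technical point is that injectivity of $C^*(R_n)\hookrightarrow C^*(\G)$ for the open subgroupoid inclusion rests on amenability of $\G$ (so full and reduced norms agree); this is why the proposition singles out amenability, as recalled from \cite[Proposition 2.4]{Renault2000}.
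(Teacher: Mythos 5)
Your proposal is correct and follows essentially the same route as the paper: realize $R_n$, $\bigcup_{k\le n}R_k$ and $R$ as open subgroupoids of $\G$, use amenability to see that the inclusions $C_c(\cdot)\subseteq C_c(\G)$ extend to injective $*$-homomorphisms (the paper phrases this via intertwining the canonical faithful conditional expectations onto $C_0(X)$), and then restrict the isomorphism of Theorem \ref{thm:local_homeo_crossed_groupoid}. The only notable difference is that you identify $K_n\cong C^*(R_n)$ by a direct spanning computation with elementary tensors and the factors $\varrho_n^{\pm 1/2}$, whereas the paper views $R_n$ as the restriction of $\bigcup_{k\le n}R_k$ to the open invariant subset $\Delta_n$ and invokes the ideal correspondence from \cite[Proposition 4.3.2]{Sims}; both implementations are fine.
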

\begin{proof}
We  view  $R$ and $\bigcup_{k=0}^n R_k$ as open subgroupoids of $\G$ with the same unit space $X$.
Then the inclusions $C_c(\bigcup_{k=0}^n R_k)\subseteq  C_c(R)\subseteq C_c(\G)$ extend to $*$-homomorphisms
$
C^*(\bigcup_{k=0}^n R_k)\to  C^*(R)\to C^*(\G)$
that intertwine the canonical faithful conditional expectations onto $C_0(X)$ (the groupoids in question are amenable).
Hence the aforementioned  $*$-homomorphisms are faithful and we may write $C^*(\bigcup_{k=0}^n R_k)\subseteq C^*(R)\subseteq C^*(\G)$.
Now it is immediate that the isomorphism from Theorem \ref{thm:local_homeo_crossed_groupoid} restricts
to isomorphisms  $ C^*(\bigcup_{k=0}^n R_k)\cong A_{n}$,   $C^*(R)\cong A_{\infty}$.

The groupoid $R_n$ can be viewed as the restriction of $\bigcup_{k=0}^n R_k$ to an open invariant subset $\Delta_n\subseteq X$.
Hence $C^*(R_n)$ can be identified with an ideal in  $C^*(\bigcup_{k=0}^n R_k)$ generated by $C_0(\Delta)$, see \cite[Proposition 4.3.2]{Sims}.
Restriction of the isomorphism $ C^*(\bigcup_{k=0}^n R_k)\cong A_{n}$  gives $ C^*(R_n)\cong K_{n}$.
\end{proof}
\begin{rem}\label{rem:Wieler_solenoids}
By \cite{Wieler}, all  irreducible Smale spaces $(\tilde{X},\tilde{\varphi})$ with
totally disconnected stable sets are  inverse limits for certain finite-to-one  continuous surjections $\varphi:X\to X$ satisfying Wieler's axioms. By Proposition \ref{prop:local_homeo_core_groupoid} and \cite[Theorem 5.6]{Deeley},
if $\varphi$ is an open Wieler map,  then the stable algebra $S$ and the stable Ruelle algebra $R_{s}$ of the Smale space $(\tilde{X},\tilde{\varphi})$, cf. \cite{Putnam-Spielberg},  are  Morita-equivalent  to the algebras $A_\infty$ and $A\rtimes L$, respectively.
\end{rem}
The groupoids $R_n$,  $\bigcup_{k=0}^n R_k$, $R$ are not only amenable but also \emph{principle} (all the isotropy groups are trivial). In particular,
inclusions $C_0(X)\subseteq C^*(\bigcup_{k=0}^n R_k)\cong A_{n},  C^*(R)\cong A_{\infty}$ are \emph{$C^*$-diagonals} in the sense of Kumjian \cite{kumjian},
and the ideals in  $A_{n}$ and $A_{\infty}$ correspond to open invariant sets in $\bigcup_{k=0}^n R_k$ and $R$, respectively (see
\cite[Theorem 4.3.3]{Sims} or \cite[Corollary 3.12]{BL}). Also  the primitive ideal spaces can be identified with quasi-orbit spaces, see \cite[Corollary 3.19]{BL} or \cite[Theorem 7.17]{Kwa-Meyer1}.
This allows us to improve   Theorem \ref{thm:spectra_of_A_n} in the case when $\varrho$ is continuous,  as follows.

\begin{thm}\label{thm:primitive_groupoid} If  $\varrho:\Delta\to [0,+\infty)$ is continuous, then the continuous bijection in \eqref{eq:pushout_bijection2}
is a homeomorphism, and we have  natural homeomorphisms
$$
\widehat{A}_n \cong \Prim (A_n) \cong X/\sim_n .
$$
where $x\sim_n y$ iff  there is $k\leq n$ such that $x,y\in \Delta_{\pos,k}$ and $\varphi^{k}(x)=\varphi^k(y)$.
If in addition $X$ is second countable, we have a homeomorphism
$$
\Prim (A_\infty) \cong X/\sim
$$
where $
x\sim y
$ iff $\overline{\OO_{R}(x)}=\overline{\OO_{R}(y)}$
 and $\OO_{R}(x):=\bigcup_{k=0, x\in \Delta_{\pos, k}}^\infty \varphi^{-k}(\varphi^{k}(x))$ is the orbit of $x\in X$.
\end{thm}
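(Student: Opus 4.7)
The plan is to reduce everything to the local homeomorphism setting by means of the continuity of $\varrho$, to read off the core subalgebras $A_n$, $A_\infty$ as groupoid $C^*$-algebras via Proposition~\ref{prop:local_homeo_core_groupoid}, and then to translate the ideal structure of the arising étale equivalence relations into the primitive ideal space. First, Proposition~\ref{prop:properties_or_rho} together with continuity of $\varrho$ forces $\Delta_{\pos}=\Delta_{\reg}$, so $\varphi|_{\Delta_{\reg}}:\Delta_{\reg}\to X$ is a local homeomorphism carrying the strictly positive continuous potential $\varrho|_{\Delta_{\reg}}$. Since the norm $\|at^n\|^2=\|L^n(a^*a)\|$ of every generator sees only the restriction of $a$ to $\Delta_{\pos,n}=\Delta_{\reg,n}$, the Cuntz-Pimsner correspondence $M_L$ coincides with the correspondence $M_{L'}$ of the restricted transfer operator $L':C_0(\Delta_{\reg})\to C_0(X)$, and this identifies $C_0(X)\rtimes L\cong C_0(X)\rtimes L'$ together with the chain $\{A_n\}$ and $A_\infty$. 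Proposition~\ref{prop:local_homeo_core_groupoid}, applied to the restricted system, then produces the isomorphisms $A_n\cong C^*(\G_n)$ and $A_\infty\cong C^*(R)$, where $\G_n:=\bigcup_{k=0}^n R_k$, $R:=\bigcup_{k\geq 0}R_k$ and $R_k=\{(x,y)\in\Delta_{\pos,k}\times\Delta_{\pos,k}:\varphi^k(x)=\varphi^k(y)\}$.

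Next I would verify that $\G_n$ is itself an equivalence relation on $X$ whose orbit equivalence is exactly the relation $\sim_n$ from the statement. If $(x,z)\in R_{k_1}$ and $(z,y)\in R_{k_2}$ with $k_1\geq k_2$, applying $\varphi^{k_1-k_2}$ to $\varphi^{k_2}(z)=\varphi^{k_2}(y)$ yields $\varphi^{k_1}(z)=\varphi^{k_1}(y)$, while $z\in\Delta_{\pos,k_1}$ together with $\varphi^{k_2}(y)=\varphi^{k_2}(z)$ forces $y\in\Delta_{\pos,k_1}$; hence $(x,y)\in R_{k_1}\subseteq\G_n$, proving transitivity. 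The $\G_n$-orbit of $x$ is therefore $\varphi^{-k_0}(\varphi^{k_0}(x))\cap\Delta_{\pos,k_0}$ with $k_0:=\max\{k\leq n:x\in\Delta_{\pos,k}\}$, which matches $\sim_n$. Because $\G_n$ is a principal amenable Hausdorff étale groupoid, \cite[Theorem 4.3.3]{Sims} together with \cite[Corollary 3.12]{BL} provides a lattice isomorphism between the closed ideals of $C^*(\G_n)\cong A_n$ and the open $\sim_n$-invariant subsets of $X$, equivalently the open subsets of the quotient $X/\sim_n$. The assignment $[x]\mapsto\ker\pi_y^{k_0}$ with $y=\varphi^{k_0}(x)$ and $\pi_y^{k_0}$ as in Theorem~\ref{thm:spectra_of_A_n} realises this lattice bijection at the level of points, so it is a homeomorphism $X/\sim_n\to\Prim(A_n)$. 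Since $A_n$ is postliminary (Theorem~\ref{thm:spectra_of_A_n}), $\widehat{A}_n\cong\Prim(A_n)$ naturally, which upgrades the continuous bijection \eqref{eq:pushout_bijection2} to a homeomorphism.

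For the $A_\infty$ statement, under the extra second countability hypothesis I would invoke \cite[Theorem 7.17]{Kwa-Meyer1}: for a principal amenable Hausdorff étale groupoid with second countable unit space, $\Prim C^*(R)$ is homeomorphic to the quasi-orbit space $X/\sim$, where $x\sim y$ iff $\overline{\OO_R(x)}=\overline{\OO_R(y)}$; and the orbit $\OO_R(x)=\bigcup_{k:x\in\Delta_{\pos,k}}\varphi^{-k}(\varphi^k(x))$ is read off directly from the definition of $R$. The principal technical point I expect to be the hardest is the compatibility in the previous paragraph between the abstract lattice bijection and the pointwise orbit map $[x]\mapsto\ker\pi_{\varphi^{k_0}(x)}^{k_0}$: one must pair the description of $\ker\pi_y^k$ from Proposition~\ref{prop:spectrum_of_K_n} with the level filtration on $\widehat{A}_n$ supplied by Theorem~\ref{thm:spectra_of_A_n}, and then verify that the pushout topology on the right-hand side of \eqref{eq:pushout_bijection2} coincides with the quotient topology on $X/\sim_n$.
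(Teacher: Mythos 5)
Your proposal is correct and follows essentially the same route as the paper: reduce via continuity of $\varrho$ to the case $\Delta=\Delta_{\pos}=\Delta_{\reg}$, invoke Proposition \ref{prop:local_homeo_core_groupoid} to model $A_n$ and $A_\infty$ by the principal amenable \'etale groupoids $\bigcup_{k=0}^n R_k$ and $R$, match ideals with open invariant sets (and hence with open sets of $X/\sim_n$, respectively the quasi-orbit space via \cite{BL}/\cite{Kwa-Meyer1}), and identify the orbits with the relation $\sim_n$. The compatibility check you flag at the end — that the pushout topology on the right-hand side of \eqref{eq:pushout_bijection2} corresponds to the open invariant sets, i.e.\ to the quotient topology on $X/\sim_n$ — is precisely the verification the paper performs (and likewise only sketches) via its explicit bijection sending $y$ in the $k$-th summand to the orbit $\varphi^{-k}(y)$.
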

\begin{proof} 
Since $\varrho$ is continuous we may  assume that $\Delta$ is equal to $\Delta_{\pos}=\Delta_{\reg}$, so that $\varrho>0$ and we may apply
isomorphisms from Proposition \ref{prop:local_homeo_core_groupoid}.
The orbits of $x$ for the groupoid $R_{[0,n]}:=\bigcup_{k=0}^n R_k$ are given by $\OO_n(x)= \bigcup_{k=0, x\in \Delta_{\pos, k}}^n \varphi^{-k}(\varphi^{k}(x))$.
We have a bijection
$$
\left(\bigsqcup_{k=0}^{n-1}  \varphi^{k}(\Delta_{\pos,k})\setminus\Delta_{\reg} \right)\sqcup  \varphi^{n}(\Delta_{\pos,n})\stackrel{\cong}{\longrightarrow} X/\sim_n
$$
that sends a point $y$ in $k$-th summand to $\varphi^{-k}(y)$, which is an $R_{[0,n]}$-orbit. Using this bijection one checks that open $R_{[0,n]}$-invariant subsets of $X$ correspond to open sets in the pushout topology  of the right-hand side of \eqref{eq:pushout_bijection2}.
Since ideals in $A_{n}\cong C^*(\bigcup_{k=0}^n R_k)$ correspond bijectively to open $R_{[0,n]}$-invariant sets, this gives the first part of the assertion (we have $\widehat{A}_n \cong \Prim (A_n)$ because $A_n$ is postliminary, in fact Type $I_0$).

The second part follows because
 $\OO_{R}(x)$ is the orbit of $x$ under the groupoid  $R$  and  the primitive ideal space $\Prim (A_\infty)\cong \Prim (C^*(R))$ is homeomorphic to the quasi-orbit space for $R$ by  \cite[Corollary 3.19]{BL}.
\end{proof}

\section{Topological free transfer operators and simplicity}\label{Sec:Topological_freeness}

A full map on locally compact Hausdorff space is called topologically free if the set of its periodic points has empty interior. 
We will introduce topological freeness for a partial  map $\varphi:\Delta\to X$ by reducing it to the case of a full map.
Namely, we will restrict $\varphi$ to its \emph{essential domain} $\Delta_{\infty}:=\bigcap_{n=1}^\infty \Delta_n \cap \varphi^n(\Delta_n)$, $\Delta_n=\varphi^{-n}(\Delta)$, $n\in \N$, which gives
a full  map $\varphi:\Delta_{\infty}\to \Delta_{\infty}$, see \cite[Definition 3.1]{kwa-leb}.
As a starting step of defining topological freeness for transfer operators we analyse this notion for open partial maps.

\begin{defn}
A partial continuous open map  $\varphi:\Delta\to X$  is \emph{topologically free} if the set of periodic points for $\varphi:\Delta_{\infty}\to \Delta_{\infty}$
has empty interior in $\Delta_\infty$.
\end{defn}

\begin{lem}\label{lem:topol_free_local_homeo}
Suppose that $\varphi:\Delta\to X$ is   a partial continuous open  map of $X$. The following
conditions are equivalent:
\begin{enumerate}
\item\label{enu:topol_free_local_homeo1} $\varphi$ is topologically free; 
\item\label{enu:topol_free_local_homeo2} for every $n\in \N$ the set $ \{x\in \Delta_n: \varphi^n(x)=x\}$  has empty interior in $X$;
\item\label{enu:topol_free_local_homeo3}  for  every  $k,l\in\N_0$ with $l< k $,  $
\{x\in \Delta_{k}: \varphi^k(x)=\varphi^l(x)\}
$
has empty interior in $X$.

\end{enumerate}

\end{lem}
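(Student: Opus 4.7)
My plan is to organize the equivalences into three independent steps: (iii)$\Rightarrow$(ii) is trivial; (ii)$\Leftrightarrow$(iii) uses the openness of $\varphi$ and its iterates; and (i)$\Leftrightarrow$(ii) uses an orbit argument combined with a Baire category argument inside the essential domain $\Delta_\infty$. Throughout I will write $F_n:=\{x\in\Delta_n:\varphi^n(x)=x\}$ and $F_{k,l}:=\{x\in\Delta_k:\varphi^k(x)=\varphi^l(x)\}$.

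First, (iii)$\Rightarrow$(ii) is the specialization $l=0$ (with $\varphi^0=\id$). For (ii)$\Rightarrow$(iii) I would argue by contraposition: suppose $U\subseteq X$ is a non-empty open set with $U\subseteq F_{k,l}$, $l<k$. Each iterate $\varphi^l\colon\Delta_l\to X$ is open as a composition of restrictions of the open map $\varphi$ to the open sets $\Delta_j$, so $\varphi^l(U)$ is open in $X$. For any $y=\varphi^l(x)$ with $x\in U$, one has $y\in\Delta_{k-l}$ (since $x\in\Delta_k$) and $\varphi^{k-l}(y)=\varphi^k(x)=\varphi^l(x)=y$. Hence $\varphi^l(U)$ is a non-empty open subset of $F_{k-l}$, contradicting (ii).

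For (i)$\Rightarrow$(ii) I also argue by contraposition. If $U\subseteq X$ is open non-empty with $U\subseteq F_n$, then every $x\in U$ is periodic with $\varphi^n(x)=x$, so its orbit cycles and is defined for all positive iterates; thus $x\in\Delta_m$ for every $m$, and for any $m$ choosing $k$ with $kn\geq m$ yields $x=\varphi^{kn}(x)=\varphi^m(\varphi^{kn-m}(x))\in\varphi^m(\Delta_m)$. Consequently $U\subseteq\Delta_\infty$, so $U$ is open in the subspace topology of $\Delta_\infty$ and consists entirely of periodic points, contradicting topological freeness.

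For the remaining direction (ii)$\Rightarrow$(i) I would use Baire category. Each $\Delta_m$ is open in $X$, and each $\varphi^m(\Delta_m)$ is open in $X$ by openness of the iterate $\varphi^m$, so $\Delta_\infty=\bigcap_m(\Delta_m\cap\varphi^m(\Delta_m))$ is a $G_\delta$-subset of the locally compact Hausdorff space $X$; it is therefore \v{C}ech-complete and in particular a Baire space. Each $F_n$ is closed in $\Delta_n$ and contained in $\Delta_\infty$, hence closed in $\Delta_\infty$. If the set of periodic points $P=\bigcup_n F_n$ had non-empty interior in the Baire space $\Delta_\infty$, then some $F_n$ would have non-empty interior in $\Delta_\infty$, yielding an open $W\subseteq X$ with $\emptyset\neq W\cap\Delta_\infty\subseteq F_n$. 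To finish, one must upgrade this to an honest open subset of $X$ lying inside some $F_{k,l}$ with $l<k$ (which contradicts (iii), hence (ii)). The upgrade proceeds by fixing $y_0\in W\cap\Delta_\infty$ with $\varphi^n(y_0)=y_0$, forming the open neighbourhood $W\cap\Delta_{2n}\cap(\varphi^n)^{-1}(W)$ of $y_0$, and exploiting the openness of $\varphi^n$ together with the forward-invariance of $\Delta_\infty$ to produce an open subset of $X$ contained in $F_{2n,n}$.

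The hard part I expect to be exactly this upgrade step. Because $\Delta_\infty$ is generally only a $G_\delta$ and not open in $X$, the Baire theorem supplies only a \emph{relative} witness inside $\Delta_\infty$, and extracting a genuine open subset of $X$ inside some $F_{k,l}$ requires a delicate iterative use of the openness hypothesis on $\varphi$; the use of (iii) rather than (ii) as the target contradiction is what makes this feasible, since one can pass from period-$n$ coincidences $\varphi^n(y)=y$ in $\Delta_\infty$ to the weaker coincidences $\varphi^{2n}(x)=\varphi^n(x)$ on an open set of $X$ through one application of $\varphi^n$.
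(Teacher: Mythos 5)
Your handling of three of the four implications is correct and essentially the paper's: (iii)$\Rightarrow$(ii) is the case $l=0$, the push-forward $\varphi^l(U)$ under the open iterate is exactly how the paper proves (iii)$\Rightarrow$(ii) (in its converse direction), and your observation that a nonempty open $U\subseteq X$ of $n$-periodic points is automatically contained in $\Delta_\infty$ gives (i)$\Rightarrow$(ii) just as in the paper, where it is subsumed in a Baire-category argument inside $\Delta_\infty$.

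The genuine gap is (ii)$\Rightarrow$(i), and it sits exactly where you located it. Your proposed upgrade does not work: for $x\in W\cap\Delta_{2n}\cap(\varphi^n)^{-1}(W)$ you only know $\varphi^n(x)\in W$, and to conclude $\varphi^{2n}(x)=\varphi^n(x)$ you would need $\varphi^n(x)\in\Delta_\infty$ (so that $W\cap\Delta_\infty\subseteq F_n$ applies); forward invariance of $\Delta_\infty$ is useless here because $x$ itself need not lie in $\Delta_\infty$. Worse, no upgrade can exist if the interiors are read literally (in $X$ in (ii)--(iii), in $\Delta_\infty$ in the definition of topological freeness): take $X=\{0\}\cup\{1/k:k\in\N\}$, $\Delta=X$, $\varphi(0)=0$, $\varphi(1/k)=1/(k+1)$; this map is continuous, open and injective, every set $F_n$ and $F_{k,l}$ equals $\{0\}$ and so has empty interior in $X$, yet $\Delta_\infty=\{0\}$ is a single fixed point, so the periodic set has nonempty interior in $\Delta_\infty$. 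For comparison, the paper's proof performs no upgrade at all: it applies Baire inside $\Delta_\infty$ and tacitly identifies ``empty interior in $\Delta_\infty$'' with ``empty interior in $X$'' for the sets $F_n$, which is precisely the passage you flagged and which the example shows cannot be taken for granted. The statement (and your argument) closes cleanly once the interiors are read consistently, e.g. taking topological freeness to mean that the periodic points of $\varphi:\Delta_\infty\to\Delta_\infty$ have empty interior in $X$ (the reading used in Theorem~A): then a nonempty open $U\subseteq X$ consisting of periodic points lies in $\Delta_\infty$, is itself a Baire space, the sets $U\cap F_n$ are closed in $U$, and Baire gives some $F_n$ with nonempty interior in $X$, contradicting (ii) directly -- no passage through relative interiors, and no detour through $F_{2n,n}$, is needed. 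As written, however, your (ii)$\Rightarrow$(i) is not complete and cannot be completed without changing the reading of the statement.
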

\begin{proof} Note that $\{x\in \Delta_n: \varphi^n(x)=x\}\subseteq \Delta_\infty$ is  closed in $\Delta_{\infty}$, becasue $\Delta_{\infty}$ is Hausdorff and $\varphi$ is continuous. Moreover, $\Delta_{\infty}$ is a Baire space, as it is a $G_\delta$ subset of the locally compact Hausdorff space $X$.
Thus $\{x\in \Delta_{\infty}:\exists_{n\in \N}\, \varphi^n(x)=x\}=\bigcap_{n\in \N}\{x\in \Delta_n: \varphi^n(x)=x\}$ has empty interior if and only if each of the intersected sets has empty interior. This proves
\ref{enu:topol_free_local_homeo1}$\Leftrightarrow$\ref{enu:topol_free_local_homeo2}.

The implication \ref{enu:topol_free_local_homeo2}$\Rightarrow$\ref{enu:topol_free_local_homeo3} is immediate. For the converse assume that there is a
non-empty open set $U\subseteq \{x\in \Delta_{k}: \varphi^k(x)=\varphi^l(x)\}
$ where $l< k $. Then $V:=\varphi^l(U)$ is non-empty open set contained in $\{x\in \Delta_n: \varphi^n(x)=x\}$ where $n:=k-l\in \N$.
\end{proof}

If we consider a transfer operator $L$ associated to a partial map $\varphi: \Delta\to X$, then the natural `domain of openness'
 for $\varphi$ is $\Delta_{\pos}$, see  Proposition~\ref{prop:properties_or_rho}. The next lemma shows that in the definition of topological freeness we can equally-well use the smaller set $\Delta_{\reg}$.

\begin{lem}\label{lem:reg_vs_positive_freeness}
If  $L$ is a transfer operator for a map $\varphi:\Delta\to X$, then  $\varphi: \Delta_{\reg}\to X$ is topologically free if and only if $\varphi:\Delta_{\pos}\to X$ is topologically free. 
\end{lem}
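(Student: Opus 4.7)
The plan is to reduce both topological freeness conditions to the empty-interior criterion of Lemma \ref{lem:topol_free_local_homeo}\ref{enu:topol_free_local_homeo2}. Both $\varphi:\Delta_{\reg}\to X$ and $\varphi:\Delta_{\pos}\to X$ are open partial continuous maps by Proposition \ref{prop:properties_or_rho}, so the lemma applies. Write $P_n^{\reg}$ (respectively $P_n^{\pos}$) for the set of $x\in X$ such that $\varphi^n(x)=x$ and $x,\varphi(x),\ldots,\varphi^{n-1}(x)\in\Delta_{\reg}$ (respectively $\in\Delta_{\pos}$). The task reduces to showing that $\mathrm{Int}(P_n^{\reg})=\emptyset$ for all $n$ if and only if $\mathrm{Int}(P_n^{\pos})=\emptyset$ for all $n$.

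One implication is immediate: since $\Delta_{\reg}\subseteq\Delta_{\pos}$, one has $P_n^{\reg}\subseteq P_n^{\pos}$, and so $\mathrm{Int}(P_n^{\pos})=\emptyset$ forces $\mathrm{Int}(P_n^{\reg})=\emptyset$. Hence topological freeness of $\varphi:\Delta_{\pos}\to X$ implies topological freeness of $\varphi:\Delta_{\reg}\to X$.

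For the converse I would argue by contraposition. Suppose $\varphi:\Delta_{\pos}\to X$ is not topologically free, so that there exist $n\in\N$ and a non-empty open set $U\subseteq X$ with $U\subseteq P_n^{\pos}$. The key idea is to consider the full forward orbit of $U$:
\[
W:=\bigcup_{k=0}^{n-1}\varphi^k(U).
\]
Since $U\subseteq P_n^{\pos}$, every point $y=\varphi^k(x)\in\varphi^k(U)$ satisfies $\varphi^n(y)=\varphi^{n+k}(x)=\varphi^k(\varphi^n(x))=y$, so $\varphi^k(U)\subseteq P_n^{\pos}\subseteq \Delta_{\pos}$. Using openness of $\varphi|_{\Delta_{\pos}}$ (Proposition \ref{prop:properties_or_rho}) inductively, each $\varphi^k(U)$ is open in $X$; hence $W$ is an open subset of $X$ contained in $\Delta_{\pos}$. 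Moreover $\varphi(W)=W$ and $\varphi^n|_W=\mathrm{id}_W$, so $\varphi|_W\colon W\to W$ is a continuous open bijection with inverse $\varphi^{n-1}|_W$, i.e.\ a self-homeomorphism of $W$.

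The conclusion then follows from Proposition \ref{prop:properties_or_rho}: every point of $W$ is a point where $\varphi$ is locally injective (the neighbourhood $W$ itself witnesses this) and where $\varrho>0$, so by the equivalence \ref{it:properties_or_rho2}$\Rightarrow$\ref{it:properties_or_rho1} in that proposition, $\varrho$ is continuous at every point of $W$. Hence $W\subseteq\Delta_{\reg}$, which means $U\subseteq P_n^{\reg}$ and so $\mathrm{Int}(P_n^{\reg})\neq\emptyset$, contradicting topological freeness of $\varphi:\Delta_{\reg}\to X$. The main (and only real) technical step is the inductive verification that the orbit $W$ is open and contained in $\Delta_{\pos}$; everything else is a direct consequence of the fact that $\varphi^n$ acting as the identity on an open set automatically forces local injectivity of $\varphi$ there.
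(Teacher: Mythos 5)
Your proof is correct and takes essentially the same route as the paper: the easy direction from $\Delta_{\reg}\subseteq\Delta_{\pos}$, and for the converse the observation that openness of $\varphi|_{\Delta_{\pos}}$ makes the forward orbit of $U$ open, while $\varphi^n=\mathrm{id}$ there forces local injectivity, so Proposition \ref{prop:properties_or_rho} places the whole orbit in $\Delta_{\reg}$. Collecting the orbit into the single invariant open set $W$ with $\varphi|_W$ a self-homeomorphism, instead of arguing injectivity on each $\varphi^k(U)$ separately as the paper does, is only a cosmetic repackaging.
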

\begin{proof}
Since $\Delta_{\reg}\subseteq \Delta_{\pos}$, topological freeness of $\varphi:\Delta_{\pos}\to X$ implies topological freeness
of $\varphi: \Delta_{\reg}\to X$.
 Assume now  that $\varphi:\Delta_{\pos}\to X$ is not topologically free. So  there is a non-empty open set
$U\subseteq \Delta_{\pos,n}=\Delta_n\setminus \varrho_n^{-1}(0)$, for some $n\in \N$, such that for each  $x\in U$  we have $x=\varphi^{n}(x)$.
Then $\varphi$ is injective on each of the sets $U$, $\varphi(U)$, ...,  $\varphi^{n-1}(U)$
 and since they are contained in
$\Delta_{\pos}=\Delta\setminus \varrho^{-1}(0)$ it follows from Proposition \ref{prop:properties_or_rho} that they are in fact contained in $\Delta_{\reg}$.
Therefore $\varphi: \Delta_{\reg}\to X$ is not topologically free.
\end{proof}
The foregoing observations naturally lead to the following definition, which  agrees with the version of topological freeness suggested in \cite[Example 9.14]{CKO}.
\begin{defn}
 We say that the \emph{transfer operator $L$ is topologically free} if the restricted open map
$\varphi: \Delta_{\reg}\to X$ is topologically free.
\end{defn}

\begin{thm}
\label{thm:isomorphism}
Let $L$ be a transfer operator for a partial map $\varphi:\Delta\to X$. The following conditions are equivalent:
\begin{enumerate}
\item\label{enu:isomorphism1}  Every faithful covariant  representation $(\pi,T)$ of $L$ extends to a faithful representation
$\pi\rtimes T$ of the crossed product  $A\rtimes L$.

\item\label{enu:isomorphism1.5}  $A$ detects ideals in $A\rtimes L$, i.e. $A\cap N\neq \{0\}$ for any non-zero ideal $N$ in $A\rtimes L$.

\item\label{enu:isomorphism2}  The orbit representation $(\pi_o, T_o)$ introduced in Example~\ref{ex:orbit_representation} extends to a faithful representation
$\pi_o\rtimes T_o$ of the crossed product  $A\rtimes L$.

\item\label{enu:isomorphism3}  The map $\varphi: \Delta_{\reg}\to X$ is topologically free.
\end{enumerate}
\end{thm}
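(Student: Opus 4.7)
The plan is to close the loop (ii) $\Rightarrow$ (i) $\Rightarrow$ (iii) $\Rightarrow$ (iv) $\Rightarrow$ (ii), where only the last implication requires genuine work. The two implications (ii) $\Rightarrow$ (i) and (i) $\Rightarrow$ (iii) are formalities: if $\pi\rtimes T$ has kernel $N$ and $\pi$ is faithful, then $N\cap A\subseteq\ker\pi=\{0\}$, forcing $N=\{0\}$ by (ii) and giving (i); and (i) applied to the faithful covariant orbit representation of Proposition~\ref{prop:orbit_representation} yields (iii).

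For (iii) $\Rightarrow$ (iv) I argue by contrapositive. Suppose $\varphi:\Delta_{\reg}\to X$ is not topologically free. Lemma~\ref{lem:topol_free_local_homeo} produces $n\geq 1$ and a non-empty open set $U\subseteq\Delta_{\reg,n}$ with $\varphi^n|_U=\id$. Fix $0\neq a\in C_c(U)$. For any $h\in\ell^2(X)$ and $x\in X$,
$$
\bigl(\pi_o(a)T_o^n h\bigr)(x)=a(x)\sqrt{\varrho_n(x)}\,h(\varphi^n(x))=a(x)\sqrt{\varrho_n(x)}\,h(x),
$$
the second equality using that $\supp(a)\subseteq U$ and $\varphi^n|_U=\id$. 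Hence $\pi_o(a)T_o^n=\pi_o(a\sqrt{\varrho_n})$ in $B(\ell^2(X))$, so $at^n-a\sqrt{\varrho_n}\in\ker(\pi_o\rtimes T_o)$. The gauge conditional expectation $E$ (Proposition~\ref{prop:conditional_expectation_description}) sends this element to $-a\sqrt{\varrho_n}$, which is non-zero because $a\neq 0$ and $\varrho_n>0$ on $U$. Thus $\pi_o\rtimes T_o$ is not faithful, contradicting (iii).

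The substantive implication (iv) $\Rightarrow$ (ii) proceeds through (i): given a non-zero ideal $N\subseteq A\rtimes L$ with $N\cap A=\{0\}$, faithfully represent the quotient $B:=(A\rtimes L)/N$ on a Hilbert space $H$, set $\pi:=q|_A$ (faithful) and obtain $T\in B(H)$ as the strong limit of $q(\mu_\lambda t)$ for an approximate unit $\{\mu_\lambda\}\subseteq I$; then $(\pi,T)$ is a faithful covariant representation whose universal extension $\pi\rtimes T=\sigma\circ q$ has kernel $N\neq\{0\}$, contradicting (i). To establish (i) under topological freeness, Corollary~\ref{cor:Expectation_Invariance} reduces the problem to verifying, for every faithful covariant $(\pi,T)$, the inequality
$$
\Bigl\|\sum_{n_k=m_k}a_kb_k\varrho_{n_k}\Bigr\|_\infty\leq \Bigl\|\sum_{k\in F}\pi(a_k)T^{n_k}T^{*m_k}\pi(b_k)\Bigr\|
$$
for every finite family $\{a_k\in I_{n_k},b_k\in I_{m_k}\}_{k\in F}$. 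Set $f:=\sum_{n_k=m_k}a_kb_k\varrho_{n_k}$, $B:=\sum_{k\in F}\pi(a_k)T^{n_k}T^{*m_k}\pi(b_k)$, and $M:=\max_k\max(n_k,m_k)$. Fix $\varepsilon>0$ and $x_0\in X$ with $|f(x_0)|\geq\|f\|_\infty-\varepsilon$; then $x_0\in\Delta_{\pos,\ell}$ for some $\ell$ with $n_k=m_k=\ell$ contributing. By topological freeness of $\varphi:\Delta_{\reg}\to X$ -- equivalently of $\varphi:\Delta_{\pos}\to X$, by Lemma~\ref{lem:reg_vs_positive_freeness} -- combined with openness of $\varphi|_{\Delta_{\pos}}$ (Proposition~\ref{prop:properties_or_rho}), the points $x$ near $x_0$ in $\Delta_{\pos,M}$ satisfying $\varphi^{n_k}(x)\neq\varphi^{m_k}(x)$ for every $k$ with $n_k\neq m_k$ form a dense subset; among these pick $x_1$ with $|f(x_1)|\geq|f(x_0)|-\varepsilon$. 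Using the covariance relations together with an approximate unit in $A$ localised near $x_1$, build a unit vector $\xi$ in the representation space of $(\pi,T)$ implementing $\operatorname{ev}_{x_1}$ on $A$; the off-diagonal matrix coefficients $\langle\pi(a_k)T^{n_k}T^{*m_k}\pi(b_k)\xi,\xi\rangle$ with $n_k\neq m_k$ then vanish (because $\varphi^{n_k}(x_1)\neq\varphi^{m_k}(x_1)$), while the diagonal ones sum to approximately $f(x_1)$. Hence $|f(x_1)|\leq\|B\|$, and letting $\varepsilon\to 0$ completes the proof.

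The principal obstacle is the final step: rigorously constructing the vector $\xi$ for an abstract faithful covariant $(\pi,T)$ and justifying the vanishing of off-diagonal coefficients. In the orbit representation one can simply take $\xi=\mathds{1}_{x_1}$ and verify the matrix-coefficient formulas by direct computation; transferring this to a general $(\pi,T)$ requires either invoking the faithfulness of the regular representation (Theorem~\ref{thm:regular_is_faithful}) as a bridge -- the orbit representation is the diagonal summand of the regular one, and topological freeness collapses the distinction between them -- or working directly with the generalised expectation $G$ of Proposition~\ref{prop:generalised_expectation} on $C^*(\pi,T)$. A secondary technical difficulty is the upper semi-continuity of the potentials $\varrho_k$ on $\Delta_{\pos}$, which makes $f$ only upper semi-continuous; the passage from the near-supremum point $x_0$ to an aperiodic $x_1$ with $|f(x_1)|\geq|f(x_0)|-\varepsilon$ must exploit the fact that $\varrho_k$ is continuous on $\Delta_{\reg,k}$ (Proposition~\ref{prop:properties_or_rho}) together with the density of aperiodic points in $\Delta_{\reg,M}$.
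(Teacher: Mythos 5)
Your handling of the peripheral implications is essentially sound: the two formal implications, the contrapositive argument producing $at^n-a\sqrt{\varrho_n}$ in $\ker(\pi_o\rtimes T_o)$ (detecting its non-vanishing with $E$ rather than with the regular representation, as the paper does, is a harmless variation), and the quotient construction for passing from condition \ref{enu:isomorphism1} to ideal detection are all fine. The problem is that the heart of the theorem --- topological freeness implies that \emph{every} faithful covariant representation $(\pi,T)$ induces a faithful $\pi\rtimes T$ --- is not actually proved. After reducing, via Corollary \ref{cor:Expectation_Invariance}, to the inequality $\|\sum_{n_k=m_k}a_kb_k\varrho_{n_k}\|_\infty\leq\|B\|$, you posit a unit vector $\xi$ in the abstract representation space ``implementing $\mathrm{ev}_{x_1}$'' for which the off-diagonal matrix coefficients vanish and the diagonal ones recover $f(x_1)$, and you concede that you do not know how to construct it. That construction \emph{is} the theorem; and the suggested bridge via Theorem \ref{thm:regular_is_faithful} (``topological freeness collapses the distinction'' between the regular and orbit representations) begs precisely the question being asked, since it amounts to asserting that an arbitrary faithful covariant representation dominates the orbit representation in norm.

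Moreover, the strategy as outlined breaks down exactly where this paper differs from the local-homeomorphism literature, namely at irregular points. The function $f=\sum_{n_k=m_k}a_kb_k\varrho_{n_k}$ is only upper semicontinuous, so a near-supremum point $x_0\notin\Delta_{\reg}$ need not admit \emph{any} nearby point $x_1$ with $|f(x_1)|\geq|f(x_0)|-\varepsilon$: for the tent map with $\varrho=\frac12\mathds{1}_{X\setminus\{\frac12\}}+\mathds{1}_{\{\frac12\}}$ (Example \ref{ex:tent_homeomorphism_works}), $G(tt^*)=\varrho$ equals $1$ only at the irregular point $\tfrac12$ and equals $\tfrac12$ at every neighbouring point, so no vector state localised near an aperiodic point can witness the required lower bound. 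This is not a ``secondary technical difficulty''; it forces a different mechanism, which is what the paper's proof supplies: since $\pi\rtimes T$ is faithful on the core (Theorem \ref{thm:faithfulness_on_the_core}), every irreducible representation $\pi_y^k$ of $A_N$ from Theorem \ref{thm:spectra_of_A_n} --- including those attached to irregular points $y$, which are not point evaluations but act on $\ell^2(\varphi^{-k}(y)\setminus\varrho_k^{-1}(0),\varrho_k)$ --- occurs through $(\pi\rtimes T)(A_N)$. One picks such a representation almost attaining $\|b_0\|$, extends it to $C^*(\pi,T)$, and kills the compressions of the off-diagonal terms by disjointness of inequivalent irreducible subrepresentations, using Lemma \ref{lem:shift_map_induced_by_representations} to identify how $\pi(I_i)T^i$ shifts the irreducible subspace; topological freeness is needed only in the case where $y$ lies in the top layer $\varphi^N(\Delta_{\pos,N})$, to arrange that the shifted representations are inequivalent to the original one. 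Without an argument of this kind (or some other way of seeing the irregular-point representations inside an arbitrary faithful covariant representation), your outline does not close.
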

\begin{proof}
The equivalence \ref{enu:isomorphism1}$\Leftrightarrow$\ref{enu:isomorphism1.5} is straightforward and implication \ref{enu:isomorphism1}$\Rightarrow$\ref{enu:isomorphism2} is trivial. To prove
\ref{enu:isomorphism2}$\Rightarrow$\ref{enu:isomorphism3} assume $\varphi: \Delta_{\reg}\to X$ is not  topologically free. Then there is a non-empty open set  $U\subseteq  \Delta_n$, such that $\varphi^n|_U=id|_U$ and $\varphi^{k}(U)\subseteq X_{\reg}$
for $k=0,...,n-1$. In particular, $\varrho_n$ is continuous and non-zero at every point in $U$. Thus  for any non-zero $a\in C_c(U)\subseteq A$ we have $a\sqrt{\varrho_n}\in A$. Using the regular representation one readily calculates that $at^n-a\sqrt{\varrho_n}$ is a non-zero element of $A\rtimes L\cong C^*(\widetilde{\pi}(A)\cup \widetilde{\pi}(I)\widetilde{T})$, but
$(\pi_o\rtimes T_o)(at^n-a\sqrt{\varrho_n})=0.
$ Hence  $\pi_o\rtimes T_o$ is not faithful.

Implication  \ref{enu:isomorphism3}$\Rightarrow$\ref{enu:isomorphism1}  follows from \cite[Example 9.14]{CKO} as by Lemma \ref{lem:reg_vs_positive_freeness} condition described there 
is equivalent to topologicall freeness as we define it.
\end{proof}

Topological freeness for homeomorphisms appeared already in the work of
Zeller-Meier, see \cite[Proposition 4.14]{Zeller-Meier},
  who used it to characterise when $C_0(X)$ is maximal abelian in the associated crossed product.
This result was generalised to crossed products by local homeomorphisms by Carlsen and Silvestrov in \cite{CS}.
However, it seems that there is no obvious generalisation of this fact if we allow irregular points (discontinuity points of $\varrho$).
More specifically, let $A'$  denote the commutant of $A=C_0(X)$ in $A\rtimes L$. So $A$ is maximal abelian in $A\rtimes L$  iff $A=A'$. Using the generalised expectation $G$ introduced in Lemma~\ref{lem:generalised_expectation} we clearly have
$$
C_0(X)\subseteq \{b\in A\rtimes L: G(b)=b\}\subseteq C_0(X)'.
$$
It turns out that when $\varrho$ is discontinuous already the first inclusion might be proper.
\begin{ex}
For the tent map  $\p:[0,1]\to[0,1]$ where  $\p(x)=1-|1-2x|$ and  $\varrho=\mathds{1}_{[0,\frac{1}{2}]}$ we have the transfer operator $L(a)(y)=a(\frac{y}{2})$.
Using the regular representation one readily calculates that
$$
G(t^nt^{*n})=t^nt^{*n}=\mathds{1}_{[0,\frac{1}{2^n}]}.
$$
Accordingly, $C_0(X)\subsetneq \{b\in A\rtimes L: G(b)=b\}$ as the latter contains some functions that
are discontinuous at points $\frac{1}{2^n}$, $n\in \N$. Hence $C_0(X)\neq C_0(X)'$ even though the map $\p$ is topologically free.
\end{ex}

For the sake of completeness we will generalise  the main result of \cite{CS} (to partial, not necessarily surjective maps on locally compact spaces,  and arbitrary continuous $\varrho$).
The proof is based on Renault's characterisation of Cartan subalgebras \cite{Re}, see also \cite[7.2]{KwaMeyer}.

\begin{thm}\label{thm:commutant_topological_freeness}
Suppose that the transfer operator $L$ is given by a continuous  $\varrho$.  Then the equivalent conditions  in Theorem \ref{thm:isomorphism}
are further equivalent to
each of the following:
\begin{enumerate}
\item The $C^*$-algebra $C_0(X)$ is maximal abelian in $C_0(X)\rtimes L$.
\item $C_0(X)$ is a Cartan subalgebra of $C_0(X)\rtimes L$, in the sense of \cite{Re}.
\item The partial map $\varphi:\Delta_{\reg}=\Delta_{\pos}\to X$ is topologically free.

\end{enumerate}
\end{thm}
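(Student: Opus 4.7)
The strategy is to reduce the statement to Renault's classical characterization of Cartan subalgebras of \'etale groupoid $C^*$-algebras, using the groupoid model provided by Theorem \ref{thm:local_homeo_crossed_groupoid}. Since $\varrho$ is continuous, $\Delta_{\pos}=\{x\in\Delta:\varrho(x)>0\}$ is open and $\Delta_{\reg}=\Delta_{\pos}$; replacing $\Delta$ by $\Delta_{\pos}$ (which does not affect $A\rtimes L$, as observed after Definition \ref{defn:representations}), I may assume $\Delta=\Delta_{\reg}$. Then $\varphi:\Delta\to X$ is a partial local homeomorphism with strictly positive continuous $\varrho$, so Theorem \ref{thm:local_homeo_crossed_groupoid} gives a canonical isomorphism $A\rtimes L\cong C^*(\G)$, where $\G$ is the Renault-Deaconu groupoid of $\varphi$ --- an amenable, \'etale, locally compact, Hausdorff groupoid --- and this isomorphism sends $A=C_0(X)$ onto $C_0(\G^0)$.

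Next I would analyse the isotropy bundle
$$
\Iso(\G)=\{(x,n-m,x):n,m\in\N_0,\ x\in\Delta_n\cap\Delta_m,\ \varphi^n(x)=\varphi^m(x)\}.
$$
A basic open neighbourhood of $(x_0,n-m,x_0)\in\Iso(\G)\setminus\G^0$ has the form $\{(x,n-m,y):x\in U,\ y\in V,\ \varphi^n(x)=\varphi^m(y)\}$, for open sets $U,V$ on which $\varphi^n$, $\varphi^m$ are injective; such a set lies in $\Iso(\G)$ precisely when on $U\cap V$ one has $\varphi^n=\varphi^m$. Thus $\Iso(\G)\setminus\G^0$ has non-empty interior in $\G$ if and only if some set $\{x\in\Delta_n\cap\Delta_m:\varphi^n(x)=\varphi^m(x)\}$ (with $n\neq m$) has non-empty interior in $X$. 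By Lemma \ref{lem:topol_free_local_homeo}, this happens iff $\varphi:\Delta_{\reg}\to X$ fails to be topologically free. Consequently, $\G$ is \emph{effective} iff $L$ is topologically free.

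Finally I would invoke Renault's results \cite{Re} for the amenable, \'etale, Hausdorff groupoid $\G$: $C_0(\G^0)$ is maximal abelian in $C^*(\G)$ precisely when $\G$ is effective, and in the Hausdorff \'etale case $C_0(\G^0)$ is then automatically a Cartan subalgebra of $C^*(\G)$ (the canonical expectation is restriction of functions to $\G^0$, the normaliser semigroup is generated by indicator-style sections on open bisections, and regularity is immediate since bisections cover $\G$). Transporting these equivalences through $A\rtimes L\cong C^*(\G)$ yields (1)$\Leftrightarrow$(2)$\Leftrightarrow$(3), and combined with Theorem \ref{thm:isomorphism} this completes the proof.

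The main delicate point is the effectiveness calculation: one must rule out interior isotropy coming from \emph{all} pairs $(n,m)$ with $n\neq m$, not merely pairs of the form $(n,0)$, and one must verify that the basic open bisections of $\G$ do exhaust the topology near isotropy elements --- but Lemma \ref{lem:topol_free_local_homeo} reduces the first issue to non-existence of open periodic sets, while for the second the partial local-homeomorphism hypothesis $\Delta=\Delta_{\reg}$ guarantees that every point of $\G$ has a basic open-bisection neighbourhood of the stated form.
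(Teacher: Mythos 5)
Your proposal is correct and follows essentially the same route as the paper: pass to $\Delta_{\reg}=\Delta_{\pos}$, identify $C_0(X)\rtimes L$ with $C^*(\G)$ via Theorem \ref{thm:local_homeo_crossed_groupoid}, and invoke Renault's characterisation (maximal abelian $\Leftrightarrow$ Cartan $\Leftrightarrow$ effective) together with the equivalence of effectiveness and topological freeness, which the paper simply cites from \cite[Corollary 3.3]{Re} and \cite[Proposition 2.3]{Renault2000} while you verify it directly. Your direct isotropy computation is sound, except that the phrase ``lies in the isotropy precisely when $\varphi^n=\varphi^m$ on $U\cap V$'' should be repaired either by applying the (open) range map to the bisection contained in the isotropy, or by shrinking to the open set $U\cap V\cap\varphi^{-n}(\varphi^m(V))$, on which the coincidence $\varphi^n=\varphi^m$ genuinely follows.
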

\begin{proof}
The transfer operator $L:I\to A$ for $\varphi:\Delta\to X$ restricts to the transfer operator $L_{\reg}:C_0(\Delta_{\reg})\to A$ for
the partial homeomorphism $\varphi:\Delta_{\reg}\to X$. Since  we assume $\varrho$ is continuous we get
$\Delta_{\reg}=\Delta\setminus \varrho^{-1}(0)$ and the crossed products $A\rtimes L$ and $A\rtimes L_{\reg}$ are naturally isomorphic (their regular representations
coincide, see Proposition \ref{prop:regular_is_faithful}). Thus by Theorem \ref{thm:local_homeo_crossed_groupoid} we may identify
$A\rtimes L$ with the groupoid $C^*$-algebra  $C^*(\G)$ of  the Renault-Deaconu groupoid \(\G\) associated to $\varphi:\Delta_{\reg}\to X$.
By the work of Renault's \cite{Re}, $C_0(X)$ is maximal abelian in $C^*(\G)$ if and only if $C_0(X)$ is a Cartan subalgebra of \(\G\) if and only if the groupoid
\(\G\) is effective (Renault considered second countable groupoids but his theory works without this assumption, see \cite[7.2]{KwaMeyer})
By \cite[Corollary 3.3]{Re} and \cite[Proposition 2.3]{Renault2000}  the groupoid \(\G\) is effective if and only if the map $\varphi:\Delta_{\reg}\to X$  is topologically free
(local homeomorphisms satisfying \ref{enu:topol_free_local_homeo3} in Lemma \ref{lem:topol_free_local_homeo} are called essentially free in
\cite{Renault2000}). This proves the desired equivalence.
\end{proof}

\subsection{Simplicity}
The following definition and lemma are compatible with \cite[Definition 3.1 and Proposition 3.2]{er} where partial local homeomorphisms are considered.

\begin{defn}\label{defn:invariant_sets} Let $U\subseteq X$. We say $U$  is \emph{positively invariant} if $\varphi(U\cap \Delta_{\pos})\subseteq U$,  $U$ is \emph{negatively invariant}
if $\varphi^{-1}(U)\cap \Delta_{\reg}\subseteq U$,  and $U$ is \emph{invariant} if it is both positively and negatively invariant.
We say that $L$ is \emph{minimal} if there are no non-trivial open invariant sets.
\end{defn}
\begin{lem}\label{lem:restricted_ideals} Let $U$ be an open subset of $X$ and put $J:=C_0(U)$ and recall that $I=C_0(\Delta)$.

\begin{enumerate}
\item\label{item:restricted_ideals1} $U$ is positively invariant iff $L(J\cap I)\subseteq J$;
\item\label{item:restricted_ideals2}   $U$ is negatively invariant iff $L^{-1}(J)\cap C_c(\Delta_{\reg})\subseteq J$ iff $L^{-1}(J)\cap C_0(\Delta_{\reg})\subseteq J$;
\end{enumerate}
\end{lem}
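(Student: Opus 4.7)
The plan is to translate each topological invariance condition on $U$ into the corresponding algebraic statement via the pointwise formula $L(a)(y) = \sum_{x \in \varphi^{-1}(y) \cap \Delta_{\pos}} \varrho(x) a(x)$. Part~(i) is short and direct. If $U$ is positively invariant and $a \in J \cap I = C_0(U \cap \Delta)$, then any $x$ giving a non-zero term in $L(a)(y)$ lies in $U \cap \Delta_{\pos}$, so $y = \varphi(x) \in U$ by invariance; hence $L(a) \in J$. Conversely, given $L(J\cap I) \subseteq J$ and $x_0 \in U \cap \Delta_{\pos}$, choosing any $a \in C_c(U \cap \Delta)$ with $a(x_0) > 0$ yields $L(a)(\varphi(x_0)) \geq \varrho(x_0) a(x_0) > 0$, and $L(a) \in J$ then forces $\varphi(x_0) \in U$.

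For part~(ii), I first dispose of the equivalence of the two inclusions. The $C_0(\Delta_{\reg})$-inclusion trivially implies the $C_c(\Delta_{\reg})$-inclusion. Conversely, the transfer identity $L(\alpha(g) a) = g L(a)$ makes $L^{-1}(J)$ closed under left multiplication by $\alpha(A)$, so any $a \in L^{-1}(J) \cap C_0(\Delta_{\reg})$ is a norm-limit of elements $\alpha(g_n) a \in L^{-1}(J) \cap C_c(\Delta_{\reg})$ for a suitable net $\{g_n\} \subseteq C_c(X)$, and closedness of $J$ propagates the inclusion to $a$. The backward direction of the main equivalence is then an explicit construction: if negative invariance fails, pick $x_0 \in (\varphi^{-1}(U) \cap \Delta_{\reg}) \setminus U$ and, using continuity of $\varphi$ and openness of $U$, shrink to a neighborhood $V \subseteq \Delta_{\reg} \setminus U$ of $x_0$ on which $\varphi$ restricts to a homeomorphism with $\varphi(V) \subseteq U$; any $a \in C_c(V)$ with $a(x_0) \neq 0$ then lies in $C_c(\Delta_{\reg}) \setminus J$ while $\operatorname{supp} L(a) \subseteq \varphi(V) \subseteq U$ gives $L(a) \in J$, contradicting the inclusion.

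The main obstacle will be the forward direction: assuming negative invariance, every $a \in C_c(\Delta_{\reg})$ with $L(a) \in J$ should lie in $J$. The naive pointwise attempt --- evaluating $L(a)(\varphi(x_0))$ at a hypothetical $x_0 \in \Delta_{\reg} \setminus U$ with $a(x_0) \neq 0$ --- can be thwarted by cancellations in $\sum_{x \in \varphi^{-1}(\varphi(x_0))} \varrho(x) a(x)$, since negative invariance places every preimage in $\Delta_{\reg}$ outside $U$. My plan to break the cancellation is a local isolation argument exploiting the finiteness of the fiber $\varphi^{-1}(\varphi(x_0)) \cap \operatorname{supp} a$ (which holds because $\operatorname{supp} a$ is compact in $\Delta_{\reg}$ and $\varphi|_{\Delta_{\reg}}$ is a local homeomorphism): separate the fiber points by disjoint neighborhoods, apply a cutoff $b \in C_c(X)$ supported near $\varphi(x_0)$ inside $X \setminus U$ so that $\alpha(b) a$ is supported only in the sheet above $x_0$, and use $L(\alpha(b) a) = b \cdot L(a) = 0$ (via the transfer identity and $L(a) \in J$) together with the single-sheet formula $L(\alpha(b)a)(\varphi(x_0)) = \varrho(x_0) a(x_0) b(\varphi(x_0))$ to deduce $\varrho(x_0) a(x_0) = 0$, the desired contradiction.
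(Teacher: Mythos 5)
Part (i) and your treatment of the direction ``inclusion $\Rightarrow$ negative invariance'' in (ii) are essentially sound and agree in substance with the paper (two small repairs: in (i)'s converse you must take $a\geq 0$, not merely $a(x_0)>0$, or the defining sum could again cancel; and in (ii) you should drop the requirement $V\subseteq \Delta_{\reg}\setminus U$ --- such a $V$ need not exist when $x_0\in \partial U$, and it is not needed, since $a(x_0)\neq 0$ with $x_0\notin U$ already gives $a\notin C_0(U)$). The genuine gap is your forward direction of (ii), and the proposed localization cannot work: $\alpha(b)=b\circ\varphi$ is constant along fibres of $\varphi$, so every $x\in\varphi^{-1}(\varphi(x_0))$ satisfies $b(\varphi(x))=b(\varphi(x_0))$, and $\alpha(b)a$ is \emph{not} supported only in the sheet through $x_0$; in fact $L(\alpha(b)a)(\varphi(x_0))=b(\varphi(x_0))\,L(a)(\varphi(x_0))=0$, which yields no contradiction, so your ``single-sheet formula'' is false precisely in the cancellation scenario you set out to defeat. (A cut-off in the source variable, $ca$ with $c\in C_c(X)$ supported near $x_0$, would isolate the sheet, but then $L(ca)$ bears no relation to $L(a)$ and need not lie in $J$; also, a $b$ supported in $X\setminus U$ with $b(\varphi(x_0))\neq 0$ need not exist if $\varphi(x_0)\in\partial U$.)

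Moreover no argument can close this gap, because the implication ``negatively invariant $\Rightarrow L^{-1}(J)\cap C_c(\Delta_{\reg})\subseteq J$'' is false as stated: take $X=\{y,x_1,x_2,u\}$ discrete, $\Delta=\{x_1,x_2\}$, $\varphi(x_1)=\varphi(x_2)=y$, $\varrho\equiv 1$, $U=\{u\}$. Then $U$ is invariant, $\Delta_{\reg}=\Delta$, and $a=\mathds{1}_{x_1}-\mathds{1}_{x_2}\in C_c(\Delta_{\reg})$ satisfies $L(a)=0\in J$ while $a\notin J$ (the same example defeats the $C_0(\Delta_{\reg})$ version). You are in good company: the paper's own one-line proof rests on the ``readily verified'' equality $L^{-1}(J)\cap C_c(\Delta_{\reg})=C_c(\varphi^{-1}(U)\cap\Delta_{\reg})$, whose inclusion $\subseteq$ fails for exactly this cancellation reason; only $\supseteq$ holds, and only the corresponding direction of the lemma is used later (in the proof of Theorem \ref{thm:test_of_simplicity}), which is the direction you do prove correctly. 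Finally, your bridge from the $C_c$-inclusion to the $C_0$-inclusion has its own hole: for $a\in C_0(\Delta_{\reg})$ and $g\in C_c(X)$ the element $\alpha(g)a$ does lie in $L^{-1}(J)\cap C_0(\Delta_{\reg})$ and approximates $a$, but its support is only contained in $\supp a\cap \varphi^{-1}(\supp g)$, which need not be compact unless $\varphi$ is proper, so in general $\alpha(g_n)a\notin C_c(\Delta_{\reg})$ and the approximation does not produce elements of $L^{-1}(J)\cap C_c(\Delta_{\reg})$.
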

\begin{proof}
\ref{item:restricted_ideals1}. If $U$ is positively invariant, then for any $y\not\in U$ we have $\varphi^{-1}(y)\cap \Delta_{\pos}=\emptyset$, which implies
$L(a)(y)=0$ for any $a\in J\cap I$. Thus $L(J\cap I)\subseteq J$.
If $U$ is not positively invariant, then there is $x\in U\setminus\varrho^{-1}(0)$ such that $\varphi(x)\notin U$.
Taking any positive  $a\in I\cap J$ with $a(x)\neq 0$, we get
 $L(a)(\varphi(x))=\sum_{t\in\varphi^{-1}(\varphi(x))}\varrho(t)a(t)>0$ which  shows that $L(J\cap I)\not\subseteq J$.

\ref{item:restricted_ideals2}. This follows from the equalities $C_c(\varphi^{-1}(U)\cap \Delta_{\reg})=L^{-1}(J)\cap C_c(\Delta_{\reg})$ and $C_0(\varphi^{-1}(U)\cap \Delta_{\reg})=L^{-1}(J)\cap C_0(\Delta_{\reg})$, which are readily verified.
\end{proof}
\begin{ex}[Directed graphs with one circuit]\label{ex:one_circuit_graph} Suppose that $\varphi:X\to X$ is a \emph{directed graph with one circuit}, i.e. $X$ is a countable  discrete set and there is a point $x\in X$ such that for any $y\in X$ we have $\varphi^{n}(y)=x$ for some $n\geq 1$, cf. \cite{BJJS}. In particular, $x$ is a periodic point and $\varphi$ is a local homeomorphism. Take any $\varrho:X \to (0,+\infty)$ 
with $\sup_{y\in X}\sum_{x\in \varphi^{-1}(y)}\varrho(x)<\infty$, so that it defines a transfer operator $L$ for $\varphi$.
Then $X=\Delta_{\pos}=\Delta_{\reg}$ and the transfer operator $L$ is minimal but $\varphi$ is not topologically free.
Hence, $A\rtimes L$ is not simple   by Theorem \ref{thm:isomorphism}.

\end{ex}
\begin{thm}\label{thm:test_of_simplicity} The following conditions are equivalent:
\begin{enumerate}
\item\label{thm:test_of_simplicity1} the crossed product $A\rtimes L$ is simple;
\item\label{thm:test_of_simplicity2} $L$ is minimal and $\varphi: \Delta_{\reg}\to X$ is topologically free;
\item\label{thm:test_of_simplicity3} $L$ is minimal and $\varphi: \Delta_{\reg}\to X$ is  not a directed graph with one circuit. 
\end{enumerate}
\end{thm}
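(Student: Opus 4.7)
The plan is to prove (i)$\Leftrightarrow$(ii) by matching ideals of $A\rtimes L$ with open $\varphi$-invariant subsets of $X$, via Theorem \ref{thm:isomorphism} and Lemma \ref{lem:restricted_ideals}, and then to prove (ii)$\Leftrightarrow$(iii) by a direct dynamical analysis. For (ii)$\Rightarrow$(i), given a non-zero ideal $N$ of $A\rtimes L$, Theorem \ref{thm:isomorphism} together with topological freeness gives $J := N\cap A = C_0(U)$ with $U\neq\emptyset$, and I will show $U$ is invariant. Positive invariance is immediate: for $a\in C_0(U)\cap I\subseteq N$ we have $L(a) = t^*at\in N\cap A = J$, so Lemma \ref{lem:restricted_ideals}\ref{item:restricted_ideals1} applies. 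For negative invariance, given $a\in C_c(\Delta_{\reg})$ with $L(a)\in J$, I will decompose $a$ via a partition of unity subordinate to a cover of $\supp a$ by open sets where $\varphi$ is injective, and combine the ideal property of $N$ with the covariance identity $c = ctt^*u$ from Proposition \ref{prop:characterization_of_covariance}\ref{it:characterization_of_covariance2.5} to conclude $a\in N\cap A = J$. Minimality then forces $U = X$, so $N = A\rtimes L$.

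For (i)$\Rightarrow$(ii), topological freeness follows at once from Theorem \ref{thm:isomorphism}. If minimality fails and $U$ is a non-trivial open invariant set, set $Y := X\setminus U$; positive invariance lets $\varphi$ restrict to a partial map $\Delta\cap Y\to Y$ (on $\Delta_{\pos}\cap Y$), yielding a transfer operator $L_Y$ with potential $\varrho|_{\Delta\cap Y}$, while negative invariance ensures that the set of regular points for $L_Y$ contains $\Delta_{\reg}\cap Y$. Pulling back the orbit representation of $L_Y$ on $\ell^2(Y)$ along the restriction $C_0(X)\to C_0(Y)$ then yields a covariant representation of $L$ whose extension to $A\rtimes L$ kills $C_0(U)\neq 0$, contradicting simplicity.

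The implication (ii)$\Rightarrow$(iii) is trivial because a directed graph with one circuit has an isolated periodic point and hence is not topologically free. The heart of the theorem is (iii)$\Rightarrow$(ii), which I prove contrapositively: assuming $L$ minimal and $\varphi:\Delta_{\reg}\to X$ not topologically free, I will derive the one-circuit structure. By Lemma \ref{lem:topol_free_local_homeo} fix a non-empty open $U$ with $\varphi^n|_U = \id_U$ and $\varphi^k(U)\subseteq\Delta_{\reg}$ for $0\leq k\leq n-1$. Pick $x_0\in U$ of minimal period $n$ and choose an open $V_0\subseteq U$ containing $x_0$ such that $V_0, V_1,\dots, V_{n-1}$, with $V_k := \varphi^k(V_0)$, are pairwise disjoint neighbourhoods of $x_k := \varphi^k(x_0)$ and each $\varphi|_{V_k}$ is injective. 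The key claim is that $\{x_0\}$ is open. If not, $W := V_0\setminus\{x_0\}$ is non-empty open, and its open invariant hull equals $X$ by minimality. But the iterative construction of the hull never reaches $\{x_0,\dots,x_{n-1}\}$: forward iterates give $\varphi^k(W) = V_k\setminus\{x_k\}$ by injectivity of $\varphi|_{V_0}$, and backward iterates along $\Delta_{\reg}$ split as $V_{n-1}\setminus\{x_{n-1}\}$ together with external preimages outside $\bigcup_k V_k$ (no $x_j$ can be such a preimage, since $\varphi(x_j) = x_{j+1}\in W\subseteq V_0$ and the disjointness of the $V_k$'s would force $x_{j+1} = x_0\notin W$); subsequent iteration stays in this punctured region. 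This contradicts the hull being $X$, so $V_0 = \{x_0\}$.

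Once $\{x_0\}$ is open, openness of $\varphi|_{\Delta_{\pos}}$ and the local-homeomorphism property of $\varphi|_{\Delta_{\reg}}$ propagate isolatedness to every point of the orbit $\OO(x_0)$. Minimality forces every orbit to be dense; combined with $\{x_0\}$ being open, every orbit must meet $\{x_0\}$, so $X = \OO(x_0)$. Therefore $X$ is countable discrete and every point eventually reaches $x_0$ under forward iteration, giving exactly the structure of Example \ref{ex:one_circuit_graph}. The main obstacle is this last dynamical argument, specifically the bookkeeping that shows the invariant-hull construction cannot reach any cycle point; the disjointness of the $V_k$'s and the injectivity of $\varphi|_{V_k}$ are what make this obstruction watertight.
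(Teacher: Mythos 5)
Your treatment of the equivalence of \ref{thm:test_of_simplicity1} and \ref{thm:test_of_simplicity2} is sound and in fact slightly more complete than the paper's: the paper only runs the correspondence ``non-zero ideal $N\mapsto$ invariant open set $U$'' and never explicitly produces a proper ideal from a non-trivial invariant set, whereas your restricted orbit representation on $\ell^2(Y)$, $Y=X\setminus U$, does exactly that (positive invariance gives the relation $\pi(L(a))=T_Y^*\pi(a)T_Y$, negative invariance gives covariance), so ``simple $\Rightarrow$ minimal'' is genuinely covered. The implication \ref{thm:test_of_simplicity2}$\Rightarrow$\ref{thm:test_of_simplicity3} is fine. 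The problem is in \ref{thm:test_of_simplicity3}$\Rightarrow$\ref{thm:test_of_simplicity2}, precisely at the step you yourself single out. Your claim that the invariant hull $\OO$ of $W=V_0\setminus\{x_0\}$ never reaches the circuit is true, but your bookkeeping only checks (a) forward images of $W$ itself, and (b) that no $x_j$ can enter via a \emph{backward} step from $W$. It does not rule out the real danger: an externally added point $z\notin\bigcup_k V_k$ with $\varrho(z)>0$ and $\varphi(z)=x_j$ could a priori lie in the hull, and then the \emph{forward} closure operation would drop $x_j$ into $\OO$. Disjointness of the $V_k$'s and injectivity of $\varphi|_{V_k}$ alone do not exclude this; you need a strengthened induction, e.g.\ ``no point of the hull has a circuit point in its forward $\varphi$-orbit,'' whose base case uses that the forward orbits of points of $W$ are their own periodic cycles and that distinct cycles meeting $U$ are disjoint from the $x$-circuit (this follows from $\varphi^n|_U=\id$ plus disjointness of $U,\varphi(U),\dots,\varphi^{n-1}(U)$, and from the fact that backward-added points have their forward orbits factoring through earlier hull points). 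The paper sidesteps this entirely with a cleaner device: from two disjoint non-empty open sets $V_1,V_2\subseteq U$ it forms $U_i=\bigcup_{m,k}\varphi^{-m}(\varphi^k(V_i))$, which are open, invariant and \emph{disjoint}; minimality would force both to equal $X$, a contradiction, so $U$ is a singleton without any hull-avoidance argument.

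There is a second soft spot in your endgame. After $\{x_0\}$ is open, you propagate isolatedness ``to every point of the orbit $\OO(x_0)$'' and invoke ``minimality forces every orbit to be dense.'' Neither is justified as stated: $\OO(x_0)$ is built from \emph{full} preimages $\varphi^{-k}$, and the local-homeomorphism propagation of isolatedness only works for preimages taken inside $\Delta_{\reg}$ (a point outside $\Delta_{\reg}$ mapping onto the circuit need not be isolated by your argument); likewise, that the complement of an orbit closure is open and invariant requires a check that is not immediate in this partial, irregular setting. The correct (and short) route is to apply minimality to the explicitly open invariant set $W':=\bigcup_{m\geq 0}(\varphi|_{\Delta_{\reg}})^{-m}(\{x_0,\dots,x_{n-1}\})$: once the circuit points are isolated, $W'$ is open, invariant and non-empty, hence $W'=X$; this simultaneously gives $X\subseteq\Delta_{\reg}$, discreteness of every point (local homeomorphism propagation now applies), countability from the standing countable-to-one assumption, and the eventual-hit property, i.e.\ the one-circuit graph structure. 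So the architecture of your proof is viable, but the two dynamical steps above need to be repaired before the argument is complete.
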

\begin{proof} We first show that \ref{thm:test_of_simplicity1}$\Leftrightarrow$\ref{thm:test_of_simplicity2}.
If  $\varphi: \Delta_{\reg}\to X$ is not topologically free, then  $A\rtimes L$ is not simple by Theorem \ref{thm:isomorphism}.
So let us assume that $\varphi: \Delta_{\reg}\to X$ is  topologically free. Then for any non-zero ideal $N$ in $A\rtimes L$ the ideal  $J:=A\cap N$ in $A$ is non-zero. Hence
$J=C_0(U)$ for some non-empty open set $U$. Note that
$
L(J\cap I)= t^* I NI t\subseteq K
$, so $L(J\cap I)\subseteq J$. Also  for any $a\in L^{-1}(J)\cap C_c(\Delta_{\reg})$, using \eqref{eq:Cuntz_relation_K2}, we obtain
\begin{align*}
a&=\sum_{i,j=1}^nu_i^Ktt^*  u_i^K  a u_j^K tt^*  u_j^K =\sum_{i,j=1}^n  u_i^K  t  L(u_i^Kau_j^K)   t^*   u_j^K \\
&=\sum_{i,j=1}^nu_i^K  t L\big(\alpha(u_i^K\circ\varphi|_{U_i}^{-1})a\alpha(u_j^K\circ\varphi|_{U_j}^{-1})\big)t^*u_j^K\\
&=\sum_{i,j=1}^nu_i^K  t (u_i^K\circ\varphi|_{U_i}^{-1})  L(a ) (u_j^K\circ\varphi|_{U_j}^{-1}) t^*u_j^K\in \overline {I t J t^* I}\subseteq  N,
\end{align*}
 so $a\in J$. Hence $U$ is an invariant set by Lemma \ref{lem:restricted_ideals}. If $U\neq X$ then $N\neq A\rtimes L$ and $A\rtimes L$ is not simple.
Conversely, if $N\neq A\rtimes L$ then $U\neq X$ because otherwise $N$ would contain $A$ and $A\rtimes L= A (A\rtimes L)$ would be $N$.

Implication \ref{thm:test_of_simplicity2}$\Rightarrow$\ref{thm:test_of_simplicity3} is  clear and to prove  the converse 
assume that $L$ is minimal but $\varphi: \Delta_{\reg}\to X$
is not topologically free. Then there is a non-empty open set $U\subseteq \Delta_{\reg, n}$ for some $n\geq 1$ such that $\varphi^n|_{U}=id$ and $U, \varphi(U), ..., \varphi^{n-1}(U)$ are pairwise disjoint. For any disjoint open sets $V_1$, $V_2\subseteq U$ the sets $U_i:=\bigcup_{m\in \N, k=0,...,n}\varphi^{-m}(\varphi^{k}(V_i))$ are disjoint open and invariant. Thus minimality of $L$ forces
$U=\{x\}$ to be a singleton and $\varphi: \Delta_{\reg}\to X$ to be a directed graph with one circuit.
\end{proof}
\begin{rem}\label{rem:regular_graphs_minimality} 
If $\Delta=\Delta_{\reg}$, then $E:=(X,\Delta, \id, \varphi)$ is a topological graph in the sense of Katsura \cite{ka1} and Theorem  \ref{thm:test_of_simplicity} 
 could be deduced from \cite[Theorem 8.12]{ka3}.  In this regular case, one could also get simplicity criteria for  $A\rtimes L$  using the Renault-Deaconu groupoid model, see \cite{Kwa-Meyer} and references therein.
\end{rem}

\section{Locally contractive transfer operators and pure infiniteness}\label{Sec:simplicity_pure_infiniteness}


The following definition is inspired by  \cite[Definition 2.7]{ka4}.
\begin{defn}\label{defn:contractive}
We say that an open  set $V\subseteq X$ is \emph{contracting }if there are pairwise disjoint, non-empty open sets $U_k\subseteq \Delta_{\reg, n_k}\cap V$ for $k=1,...,m$,  $n_k\geq 1$, such that
$$
V\not\subseteq \overline{\bigcup_{k=1}^m U_k}\quad \text{and}\quad
\overline{V}\subseteq \bigcup_{k=1}^m \varphi^{n_k}(U_k).
$$
We say that $L$ is \emph{contracting} if $\Delta=\Delta_{\pos}$ and there is $x_0\in \Delta$ such that
every neighbourhood of $x_0$  contains a  contracting open set and $\overline{\bigcup_{n=0}^{\infty}\varphi|_{\Delta_{\reg}}^{-n}(x_0)}=X$.
\end{defn}
If $\Delta=\Delta_{\reg}$, then  $L$ is  contractive iff
the topological  graph $E:=(X,\Delta, \id, \varphi)$ is contractive in the sense of \cite{ka4}, and we could use \cite[Theorem A]{ka4} to show that  $A\rtimes L$  is purely infinite simple whenever $L$ is minimal and  contractive.
A formally weaker result could be obtained using a groupoid model and \cite[Proposition 2.4]{Anantharaman-Delaroche}, as  $L$ is contractive
if the Renault-Deaconu groupoid $\G$ is minimal and locally contractive in the sense of \cite{Anantharaman-Delaroche}, but the converse is not clear.
 We will now prove a general result allowing irregular points, that is admitting the case when  $\Delta_{\pos}\neq \Delta_{\reg}$.
\begin{lem}\label{lem:scaling_elements}
 If  there is a  contracting precompact open set $V$, then there are $b,c \in A\rtimes L\setminus\{0\}$ satisfying $b^*bb=b$, $b^*bc=c$, $b^*c=0$ and $a b=b $ for all
$a \in C_0(X)$ which is $1$ on $V$.
\end{lem}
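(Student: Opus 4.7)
The plan is to construct $b$ as a finite sum $\sum_k h_k t^{n_k}$ of weighted shift-like elements built from the contracting data, chosen so that $b^*b$ becomes a continuous function in $A = C_0(X)$ identically equal to $1$ on $\overline V$, while $b$ has its left support contained in $\bigcup_k U_k \subseteq V$. First, by passing to a finite refinement one may assume each restriction $\varphi^{n_k}|_{U_k}$ is a homeomorphism onto an open subset of $X$: since $U_k \subseteq \Delta_{\reg, n_k}$ and $\varphi^{n_k}$ is a local homeomorphism there, one replaces the original family by disjoint open injectivity neighbourhoods inside each $U_k$ whose $\varphi^{n_k}$-images still cover the compact set $\overline V$. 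Next, pick a partition of unity $\{f_k\}_{k=1}^m \subseteq C_c(X)^+$ subordinate to $\{\varphi^{n_k}(U_k)\}$ with $\supp f_k$ compactly contained in $\varphi^{n_k}(U_k)$ and $\sum_k f_k \equiv 1$ on $\overline V$. Because $\varrho_{n_k}$ is continuous and strictly positive on $U_k \subseteq \Delta_{\reg,n_k}$, the formula
$$
h_k := \mathds{1}_{U_k}\sqrt{(f_k \circ \varphi^{n_k})/\varrho_{n_k}}
$$
defines a function $h_k \in C_c(U_k) \subseteq I_{n_k}$, and I set $b := \sum_{k=1}^m h_k t^{n_k} \in A \rtimes L$.

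The four relations then follow from a short calculation using Section~\ref{sec:crossed_products}. Pairwise disjointness $U_j \cap U_k = \emptyset$ for $j \neq k$ kills every off-diagonal contribution $t^{*n_j}\bar h_j h_k t^{n_k}$ in $b^*b$, while injectivity of $\varphi^{n_k}|_{U_k}$ collapses the transfer sum $L^{n_k}(|h_k|^2)$ to a single term giving exactly $f_k$. Hence $b^*b = \sum_k f_k =: \psi \in A$ with $\psi \equiv 1$ on $\overline V$. Since $\supp h_k \subseteq U_k \subseteq V \subseteq \{\psi = 1\}$, pointwise multiplication gives $\psi h_k = h_k$, and consequently $b^*b\cdot b = \psi b = b$ and $a b = b$ for every $a \in C_0(X)$ with $a|_V = 1$.

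To finish, the hypothesis $V \not\subseteq \overline{\bigcup_k U_k}$ guarantees that the open set $V \setminus \overline{\bigcup_k U_k}$ is nonempty, so one picks any nonzero $c \in C_c\bigl(V \setminus \overline{\bigcup_k U_k}\bigr) \subseteq A$. Then $\supp c \subseteq V$ forces $\psi c = c$, i.e., $b^*b\cdot c = c$, while $\supp c \cap \overline{U_k} = \emptyset$ forces $\bar h_k c = 0$ for every $k$ and thus $b^* c = \sum_k t^{*n_k}(\bar h_k c) = 0$; moreover $b \neq 0$ because $\sum_k f_k \equiv 1$ on the nonempty $\overline V$ makes some $h_k$ nonzero, as detected by the orbit representation. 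The only genuine technical obstacle is the preliminary reduction to injective $\varphi^{n_k}|_{U_k}$: one has to chop each $U_k$ into smaller pieces on which $\varphi^{n_k}$ is injective, select a finite subfamily whose $\varphi^{n_k}$-images still cover $\overline V$, and shrink if necessary to keep the pieces pairwise disjoint --- routine but slightly fussy. Once this is in place, every other identity is immediate from the cocycle property of $\varrho_{n_k}$ and the relations defining $A \rtimes L$.
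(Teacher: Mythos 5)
Your verification of the four relations is correct and essentially the same computation as in the paper, \emph{provided} you have already reduced to a pairwise disjoint family of sets on each of which $\varphi^{n_k}$ is injective; the genuine gap is precisely the reduction you dismiss as ``routine but slightly fussy''. Chopping each $U_k$ into injectivity neighbourhoods is easy, but making the resulting pieces pairwise disjoint while their $\varphi^{n_k}$-images still cover $\overline V$ is not: shrinking a piece shrinks its image, and a point of $\overline V$ all of whose preimages in $\bigcup_k U_k$ lie on the boundaries of pieces already chosen can become impossible to cover. Worse, the general principle you are implicitly invoking --- that for a local homeomorphism every compact subset of the image is covered by the images of finitely many pairwise disjoint open sets on which the map is injective --- is false. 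For the antipodal double cover $\pi\colon S^2\to\mathbb{RP}^2$ there is no finite family of pairwise disjoint open sets $W_j\subseteq S^2$, each mapped injectively by $\pi$, with $\bigcup_j\pi(W_j)=\mathbb{RP}^2$: disjointness forces the associated local sections to disagree at every point of every overlap $\pi(W_i)\cap\pi(W_j)$, so no point of $\mathbb{RP}^2$ lies in three of the images; the transition functions are then constant on all overlaps, hence the cover is pulled back from a double cover of the one-dimensional nerve, which would force $w_1^2=0$, contradicting $w_1^2\neq 0$ for the antipodal cover. So your preliminary step requires a real argument exploiting the specific structure of a contracting set, and as written it is a gap, not a formality.

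The paper's proof is arranged exactly so that this disjointification is never needed. Keep the original pairwise disjoint sets $U_k\subseteq V$ from Definition \ref{defn:contractive} as the supports of the summands of $b$; write $U_k=\bigcup_i U_k^i$ with $\varphi^{n_k}|_{U_k^i}$ injective, overlaps allowed; take a partition of unity $\{h_k^i\}$ on $\overline V$ subordinate to $\{\varphi^{n_k}(U_k^i)\}$; and let $a_k\in C_c(U_k)^+$ be the sum over $i$ of the functions equal to $\varrho_{n_k}^{-1}\cdot\bigl(h_k^i\circ\varphi^{n_k}\bigr)$ on $U_k^i$ and $0$ elsewhere. Setting $b:=\sum_k\sqrt{a_k}\,t^{n_k}$, the off-diagonal terms of $b^*b$ vanish because the $U_k$ themselves are disjoint, and $b^*b=\sum_k L^{n_k}(a_k)=\sum_{k,i}h_k^i$ equals $1$ on $\overline V$ by linearity of $L^{n_k}$ together with injectivity on each single $U_k^i$. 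From there your choice of $c\in C_c\bigl(V\setminus\overline{\bigcup_k U_k}\bigr)$ and your verification of $b^*bb=b$, $b^*bc=c$, $b^*c=0$ and $ab=b$ go through verbatim. The idea you missed is to sum the pulled-back partition functions inside each $U_k$ \emph{before} taking the square root, so that the only disjointness ever used is the one already built into the definition of a contracting set.
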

\begin{proof} Since $U_k\subseteq \Delta_{\reg, n_k}$ and $\overline{V}\subseteq \bigcup_{k=1}^m \varphi^{n_k}(U_k)$ we may find $a_k\in C_c(U_k)^+$, for $k=1,...,m$, such that $g=\sum_{k=1}^m L^{n_k}(a_k)\in C_0(X)$ is $1$ on $V$. Indeed, each $U_k=\bigcup_{i=1}^{m_k} U_k^i$ is a union of open sets where $\varphi^{n_k}|_{U_k^i}$ is a homeomorphism onto its range. Taking a partition of unity
$\{h_k^i\}_{k=1, i=1}^{m,m_k}\subseteq C_0(X)$ on $\overline{V}$
 subordinated to $\{\varphi^{n_k}(U_k^i)\}_{k=1, i=1}^{m,m_k}$ we may put $a_k:=\sum_{i=1}^{m_k} \varrho_{n_k}^{-1}\cdot h_{k}^i\circ (\varphi^{n_k}|_{U_k^i})^{-1}$.
Now define $b=\sum_{k=1}^m \sqrt{a_k} t^{n_k}$. Then for $a \in C_0(X)$ which is $1$ on $V$ we have $a b= b$.
Since the sets $\{U_k\}_{k=1}^m$ are pairwise disjoint we get
$
b^*b=\sum_{k,l=1}^m t^{* n_k} \sqrt{a_k} \sqrt{a_l} t^{n_l}=\sum_{k=1}^m t^{* n_k} a_k t^{n_k}=g.
$
Since $g$ is $1$ on $\overline{V}$, we have $b^*bb=b$. Take any non-zero $c\in C_c(V\setminus \overline{\bigcup_{k=1}^m U_k})$, then clearly,
$b^*bc=c$ and $b^*c=0$.
\end{proof}

\begin{prop}\label{prop:infinite_projection}
 If  $L$ is minimal and there is a  contracting open set, then $A\rtimes L$ is simple and contains an infinite projection.
\end{prop}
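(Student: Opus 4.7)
The simplicity follows from Theorem~\ref{thm:test_of_simplicity}, equivalence \ref{thm:test_of_simplicity1}$\Leftrightarrow$\ref{thm:test_of_simplicity3}: since $L$ is minimal, it suffices to verify that $\varphi\colon\Delta_\reg\to X$ is not a directed graph with one circuit. If it were, $X$ would be countable discrete, so any precompact contracting open set $V$ would be finite. Pairwise disjointness of the $U_k$'s together with $|\varphi^{n_k}(U_k)|\leq |U_k|$ would then yield
$$|V|\leq\Big|\bigcup_k \varphi^{n_k}(U_k)\Big|\leq\sum_k|U_k|=\Big|\bigcup_k U_k\Big|,$$
while $V\not\subseteq\overline{\bigcup_k U_k}=\bigcup_k U_k$ forces $|\bigcup_k U_k|<|V|$, a contradiction.

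To produce the infinite projection, the plan is to apply Lemma~\ref{lem:scaling_elements} to the precompact contracting open set, obtaining non-zero $b,c\in A\rtimes L$ with $b^*bb=b$, $b^*bc=c$, $b^*c=0$, and $ab=b$ whenever $a\in C_0(X)$ equals $1$ on $V$. Setting $g:=b^*b$, $P:=bb^*$ and $Q:=cc^*$, the relation $b^*c=0$ gives $PQ=b(b^*c)c^*=0$, so $P$ and $Q$ are orthogonal positive elements and $[P+Q]=[P]+[Q]$ in the Cuntz semigroup. The standard identity $xx^*\sim x^*x$ gives $[P]=[g]$. Since $gb=b$ and $gc=c$, both $P$ and $Q$ lie in the hereditary subalgebra $\overline{g(A\rtimes L)g}$, so $P+Q\precsim g$, whence $[g]+[Q]\leq[g]$.

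By the simplicity established above, the non-zero element $Q$ is full in $A\rtimes L$. Using standard manipulations in the Cuntz semigroup (fullness of $Q$ together with iteration of $[g]+n[Q]\leq[g]$ applied to each $(g-\varepsilon)_+$) one derives $[g]\oplus[g]\leq[g]$, i.e.\ $g$ is properly infinite. The conclusion then follows from the Kirchberg--R{\o}rdam fact that in a simple $C^*$-algebra, the hereditary subalgebra generated by any non-zero properly infinite positive element contains an infinite projection.

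The main technical obstacle is this last extraction step. The Cuntz-semigroup computation itself is elementary once the orthogonality $b^*c=0$ and the hereditary containment are in place, but the passage from a properly infinite positive element to a genuine projection inside the algebra relies on nontrivial Kirchberg--R{\o}rdam machinery. A more concrete alternative would construct a projection and a proper isometry directly from the decomposition $b=\sum_k\sqrt{a_k}\,t^{n_k}$ via functional calculus on $g$ (exploiting $g=1$ on $\overline V$) and polar-decomposition intuition from $b^*bb=b$, but keeping the resulting projection inside $A\rtimes L$ rather than its bidual demands additional care.
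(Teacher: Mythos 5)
Your simplicity argument is fine and is the same route the paper takes (rule out the one-circuit graph case and invoke Theorem~\ref{thm:test_of_simplicity}); your explicit counting argument works precisely because you take $V$ precompact, hence finite in the discrete setting, which is consistent with the hypothesis of Lemma~\ref{lem:scaling_elements}. The Cuntz-semigroup computation in the second half is also correct as far as it goes: with $g=b^*b$, $P=bb^*$, $Q=cc^*$ one indeed gets $PQ=0$, $P,Q\in\overline{g(A\rtimes L)g}$, $[g]+[Q]\leq[g]$, and, using fullness of $Q$ in the simple algebra, that $g$ is properly infinite.

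The gap is the final step, and it is a real one: there is no ``Kirchberg--R{\o}rdam fact'' asserting that in a simple $C^*$-algebra the hereditary subalgebra generated by a single properly infinite positive element contains an infinite projection. The Kirchberg--R{\o}rdam results that produce infinite projections require pure infiniteness, i.e.\ that \emph{every} nonzero positive element (of every hereditary subalgebra) be properly infinite, or they start from a projection that is already there; a lone properly infinite element only gives you Cuntz subequivalences ($a\precsim g$ for all $a$, by simplicity), and approximate relations of that kind do not by themselves yield projections. Extracting a projection is exactly the hard point, and the exact algebraic relation you discarded when passing to Cuntz classes is what makes it possible: Lemma~\ref{lem:scaling_elements} was designed to produce a \emph{scaling element}. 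Indeed $b^*bb=b$ gives $(b^*b)(bb^*)=bb^*$, and $b^*b\neq bb^*$ because $bb^*c=0$ while $b^*bc=c\neq 0$; the paper then simply cites \cite[Proposition 4.2]{katsura0} (going back to Blackadar--Cuntz), which says that a simple $C^*$-algebra contains an infinite projection if and only if it contains a scaling element. So the correct completion of your argument is not more Cuntz-semigroup machinery but keeping the exact relations $b^*bb=b$, $b^*c=0$, $b^*bc=c$ and invoking the scaling-element criterion; as written, your proof of the existence of an infinite projection is incomplete, as you yourself acknowledge.
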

\begin{proof}
If there is a contracting set, then $\varphi: \Delta_{\reg}\to X$ can not be a directed graph with one circuit.
Hence $A\rtimes L$ is simple by Theorem \ref{thm:test_of_simplicity}. By Lemma \ref{lem:scaling_elements} there is
$b \in A\rtimes L$ with $b^*bb=b$, $b^*b\neq bb^*$. Such elements are called \emph{scaling}, and
a simple $C^*$-algebra has an infinite projection if and only if it has a scaling element,  see  \cite[Proposition 4.2]{katsura0}.
\end{proof}

\begin{lem}\label{lem:cutting_elements}
 Assume that $\varphi:\Delta_{\reg}\to X$ is topologically free, $\Delta=\Delta_{\pos}$  and there is $x_0\in \Delta$ such that $\overline{\bigcup_{n=0}^{\infty}\varphi^{-n}(x_0)\cap \Delta_{\reg,n}}=X$ .
For any non-zero positive $b\in A\rtimes L$  there is $d\in A\rtimes L$ and $a\in A$ which is $1$ on some neighbourhood of $x_0$ and $\|d^* b d - a\|< 1/2$.
\end{lem}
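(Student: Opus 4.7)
The argument follows a standard cutting-down scheme: reduce $b$ to a positive element of the core $A_\infty$ modulo small error via the conditional expectation $E:A\rtimes L\to A_\infty$, then construct a cutting element $d=\kappa\,f\,t^N$ concentrated at a regular preimage $x_1\in\varphi^{-N}(x_0)\cap\Delta_{\reg,N}$ provided by the density hypothesis, and exploit topological freeness to make the off-diagonal contributions vanish on a neighbourhood of $x_0$.

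Scale $b$ so that $\|E(b)\|=1$ (valid since $E$ is faithful by Proposition \ref{prop:conditional_expectation_description} and $b>0$). For small $\epsilon>0$, by Lemma \ref{lem:spanning_lemma} approximate $b$ within $\epsilon$ by
\[
b' = \beta_0 + \sum_{i=1}^n\bigl(\beta_{-i}t^{*i}a_{-i}+a_it^i\beta_i\bigr),
\]
with $\beta_0=E(b')\in A_\infty$ positive. Approximate $\beta_0$ further by a positive element of some $A_N$, $N\ge n$, whose norm is close to $1$; by Theorem \ref{thm:spectra_of_A_n} the norm is attained (up to $\epsilon$) at an irreducible representation $\pi_{y_1}^k$ with $y_1\in\varphi^k(\Delta_{\pos,k})$. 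Combining lower semi-continuity of $y\mapsto\|\pi_y^k(\beta_0)\|$ on this open set with the density hypothesis, and shifting via Lemma \ref{lem:shift_map_induced_by_representations}, after enlarging $N$ we find $x_1\in\varphi^{-N}(x_0)\cap\Delta_{\reg,N}$ such that the vector expectation $\langle\eta,\pi_{x_0}^N(\beta_0)\eta\rangle$ at $\eta:=\mathbf 1_{x_1}/\sqrt{\varrho_N(x_1)}$ is within $\epsilon$ of $1$. Topological freeness of $\varphi:\Delta_{\reg}\to X$ (Lemmas \ref{lem:topol_free_local_homeo}, \ref{lem:reg_vs_positive_freeness}) asserts that $\{x\in \Delta_{\reg,n}:\varphi^i(x)\ne x,\ 1\le i\le n\}$ is open dense in $\Delta_{\reg,n}$; intersecting with the dense tail $\bigcup_{m\ge n}\varphi^{-m}(x_0)\cap \Delta_{\reg,m}$ (the $m<n$ contribution being countable) we may further require $\varphi^i(x_1)\ne x_1$ for $i=1,\dots,n$. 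Choose an open $U\subseteq \Delta_{\reg,N}$ with $x_1\in U$ on which $\varphi^N$ is injective and small enough that $\varphi^i(U)\cap U=\emptyset$ for $i=1,\dots,n$. Pick $f\in C_c(U)$ with $f(x_1)=1$ and set $d=\kappa\,f\,t^N$.

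The diagonal contribution $d^*\beta_0 d$ lies in $A$ and is supported in $\varphi^N(U)\ni x_0$: using $t^{*N}g t^N=L^N(g)$ and injectivity of $\varphi^N|_U$, direct calculation via the orbit representation yields $d^*\beta_0 d(x_0)=\kappa^2\varrho_N(x_1)\cdot\langle\eta,\pi_{x_0}^N(\beta_0)\eta\rangle$. Setting $\kappa^{-2}=\varrho_N(x_1)\langle\eta,\pi_{x_0}^N(\beta_0)\eta\rangle$ makes $d^*\beta_0 d$ close (by continuity on $\Delta_{\reg,N}$) to a continuous function $a\in A$ equal to $1$ on a neighbourhood of $x_0$. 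The main obstacle is bounding the off-diagonal contributions $d^*(a_it^i\beta_i)d$ and $d^*(\beta_{-i}t^{*i}a_{-i})d$ for $1\le i\le n$. For $\beta_i\in A$, the commutation relations (Lemma \ref{lem:automatic_commutation_relation}) give
\[
d^*(a_it^i\beta_i)d=\kappa^2 L^N\bigl(fa_i\cdot\alpha^i(\beta_i f)\bigr)\,t^i,
\]
whose integrand is supported in $U\cap\varphi^{-i}(U)=\emptyset$ by our choice of $U$, so the term vanishes identically. For general $\beta_{\pm i}\in A_\infty$, expanding via its Fourier-type decomposition in some $A_M$ and iterating the commutation relations produces additional factors $f\circ\varphi^{\pm j}$, whose supports land in $\varphi^{\pm j}(U)$; enlarging $N$ and imposing disjointness of $\varphi^j(U)$ for the relevant range of $j$ (again available by topological freeness) forces every such product to zero. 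Combining with $\|b-b'\|<\epsilon$ and the uniform bound on $\|d\|$, choosing $\epsilon$ sufficiently small yields $\|d^*bd-a\|<1/2$.
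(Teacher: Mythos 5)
Your overall scheme (approximate $b$, isolate the core part via $E$, localize at a preimage of $x_0$ in the dense backward orbit, and use non-periodicity to kill off-diagonal terms) is in the right spirit, and your second half resembles what the paper actually does. But there is a genuine gap at the decisive step: you claim to find a single point $x_1\in\varphi^{-N}(x_0)\cap\Delta_{\reg,N}$ at which the normalized diagonal coefficient $\langle\eta,\pi_{x_0}^{N}(\beta_0)\eta\rangle$, $\eta=\mathds{1}_{x_1}/\sqrt{\varrho_N(x_1)}$, is within $\epsilon$ of $\|E(b)\|=1$. Neither lower semicontinuity of $y\mapsto\|\pi_y^{k}(\beta_0)\|$ nor Lemma \ref{lem:shift_map_induced_by_representations} gives this: they control the norm of $\pi_y^{k}(\beta_0)$, which is attained at an eigenvector typically spread over many preimage points, not at a single basis vector. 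A positive core element can have all of its diagonal coefficients, in every $\pi_y^{k}$, arbitrarily small compared with its norm; for the full shift with $\varrho\equiv 1$ the element $\beta_0=2^{-N}\bigl(\sum_{|\mu|=N}s_\mu\bigr)\bigl(\sum_{|\mu'|=N}s_{\mu'}\bigr)^{*}$ is a projection of norm $1$ all of whose diagonal entries are $2^{-N}$. This is fatal to your estimates, because your scaling constant is $\kappa^{2}=\bigl(\varrho_N(x_1)\langle\eta,\pi_{x_0}^{N}(\beta_0)\eta\rangle\bigr)^{-1}$, and every discarded error (the $\|b-b'\|<\epsilon$ approximation, and the approximation of $\beta_{\pm i}\in A_\infty$ by elements of some $A_M$ needed to make the off-diagonal terms vanish) re-enters multiplied by $\kappa^{2}\|ft^{N}\|^{2}\approx\langle\eta,\pi_{x_0}^{N}(\beta_0)\eta\rangle^{-1}$; since $\epsilon$ must be fixed before $b'$, hence before this coefficient is known, the argument is circular as written. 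A secondary circularity occurs in the off-diagonal step: the range of $j$ for which you need $\varphi^{j}(U)\cap U=\emptyset$ depends on the level $M$ of the approximation of $\beta_{\pm i}$, chosen after $x_1$, so non-periodicity of $x_1$ up to order $n$ does not suffice.

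The paper avoids exactly this by not proving the cutting-to-the-diagonal step by hand: using $\Delta=\Delta_{\pos}$ and topological freeness it notes that the associated topological quiver satisfies condition (L), identifies $A\rtimes L$ with the quiver algebra (Corollary \ref{cor:topological_quivers}), and invokes \cite[Proposition 6.14]{mt} to get $d_0$ with $\|d_0\|\le 1$ and $a_0\in C_0(X)^{+}$ with $\|a_0\|=\|E(b_0)\|$ and $\|d_0^{*}b_0d_0-a_0\|<\varepsilon$; only afterwards does it localize at $x_0$ using the dense backward orbit, cutting on the set $\{a_0>4\varepsilon\}$ so that the localizing element $ct^{n}$ has norm at most $(4\varepsilon)^{-1/2}$ — i.e. the analogue of your $\kappa$ is bounded in terms of $\|E(b)\|$ by construction. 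To make your proof work you would have to supply a genuine substitute for \cite[Proposition 6.14]{mt}, e.g. a spectral-projection argument inside $A_N$ that handles eigenvectors supported on several preimage points; a single rank-one cut $d=\kappa f t^{N}$ at one preimage point cannot do it in general.
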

\begin{proof}
Put $\varepsilon:=\|E(b)\|/5$ where $E$ is the conditional expectation onto the core $A_{\infty}$. Choose a positive $b_0\in \text{span}\{at^n t^{*m}c: a\in I_n,c\in I_m, n,m\in \N_0\}$  such that $\|b- b_0\|<\varepsilon$.
Since  $\Delta=\Delta_{\pos}$, in view of Lemma \ref{lem:reg_vs_positive_freeness}, we see that $\varphi:\Delta\to X$ is topologically free, and this implies that the topological quiver $\QQ=(X, \Delta, id, \varphi,\mu)$ satisfies condition (L).
 By Corollary \ref{cor:topological_quivers} the $C^*$-algebra associated to $\QQ$ is isomorphic to $A\rtimes L$. Hence we may apply \cite[Proposition 6.14]{mt}
to conclude that there is $d_0\in A\rtimes L$ and $a_0\in C_0(X)^+$ such that  $\|d_0\|\leq 1$, $\|a_0\|=\|E(b_0)\|$ and $\|d_0^* b_0 d_0 - a_0\|<\varepsilon$.
Since $\|a_0\|=\|E(b_0)\|>\|E(b)\|-\varepsilon=4\varepsilon$, the open set
$$
U:=\{x\in X: a_0(x)> 4 \varepsilon\}
$$
is non-empty. As $\bigcup_{n=0}^{\infty}\varphi^{-n}(x_0)\cap \Delta_{\reg,n}$ is dense in
$X$ there is $n\in\N$ and an open subset $U_0\subseteq U\cap \Delta_{\reg,n}$ such that $\varphi^n|_{U_0}$ is a local homeomorphism onto
an open neighbourhood $V_0$ of $x_0$. Take $c_0\in C_c(V_0)^+$, $\|c_0\|\leq 1$, such that $c_0$ is $1$ on an open neighbourhood $V\subseteq V_0$ of $x_0$.
Then $c:= (a_0\varrho_n)^{-1} \cdot c_0\circ (\varphi^n|_{U_0})^{-1}\in C_c(U_0)$ is such that $a:=L^{n}(ca_0c)$ is $1$ on  $V$ and
$\|c t^n \|^2=\|L^n(c^2)\|\leq \max_{x\in U_0} a_0(x)^{-1}< (4 \varepsilon)^{-1}$.
Thus putting
$d:=d_0 c t^n$ we get
\begin{align*}
\|d^*bd- a\|&=\|t^{*n} c d_0^* b d_0 c t^n-  t^{*n} ca_0c t^n \|\leq \|d_0^* b d_0 -  a_0\| \cdot \|c t^n \|^2
\\
&\leq (\|d_0^* (b-b_0) d_0 \| +\|d_0^* b_0 d_0 - a_0\|)\cdot \|c t^n \|^2<(\varepsilon +\varepsilon)(4 \varepsilon)^{-1}<1/2.
\end{align*}
\end{proof}

\begin{thm}\label{thm:purely_infinite} If $L$ is minimal and contractive, then  $A\rtimes L$ is  purely infinite and simple.
\end{thm}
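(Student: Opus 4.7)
The plan is to handle simplicity immediately via Proposition~\ref{prop:infinite_projection} (whose hypotheses are already assumed), and then reduce pure infiniteness to the assertion that every non-zero positive $b\in A\rtimes L$ contains an infinite projection in the hereditary subalgebra $\overline{b(A\rtimes L)b}$. Simplicity combined with Theorem~\ref{thm:isomorphism} also gives topological freeness of $\varphi:\Delta_{\reg}\to X$, which is what will allow me to invoke the cutting lemma below.

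First I would apply Lemma~\ref{lem:cutting_elements} to the given non-zero $b\in (A\rtimes L)_+$: its three hypotheses are met because $\Delta=\Delta_{\pos}$ and the dense-orbit condition are built into contractivity, while topological freeness was just established. The output is $d\in A\rtimes L$ and $a\in A$ with $\|d^*bd-a\|<1/2$ and $a\equiv 1$ on some open neighbourhood $W$ of $x_0$. Setting $h:=(a-\tfrac{1}{2})_+$, Rørdam's standard lemma on Cuntz subequivalence gives $h\precsim d^*bd\precsim b$, and $h\equiv\tfrac{1}{2}$ on $W$.

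Next I would pick a precompact contracting open set $V$ with $\overline{V}\subseteq W$ (possible by local compactness followed by the contractivity hypothesis). If $\{(U_k,n_k)\}_{k=1}^m$ is the associated contracting data, I would refine it by replacing each $U_k$ with $U_k\cap\varphi^{-n_k}(W)$; the inclusion $\overline{V}\subseteq W$ makes this preserve the conditions $\overline{V}\subseteq\bigcup_k\varphi^{n_k}(U_k)$ and $V\not\subseteq\overline{\bigcup_k U_k}$, while additionally forcing $\bigcup_k\varphi^{n_k}(U_k)\subseteq W$. Lemma~\ref{lem:scaling_elements} then yields non-zero $b_0,c_0\in A\rtimes L$ with $b_0^*b_0 b_0=b_0$, $b_0^*b_0 c_0=c_0$ and $b_0^*c_0=0$, and with the key property $ab_0=b_0a=b_0$ --- the right-hand equality being the reason for the refinement of the range support. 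A direct computation then gives $h b_0 h=\tfrac{1}{4}b_0$ and $h c_0 h=\tfrac{1}{4}c_0$, so $b_0,c_0\in B_0:=\overline{h(A\rtimes L)h}$.

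To conclude, $B_0$ is a hereditary subalgebra of the simple $C^*$-algebra $A\rtimes L$ and therefore is itself simple, and it contains the scaling element $b_0$ (the inequality $b_0^*b_0\neq b_0b_0^*$ being witnessed by $c_0$ exactly as in the proof of Proposition~\ref{prop:infinite_projection}). Katsura's theorem \cite[Proposition 4.2]{katsura0} then provides an infinite projection $p\in B_0$. Since $p\in B_0$ forces $p\precsim h\precsim b$, the standard functional-calculus trick of Rørdam produces a projection $q\in\overline{b(A\rtimes L)b}$ Murray--von Neumann equivalent to $p$, and hence infinite. The main obstacle is the support bookkeeping needed to put $b_0$ into $B_0$: the definition of contracting only guarantees $\overline{V}\subseteq\bigcup_k\varphi^{n_k}(U_k)$, so the ``range support'' of $b_0$ may escape $W$, and without the refinement above one cannot ensure $b_0a=b_0$ and hence cannot place $b_0$ inside the hereditary subalgebra attached to $h\precsim b$.
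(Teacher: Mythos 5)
Your argument is correct, and it reaches pure infiniteness by a genuinely different endgame than the paper, even though the first half (simplicity and topological freeness via Theorem~\ref{thm:test_of_simplicity}/Proposition~\ref{prop:infinite_projection}, the cut-down $\|d^*bd-a\|<1/2$ from Lemma~\ref{lem:cutting_elements}, and a precompact contracting set $V$ inside the neighbourhood where $a\equiv 1$) coincides with the paper's. The paper fixes one global infinite projection $p$, uses simplicity and Katsura's \cite[Lemma 4.1]{katsura0} to write $p=\sum_k b_k^*c^*cb_k$, forms $e=\sum_k b^kcb_k$ with $e^*ae=e^*e=p$ and $\|e\|=1$, and then pushes $p$ into $\overline{b_0(A\rtimes L)b_0}$ by the estimate $\|e^*d^*b_0de-p\|<1/2$ and two applications of functional calculus; note that for this only the one-sided relation $ab=b$ of Lemma~\ref{lem:scaling_elements} is needed, since $a$ only ever multiplies $e$ from the left. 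You instead manufacture the scaling element inside the hereditary subalgebra $B_0=\overline{h(A\rtimes L)h}$ with $h=(a-\tfrac12)_+\precsim d^*bd\precsim b$, which requires the two-sided absorption $hb_0h=\tfrac14 b_0$; the stated Lemma~\ref{lem:scaling_elements} does not give $b_0a=b_0$, and you correctly identify this as the obstruction and fix it by reopening the construction with the refined data $U_k\cap\varphi^{-n_k}(W)$ (your verification that the refinement preserves the contracting conditions, and that the iterated commutation relation $\sqrt{a_k}\,t^{n_k}h=\sqrt{a_k}\,\alpha^{n_k}(h)t^{n_k}$ then yields $b_0h=\tfrac12 b_0$, is sound; if some refined $U_k$ is empty one simply discards it, and $c_0$ lies in $B_0$ because it is a function supported in $V\subseteq W$). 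Katsura's \cite[Proposition 4.2]{katsura0} applied to the simple hereditary subalgebra $B_0$ then gives an infinite projection, which you transport into $\overline{b(A\rtimes L)b}$ via $p\precsim h\precsim b$ and the standard fact that a projection Cuntz-subequivalent to $b$ is Murray--von Neumann equivalent to a projection in $\overline{b(A\rtimes L)b}$. What each route buys: yours avoids Katsura's Lemma 4.1 and the $e$-construction entirely, at the cost of not using Lemma~\ref{lem:scaling_elements} as a black box (its proof must be rerun with the refined cover) and of importing two standard comparison facts (R{\o}rdam's $(a-\varepsilon)_+\precsim a'$ lemma and the projection-transfer trick); the paper's route keeps the auxiliary lemmas exactly as stated and needs no Cuntz-comparison machinery. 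One cosmetic point: to apply R{\o}rdam's lemma you need $a\geq 0$, which is not in the statement of Lemma~\ref{lem:cutting_elements} but is clear from its proof ($a=L^n(ca_0c)$), so you should say so explicitly.
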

\begin{proof} By Proposition \ref{prop:infinite_projection}, $A\rtimes L$ is simple and contains an infinite projection $p$. Hence it suffices to show
that for each non-zero positive $b_0\in A\rtimes L$, the hereditary $C^*$-subalgebra $\overline{b_0 (A\rtimes L) b_0}$ generated by $b_0$ contains a
projection equivalent to the infinite projection $p$. To this end, note that by Theorem \ref{thm:test_of_simplicity}, $\varphi:\Delta_{\reg}\to X$ is topologically
free. Let $x_0\in \Delta$ be such that $\overline{\bigcup_{n=0}^{\infty}\varphi|_{\Delta_{\reg}}^{-n}(x_0)}=X$ and
every neighbourhood of $x_0$  contains a non-empty contracting open set. By Lemma \ref{lem:cutting_elements}  there is $d\in A\rtimes L$ and $a\in A$ which is $1$ on a neighbourhood $V$ of $x_0$ and $\|d^* b_0 d - a\|< 1/2$. We may take $V$ to be a precompact open contracting set. By Lemma \ref{lem:scaling_elements}
there are non-zero  $b,c \in A\rtimes L$ such that
\begin{equation}\label{eq:scaling_relations}
b^*bb=b, \quad b^*bc=c, \quad b^*c=0\quad \text{and}\quad a b=b .
\end{equation} 
Since  $A\rtimes L$ is simple and
$c^*c\neq 0$  there are $b_1,...,b_l\in A\rtimes L$ such that
$
p=\sum_{k=1}^l b_k^*c^*c b_k,
$
see \cite[Lemma 4.1]{katsura0}. Set $e:=\sum_{k=1}^l b^k c b_k$. Then using \eqref{eq:scaling_relations} we get
$$
e^* a e= e^*e=\sum_{k,i=1}^l  b_k c^*  b^{k*}   b^{i}c b_i=\sum_{k=1}^l  b_k c^* c b_k=p.
$$
In particular, $\|e\|=1$. Using all these we get
$$
\|e^*d^* b_0 d e - p\|=\|e^*(d^* b_0 d e - a) e\|< 1/2.
$$
Let $f$ be the characteristic function of the interval $(\frac{1}{2},\frac{3}{2})$. Then $p_0:=f(e^*d^* b_0 d e)$ is
a well defined projection with $\|p_0-e^*d^* b_0 d e\|\leq 1/2$, cf. \cite[Lemma 2.2.4]{RLL}.
Hence  $\|p_0 - p\|<1$ and therefore $p_0$ and $p$ are equivalent, cf. \cite[Proposition 2.2.5]{RLL}.
Then $q:=f(\sqrt{b_0}d e  e^*d^* \sqrt{b_0})$ is a projection in $\overline{b_0 (A\rtimes L) b_0}$ which is equivalent to $p_0$ and hence to the
infinite projection $p$.
\end{proof}
\begin{cor}\label{cor:Kirchberg}
If $X$ is second countable and $L$ is minimal and contractive, then $A\rtimes L$ is a Kirchberg algebra, i.e. a simple, separable, nuclear, purely infinite $C^*$-algebra satisfying the UCT.
\end{cor}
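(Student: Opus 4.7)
The plan is to assemble the statement from results already established in the excerpt; no new argument is required. The hypotheses that $L$ is minimal and contractive are precisely those of Theorem~\ref{thm:purely_infinite}, which immediately delivers the two non-trivial defining properties: $A\rtimes L$ is simple and purely infinite. So those are handled at once.

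For separability, I would observe that second countability of the locally compact Hausdorff space $X$ makes $A=C_0(X)$ separable. By Remark~\ref{rem:crossed_product_span}, $A\rtimes L$ is the closed linear span of elements $at^nt^{*m}b$ with $a\in I_n$, $b\in I_m$; since $I_n=C_0(\Delta_n)$ is separable for every $n$ (each $\Delta_n\subseteq X$ inherits second countability), a countable dense subset of $A\rtimes L$ can be obtained by choosing countable dense subsets of each $I_n$ and forming the appropriate $\Q[i]$-linear combinations. Hence $A\rtimes L$ is separable.

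For nuclearity and the UCT, I would invoke Theorem~\ref{thm:crossed_product_Cuntz-Pimsner}: it states that $A\rtimes L$ is always nuclear (because $A$ is commutative), and that it satisfies the UCT whenever $A$ is separable, which holds here. Combining with the output of Theorem~\ref{thm:purely_infinite}, $A\rtimes L$ is simple, separable, nuclear, purely infinite, and UCT, i.e.\ a Kirchberg algebra in the UCT class. There is no substantive obstacle: the corollary is a pure assembly of Theorems~\ref{thm:crossed_product_Cuntz-Pimsner} and~\ref{thm:purely_infinite} together with the elementary separability check.
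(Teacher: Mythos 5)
Your proposal is correct and follows essentially the same route as the paper: the paper's proof is simply "Combine Theorems \ref{thm:purely_infinite} and \ref{thm:crossed_product_Cuntz-Pimsner}", with your explicit separability check (second countability of $X$ giving separable $C_0(\Delta_n)$ and hence a countable dense span) being the only elaboration, and it is accurate.
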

\begin{proof}
Combine Theorems \ref{thm:purely_infinite} and \ref{thm:crossed_product_Cuntz-Pimsner}.
\end{proof}

\begin{ex}[$C^*$-algebras of rational maps]\label{ex:pure_infinite1}
Let $R:\widehat{\C}\to \widehat{\C}$ be a rational map of degree at least two, and let $X=\Delta=J_R$ be the Julia set for $R$. The  transfer operator $L:C(J_R)\to C(J_R)$ for $\varphi:J_R\to J_R$, considered in Example \ref{ex:Riemann_maps}, is minimal and contractive.
Indeed, $\Delta_{\pos}=J_R$ is uncountable and $\Delta_{\pos}\setminus\Delta_{\reg}$ is finite by \cite[Corollary 2.7.2,  Theorem 4.2.4]{Beardon}.
 Moreover, for any open $V\subseteq J_R$ there is $n\in \N$ such that $R^n(V)=J_R$, by \cite[Theorem 4.2.5]{Beardon},  and  $\overline{\bigcup_{n=0}^{\infty}R^{-n}(z)}=J_{R}$  for every $z\in J_R$, by \cite[Theorem 4.2.7]{Beardon}.
So one may find $z_0\in J_R$ whose inverse orbit $\bigcup_{n=0}^{\infty}R^{-n}(z)$
does not contain any critical point, and any open neighbourhood of $z_0$
 contains a contractive open set.
Hence $C(J_R)\rtimes L$ is simple and purely infinite. This recovers \cite[Theorem 3.8]{Kajiwara_Watatani0} as a special case of Theorem \ref{thm:purely_infinite}.
Note that  the Fatou set $F_R=\widehat{\C}\setminus J_R$ is open and invariant for $R:\widehat{\C}\to \widehat{\C}$. Thus
$C(\widehat{\C})\rtimes L$ is simple if and only if $\widehat{\C}=J_R$.

As a by product we also recover the main result of  \cite{Hamada}. Namely, let $\mu^L$ be the Lyubich measure. It is a $\varphi$-invariant regular probability measure whose support is $J_R$. Denoting by $T_\varphi$ the composition operator on $L_2(\mu^L)$ and identifying $C(J_R)$ with operators of multiplication on $L_2(\mu^L)$ we get $L(a)= d\cdot T_{\varphi}^* a T_{\varphi}$ where $d$ is degree of $R$, see  \cite[Lemma, p. 366]{Lyubich}. Thus using Proposition \ref{prop:characterization_of_covariance} one gets that $(id, d^{1/2}T_{\varphi})$ is covariant representation of $L:C(J_R)\to C(J_R)$ and therefore the simple $C^*$-algebra $C(J_R)\rtimes L$ is isomorphic to the $C^*$-subalgebra of $B(L_2(\mu^L))$ generated by  $C(J_R)$ and the composition operator $T_\varphi$.
\end{ex}

\begin{ex}[Branched expansive coverings]\label{ex:pure_infinite2} Consider the transfer operator from  Example \ref{ex:branched_maps},
where $\varphi:X \to X$ is a continuous map on a compact metric space $X$ whose  inverse has a finite number of continuous
branches  $\gamma=\{\gamma_i\}_{i=1}^N$ that are proper contractions and  $X$ is self-similar for $\gamma$. In other words,
$X$ is covered by compact sets $\Delta_i$, $i=1,...,N$, such that $\varphi:\Delta_i\to X$ is an expansive homeomorphism ($\gamma_i=\varphi|_{\Delta_i}^{-1}$). 
 As in \cite{Kajiwara_Watatani} we assume \emph{the open set condition} for $\gamma$, which in terms of $\varphi$ says that there is a non-empty open set $V\subseteq X$, such that
$\varphi^{-1}(V)\subseteq V$ and $\varphi^{-1}(V)\cap \Delta_{i}\cap \Delta_j=\emptyset$ for $i\neq j$.
Then $\varphi^{-1}(V)$ is necessarily an open dense set in $X$ not intersecting the set of branching points
$
B=\bigcup_{i\neq j} \{x\in \Delta_i\cap \Delta_j\}
$
and we have $\Delta_{\reg}=X\setminus B$, cf.  \cite[Proposition 2.6]{Kajiwara_Watatani}.
Using this one infers that each of the sets $\varphi^{n}(B)$ has empty interior.
Thus  $X\setminus \bigcup_{n=0}^\infty \varphi^{n}(B)$ is dense in $X$ by Baire theorem.
For any $x\in X\setminus \bigcup_{n=0}^\infty \varphi^{n}(B)$ its negative orbit $\bigcup_{n=0}^{\infty}\varphi^{-n}(x)$ lies entirely in $\Delta_{\reg}$.
Every  negative orbit is dense in $X$. Indeed,  $A:=\overline{\bigcup_{n=0}^{\infty}\varphi^{-n}(x)}$ is a closed set with $\varphi^{-1}(A)\subseteq A$, which implies  $A=X$ by the uniqueness of the self-similar set $X$, see  \cite{Huchinson}. Using  expansiveness of $\varphi$ we conclude that every neighbourhood of $x_0\in X\setminus \bigcup_{n=0}^\infty \varphi^{n}(B)$  contains a non-empty contracting open set. Minimality is clear. Hence
$C(X)\rtimes L$ is a unital Kirchberg algebra by Corollary \ref{cor:Kirchberg}.  This recovers \cite[Theorem 3.8]{Kajiwara_Watatani} when the systems of contractive maps  form inverse branches of a continuous map.

Recall that the \emph{Hutchinson measure} $\mu^H$ is the  unique regular  probability measure such that $\mu^H(A)=1/N\sum_{i=1}^N \mu^H(\gamma_i(A))$ for all Borel $A\subseteq X$. It support is $X$ and
so we may identify $C(X)$ with operators of multiplication on $L_2(\mu^H)$. If $\mu^H(B)=0$ (which is automatic when $X\subseteq \R^d$ and $\gamma_i$'s are similitudes, see \cite{Schief}), then  the composition operator $T_\varphi$ is an isometry on $L_2(\mu^L)$ satisfying  $L(a)= 1/N\cdot T_{\varphi}^* a T_{\varphi}$, see  \cite{Hamada2}. Thus using Proposition \ref{prop:characterization_of_covariance} one sees that $(id, N^{-1/2}T_{\varphi})$ is a covariant representation of $L:C(J_R)\to C(J_R)$ and therefore  $C(X)\rtimes L$ is isomorphic to the $C^*$-subalgebra of $B(L_2(\mu^H))$ generated by  $C(X)$ and the composition operator $T_\varphi$. This recovers the main result of \cite{Hamada2}.
\end{ex}
\begin{ex}[Expanding local homeomorphisms]\label{ex:pure_infinite3}
Assume $\varphi:X\to X$ is an open continuous expanding map on a compact metric space, cf. \cite{Anantharaman-Delaroche}, \cite{bar_kwa} and references therein.
Then any continuous $\varrho:X\to (0,\infty)$ defines a transfer operator $L$ for $\varphi$ and  $C(X)\rtimes L\cong C^*(\G)$, by Theorem \ref{thm:local_homeo_crossed_groupoid}.
By \cite[Lemma 7.4]{bar_kwa}, $\varphi$ is topologically free if and only if $X$ has no isolated periodic points. Clearly, $L$ is minimal if and only if $\varphi$ is \emph{minimal}, i.e. there is no non-trivial open set $U$ with $\varphi^{-1}(U)=U$.
Thus assuming  $X$ is infinite, by Theorem \ref{thm:test_of_simplicity}, we get
that
$$
C(X)\rtimes L\text{  is simple if and only if $\varphi$ is minimal}.
$$
Assume now that there are no wandering points in $X$, or equivalently that periodic points are dense in $X$.
Then by spectral decomposition, cf.  \cite[Theorem 2.5]{bar_kwa}, $\varphi$ is minimal iff  $\varphi$ is topologically transitive iff
for every non-empty open $U\subseteq X$ there is $N\in \N$ such that $\bigcup_{k=1}^N\varphi^k(U)=X$.
Thus if $\varphi$ is minimal, then every negative orbit $\bigcup_{n=0}^{\infty}\varphi^{-n}(x)$ is dense in $X$ and every non-trivial open subset $V\subsetneq X$ is contracting,  so in particular $L$ is contracting.
Hence by Corollary \ref{cor:Kirchberg} we get
$$
\text{$C(X)\rtimes L$ is a Kirchberg algebra, if $\varphi$ is minimal and has no wandering points}.
$$
This last statement improves  \cite[Proposition 4.2]{Anantharaman-Delaroche} (in the minimal case) and  implies  \cite[Proposition 4.2]{Exel_Huef_Raeburn}.
If there are no wandering points, then by \cite[Proposition 3.8]{bar_kwa}   there exists a  $\varphi$-invariant Borel probability measure $\mu$ with support $X$  such that identifying $C(X)$ with operators of multiplication on $L_2(\mu)$ we have $L(1)^{-1}L(a)=T_\varphi^* a T_\varphi$, for $a\in C(X)$, where $T_\varphi\in B(L_2(\mu))$ is the composition operator with $\varphi$.
Also $(id, L(1)^{-1/2}T_{\varphi})$ is a covariant representation of $L$. Thus is if $X$ has no isolated periodic points, then  $C(X)\rtimes L$ is isomorphic to the $C^*$-subalgebra of $B(L_2(\mu))$ generated by  $C(X)$ and the composition operator $T_\varphi$.
If in addition $\varphi$ is minimal (topologically transitive) and $\ln \varrho$ is H\"older continuous, then the above measure $\mu$ is unique and it is the \emph{Gibbs measure} for $\varphi$ and $\ln\varrho$.

\end{ex}

\end{document}